\documentclass[bsl]{asl}


\usepackage{epsfig}
\usepackage{latexsym}
\usepackage{amssymb}
\usepackage[all]{xy}
\usepackage{graphicx}


\title[Fifty years of the spectrum problem]{
Fifty years of the spectrum problem:\\
Survey and new results
}



\author{A. Durand}
\revauthor{Durand, Arnaud}

\address{\'Equipe de Logique,
Universit\'e Paris 7, France}

\email{durand@logique.jussieu.fr}

\author{N. D. Jones}
\revauthor{Jones, Neil D.}

\address{DIKU, University of Copenhagen,
Denmark}

\email{neil@diku.dk}

\author{J. A. Makowsky}
\revauthor{Makowsky, Johann A.}

\address{Faculty of Computer Science, Technion, Haifa, Israel}

\email{janos@cs.technion.ac.il}

\author{M. More}
\revauthor{More, Malika}

\address{Univ Clermont 1, LAIC, France}

\email{more@laic.u-clermont1.fr}

\newif\ifskip
\skiptrue

\newtheorem{theorem}{Theorem}[section]
\newtheorem{proposition}[theorem]{Proposition}
\newtheorem{lemma}[theorem]{Lemma}
\newtheorem{corollary}[theorem]{Corollary}

\newtheorem{question}[theorem]{Question}
\newtheorem{openquestion}{Open Question}

\theoremstyle{definition}
\newtheorem{definition}[theorem]{Definition}

\newtheorem{conjecture}[openquestion]{Conjecture}
\newtheorem{obs}[theorem]{Observation}
\newtheorem{remark}[theorem]{Remark}


\newtheorem{example}[theorem]{Example}

\newcommand{\CMSOL}[1]{\textsc{CMSOL}({#1})}
\newcommand{\Cmsol}{\textsc{CMSOL}}

\newcommand{\Fol}{\textsc{FOL}}


\newcommand{\raus}[1]{}

\newcommand{\NE}{\mathrm{NE}}
\newcommand{\coNE}{\mathrm{coNE}}
\newcommand{\E}{\mathrm{E}}
\newcommand{\coE}{\mathrm{coE}}

\renewcommand{\L}{\mathrm{L}}
\newcommand{\NL}{\mathrm{NL}}
\newcommand{\coNL}{\mathrm{coNL}}
\newcommand{\NP}{\mathrm{NP}}
\newcommand{\coNP}{\mathrm{coNP}}
\newcommand{\classP}{\mathrm{P}}

\newcommand{\UN}{\mathrm{UN}}

\newcommand{\LINSPACE}{\mathrm{LINSPACE}}
\newcommand{\NLINSPACE}{\mathrm{NLINSPACE}}
\newcommand{\coNLINSPACE}{\mathrm{coNLINSPACE}}
\newcommand{\LTH}{\mathrm{LTH}}
\newcommand{\PH}{\mathrm{PH}}

\newcommand{\DTime}[1]{\mathrm{DTIME}({#1})}
\newcommand{\NTime}[1]{\mathrm{NTIME}({#1})}
\newcommand{\DSpace}[1]{\mathrm{DSPACE}({#1})}
\newcommand{\NSpace}[1]{\mathrm{NSPACE}({#1})}

\newcommand{\PSPACE}{\mathrm{PSPACE}}

\newcommand{\NRam}[1]{\mathrm{NRAM}({#1})}

\newcommand{\SPEC}{\textsc{Spec}}
\newcommand{\coSPEC}{\mathrm{co}\textsc{Spec}}
\newcommand{\HOSPEC}[1]{\textsc{ho}\!-\!\textsc{spec}_{#1}}
\newcommand{\spec}[1]{\textsc{spec}({#1})}
\newcommand{\MSOL}[1]{\textsc{MSOL}({#1})}
\newcommand{\Msol}{\textsc{MSOL}}
\newcommand{\SOL}[1]{\textsc{SO}({#1})}
\newcommand{\Sol}{\textsc{SO}}
\newcommand{\EMSOL}[1]{\textsc{EMSOL}({#1})}

\newcommand{\FO}{\textsc{FO}}

\newcommand{\RUD}{\textsc{Rud}}
\newcommand{\SRUD}{\textsc{Srud}}

\newcommand{\harp}{\!\upharpoonright\!}

\newcommand{\N}{\mathbb{N}}
\newcommand{\Q}{\mathbb{Q}}

 
\newcommand{\FSpectra}[2]{{\textsc{f-spec}}_{#1}^{#2}}
\newcommand{\RSpectra}[2]{{\textsc{spec}}_{#1}^{#2}}



\newcommand{\NN}{\mathbb{N}} 


\def\iff{\Longleftrightarrow}
\def\N{{\mathbb{N}}}

\def\Z{{\mathbb{Z}}}

\def\cC{{\cal C}}

\def\cA{\mathcal{A}}
\def\cB{\mathcal{B}}
\def\cC{\mathcal{C}}

\def\bP{{\mathbf P}}
\def\N{{\mathbb N}}
\def\bPSpace{{\mathbf{PSpace}}}
\def\bPH{{\mathbf{PH}}}
\def\bNP{{\mathbf{NP}}}

\newcommand{\comment}[1]{\marginpar{\small\em {#1}}}

\newenvironment{renumerate}{\begin{enumerate}}{\end{enumerate}}


\begin{document}
\begin{abstract}
In 1952, Heinrich Scholz published a
question in the Journal of Symbolic Logic
asking for a characterization of spectra, i.e., sets of natural numbers
that are the cardinalities of finite models of first order sentences.
G\"unter Asser asked whether the complement of a spectrum is always a spectrum.
These innocent questions turned out to be seminal for the development
of finite model theory and descriptive complexity.
In this paper we survey developments over the last 50-odd years
pertaining to the spectrum problem.
Our presentation follows conceptual developments rather than 
the chronological order.  
Originally a number theoretic problem, it has been approached in terms
of recursion theory, resource bounded complexity theory,  
classification by complexity of the defining sentences,
and finally in terms of structural graph theory.
Although Scholz' question was answered in various ways, Asser's question remains open.
One appendix 
paraphrases the contents of several early and not easily accesible papers
by G. Asser, A. Mostowski, J. Bennett and S. Mo.
Another appendix 
contains a compendium of questions and conjectures which remain open.
\end{abstract}
 
\maketitle
\small
\begin{center}
To be submitted to the Bulletin of Symbolic Logic.
\smallskip  

\today \ \  (version 13.2)
\end{center}


\footnotesize
\tableofcontents
\normalsize
\newpage
\noindent
\begin{flushright}
\small
\begin{tabular}{l}
{\sc Sganarelle}: Ah! Monsieur, c'est un {\bf spectre}: \\
je le reconnais au marcher. \\
{\sc Dom Juan}: Spectre, fant\^ome, ou diable, \\
je veux voir ce que c'est.\\
{\tiny
J.B. Poquelin, dit 
Moli\`ere,
Dom Juan, Acte V, sc\`ene V
}
\end{tabular}
\end{flushright}
\section{Introduction}
\label{intro1}
At the Annual Symposium of the European Association of Computer Science Logic, CSL'05,
held in Oxford in 2005,
Arnaud Durand, Etienne Grandjean and Malika More 
organized a special workshop dedicated to the spectrum problem.
The workshop speakers and the title of their talks where
\begin{itemize}
\item
Annie Chateau (UQAM, Montreal)
\\
The Ultra-Weak Ash Conjecture is Equivalent to the Spectrum Conjecture, and Some Relative Results
\item
Mor Doron (Hebrew University, Jerusalem).
\\
Weakly Decomposable Classes and Their Spectra (joint work with S. Shelah),
\item
Aaron Hunter (Simon Fraser University, Burnaby).
\\
Closure Results for First-Order Spectra: The Model Theoretic Approach
\item
Neil Immerman (University of Massachusetts, Amherst)
\\
Recent Progress in Descriptive Complexity
\item
Neil Jones (University of Copenhagen, Copenhagen)
\\
Some remarks on the spectrum problem
\item
Johann A. Makowsky (Technion--Israel Institute of Technology, Haifa)
\\
50 years of the spectrum problem
\end{itemize}
The organizers and speakers then decided to use the occasion to expand
the survey talk given by J.A. Makowsky into the present survey paper,
rather than publish the talks.

\subsubsection*{Acknowledgements}
We would like to thank 
A. Chateau,
M. Doron,
A. Esbelin,  
R. Fagin, 
E. Fischer,
E. Grandjean,
A. Hunter,
N. Immerman and
S. Shelah
for their fruitful comments which helped our preparation of this survey.
\ 
\\
\ 
\newpage
\section{The Emergence of the Spectrum Problem}
\label{se:intro}
\ 
\\
\ 
\subsection{Scholz's problem}

In 1952, H. Scholz published an innocent
question in the Journal of Symbolic Logic 
\cite{ar:Scholz52}:
\begin{quote}
\small
{\bf 1.}
Ein ungel\"ostes Problem in der symbolischen Logik.
$IK$ sei der Pr\"adikatenkalk\"ul der ersten Stufe mit der
Identit\"at. In $IK$ ist ein Postulatensystem $BP$ f\"ur
die Boole'sche Algebra mit einer einzigen zweistelligen
Pr\"adikatenvariablen formalisierbar.
$\theta$ sei die Konjunktion der Postulate von $PB$.
Dann ist $\theta$ f\"ur endliche $m$ $m$-zahlig erf\"ullbar
genau dann, wennn es ein $n > 0$ gibt, 
soda{\ss} $m=2^n$.

Hieraus ergibt sich das folgende Problem. $H$ sei ein Ausdruck
des $IK$. Unter dem {\em Spectrum von $H$} soll die Menge der
nat\"urlichen Zahlen verstanden sein, f\"ur welche $H$
erf\"ullbar ist. $M$ sei eine beliebige Menge von nat\"urlichen 
Zahlen. Gesucht ist eine hinreichende [hinrerichende] und
notwendige Bedingung daf\"ur, 
da{\ss} es ein $H$ gibt, 
soda{\ss}
$M$ das Spectrum von $H$ ist.
\\
{\em (Received September 19, 1951)}.
\end{quote}
In English:
\begin{quote}
\small
{\bf 1.}
An unsolved problem in symbolic logic.
Let
$IK$ [Identit\"atskalk\"ul]
be the first order predicate calculus 
with identity.
In $IK$ one can formalize an axiom system 
$BP$ [Boole'sche Postulate] 
for Boolean algebras
with only one binary relation variable.
Let $\theta$  be the conjunction of the axioms of $BP$.
Then $\theta$ is satisfiable in a finite domain of $m$ elements
if and only if there is an $n > 0$ such that $m=2^n$.

From this results the following problem.
Let $H$ be an expression of $IK$. 
We call the set of natural numbers, for which $H$ is satisfiable,
the {\em spectrum of $H$}.
Let $M$ be an arbitrary set of natural numbers.
We look for a sufficient and necessary condition
that ensures that there exists an $H$, such that $M$
is the spectrum of $H$.
\\
{\em (Received September 19, 1951)}.
\end{quote}
This question inaugurated a new column of {\em Problems}
to be published in the Journal of Symbolic Logic and
edited by L. Henkin.
Other questions published in the same issue 
were authored by G. Kreisel and L. Henkin.
They deal with a question about 
interpretations of non-finitist proofs dealing with
recursive ordinals and
the no-counter-example interpretation (Kreisel),
the provability of formulas asserting the provability or independence
of provability assertions (Henkin),
and the question whether the ordering principle is equivalent
to the axiom of choice (Henkin).
All in all 9 problems were published, the last in 1956.

The context in which Scholz's question was formulated
is given by the various completeness and incompleteness
results for First Order Logic that were the main concern
of logicians of the period.
An easy consequence of 
G\"odel's classical completeness theorem of 1929
states that
validity of first order sentences in all (finite and infinite)
structures is recursively enumerable,
whereas Church's  and Turing's classical theorems state 
that it is not recursive.
In contrast to this, the following was shown in 1950  by
B. Trakhtenbrot. 
\begin{theorem}[Trakhtenbrot 50\cite{ar:Trakhtenbrot50}]
Validity of first order sentences in all finite structures
(f-validity) is not recursively enumerable, and hence
satisfiability of first order sentences in some finite structure
(f-satisfiability) is not 
decidable, although it is recursively enumerable.
\end{theorem}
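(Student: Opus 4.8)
The plan is to reduce the halting problem to f-satisfiability by an effective translation $M \mapsto \varphi_M$ sending a Turing machine $M$ to a first order sentence $\varphi_M$ such that $\varphi_M$ has a finite model if and only if $M$ halts on the empty input. Granting such a translation, the theorem follows quickly. On the one hand, f-satisfiability \emph{is} recursively enumerable: one can effectively list all pairs consisting of a first order sentence and a finite structure over its signature and check the (decidable) satisfaction relation. On the other hand, f-satisfiability is not decidable, since the undecidable halting problem many-one reduces to it via $M \mapsto \varphi_M$. Consequently f-validity, which is tied to f-satisfiability by the recursive correspondence ``$\varphi$ is f-valid iff $\neg\varphi$ is not f-satisfiable'', cannot be recursively enumerable: were it r.e., then f-satisfiability would be both r.e. and co-r.e., hence recursive, a contradiction. (In the language of the arithmetical hierarchy one obtains the sharper statement that f-satisfiability is $\Sigma^0_1$-complete and f-validity is $\Pi^0_1$-complete.)

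To build $\varphi_M$, I would encode a halting computation of $M$ as a finite structure. Work in a signature with one binary relation symbol $<$ and finitely many further relation symbols; in a finite model a first order sentence can force $<$ to be a discrete linear order with a least and a greatest element, so its universe is, up to isomorphism, an initial segment $\{0,1,\dots,n-1\}$ of the naturals. Reading this segment as an index set for both tape cells and time steps, introduce predicates $\mathrm{Cell}_a(p,t)$ (``cell $p$ holds symbol $a$ at time $t$''), $\mathrm{Head}(p,t)$ (``the head scans cell $p$ at time $t$''), and $\mathrm{State}_q(t)$ (``$M$ is in state $q$ at time $t$''); pairs over the ordered set can be handled either by a definable pairing or by a two-sorted presentation that is afterwards merged. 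The sentence $\varphi_M$ is the conjunction asserting: $<$ is a finite discrete order with endpoints; at time $0$ the tape is blank, the head is on cell $0$, and the state is initial; for every pair of consecutive times the local transition rules of $M$ hold (expressible because each rule relates only the scanned cell and its neighbours at times $t$ and $t{+}1$, with unscanned cells unchanged); at each time exactly one symbol occupies each cell, the head is in exactly one cell, and exactly one state holds; and some time step carries a halting state, under the convention that halting states are fixed points of the transition so that the constraints stay satisfiable at all sufficiently large sizes.

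Correctness is then the claim that $\varphi_M$ has a model of size $n$ precisely when $M$ halts on the empty input within a run using at most $n$ time steps and $n$ tape cells; hence $\varphi_M$ is f-satisfiable iff $M$ halts, and the construction is plainly effective in a description of $M$. The only genuine work — and the place where care is required — is the bookkeeping of the previous paragraph: checking that each clause (``initial configuration'', ``one step of $M$'', ``halting reached'') is honestly first order over the ordered finite universe, that nothing forces the universe to be \emph{too small} for a halting run to fit once $M$ does halt, and that \emph{no} finite model exists when $M$ runs forever. None of these points is deep, but writing the sentence out faithfully is the substance of the argument; everything else is the soft recursion-theoretic wrapper of the first paragraph.
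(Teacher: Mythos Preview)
The paper does not prove this theorem; it merely cites Trakhtenbrot's 1950 result as historical background before introducing Scholz's question, so there is no ``paper's own proof'' to compare against.

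That said, your proposal is correct and is the standard argument. The reduction $M \mapsto \varphi_M$ encoding a bounded halting computation over a finite linearly ordered universe is exactly how Trakhtenbrot's theorem is usually proved, and your recursion-theoretic wrapper (f-satisfiability is $\Sigma^0_1$; a decision procedure would contradict undecidability of halting; hence f-validity is not r.e.) is clean and accurate. One small point worth tightening in a full write-up: you let the same universe $\{0,\dots,n-1\}$ index both time and tape position, which is fine since an $n$-step computation visits at most $n$ cells, but you should say explicitly that the ``no finite model when $M$ diverges'' direction uses the fact that any finite model forces a \emph{maximal} time step, at which a halting state must hold---so divergence genuinely blocks all finite models, not just small ones. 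You gesture at this but it is the one place where a reader might want a sentence more.
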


\begin{quote}
\small
\_\hrulefill

\noindent
Heinrich Scholz, a German philosopher, 
was born 17. December 1884  in Berlin and died
30. December 1956 in M\"unster.
He was a student of Adolf von Harnack.
He studied in Berlin and Erlangen philosophy and theology
and got his habilitation in 1910 in Berin for the subjects
philosophy of religion and systematic theology.
He received his Ph.D. in 1913 for his thesis
{\em
Schleiermacher and Goethe. 
A contribution to the history of German thought}.
In 1917 he was appointed full professor for philosophy of religion
in Breslau (Wroclaw, today Poland).
In 1919 he moved to Kiel, and from 1928 on he taught in M\"unster. 
From 1924 till 1928 he studied exact sciences and logic
and formed in M\"unster a center for mathematical logic and
foundational studies,
later to be known as {\em the school of M\"unster}.
His chair became in 1936 the first chair for {\em mathematical logic
and the foundations of exact sciences}.
His seminar underwent several administrative metamorphoses
that culminated in 1950  in the creation of the {\em
Institute for mathematical logic and the foundations
of exact sciences}, which he led until his untimely death.
Among his pupils and collaborators we find
W. Ackermann, F. Bachmann, G. Hasenj\"ager, H. Hermes, K. Schr\"oter
and H. Schweitzer.
He was also among the founders
of the German society bearing the same name (DVMLG).
H. Scholz was a Platonist, and he considered mathematical logic
as the foundation of epistemology.
He is credited for his discovery of Frege's estate,
and for making Frege's writing accessible to a wider readership.
Together with his pupil Hasenj\"ager he authored
the monograph {\em Grundz\"uge der Mathematischen Logik},
 published posthumously in 1961.

\noindent
\_\hrulefill
\end{quote}

Thus,
H. Scholz really asked whether one could prove anything
meaningful about f-satisfiablity besides its undecidability.

\ 
\subsection{Basic facts and questions}
In our notation to be used throughout the paper,
H. Scholz introduced the following:

Let $\tau$ be a vocabulary, 
i.e., set of relation and function symbols.
Let $\phi$ be a sentence in some 
logic with equality over a vocabulary $\tau$. 
Unless otherwise stated the logic will be first order logic $FOL(\tau)$.
Sometimes we shall also discuss second order logic,
or a fragment thereof.

\begin{definition}
The {\em spectrum} $\spec{\phi}$ of  $\phi$ 
is the set of finite cardinalities
(viewed as a subset of $\N$),
in which $\phi$ has a model.

We denote by $\SPEC$ the set of spectra of first-order sentences, i.e.,
$$\SPEC = \{\spec{\phi}\ |\ \phi \mbox{\ is a first-order formula} \}$$
\end{definition}

We shall use $S, S_i$ to denote spectra.
For the definition of spectra it does not
matter whether we use function symbols or not.
So, unless otherwise stated, vocabularies will be without
function symbols. However, we shall allow function symbols
when dealing with sentences of special forms.

Clearly, $\spec{\phi} = \emptyset$ if and only if
$\phi$ is not f-satisfiable.
By definition of satisfiability $0$ is never part of a spectrum.
Very often a spectrum is finite, cofinite or even of the form 
$\mathbb{N}^+ = \mathbb{N} - \{0\}$.

\begin{question}
\label{q1}
Is it decidable whether, for a given $\phi$, 
$\spec{\phi}= \mathbb{N}^+$?
\end{question}

As H. Scholz noted, 
(the set of)
powers of $2$ form a spectrum, because they are the cardinalities
of finite Boolean algebras.
Similarly, powers of primes form a spectrum, because they
are the cardinalities of finite fields.
For $a,b \in \mathbb{N}^+$ 
there are many ways to construct a sentence $\phi$ with 
$\spec{\phi}= a +b\mathbb{N}$,
one of which
consists in using one unary function symbol.
With a moment of reflection, one sees that
spectra have the following closure properties.

\begin{proposition}
\label{pr:easyclosures}
Let $S_1$ and $S_2$ be spectra. 
\begin{renumerate}
\item
Then $S_1 \cup S_2, S_1 \cap S_2$
are also spectra. 
\item
Let 
$S_1 + S_2 = 
\{ m +n : m \in S_1, n \in S_2 \}$. Then $S_1 + S_2$ is a spectrum.
\item
Let 
$S_1 \star S_2 = 
\{ m \cdot n : m \in S_1, n \in S_2 \}$. Then $S_1 \star S_2$ is a spectrum.
\end{renumerate}
\end{proposition}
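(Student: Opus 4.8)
The plan is to treat the four closures separately, in order of increasing difficulty, writing $S_i=\spec{\phi_i}$ with $\phi_i$ a first-order sentence over a vocabulary $\tau_i$; since the spectrum of a sentence is not affected by renaming its symbols, I may assume $\tau_1$ and $\tau_2$ are disjoint. For (i), read $\phi_1\vee\phi_2$ over $\tau_1\cup\tau_2$: a finite model of size $n$ has either its $\tau_1$-reduct satisfying $\phi_1$ or its $\tau_2$-reduct satisfying $\phi_2$, so $n\in S_1\cup S_2$, and conversely any finite model of $\phi_i$ becomes a model of $\phi_1\vee\phi_2$ once the remaining symbols are interpreted arbitrarily; hence $\spec{\phi_1\vee\phi_2}=S_1\cup S_2$, and symmetrically $\spec{\phi_1\wedge\phi_2}=S_1\cap S_2$.

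For the sum (ii) I would use relativisation. Let $U$ be a fresh unary symbol and let $\phi_1^{U}$, $\phi_2^{\lnot U}$ be the relativisations obtained by restricting every quantifier to $U$, respectively to $\lnot U$. Put
$$\psi\ :=\ \phi_1^{U}\wedge\phi_2^{\lnot U}\wedge\exists x\,U(x)\wedge\exists x\,\lnot U(x).$$
In a finite model $\fA$ of $\psi$ the set $U^{\fA}$ is nonempty and, by the relativisation lemma, the $\tau_1$-substructure it induces satisfies $\phi_1$; likewise its complement induces a $\tau_2$-substructure satisfying $\phi_2$; so $|\fA|=|U^{\fA}|+|A\setminus U^{\fA}|\in S_1+S_2$. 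Conversely, the disjoint union of a finite model of $\phi_1$ and a finite model of $\phi_2$, with $U$ naming the first summand, satisfies $\psi$. (That $\tau_1$-relations may hold of tuples outside $U$ is harmless, since the relativised quantifiers never reach those tuples, and there are no function symbols around.) Thus $\spec{\psi}=S_1+S_2$.

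The real content is the product (iii): the idea is to make the Cartesian product first-order definable by a ``grid'' of two equivalence relations. Introduce fresh binary symbols $E_1,E_2$ and let $\chi$ be the conjunction of: ``$E_1$ and $E_2$ are equivalence relations''; the grid axiom $\forall x\forall y\,\exists!\,z\,(E_1(x,z)\wedge E_2(y,z))$; ``every relation of $\tau_1$ is invariant under $E_1$ and every relation of $\tau_2$ is invariant under $E_2$''; and $\phi_1/E_1$, $\phi_2/E_2$, where $\phi_i/E_i$ is $\phi_i$ with every occurrence of $=$ replaced by $E_i$. In a finite model $\fA$ of $\chi$, fix $a_0\in A$; the grid axiom (which already forces $E_1\cap E_2$ to be the identity) makes the map sending a pair from the $E_2$-class of $a_0$ and the $E_1$-class of $a_0$ to the unique common $E_1/E_2$-successor a bijection onto $A$, and since those two classes are transversals for $E_1$ and for $E_2$ we get $|\fA|=|A/E_1|\cdot|A/E_2|$. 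The invariance axioms make $A/E_1$ a genuine $\tau_1$-structure and $A/E_2$ a genuine $\tau_2$-structure, and $\fA\models\phi_i/E_i$ holds exactly when $A/E_i\models\phi_i$, so $|A/E_1|\in S_1$, $|A/E_2|\in S_2$, and $|\fA|\in S_1\star S_2$. Conversely, from finite models $\fB_1\models\phi_1$ and $\fB_2\models\phi_2$ one forms the literal product on $B_1\times B_2$, takes $E_1,E_2$ to be equality in the first and second coordinate, and pulls the $\tau_1$-relations back along the first projection and the $\tau_2$-relations back along the second; this model of $\chi$ has size $|B_1|\cdot|B_2|$, so $\spec{\chi}=S_1\star S_2$. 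The step I expect to be the main obstacle, and the one worth spelling out in full, is checking that the grid axiom really does produce the claimed bijection and hence the product formula for $|\fA|$; once that is in place, the faithfulness of the $=\,\mapsto E_i$ substitution for passing to the quotient is a routine induction.
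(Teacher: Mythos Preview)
Your proof is correct in all three parts, and the constructions you give are the standard ones: disjunction/conjunction over disjoint vocabularies for (i), relativisation to a unary predicate for (ii), and the two-equivalence-relation grid for (iii). The grid argument in particular is handled carefully; your verification that the axiom $\forall x\forall y\,\exists!z\,(E_1(x,z)\wedge E_2(y,z))$ forces $E_1\cap E_2$ to be the identity and yields the product formula $|A|=|A/E_1|\cdot|A/E_2|$ is sound.

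There is nothing to compare against: the paper does not prove this proposition at all. It is introduced with the phrase ``With a moment of reflection, one sees that spectra have the following closure properties'' and left at that. Your write-up supplies exactly the reflection the reader is invited to make.
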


In the spirit of Question \ref{q1}
we can also ask:

\begin{question}
\label{q2}
Which of the following sets
are recursive?
The set of sentences $\phi$ such that
\begin{renumerate}
\item
$\spec{\phi}$ is finite, cofinite.
\item
$\spec{\phi}$ is ultimately periodic.
\item
$\spec{\phi}$ is,
for given $a,b \in \N$ of the form $a+ b\N$.
\item
$\spec{\phi}=S$  for a given set $S \subseteq \N^+$.
\end{renumerate}
\end{question}
We shall answer Questions \ref{q1} and \ref{q2} 
in Section~\ref{prescribed spectra}.

\ 
\subsection{Immediate responses to H. Scholz's problem}
The first to publish a paper in response to H. Scholz's problem
was G. Asser \cite{ar:Asser55}. A. Robinson's review
\cite{misc:Robinson-mr}
 summarizes it as
follows:
\begin{quote}
\small
($\ldots$)
The present paper is concerned with the characterisation of all 
representable sets [=spectra]. A rather intricate necessary and sufficient 
condition is stated for arithmetical function $X(n)$ to be the 
characteristic function of a representable set. 
The condition shows that such a function is elementary in 
the sense of Kalmar.
($\ldots$)
On the other hand, the author establishes that there exist 
non-representable sets whose characteristic function is elementary. 
Examples of representable sets (some of which are by no means obvious) 
are given without proof and the author suggests that further 
research in this field is desirable.
\end{quote}
Asser also noted that his characterization did not
establish whether the complement of a spectrum is a spectrum.

About the same time, A. Mostowski \cite{ar:Mostowski56}
also considered the problem. H. Curry \cite{misc:Curry-mr}
summarizes Mostowski's paper as follows:
\begin{quote}
\small
($\ldots$)
The author proves that for each function $f(n)$ of a class $K$ of 
functions, which is like the class of primitive functions except 
that at each step all functions are truncated above at $n$,
there is a formula $H$ that has a model in a set of $n+1$ 
individuals if and only if $f(n)=0$. 
From this he deduces positive solutions to Scholz's problem in a 
number of special cases. 
\end{quote}
It is usually considered that A. Mostowski really proved 

\begin{theorem}
\label{th:mostowski}
All sets of natural numbers, 
whose characteristic functions are in the second level
of the Grzegorzcyk Hierarchy ${\mathcal E}^2$, are first order spectra.
\end{theorem}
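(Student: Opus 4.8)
The plan is to show that every such $S$ is the spectrum of an explicitly built first-order sentence $\phi_S$, by first reducing to a resource-bounded computation and then encoding that computation in first-order logic. The first step is to recall the computational meaning of the second Grzegorczyk level: every function in $\mathcal{E}^2$ is bounded by a polynomial in its arguments, and — by Ritchie's characterisation of $\mathcal{E}^2$, or directly from Grzegorczyk's presentation via composition and bounded recursion — the predicate ``$n \in S$'' is decided by a deterministic machine $M$ that, on input $n$, halts within $n^{c}$ steps for some fixed $c$. This polynomial time bound is exactly the feature that makes the theorem hold at level $\mathcal{E}^2$ and not higher: a first-order sentence read over a universe of $n$ elements can, via $k$-ary relation symbols, name only the $n^{k}$ positions of $U^{k}$, so it can faithfully track a computation whose length and space are at most a fixed power of $n$, but nothing of super-polynomial size.

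The second step is the Fagin/Jones--Selman-style translation of the run of $M$ into a sentence. The vocabulary of $\phi_S$ carries a binary symbol $<$, axiomatised to be a discrete linear order; in a model of cardinality $n$ this identifies the universe with $\{0,\dots,n-1\}$, and then the lexicographic order on $c$-tuples, together with its successor, least element and greatest element, is first-order definable, furnishing $n^{c}$ ``time stamps'' (and as many ``cell addresses''). Additional relation symbols record the computation: $\mathrm{Tape}_a(\bar t, \bar j)$ for each tape symbol $a$, $\mathrm{Head}(\bar t, \bar j)$, and $\mathrm{State}_q(\bar t)$ for each state $q$. Then $\phi_S$ is the conjunction of: an \emph{initialisation} clause fixing the time-$\bar 0$ configuration to the start state with input $n$ on the first $n$ cells; a \emph{transition} clause asserting that for every time stamp before the last, the next configuration follows from the current one by $M$'s transition table (a finite Boolean combination: cells off the head are unchanged, the head moves as prescribed, the new state is determined by the old state and the scanned symbol); and an \emph{acceptance} clause asserting that some time stamp carries an accepting state. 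Because $M$ is deterministic, over a model of size $n$ these relations are forced and can be consistently realised precisely when $M$ accepts $n$, so $\phi_S$ has a model of cardinality $n$ iff $n \in S$; hence $S = \spec{\phi_S} \in \SPEC$.

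The step I expect to be the real obstacle is the first one: establishing, with the right constants, that the $\mathcal{E}^2$-decision procedure runs in time polynomial in $n$, since everything downstream depends on matching that bound with a fixed-arity first-order encoding over an $n$-element universe. The encoding itself is routine bookkeeping once the bound is in hand; note that it uses no arithmetic on $U$ beyond the order — only lexicographic successor on tuples — so plain first-order logic with a built-in linear order suffices, and neither existential second-order quantifiers nor the later characterisation $\SPEC = \NE$ is needed. A more faithful rendering of Mostowski's own route would replace the machine detour by an induction on the generation of the $\mathcal{E}^2$ function from the initial functions under composition and bounded recursion, representing the whole (polynomial-size) trace of each bounded recursion as a relation over tuples of the universe and asserting its correctness in first-order logic; there the obstacle becomes the purely combinatorial task of composing such trace-representations while keeping every intermediate value and recursion depth polynomially bounded in $n$.
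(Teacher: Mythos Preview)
Your argument is correct, but it follows a different route from what the paper presents. The paper treats this theorem in two ways: first via Mostowski's own 1956 approach, which works not with $\mathcal{E}^2$ directly but with an auxiliary class $K$ of ``$n$-bounded'' primitive-recursive-style functions, and proceeds by induction on the generation of a function in $K$, introducing for each function a relation symbol for its graph and writing first-order axioms (order, successor, base case, recursion step) that pin down that graph over $\{0,\dots,n\}$; the paper notes explicitly that this does not quite reach all of $\mathcal{E}^2_\star$, only $K$. The full statement $\mathcal{E}^2_\star \subseteq \SPEC$ is then obtained in the paper as a corollary of Bennett's characterisation of first-order spectra via strictly rudimentary relations.

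Your route---Ritchie's identification of $\mathcal{E}^2_\star$ with deterministic linear space (hence polynomial time in the value $n$), followed by a Jones--Selman-style Turing-machine encoding over $c$-tuples of an $n$-element ordered universe---is a third, complexity-theoretic argument. It is perfectly sound and in some ways cleaner than either of the paper's routes: it avoids both the gap between $K$ and $\mathcal{E}^2$ in Mostowski's proof and the detour through $\SRUD$ in Bennett's. The price is anachronism (Ritchie 1963 and Jones--Selman 1972 postdate Mostowski 1956), and a reliance on the computational characterisation of $\mathcal{E}^2$ rather than on its recursion-theoretic definition; you acknowledge this yourself in your final paragraph, where you correctly identify the ``faithful'' Mostowski-style alternative as an induction on the recursion schemes, representing each bounded recursion by a polynomially-sized table of values. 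One small point to make explicit: your initialisation clause uses unary input $1^n$, which is fine, but you should say that the $\LINSPACE$ machine on binary input converts trivially to a polynomial-time machine on unary input, so that the input really is first-order expressible as ``the first $n$ cells''.
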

The detailed definitions and contents of this
theorem will be discussed in Section \ref{section recursion}.

In the last 50 years a steady stream of 
papers appeared dealing with spectra of first order
and higher order logics. The problem seems not too important at first
sight. However, some of these papers had considerable 
impact on what is now called Finite Model Theory and 
Descriptive Complexity Theory.

\begin{openquestion}[Scholz's Problem]
\label{scholz-problem}
Characterize the sets of natural numbers
that are first order spectra.
\end{openquestion}
Scholz's Problem, as stated, is rather vague. He asks for a characterization
of a family of subsets of the natural numbers without specifying,
what kind of an answer he had in mind. The answer could be in terms of
number theory, recursion theory, it could be algebraic, or in terms
of something still to be developed.
We shall see in the sequel many solutions to Scholz's Problem,
but we consider it still open, because further answers are still possible.

The same question can be asked for any logic,
in particular second order logic $\Sol$,
or fragments thereof, like monadic second order logic $\Msol$,
fixed point logic, etc., as discussed in \cite{bk:EbbinghausF95,bk:Libkin04}.

\begin{openquestion}[Asser's Problem]
\label{open-assers-problem}
Is the complement of a first order spectrum
a first order spectrum?
\end{openquestion}
Here the answer should be yes or no.

The corresponding problem for $\Sol$ has a trivial solution.
Let  $\phi \in \SOL{\tau}$ with 
$\tau= \{R_1, \ldots, R_k\}$. An integer $n$ is in $\spec{\phi}$ iff 
\[
n \models \exists R_1 \exists R_2 \ldots \exists R_k \phi.
\]
Then, the complement of $\spec{\phi}$ is easily seen 
to be the spectrum of the $\Sol$ sentence 
$\neg (\exists R_1 \ldots \exists R_k \phi)$. 
In passing, note that every $\Sol$-spectrum is a 
$\SOL{\tau}$ over a language $\tau$ containing equality only.

However, for fixed  fragments of $\Sol$, Asser's Problem remains open.
In particular
\begin{openquestion}
\label{asser-msol}
Is the complement of a spectrum of an $\Msol$-sentence
again
a spectrum of an $\Msol$-sentence?
\end{openquestion}

\ 
\subsection{Approaches and themes}
In this survey we shall describe the various solutions
and attempts to
solve Scholz's  and Asser's problems, and the developments these
attempts triggered.
We shall
emphasize more the various ways the questions were approached,
and focus less on the historical order of the papers. 

There are several discernible themes:
\begin{description}
\item[Recursion Theory]
The early authors H. Asser and A. Mostowski
approached the question in the language
of the theory of recursive functions
i.e. they looked for characterization of spectra in terms of
recursion schemes, or hierarchies of recursive functions.
Most prominently in terms of Kalmar's elementary functions,
the Grzegorczyk hierarchy and hierarchies of arithmetical
predicates, in particular {\em rudimentary relations}. 
This line of thought culminates in 1962
in the thesis of J. Bennett \cite{phd:Bennett62}\footnote{
It seems that some of Bennet's unpublished results were rediscovered
independently in China in the late 1980ties by Shaokui Mo \cite{ar:Mo91}.
We shall discuss his work in Section \ref{se:Mo}.
}.
Although G. Asser already characterized first order spectra
in such terms, his characterization was not considered
satisfactory even by himself, 
because it did not use standard terms and was not useful in 
proving that a given set of integers is (or not) a spectrum.
We shall discuss the recursion theoretic
approach in detail in Section \ref{recursion}.
\item[Complexity Theory]
In the 1970s,
D. R\"odding and H. Schwichtenberg
of the M\"unster school
\cite{ar:RoeddingS72} gave a sufficient but not
necessary condition: any set of integers recognizable by a deterministic linear
space-bounded Turing machine is a first-order spectrum. 
(This is also a consequence of results of
Bennett and Ritchie
 \cite{phd:Bennett62,ar:Ritchie63},
 obtained before the emergence of complexity theory.)
Further, R\"odding and Schwichtenberg showed that 
sets of integers recognisable 
using larger space bounds are higher order spectra.
C.\ Christen developed this line further
\cite{phd:Christen74,proc:Christen76},
independently obtaining a number of the following results.

At the same time the spectrum problem
gained renewed interest  in the USA.
In 1972 A. Selman and N. Jones found
an exact solution to Scholz's original question
\cite{proc:JonesS72}: a set of integers is a 
first order spectrum if and only if it is
 recognizable by a non-deterministic Turing machine
in time $O(2^{c \cdot n})$.

This result was independently also obtained by R. Fagin in his thesis
\cite{phd:Fagin73},
which contains an abundance of further results.
Most importantly, R. Fagin studies {\em generalized spectra},
which are the projective classes of Tarski, restricted
to finite structures, and really laid the foundations
for Finite Model Theory and Descriptive Complexity,
as can be seen in the monographs 
\cite{bk:Immerman99,bk:EbbinghausF95,bk:Libkin04}.
We shall discuss the complexity theoretic
approach in detail in Section \ref{complexity}.
\item[Images and preimages of spectra]
From Proposition \ref{pr:easyclosures} it follows that,
if $S$ is a first order spectrum and $p$ is a polynomial
with positive coefficents, then $p(S)=\{p(m): m \in S\}$
is also a spectrum. In J. Bennett's thesis
it is essentially proved that there is a first order spectrum $S$
and an integer $k$
such that $\{n : 2^{n^k} \in S\}$ is not a spectrum.
It is natural to ask what happens to a spectrum
under images and preimages of number theoretic functions.
The general line of this type of results states that certain
images or preimages of spectra of specific forms
of sentences are or are not spectra of other specific forms
of sentences.
%
\item[Spectra of syntactically restricted sentences]
Already in a paper by L. L\"owenheim  from 1915
\cite{ar:Loewenheim15}
it is noted that, what later will be called the spectrum
of a sentence in monadic second order logic ($\Msol$)
with unary relation symbols only, is finite or cofinite.
The set of even numbers is the spectrum 
of an $\Msol$ sentence 
with one binary relation symbol, and it is ultimately periodic.
Further, {\em every}  ultimately periodic set of positive integers
is a spectrum of a first order $\Msol$ sentence with one unary function
symbol. 
\raus{If we allow one binary relation symbol in first order logic,
one can see that the even numbers form a spectrum that is
ultimately periodic.
On the other hand, every ultimately periodic set of positive integers
is a spectrum of a first order sentence of one unary function
symbol.}
Over the last fifty years various papers were written
relating restrictions on the use of relation and function symbols,
or other syntactic restrictions,
to special forms of spectra.
R. Fagin, in his thesis, poses the following problem

\begin{openquestion}[Fagin's Problem for binary relations]
\label{Fagin-binary}
Is every first order
spectrum the spectrum of a first order sentence of
one binary relation symbol?
\end{openquestion}
The question is even open, if 
restricted to
any fixed vocabulary that contains at least one
binary relation symbol or two unary function symbols.

Much of this line of research is motivated by attempts to
solve Fagin's problem. 
%
\item[Transfer theorems]
Another way of studying spectra
is given by the following result, again from Fagin's thesis:
If $S$ is a spectrum of a
purely relational sentence where all the predicate symbols have arity
bounded by $k$, then $S^k =\{ m^k : m \in S\}$
is a spectrum of a sentence with one binary relation symbol only,
or even a spectrum on simple graphs.
One can view this an approach combining the
study of images and preimages of spectra with 
either syntactically or semantically restricted spectra.
Over the years quite a few results along this line were published.
We shall discuss the last three approaches under
the common theme of restrictions on vocabularies 
in detail in Section \ref{vocabularies}.
\item[Spectra of semantically restricted classes]
R. Fagin shows that Asser's problem has a positive answer
if and only if it has a positive answer if restricted to
the class  of simple graphs.
Similarly, in order to understand Fagin's problem better, one
could consider restricted graph classes $K$, 
and study first order spectra restricted to
graphs in $K$. One may think of graphs of
bounded degree, planar graphs, trees,
graphs of tree-width at most $k$, etc.

\begin{openquestion}[Fagin's Problem for simple graphs]
\label{Fagin-simple}
Is every first order
spectrum the spectrum of a first order sentence 
over simple graphs?
\end{openquestion}

\begin{openquestion}
\label{Fagin-planar}
Is every first order
spectrum the spectrum of a first order sentence 
over planar graphs?
\end{openquestion}
For restrictions to graph classes of bounded tree-width,
the answer is negative. 
The reason for this is that spectra of graphs of bounded tree-width
are ultimately periodic.
In fact, this holds for a much wider class of spectra.
E. Fischer and J.A. Makowsky, \cite{ar:FischerM04},
have analyzed 
under what conditions
$\Msol$-spectra are ultimately periodic.
We shall discuss their results in detail in Section \ref{semantic}.

This line of thought has not been extensively  explored,
this may well be a fruitful avenue for 
studying spectra in the future.
\end{description}
In the sequel of this survey we shall summarize what is known
about spectra along these themes.
Various solutions to
Scholz's Problem  were  offered in the literature,
varying with the tastes of the times, but there may be still
more to come.
Asser's and Fagin's Problems are still open.
Both problems are intimately related to our understanding
of definability hierarchies in Descriptive Complexity Theory.
They may well serve as benchmarks of our understanding.

\section{Understanding Spectra: counting functions and number theory}
\label{se:counting}
In this section we formulate various ways to test our understanding
of spectra. It will turn out that there still many questions
we do not know how to answer.

\ 
\subsection{Representation of spectra and counting functions}
Spectra are sets of positive natural numbers.
These sets can be represented in various ways. We shall use
the following:

\begin{definition}
Let $M \subseteq \N^+$, and let
$m_1, m_2, \ldots  $ an enumeration of $M$ ordered
by the size of its elements.
\begin{enumerate}
\item
$\chi_M(n)$ is the characteristic function of $M$, i.e.,
$$
\chi_M(n) =
\begin{cases}
1 & \mbox{ if } n \in M \\
0 & \mbox{ else }.
\end{cases}
$$
\item
$\eta_M(n)$ is the enumeration function of $M$, i.e.,
$$
\eta_M(n) =
\begin{cases}
m_n & \mbox{ if it exists }\\
0 & \mbox{ else }.
\end{cases}
$$
\item
$\gamma_M(n)$ is the counting function of $M$, i.e.,
$\gamma_M(n)$ is the number of elements in $M$
that are strictly smaller than $n$.
\item
A {\em gap} of $M$ is a pair of integers $g_1, g_2$ such that
$g_1, g_2 \in M$ but for each $n$ with $g_1 < n < g_2$ we have that
$n \not\in M$.
Now let $\delta_M(n)$ be the length of the $n$th gap of $M$.
Clearly, $\delta_M(n)= \eta_M(n+1)-\eta_M(n)$.
\end{enumerate}
\end{definition}

Obvious questions are of the following type:
\begin{openquestion}
\label{counting}
Which strictly increasing sequences of positive integers,
are enumerating functions of spectra?
For instance, how fast can they grow?
\end{openquestion}
\begin{openquestion}
\label{gaps}
If $M$ is a spectrum how can $\delta_M(n)$ behave?
\end{openquestion}
Coding runs of Turing machines one can easily obtain
the following.
\begin{proposition}
\label{pr:gaps}
For every recursive monotonically increasing function $f$
there is a first order formula $\phi$ such that $\delta_{\phi}(n)=f(n)$.
\end{proposition}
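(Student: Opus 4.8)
The plan is to build, for a given recursive monotonically increasing function $f$, a first order sentence $\phi$ whose models of size $m$ encode halting computations of a Turing machine computing $f$, arranged so that the ``allowed'' cardinalities are exactly the values taken by a suitable enumeration-type function whose successive differences are $f(n)$. Concretely, one fixes a Turing machine $M_f$ that, on input $n$ (in unary or binary), runs for exactly some time $t(n)$ and outputs $f(n)$; by padding we may assume $t$ is itself nicely behaved. The key point is that the spectrum problem has a known complexity-theoretic characterization (Jones--Selman / Fagin, discussed later in the survey): every set recognizable by a nondeterministic Turing machine in time $2^{O(n)}$ is a first order spectrum, and more relevantly the standard coding lets a first order sentence of vocabulary $\tau$ say ``the universe has size $m$ and there is a halting run of $M$ of length $\le m$ (or $\le 2^m$, depending on how much room we give ourselves) producing such-and-such output.''

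First I would set up the enumeration function we actually want: define $\eta(0)$ to be some fixed small value, and $\eta(n+1) = \eta(n) + f(n)$, so that $\delta(n) = \eta(n+1)-\eta(n) = f(n)$ as required by the statement (matching the identity $\delta_M(n) = \eta_M(n+1)-\eta_M(n)$ recorded in the Definition). The target spectrum is $M = \{\eta(n) : n \in \N\}$. Next I would encode, in a first order sentence $\phi$ over a universe of size $m$, the assertion: ``there exists $n$ and a correct, halting computation of the machine that successively computes $\eta(0), \eta(1), \ldots, \eta(n)$, verifying at the end that the running total equals $m$.'' The computation is laid out on a grid of cells indexed by pairs (or triples) of universe elements, exactly as in the classical proof that f-satisfiable sentences capture $\NTime(2^{O(n)})$; the only new ingredient is that the quantity being checked against $|U| = m$ is the accumulated sum $\eta(n)$ rather than a single function value. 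Since $f$ (hence $\eta$) is recursive, the machine exists; since $\eta$ is monotonically increasing in $n$ and the total grows without bound, each $m \in M$ arises from exactly one $n$, and no $m \notin M$ admits such a certificate.

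The main obstacle is the usual bookkeeping of the resource bound: a first order sentence over a universe of size $m$ can describe computations running in time polynomial in $m$ (or, using tuples of bounded arity, time $m^k$, and with a linear order plus arithmetic, up to $2^{cm}$), so one must ensure that computing $\eta(n)$ and checking the running total against $m$ fits within the available space/time when $m = \eta(n)$. Because $\eta(n) = f(0) + \cdots + f(n-1) \ge n$ (as $f$ takes positive integer values and is monotone), we have $m \ge n$, so there is at least linear room in $m$ to index the stages $0, \ldots, n$; the delicate part is that computing $f(j)$ for $j \le n$ may itself take far more than $\mathrm{poly}(m)$ steps if $f$ is an arbitrary recursive function. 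The standard fix is to \emph{build the time budget into $\eta$}: replace $f$ by a padded version so that $\eta(n)$ also dominates the total running time of the machine through stage $n$; equivalently, one arranges the universe to also host the entire computation transcript, and one proves that this does not change the gap lengths because the padding is absorbed into how $\eta$ is defined while $\delta_M(n)$ is still read off as $\eta(n+1)-\eta(n)$. Once this accounting is arranged, writing down $\phi$ is the routine ``coding runs of Turing machines'' alluded to in the statement, and verifying $\spec{\phi} = M$ and hence $\delta_\phi(n) = f(n)$ is immediate. I would remark that function symbols in $\tau$ make the encoding of arithmetic on the universe cleaner, which is permitted here since the survey allows function symbols ``when dealing with sentences of special forms,'' and the size of a spectrum is unaffected.
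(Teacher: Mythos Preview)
Your overall approach---construct $M=\{\eta(n)\}$ with $\eta(n+1)-\eta(n)=f(n)$ and then express ``$m\in M$'' by coding a Turing-machine transcript inside a size-$m$ structure---is exactly what the paper's one-line hint (``coding runs of Turing machines'') points to, and you have correctly isolated the only nontrivial issue: a first-order sentence over a fixed vocabulary can describe transcripts of length at most $m^{k}$ for some fixed $k$, whereas computing $f(0),\dots,f(n-1)$ for an arbitrary recursive $f$ may take time far exceeding any polynomial in $m=\eta(n)$.

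The gap is in your repair. ``Replacing $f$ by a padded version so that $\eta(n)$ also dominates the total running time'' changes the function whose values are the gaps: if $f'$ is the padded function and $M'$ the resulting spectrum, then $\delta_{M'}(n)=f'(n)$, not $f(n)$, so you have proved the statement for $f'$ rather than for the given $f$. The sentence ``the padding is absorbed into how $\eta$ is defined while $\delta_M(n)$ is still read off as $\eta(n+1)-\eta(n)$'' is circular, since $\eta$ is \emph{determined} by $f$ via $\eta(n+1)-\eta(n)=f(n)$; you cannot alter $\eta$ without altering $f$. Nor does the one free parameter $m_1=\eta(0)$ help: it is a constant, while the running time to compute $f(0),\dots,f(n-1)$ may grow unboundedly with $n$ (take, e.g., $f(n)=2n+h(n)$ with $h\in\{0,1\}$ recursive but requiring Ackermann-type time; then $m=\eta(n)=\Theta(n^{2})$ while verifying $m\in M$ apparently needs time $\gg m^{c}$ for every $c$). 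The paper's sketch does not address this point either, so what you have found is a genuine subtlety; to close it you would need either an extra hypothesis on $f$ (for instance, that the cumulative running time through stage $n$ is polynomial in $\eta(n)$) or a different mechanism that avoids recomputing $f$ inside the model.
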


Various other partial answers to these questions will appear throughout our
narrative.

\subsection{Prime numbers}
An obvious question is whether the primes form a spectrum.
If one gets more ambitious one can ask for special sets of primes
such as Fermat primes (of the form $2^{2^n}+1$), 
Mersenne primes (of the form $2^p-1$ with $p$ a prime), or
the set of primes $p$ such that $p+2$ is also a prime (twin primes).
Even if we do not know whether such a set is finite,
which is the case for twin primes,
it may still be possible to prove that it is a spectrum.
The answer to all these question is yes, 
because all these sets are easily proved to be rudimentary, see Section \ref{recursion}.

In the sense of the above definitions
we have $\chi_{primes}$ is the charactersitic function
of the set of primes,
$\eta_{primes}(n)= p_n$,  and $\gamma_{primes}(n)$
is the counting function of the primes, usually denoted by $\pi(n)$.
$\delta_{primes}(n)$ is usually denoted by $d_n$.
All these functions related to primes are
subject to intensive study in the literature, see eg. \cite{bk:Ribenboim89}.
As we have said that the primes form a first order spectrum,
all the features of these functions observed on primes
do occur on spectra.

For instance,
$\pi(n)$
is approximated by the integral logarithm $li(n)$,
and it was shown by J.E. Littlewood in 1914, cf. \cite{bk:Ribenboim89}
that $\pi(n) - li(n)$  changes sign infinitely many often.
For logical aspects of Littlewood's theorem, see
\cite{ar:Kreisel52}.

Let us define 
\begin{gather}
\pi^+ =\{ n : \pi(n) - li(n) > 0 \}
\notag \\
\pi^- =\{ n : \pi(n) - li(n) \geq 0 \}
\notag
\end{gather}
A less obvious question concerning spectra and primes is
\begin{openquestion}
\label{littlewood}
Are the sets
$\pi^+$ and $\pi^-$ spectra?
\end{openquestion}

\ 
\subsection{Density functions}
Many combinatorial functions are defined by 
linear or polynomial recurrence relations. 
Among them we have the powers of $2$, factorials, 
the Fibonacci numbers, Bernoulli numbers,
Lucas numbers, Stirling numbers and many more,
cf. \cite{bk:GrahamKP89}.
\begin{question}
Are the sets of values of these combinatorial functions
first order spectra? 
\end{question}
The answer will be yes in all of these cases.
We shall sketch a proof in Section \ref{subsection bennett}
that is based on the existence of such recurrence relations.

But these functions also allow combinatorial interpretations
as counting functions: The powers of $2$ count subsets,
the factorials count linear orderings, the Stirling numbers
are related to counting equivalence relations.
We shall see below that
in these three examples
the combinatorial definitions allow
us to give alternative proofs that these sets of
numbers are first order spectra.

The spectrum of a sentence $\phi$ witnesses the existence
of models of $\phi$ of corresponding cardinalities.
Instead, one could also ask for the number
of ways the set $\{0,1, \ldots, n-1\}=[n]$
can be made into a model of  $\phi$.
Alternatively one could count models up to isomorphisms
or up to some other equivalence relation.

Combinatorial counting functions come in different flavours;
\begin{definition}
\label{def-counting}
Let $\mathcal{C}$ be a class of finite $\tau$-structures.
With $\mathcal{C}$ we associate the following counting functions:
\begin{renumerate}
\item
$f_{\mathcal{C}}(n)$  is the number of ways one can interpret the
relation symbols of $\tau$ on the universe $[n]$ such that 
the resulting structure is in $\mathcal{C}$.
This corresponds to counting labeled structures.
\item
Let $Str(\tau)(n)$ denote the number of labeled $\tau$-structures
of size $n$. We put
$$
{\mathit prob}_{\mathcal{C}}(n) = 
\frac{f_{\mathcal{C}}(n)}{Str(\tau)(n)}
$$
which can be interpreted as the probability that
a labeled $\tau$-structure of size $n$
is in $\mathcal{C}$.
\item
$f_{\mathcal{C}}^{iso}(n)$ is the number of non-isomorphic
models in $\mathcal{C}$ of size $n$.
\item
For an equivalence relation $E$ on $\mathcal{C}$ we denote by
$f_{\mathcal{C}}^{E}(n)$ the number of non-E--equivalent
models in $\mathcal{C}$ of size $n$.
\item
\label{Ash-f}
If $E$ is the $k$-equivalence from Ehrenfeucht-Fra\"{\i}ss\'e
games, $f_{\mathcal{C}}^E(n)$ is denoted by
$N_{\mathcal{C},k}(n)$, and is called an Ash-function,
cf. \cite{ar:Ash94} and Section \ref{Ash}.
\item
If $\mathcal{C}$ consists of all the finite models
of a sentence $\phi$ we write 
$f_{\phi}^E(n)$  instead of
$f_{\mathcal{C}}^E(n)$. 
Similarly for ${\mathit prob}_{\phi}(n)$.
\end{renumerate}
\end{definition}

Counting labeled and non-labeled structures has a rich literature,
cf. \cite{bk:HararyP73,bk:Wilf90}.
Note that
counting non-labeled non-isomorphic structures is in general
much harder than the labeled case. 
The first connection between counting labeled structures
and logic is the celebrated 0-1 Law for first order logic:

\begin{theorem}[0-1 Laws]
\label{01 laws}
For every first order sentence $\phi$ over a purely relational
vocabulary $\tau$ we have:
\begin{renumerate}
\item
(Y. Glebskii, D. Kogan, M. Liogonki and V. Talanov \cite{ar:GlebskijKLT69}; 
R. Fagin \cite{phd:Fagin73})
$$
\lim_{n \rightarrow \infty} {\mathit prob}_{\phi}(n) =
\begin{cases}
0 & \\
1 &
\end{cases}
$$
and the limit always exists.
\item
(E. Grandjean \cite{ar:Grandjean83a})
Furthermore, the set of sentences $\phi$ such that
$\lim_{n \rightarrow \infty} {\mathit prob}_{\phi}(n) =1$
is decidable, and in fact $\bPSpace$-complete. 
\end{renumerate}
\end{theorem}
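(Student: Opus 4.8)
\textbf{Proof proposal for the 0-1 Law (Theorem \ref{01 laws}).}

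The plan is to prove part (i) first, via the standard route through the countable random structure, and then bootstrap part (ii) from the syntactic analysis that (i) produces. For part (i), I would fix the purely relational vocabulary $\tau$ and, for each finite set of atomic configurations, introduce the \emph{extension axioms} $\sigma_{\bar a, \bar b, k}$: for every pair of disjoint tuples from any $(k{-}1)$-element substructure, and every consistent way of specifying the atomic type of one more point over that substructure, there exists such a point. First I would show that each extension axiom has limiting probability $1$: for a fixed substructure of size $k-1$, the probability that a \emph{given} new element fails to realize a prescribed type is bounded by a constant $c<1$ independent of $n$ (it is $1 - 2^{-r}$ for $r$ the number of atomic facts to be fixed), the failures over the $n-k+1$ candidate new elements are independent, so the probability that \emph{no} witness exists is at most $c^{n-k+1}\to 0$; summing over the polynomially many choices of substructure still gives $o(1)$. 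Then I would invoke the fact (a classical Ehrenfeucht--Fra\"\i{}ss\'e argument, essentially $\omega$-categoricity of the Fra\"\i{}ss\'e limit of the class of all finite $\tau$-structures) that the deductive closure $T$ of all extension axioms is complete: any two models of $T$ satisfy the same first-order sentences, because Duplicator wins the $k$-round EF game on any two models of the relevant finitely many extension axioms. Hence for every sentence $\phi$, either $T\vdash\phi$ or $T\vdash\neg\phi$; in the first case $\mathrm{prob}_\phi(n)\geq\mathrm{prob}_{\text{(finitely many extension axioms implying }\phi)}(n)\to 1$, and symmetrically in the second case the limit is $0$. This gives existence of the limit and its $\{0,1\}$ value simultaneously.

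For part (ii), the key observation is that the above argument is \emph{effective}: $\phi$ has limiting probability $1$ if and only if $\phi$ is a logical consequence of some finite conjunction of extension axioms, equivalently $\{\phi\}\cup\{\text{all extension axioms}\}$ is consistent, equivalently $\phi$ holds in \emph{the} (up to elementary equivalence unique) model of $T$. So the decision problem is: does $\phi$ hold in the random structure? I would represent a sufficiently deep but finite fragment of $T$ by its $k$-Hintikka sentences and reduce ``$\phi$ follows from $T$'' to a quantified game-theoretic / model-checking question on a finite object whose size is exponential in $|\phi|$ but which can be explored in polynomial space. Concretely, one decides $T\models\phi$ by an alternating search bounded by the quantifier rank of $\phi$: at each quantifier one guesses (or universally ranges over) an atomic type of the next element over the current finite configuration, using the extension axioms to guarantee such an element exists in the random structure; since the current configuration never needs more than quantifier-rank-of-$\phi$ many named elements, it is describable in polynomial space, and alternating polynomial space equals $\bPSpace$. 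This yields the $\bPSpace$ upper bound. For $\bPSpace$-hardness I would reduce from quantified Boolean formulas (QBF): encode a QBF instance as a first-order sentence over a vocabulary with enough relation symbols to ``address'' truth assignments inside the random structure, so that the sentence holds almost surely iff the QBF is true; this is where one must be careful that the first-order sentence genuinely captures the alternation of the QBF rather than being trivially determined.

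The step I expect to be the main obstacle is not part (i) — that is a well-worn argument — but getting the complexity bounds in part (ii) \emph{tight}. The upper bound requires arguing that the relevant fragment of the theory of the random structure is $\bPSpace$-recognizable, which means being disciplined about representing Hintikka types succinctly and running the alternating quantifier elimination without blowup; the lower bound requires an honest QBF encoding, and the delicate point there is forcing the first-order evaluation on the random structure to simulate alternating quantifiers over Boolean values, which typically needs a clever coding of bit-strings as tuples together with extension axioms that make the needed ``next assignment'' elements almost surely present. I would therefore budget most of the effort for the $\bPSpace$-completeness bookkeeping in (ii), treating (i) as a lemma proved en route.
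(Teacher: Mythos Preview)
The paper is a survey and does not supply its own proof of this theorem: it states the 0--1 law with attributions to Glebski\u{\i}--Kogan--Liogon'ki\u{\i}--Talanov and Fagin for part (i) and to Grandjean for part (ii), and moves on. So there is no in-paper argument to compare against; the relevant comparison is with the cited literature.

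Your proposal is essentially the standard route taken in those references. For (i), the extension-axiom argument (each extension axiom has asymptotic probability $1$; the theory $T$ of all extension axioms is complete by an EF/$\omega$-categoricity argument; hence every $\phi$ or $\neg\phi$ follows from finitely many of them) is exactly Fagin's proof. For (ii), your upper bound---evaluating $\phi$ on the random structure by an alternating procedure that maintains only the atomic type of at most quantifier-rank-many elements, using extension axioms to justify that any requested type is realizable---is Grandjean's argument, and the QBF reduction is the right shape for hardness. One small sharpening: for the $\bPSpace$ upper bound you do not need to materialize Hintikka sentences or any exponential-size object at all; it suffices to maintain the current partial atomic diagram (polynomial in $|\phi|$) and branch on quantifiers, so the argument is cleaner than your sketch suggests. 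Your instinct that the hardness reduction is where the real work lies is correct.
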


What we are interested in here, is the relationship of 
such counting functions to spectra.
Our example of powers of $2$ shows that
$$
2^n 
= f_{\phi}(n)
= \eta_{\psi}(n) 
$$
where $\phi$ is an always-true first order sentence with one unary relation symbol,
and $\psi$ is the conjunction of the axioms of Boolean algebras.
Similarly,
$$
n!
= f_{\phi_{LIN}}(n)
= \eta_{\psi}(n) 
$$
where $\phi_{LIN}$ are the axioms of linear orders,
and $\psi$ describes the following situation:
\begin{renumerate}
\item
$P$ is a unary relation and $R$ is an linear order on $P$.
\item
$E$ is a ternary relation that is a bijection between
the universe (first argument $x$) and all the linear
orderings on $P$ (remaining two arguments $y,z$).
\item
First 
we say that there is an $x$ that corresponds to $R$;
and that for $x \neq x'$ the orderings are different.
This says that $E$ is injective.
To ensure that we get all the orderings on $P$
we say that for every ordering and every transposition
of two elements in this ordering, there is a corresponding
ordering.
\end{renumerate}
Hence, the size of the model of $\psi$ is the number of
linear orderings on $P$.

Clearly, if 
$f_{\phi}$
is not strictly increasing, there is no $\psi$
with $f_{\phi}(n) = \eta_{\psi}(n)$.
For instance,
for $\phi$ which says that some function is a bijection of 
a part of the universe to its complement, we have
$$
f_{\phi}(n) = 
\begin{cases}
{2m \choose m} \cdot m! & \mbox{ if } n =2m
\\
0 & \mbox{ else }
\end{cases}
$$
\begin{openquestion}
\label{lcounting}
Let $\phi$ a first order sentence, and 
$f_{\phi}$
be the associated
labeled counting function that is monotonically increasing.
Is there a first order sentence $\psi$ such that
for all $n$
$$
f_{\phi}(n) = \eta_{\psi}(n)
$$
\end{openquestion}
The converse question seems more complicated. For instance,
as we have noted before, the primes $p_n$
are of the form $\eta_{\psi}$  for some first order $\psi$,
but we are not aware of any labeled counting function
that will produce the primes.

R. Fagin \cite{proc:Fagin74} calls
$\phi$ {\em categorical} if $f_{\phi}^{iso}(n) \leq 1$
for every $n$. For instance, $\phi_{LIN}$ is categorical.
The counting function up to isomorphisms 
can be bounded by any finite number $m$, 
using disjunctions of different categorical sentences.
So it may be less promising to study for which first order
sentences
$\phi$  there is a $\psi$ such that
$f_{\phi}^{iso}(n) = \eta_{\psi}(n)$, or vice versa.

Surprisingly enough, C. Ash \cite{ar:Ash94}
has found a connection between Asser's Problem
and the behaviour of the Ash functions defined in 
Definition \ref{def-counting}(\ref{Ash-f}).
We shall discuss this in Section \ref{Ash}.

\ 
\subsection{Sentences with prescribed spectra}~\label{prescribed spectra}
In the light of Theorem~\ref{01 laws}
we note that if ${\mathit prob}_{\phi}(n)$
tends to $1$ then $spec(\phi)$ is cofinite.
Obviously, the converse does not hold,
because there are categorical sentences with
models in all finite cardinalities.

Trakhtenbrot's Theorem says that it is undecidable whether
a spectrum is empty, and Grandjean's Theorem says that
it is decidable, whether a sentence  is almost always true,
i.e.  ${\mathit prob}_{\phi}(n)$ tends to $1$.
As a partial answer to Questions \ref{q1} and \ref{q2} we have:
\begin{proposition}
Let $\phi$ be a first order sentence.
The following are undecidable:
\begin{renumerate}
\item
$\spec{\phi}$ is finite, cofinite.
\item
$\spec{\phi}$ is ultimately periodic.
\item
$\spec{\phi}$ is,
for given $a,b \in \N$ of the form $a+ b\N$.
\item
$\spec{\phi}=S$  for a given set $S \subseteq \N^+$.
\end{renumerate}
\end{proposition}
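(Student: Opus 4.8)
The plan is to derive all four undecidability claims by effective reduction from known undecidable sets, using two gadgets: the first reduces from Trakhtenbrot's Theorem in the form ``$\{\phi:\spec{\phi}=\emptyset\}$ is not recursive'', the second codes Turing-machine computations into spectra, in the spirit of the remark preceding Proposition~\ref{pr:gaps} and of the characterizations of Section~\ref{complexity}.

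\emph{Gadget 1: relativization.} For a relational sentence $\phi$ over $\tau$, let $\phi^{P}$ be its relativization to a fresh unary predicate $P$, so that a structure satisfies $\phi^{P}$ iff the set named by $P$ induces a $\tau$-model of $\phi$. Then $\spec{\phi^{P}}=\{\,n:\spec{\phi}\cap[1,n]\neq\emptyset\,\}$, which is $\emptyset$ if $\spec{\phi}=\emptyset$ and the cofinite set $[m_0,\infty)$ with $m_0=\min\spec{\phi}$ otherwise. Hence $\phi\mapsto\phi^{P}$ many-one reduces $\{\phi:\spec{\phi}=\emptyset\}$ to $\{\psi:\spec{\psi}\text{ is finite}\}$, and reduces its complement to $\{\psi:\spec{\psi}\text{ is cofinite}\}$; this gives~(i). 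If $S=\spec{\sigma}$ is a spectrum that is \emph{not} cofinite, then over disjoint vocabularies $\sigma\vee\phi^{P}$ has spectrum $S$ when $\phi$ is f-unsatisfiable and the strictly larger set $S\cup[m_0,\infty)$ otherwise; a non-cofinite $S$ cannot contain a cofinite tail, so this reduces $\{\phi:\spec{\phi}=\emptyset\}$ to $\{\psi:\spec{\psi}=S\}$. Taking $S=a+b\N$ with $b\geq 2$ gives those instances of~(iii), and taking $S$ finite (after adjusting the bounded part of the spectrum by a suitable disjunct) or any other non-cofinite spectrum gives the corresponding instances of~(iv).

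\emph{Gadget 2: coding computations.} From a Turing machine $M$ one builds, by the usual first-order encoding of space--time diagrams, a sentence $\psi_M$ whose models of size $n$ encode the run of $M$ on the empty input for $g(n)$ steps and assert that no halting configuration has occurred, where $g$ is a fixed unbounded recursive function small enough to fit (e.g.\ $g(n)=\lfloor\sqrt{n}\rfloor$, or, once the size is forced to be a power of $2$, $g$ reading off the exponent). If $M$ does not halt, $\spec{\psi_M}$ is infinite and ``structured''; if $M$ halts after $t$ steps, $\spec{\psi_M}$ is finite. Conjoining easily expressible side conditions on the size tunes the loop-case spectrum: with none one gets $[c,\infty)$, and padding by a fixed finite disjunct yields exactly $\N^{+}$, settling~(iv) for $S=\N^{+}$ (equivalently Question~\ref{q1}) and re-proving~(i) and the ``finite or cofinite'' version; with ``size $\equiv a\pmod{b}$ and $\geq a$'' one gets $a+b\N$, settling~(iii) for all $a,b$; with ``size a power of $2$'' one gets $\{2^{k}:k\geq c\}$, which is not ultimately periodic, settling~(ii); and for a cofinite spectrum $S=\spec{\sigma}$, the sentence $\sigma\wedge(\psi_M\vee\text{``size}<c\text{''})$ has spectrum $S$ if $M$ loops and a finite proper subset of $S$ if $M$ halts, completing~(iv). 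In each case the target property of $\spec{\psi}$ holds exactly when $M$ halts, or exactly when it does not, so the decision problem is non-recursive. (One must assume the given $S\subseteq\N^{+}$ is itself a spectrum, since otherwise $\spec{\phi}=S$ is trivially never true.)

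The hard part is Gadget~2. The encoding must be \emph{rigid} --- every model of size $n$ really codes the length-$g(n)$ prefix of the unique computation of $M$, so that ``no halting configuration appears'' is equivalent to ``$M$ has not halted within $g(n)$ steps'' --- and it must be \emph{paddable} to each cardinality of the intended set, not merely to the arithmetically convenient ones. It is precisely this control over the exact shape of the spectrum (hitting $\N^{+}$, or $a+b\N$, or $\{2^{k}\}$ on the nose, and refuting ultimate periodicity) that forces genuine computation coding rather than the cheap Gadget~1, since f-satisfiability of $\phi$ can only ever be used to adjoin a tail $[m_0,\infty)$ whose starting point $m_0$ one does not control.
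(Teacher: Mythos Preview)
Your approach is correct and covers all the cases, but it differs from the paper's route in an interesting way. The paper reduces \emph{everything} from Trakhtenbrot's theorem using a single device: it equips the structure with first-order definable arithmetic $(+,\times,\mathit{Bit})$ and then, inside a structure of size $N$, internally checks whether the given $\varphi$ has a model of size $n$ by encoding such a model as an integer $y<2^{n^2}$ and evaluating the relativized formula $\varphi'(n,y)$. With this one trick it manufactures sentences $\psi_1,\dots,\psi_5$ whose spectra are, respectively, empty vs.\ infinite, $\N^+$ vs.\ missing $\{2^{n^2}:n\in\spec\varphi\}$, the target set $\{f(i)\}$ vs.\ a proper subset, $\N^+$ vs.\ co-infinite, and empty vs.\ $\{2^{n_0^2}\cdot m^2:m\in\N^+\}$ --- the last giving non-periodicity via squares rather than your powers of~$2$.

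Your two-gadget decomposition trades this uniformity for modularity: Gadget~1 (relativization) is strictly more elementary than anything in the paper's proof and dispatches every case where the ``bad'' outcome forces a cofinite tail into the spectrum, while Gadget~2 reduces from halting rather than from f-satisfiability. Conceptually, your Gadget~2 and the paper's arithmetic-coding device are cousins --- both embed a computation into the structure --- but the paper computes ``does $\varphi$ have a small model?'' whereas you compute ``has $M$ halted?''. One concrete payoff of your case split is item~(iv): by treating cofinite and non-cofinite $S$ separately you cover \emph{every} spectrum $S$, whereas the paper's sketch explicitly writes out only the rudimentary-range case $S=\{f(i):i\in\N^+\}$. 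A minor point: in Gadget~1 you should conjoin $\exists x\,P(x)$ to $\phi^P$ to avoid the empty-$P$ edge case, and your ``rigidity'' requirement in Gadget~2 is stronger than needed --- mere existence/non-existence of models of the right sizes suffices.
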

\begin{proof}[Sketch of Proof]
\raus{
\marginpar{
Malika check the proof!

We also distinguished two case
in some margin notes of version 11.

A: Fix $X \subset P(N)$
\\
is $spec(\phi) \in X$?

B: Fix $S \subseteq N$.
Given $\phi$ is $spec(\phi)=S$?
}
}

Let $\varphi\in\FO$. 
We describe the construction of $\FO$-sentences $\psi_1$, $\psi_2$, $\psi_3$, $\psi_4$ and $\psi_5$ such that $\spec{\varphi}=\emptyset$ if and only if:
\begin{itemize}
\item[-] $\spec{\psi_1}$ is finite.
\item[-] $\spec{\psi_2}=\N^+$.
\item[-] More generally, $\spec{\psi_3}=\{f(i)\ |\ i\in\N^+\}$ for a given function $f$ such that $f(i)\geq i$ for all $i$ and the graph $n=f(i)$ seen as a binary relation is rudimentary (see Section \ref{rudimentary} for a precise definition).
\item[-] $\spec{\psi_4}$ is cofinite.
\item[-] $\spec{\psi_5}$ is ultimately periodic.
\end{itemize}
Since the problem of emptiness of spectra is undecidable, the announced result follows.

Let $0$ and $\max$ be two constant symbols, let $\leq$ be a binary predicate symbol and let $+$ and $\times$ be two ternary predicate symbols. Let $Arithm(0,\max,\leq,+,\times)$ denote a first-order sentence axiomatizing the usual arithmetic predicates. Our sentences $\psi_i$ ($i=1,\ldots,3$) consist of the conjunction of $Arithm(0,\max,\leq,+,\times)$ with a specific part $\psi'_i$ that we will describe below. 
We shall use the fact that the $Bit$ predicate\footnote{$Bit(a,b)$ is true iff the bit of rank $b$ of $a$ is $1$.} is definable from $+$ and $\times$ in finite structures, as well as the ternary relation $a=b^c$.  
For simplicity, we will assume w.l.o.g. that the signature of $\varphi$ consists of a binary relation $R$ only. Let $n$ and $y$ be new variable symbols. Let $\varphi'(n,y)$ be the formula obtained from $\varphi$ by replacing every quantification $\forall x$ by $\forall x<n$ and $\exists x$ by $\exists x<n$, and every atomic formula $R(x,x')$ by $Bit(y,x+nx')$. The idea is that a graph $R$ on a set of $n$ elements seen as $\{0,\ldots,n-1\}$ is encoded by the number $y<2^{n^2}$ written in binary with a $1$ in position $a+bn$ if and only if $R(a,b)$ holds. Hence for all $n\in\N^+$, we have $\exists y<2^{n^2}\varphi'(n,y)$ if and only if $\varphi$ has a model with $n$ elements.

\begin{itemize}
\item[-] Let $\psi'_1\equiv \exists m,n,y<\max (\max=3^m\times2^{n^2}\wedge y<2^{n^2}\wedge \varphi'(n,y))$.

It is easy to verify that if $\spec{\varphi}=\emptyset$, then $\spec{\psi_1}$ is also empty (hence finite), and conversely, if $\spec{\varphi}\neq\emptyset$, then $\spec{\psi_1}$ contains all the integers of the form $3^m\times2^{n^2}$ for some $m\in\N^+$ and $n\in\spec{\varphi}$, i.e. $\spec{\psi_1}$ is infinite.

\item[-] Let $\psi'_2\equiv(\forall n<\max\ \max\neq2^{n^2})\vee (\exists n<\max (\max=2^{n^2}\wedge \forall y<2^{n^2} \neg\varphi'(n,y)))$.

If $\spec{\varphi}=\emptyset$, then for all $n$ and $y$, the condition $\exists y<2^{n^2}\varphi'(n,y)$ is false, hence $\spec{\psi_2}=\N^+$. Conversely, if $\spec{\varphi}\neq\emptyset$, then the integers of the form $2^{n^2}$ with $n\in\spec{\varphi}$ are not in $\spec{\psi_2}$.

\item[-] Since the binary relation $y=f(x)$ is rudimentary, it is definable from $+$ and $\times$ in finite structures.
Let $\psi'_3\equiv \exists i<\max(\max=f(i)\wedge((\forall n<i\ i\neq2^{n^2})\vee (\exists n<i (i=2^{n^2}\wedge \forall y<2^{n^2} \neg\varphi'(n,y)))))$.

The verification that $\spec{\varphi}=\emptyset$ if and only if $\spec{\psi_3}=\{f(i)\ |\ i\in\N^+\}$ is similar to the previous case.

\item[-] Let $\psi'_4\equiv\forall n<\max(2^{n^2}\leq\max\longrightarrow\forall y<2^{n^2}\neg\varphi'(n,y))$.

If $\spec{\varphi}=\emptyset$, then for all $n$ and $y$, the condition $\exists y<2^{n^2}\varphi'(n,y)$ is false, hence $\spec{\psi_4}=\N^+$ (hence is cofinite). Conversely, if $\spec{\varphi}\neq\emptyset$, then the integers greater than $2^{n^2}$ with $n\in\spec{\varphi}$ are not in $\spec{\psi_2}$, which is not cofinite.

\item[-] Let $\psi'_5\equiv \exists n,m<\max (\max=2^{n^2}\times m^2\wedge \exists y<2^{n^2}\varphi'(n,y) \wedge \forall n'<n\forall y'<2^{n^2} \neg\varphi'(n',y')))$.

If $\spec{\varphi}=\emptyset$, then $\spec{\psi_5}=\emptyset$ (hence is ultimately periodic). Conversely, observe that $\spec{\psi_5}=\{n_0\times m^2\ |\ m\in\N+\}$, where $n_0=\inf(\spec{\varphi})$. Hence $\spec{\psi_5}$ is not ultimately periodic.
\end{itemize}
\end{proof}

\ 
\subsection{Real numbers and spectra}

Let $\chi_{\phi}(n)$ be the characteristic function of
the spectrum of
a first order sentence $\phi$.
We can associate with $\phi$ and $a \in \Z$ the real number
$r_{\phi}= a+ \sum_n \chi_{\phi}(n) 2^{-n}$.
\begin{definition}
A real number is {\em first order spectral} 
if it is of the form $r_{\phi}$ for some $a \in \Z$ and some first order
sentence $\phi$.
\end{definition}
As we have noted ultimately periodic sets of 
natural numbers are first order spectra, and correspond to
rational numbers. Also every ultimately periodic spectrum
can be realized by a formula with one
function symbol only. We have

\begin{proposition}
Every rational number $q$ is first order spectral
using a formula with one function symbol only.
\end{proposition}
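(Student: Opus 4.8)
The plan is to reduce the claim to the already-established fact that every ultimately periodic set of positive integers is the spectrum of a first-order sentence with a single unary function symbol, and then to show that this covers all rational numbers. First I would write $q = a + r$ with $a \in \Z$ and $r \in [0,1)$ rational. The binary expansion of $r$ is eventually periodic: there exist $k \geq 0$ and $p \geq 1$ such that the bit sequence $(\chi(n))_{n \geq 1}$ defined by $r = \sum_{n \geq 1} \chi(n) 2^{-n}$ satisfies $\chi(n+p) = \chi(n)$ for all $n > k$. Let $M = \{\, n \in \N^+ : \chi(n) = 1 \,\}$; this is an ultimately periodic subset of $\N^+$, and by construction $r_\phi = a + \sum_n \chi_M(n) 2^{-n} = q$ for any $\phi$ with $\spec{\phi} = M$.

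Next I would invoke the fact recorded in the excerpt (in the discussion of syntactically restricted sentences) that \emph{every} ultimately periodic set of positive integers is the spectrum of a first-order sentence using one unary function symbol. Applying this to $M$ yields a sentence $\phi$ with one function symbol and $\spec{\phi} = M$, hence $r_\phi = q$, which is exactly the assertion. One should note the harmless boundary case: if $r = 0$ then $M = \emptyset$, realized by any unsatisfiable sentence (e.g. one asserting that a unary function is both injective and non-surjective on a nonempty part of the universe — but it is cleaner to just take $\phi$ to be a contradiction, or to absorb this into the ``ultimately periodic'' statement, since $\emptyset$ is vacuously ultimately periodic).

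The only real content is the cited construction of a one-function-symbol sentence with a prescribed ultimately periodic spectrum, so the main obstacle — were it not already granted — would be exhibiting that sentence. The standard device is: a unary function $f$ on a finite universe decomposes it into ``rho-shaped'' components (a tail leading into a cycle); by axiomatizing that $f$ has no tails (i.e. $f$ is a bijection, so the universe is a disjoint union of cycles) and constraining the allowed cycle lengths via first-order conditions, one forces the universe size to lie in a prescribed union of an arithmetic progression (from repeatable cycle lengths) with a finite exceptional set — precisely the shape $a + b\N$ combined with finitely many extra points, which together generate all ultimately periodic sets under the union closure of Proposition~\ref{pr:easyclosures}. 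Since we are permitted to assume this, the proof above is complete.
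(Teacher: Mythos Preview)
Your argument is correct and matches the paper's approach exactly: the paper does not give a separate proof but presents the proposition as an immediate consequence of the two remarks preceding it, namely that rational numbers correspond to ultimately periodic binary expansions and that every ultimately periodic set is realized as the spectrum of a sentence with one unary function symbol. Your write-up simply spells out these two steps (and the boundary case $r=0$) in more detail than the paper does.
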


\begin{question}
Do the the first order spectral reals form a field?
\end{question}

E. Specker, \cite{ar:Specker49},
proved that 
there is a real $x$ primitive recursive in
base $2$ such
that $3x$, $x + \frac{1}{3}$, $x^2$ are not primitive recursive 
in base $2$. A modern treatment can be found in \cite{ar:ChenSuZheng2007}. 
H. Friedman \cite{misc:Friedman06} mentioned on an internet discussion
site that primitive recursive can be replaced in Specker's Theorem
by much lower complexity within the Grzegotczyk Hierarchy.
J. Miller kindly provided us, \cite{email:miller}, 
with the more precise statement
\begin{theorem}[E. Specker, 1949 and H. Friedman 2003]
\label{thm:friedman}
\ \\
There is a real $x$ which is primitive recursive in base $2$
(and can even be taken
to be in $\mathcal{E}^2$ of the Grzegorczyk Hierarchy),  
such that $3x$, $x + \frac{1}{3}$, $x^2$ are not primitive recursive 
in base $2$.
\end{theorem}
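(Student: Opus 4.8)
Recall that ``$r$ is primitive recursive (resp.\ in $\mathcal{E}^2$) in base $2$'' means that the function $n\mapsto d_n$, where $d_1d_2d_3\ldots$ is the binary expansion of the fractional part of $r$, is primitive recursive (resp.\ in $\mathcal{E}^2$). The point of the theorem is that this notion is fragile: the $n$-th binary digit of $3x$ or of $x+\frac{1}{3}$ is not a \emph{local} function of the digits of $x$, since it depends on a carry coming from the right that may have to be traced unboundedly far, and the $n$-th digit of $x^2$ depends on the whole convolution $\sum_{i+j=n}d_id_j$. The plan is to exploit exactly this: I will build $x$ whose digit function is a short, uniformly $\mathcal{E}^2$ computation, but in which a non-recursive set is ``hidden'' so that computing a single digit of $3x$, of $x+\frac{1}{3}$, or of $x^2$ forces one to resolve the hidden information.

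Concretely, fix Kleene's $T$-predicate, giving an $\mathcal{E}^2$ relation $R(n,s)$, monotone and eventually absorbing in $s$, with $K=\{n:\exists s\,R(n,s)\}$ the (non-recursive, hence non-primitive-recursive) halting set. I would lay the binary positions out in widely separated blocks, the $n$-th block being devoted to the stage-by-stage search for $n$, padded on both sides by enough $0$'s to serve as carry barriers; within block $n$ the digits carry a ``neutral'' pattern (for the $3x$ coding, a run of $1$'s; for the $x+\frac13$ coding, the pattern $\overline{01}$ that sits just below the carry threshold $\frac13$ against $\frac13=0.\overline{01}$) as long as no witness $R(n,s)$ has yet been found, and switch to a ``commit'' digit the moment one is found, tipping the tail of $x$ across the carry threshold attached to a designated detector position of the block. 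Reading a digit of $x$ then only ever requires a bounded search ``$\exists s\le(\text{index read off from the position})\ R(n,s)$'', which is in $\mathcal{E}^2$, so $x$ is in $\mathcal{E}^2$; but reading the detector digit of $3x$ (resp.\ of $x+\frac13$) at block $n$ returns $\chi_K(n)$, so the digit function of $3x$ (resp.\ $x+\frac13$) is not recursive, a fortiori not primitive recursive. The case $x^2$ uses the same philosophy with the convolution $\sum_{i+j=m}d_i d_j$ in place of a single carry: block $n$ carries a single isolated $1$ (or not), produced once the search succeeds, so that a designated digit of $x^2$ near the doubled position reflects $\chi_K(n)$. Finally I would obtain one $x$ serving all three operations by devoting three disjoint interleaved families of blocks to the three transforms, relying on the $0$-barriers (and, for $x^2$, on the super-exponential spacing of blocks) to keep the carries of $3x$ and $x+\frac13$ and the cross terms of $x^2$ from leaking between families.

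The hard part is making the two requirements coexist. The ``easy'' side demands that the coding of $K$ be invisible to any fixed-time inspection of $x$, i.e.\ that each digit of $x$ be a short computation relative to its own position; the ``hard'' side demands that the carry (or convolution sum) feeding a detector position genuinely depend on an unbounded initial segment of $x$ and actually decode $K$. Reconciling these forces a careful choice of the block geometry --- where the detector positions and the stage-slots sit, and how much $0$-padding separates the ``neutral'' pattern from the block boundary --- so that the threshold comparison for the transform is never settled by any bounded prefix while every individual digit of $x$ remains a bounded search; and then one must push the $\mathcal{E}^2$ bookkeeping through. I expect the $x^2$ case to be the most delicate, precisely because a digit of $x^2$ is non-local in $x$: one has to control which pairs $(i,j)$ contribute to each output digit and keep the cross terms between distinct blocks away from all detector positions, which is the fiddly technical heart of the argument. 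The remainder --- Specker's original version, where $x$ is only required to be primitive recursive in base $2$ --- is then immediate, since $\mathcal{E}^2$ in base $2$ implies primitive recursive in base $2$.
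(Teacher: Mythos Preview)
Your proposal correctly identifies the mechanism --- the discontinuity of $x\mapsto 3x$, $x\mapsto x+\tfrac13$, $x\mapsto x^2$ on binary expansions, and the trick of spreading an unbounded search across the positions so that each individual digit of $x$ is a bounded computation --- but the specific strategy of encoding the halting set $K$ has a gap in the block layout. If your blocks sit at predetermined finite positions (as ``widely separated'' with ``super-exponential spacing'' and $0$-barriers indicates), then block $n$ can only run the bounded search $\exists s\le\ell_n\,R(n,s)$ for its fixed length $\ell_n$; the very $0$-barriers you insert to stop carries from leaking between blocks force the detector bit at block $n$ to depend only on that bounded search, which is decidable --- so it cannot equal $\chi_K(n)$. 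If instead you let block $n$ run open-endedly until a witness appears, then for any $n\notin K$ that block never terminates and no later block is ever laid down. Either way the claimed decoding of $\chi_K$ from the digits of $3x$ fails.

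The paper's proof avoids this by diagonalizing directly against the enumeration $(p_i)$ of primitive recursive functions rather than trying to encode a non-recursive set. Block $i$ maintains the neutral $0101\ldots$ pattern while waiting for $p_i(n_i)$ to converge; since each $p_i$ is \emph{total}, this wait is always finite, so one commits one or two bits to make $p_i(n_i)$ wrong about the $n_i$-th digit of $3x$ and then moves on to block $i+1$. Every requirement is eventually met, with no predetermined block lengths needed, and computing bit $t$ of $x$ only requires simulating at most $t$ steps of the current $p_i$, which is linear space and hence in $\mathcal{E}^2$. Your ``delayed trap'' intuition and bounded-search-per-digit idea are exactly what the argument uses; the fix is to change the target from $K$ to the sequence of total functions $p_i$.
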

\begin{proof}[Sketch of proof\footnote{due to J. Miller}]
This can be proved by exploiting the fact that none of these functions
$3x, x^2, x+1/3$ are continuous as functions on binary expansions of
reals. They can take a number $x$ that is not a binary rational to one
that is.

Let us focus on $3x$. So, for example, if $x = 0.0101010101 \ldots$, 
then $3x = 1$. We can exploit this as follows. 
Say we have built the binary
expansion of $x$ up to position $n-1$ and it looks like $0.b$ 
(where $b$ is a finite string) and that we want to 
diagonalize against the $i$th primitive
recursive function $p_i$. 
Compute $p_i(n)$, step by step. 
As long as it
does not converge, keep building $x$ to look like $0.b0010101010101 \ldots$.
If $p_i(n)$ converges at stage $s$, then use position $n+2s$ 
or position $n+2s+1$
to spring the delayed trap. If $p_i(n)=0$, then let 
$x = 0.b0010101 \ldots 01011$. If $p_i(n)=1$, then let 
$x = 0.b0010101 \ldots 0100$.
Either way, $p_i(n)$ does not correctly compute the $n$th bit of $3x$. 
Note that we spread out the unbounded search, so that each bit is computed
by a bounded (primitive recursive) procedure.

In this way we can diagonalize against $3x, x^2$ and $x+1/3$ being
primitive recursive while making $x$ primitive recursive.
To make $x$ to be in $\mathcal{E}^2$ one uses the fact
that $\mathcal{E}^2$ is the same as computable in linear space
\cite{ar:Ritchie63}.

In the argument above,
when we are trying to figure out bit $t$ of $x$, we compute 
$p_i(n)$ (for some
$i$ and $n$ determined earlier in the construction of $x$) for $t$
steps and
if it does not halt we output the default bit (alternating between 
$0$ and $1$), so it can be made in linear time.
Actually, in the case of $x^a$2, one has to work a little harder to
determine the default bit, but this can definitely be done in linear
space (and polynomial time).
\end{proof}

We shall see in Section \ref{recursion}, 
Theorem \ref{th:most56},
that Theorem \ref{thm:friedman} covers all the spectral reals,
therefore the spectral reals do not form a field. More precisely
we have the following Corollary:

\begin{corollary}
The spectral reals are not closed under addition nor
under multiplication. 
Furthermore, they are closed under the operation $1-x$
iff the complement of a spectrum is a spectrum.
\end{corollary}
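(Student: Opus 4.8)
\emph{Overview and dictionary.} The plan is to translate everything into the language of spectra via one dictionary and then read the three assertions off short computations. First I would record the translation, immediate from the definition of $r_\phi$ and from $0\notin\spec{\phi}$ for every $\phi$: a real $r$ is first order spectral if and only if the fractional part of $r$ has a binary expansion $0.d_1d_2d_3\cdots$ with $d_n\in\{0,1\}$ for which $\{n\in\N^+: d_n=1\}$ is a first order spectrum, where for a dyadic fractional part either of the two expansions may be used. I would then isolate two facts used throughout. (a) By the recursion-theoretic upper bound on spectra (Theorem~\ref{th:most56}; the characteristic function of a first order spectrum is elementary, hence primitive recursive), every first order spectral real is primitive recursive in base $2$. (b) By Theorem~\ref{th:mostowski}, every set with characteristic function in $\mathcal{E}^2$ is a first order spectrum, so by the dictionary every real lying in $\mathcal{E}^2$ in base $2$ is first order spectral.

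\emph{Failure of closure under $+$ and $\times$.} Here I would feed the Specker--Friedman real of Theorem~\ref{thm:friedman} into (a) and (b): that real $x$ can be taken in $\mathcal{E}^2$ in base $2$, hence is spectral by (b), whereas $3x$, $x^2$ and $x+\frac{1}{3}$ are not primitive recursive in base $2$, hence are not spectral by (a). Closure of the spectral reals under addition would force $3x=(x+x)+x$ to be spectral, and closure under multiplication would force $x^2=x\cdot x$ to be spectral; both are contradictions. In particular the spectral reals are neither a subring nor a subfield of $\mathbb{R}$.

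\emph{The $1-x$ equivalence.} For ($\Leftarrow$), assuming the complement within $\N^+$ of every first order spectrum is again a first order spectrum, I would take a spectral real $r=a+\sum_{n\ge1}\chi_\phi(n)2^{-n}$ and use $\sum_{n\ge1}2^{-n}=1$ to get $1-r=-a+\sum_{n\ge1}(1-\chi_\phi(n))2^{-n}=-a+\sum_{n\ge1}\chi_{S'}(n)2^{-n}$ with $S'=\N^+\setminus\spec{\phi}$; since $S'=\spec{\psi}$ for some $\psi$ by hypothesis, this exhibits $1-r=r_\psi$, so $1-r$ is spectral. For ($\Rightarrow$), assuming closure under $x\mapsto 1-x$, I would take a spectrum $S=\spec{\phi}$: if $S$ is finite or cofinite its complement is cofinite or finite and hence a spectrum (finite and cofinite subsets of $\N^+$ are readily spectra), so suppose $S$ is neither; then $x:=\sum_{n\in S}2^{-n}=r_\phi$ is non-dyadic and spectral, so $1-x=b+\sum_{n\ge1}\chi_\psi(n)2^{-n}$ is spectral, and since $1-x\in(0,1)$ is again non-dyadic we must have $b=0$ and a unique binary expansion, which when compared with $1-x=\sum_{n\ge1}(1-\chi_\phi(n))2^{-n}$ forces $\chi_\psi(n)=1-\chi_\phi(n)$ for all $n\ge1$, i.e.\ $\spec{\psi}=\N^+\setminus S$.

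\emph{Main obstacle.} The one genuine subtlety is the non-uniqueness of binary expansions of dyadic rationals in the ($\Rightarrow$) step: a priori the spectral real $1-x$ need not pin down the complement sequence. The decisive observation is that this ambiguity occurs exactly when $S$ is finite or cofinite, and in that case $\N^+\setminus S$ is a spectrum unconditionally, so the ambiguity never bites; once it is dispatched the remainder is the bookkeeping above together with the cited results (Theorems~\ref{th:mostowski}, \ref{th:most56} and \ref{thm:friedman}).
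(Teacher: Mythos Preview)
Your argument is correct and is exactly the route the paper intends: the paper gives no detailed proof, merely remarking that the corollary follows once one knows that the Specker--Friedman real of Theorem~\ref{thm:friedman} is spectral (via Mostowski's inclusion $\mathcal{E}^2_\star\subseteq\SPEC$) while $3x$ and $x^2$ are not, and you have spelled out precisely this, together with the $1-x$ bookkeeping the paper omits entirely. One small correction: in your fact~(a) you cite Theorem~\ref{th:most56}, but that is Mostowski's \emph{lower} bound $\mathcal{E}^2_\star\subseteq\SPEC$; the upper bound ``the characteristic function of a spectrum is elementary (hence primitive recursive)'' that you actually use is Asser's Theorem~\ref{theorem asser}, so swap the reference there (your citation of Theorem~\ref{th:mostowski} for fact~(b) is fine).
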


We now turn the question of algebraicity and transcendence of spectral reals.
Clearly, every first order spectral real
is a recursive real
in the sense of A. Turing \cite{ar:Turing36}.
Using Liouville's  Theorem\footnote{
Liouville's Theorem states, in simplified form, that
a real of the form $r =\sum_n 2^{-f(n)}$  where $f(n) \geq n!$
is transcendental.
}, we can see that many
transcendental reals are first order spectral.

\begin{openquestion}
\label{algebraic}
Are there any irrational algebraic reals which are spectral?
\end{openquestion}

One way of analyzing irrational numbers is by counting
the number of $1$s in their binary representation.
For a real $r \in (0,1)$ let $\gamma_r(n)$ be the
number of $1$s among its first $n$ digits.
If $r=r_{\phi}$ is spectral we have $\gamma_r(n)=\gamma_{phi}(n)$.

In the sequel we follow closely and quote from M. Waldschmidt \cite{ar:Waldschmidt08}.

\begin{theorem}[Bailey, Borwein, Crandall, and Pomerance, 2004, \cite{ar:BBCP04}] 
Let $r$ be a real algebraic number of degree $d \geq 2$. 
Then there is a positive number $C_{r,d}$, which depends
only on $r$, such that 
$\gamma_r(n) \geq C_{r,d}n^{\frac{1}{d}}$.
\end{theorem}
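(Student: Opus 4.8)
The plan is to show that too few one-bits would force $r$ to be ``too close'' to a structured algebraic number, contradicting a Liouville-type separation estimate. Two opposing facts drive the argument: a perfect $d$-th power of a sparse integer is still fairly sparse, whereas a sparse truncation of $r$ makes $r$ exceedingly well approximated by a rational. Throughout I may assume $r\in(0,1)$ (replacing $r$ by its fractional part changes neither its degree $d$ nor $\gamma_r$), so $r$ is irrational with a unique binary expansion. Fix a primitive integer polynomial $P(y)=\sum_{i=0}^{d}c_iy^i$ with $P(r)=0$ and $c_d>0$ (negate $P$ if necessary); since $P$ is irreducible of degree $d\ge2$ it has no rational root, and $c_0\neq0$ because $r\neq0$. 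Write $P(y)=c_d\prod_{j=1}^{d}(y-r_j)$ with $r_1=r$, put $\kappa:=\lceil\log_2\|P'\|_{[0,1]}\rceil$, and let $w(\cdot)$ denote Hamming weight (number of binary ones).

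First I would set up the truncation. For $n\in\N$ let $T_n:=2^{-n}\lfloor 2^nr\rfloor=M_n/2^n$, so $0<r-T_n<2^{-n}$ and the integer $M_n<2^n$ has exactly $\gamma_r(n)$ ones. Then $E_n:=2^{nd}P(T_n)=\sum_{i=0}^{d}c_iM_n^i2^{n(d-i)}$ is an integer; it is nonzero, since $P(T_n)=0$ would make the rational $T_n$ a root of the irreducible $P$; and, by the mean value theorem, $|E_n|=2^{nd}|P(T_n)-P(r)|\le 2^{nd}\|P'\|_{[0,1]}(r-T_n)<2^{n(d-1)+\kappa}$. Finally, for large $n$ the point $T_n$ lies near $r_1$ and away from $r_2,\dots,r_d$, so the sign of $P(T_n)=c_d(T_n-r)\prod_{j\ge2}(T_n-r_j)$ is eventually constant; since $T_n-r<0$ always, replacing $T_n$ by $T_n+2^{-n}=(M_n+1)/2^n$ uniformly if necessary (which raises $w(M_n)$ by at most $1$) I may assume $E_n<0$ for all large $n$.

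The heart of the matter is the integer identity obtained by isolating the $i=d$ term: $c_dM_n^d=E_n+2^nB_n$ with $B_n:=-\sum_{i=0}^{d-1}c_iM_n^i2^{n(d-1-i)}\in\Z$; since $E_n<0$ this reads $c_dM_n^d=2^nB_n-|E_n|$. The left side is sparse: $M_n$ is a sum of $\gamma_r(n)$ powers of $2$, so $M_n^d$ is a sum of at most $\gamma_r(n)^d$ powers of $2$, and adjoining a power of $2$ to an integer never raises its Hamming weight by more than $1$; hence $w(c_dM_n^d)\le w(c_d)\gamma_r(n)^d$. The right side is a subtraction: using $\sum_{i<d}c_ir^i=-c_dr^d$ one finds $B_n=c_dr^d2^{n(d-1)}+O(2^{n(d-2)})$, so $2^nB_n$ is a multiple of $2^n$ of size $\asymp 2^{nd}$ while $|E_n|<2^{n(d-1)+\kappa}$; the subtraction $2^nB_n-|E_n|$ therefore propagates a borrow, and whenever $B_n$ carries a block of $\ell$ consecutive zero bits just above the top bit of $|E_n|$, the borrow turns that whole block into $1$'s, so $c_dM_n^d$ acquires at least $\ell$ ones. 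Producing such a block of length $\ell\ge n/d-O(\log n)$ for suitably many $n$ then gives $w(c_d)\gamma_r(n)^d\ge n/d-O(\log n)$, i.e.\ $\gamma_r(n)\ge C_{r,d}\,n^{1/d}$, and this passes to every $n$ because $\gamma_r$ is nondecreasing and the qualifying $n$ occur with bounded multiplicative gaps.

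The step I expect to be the real obstacle is the one just described: guaranteeing, for a general algebraic $r$, a long enough run of zeros in $B_n$ above the top of $|E_n|$ --- equivalently, controlling the carries in that identity, including the fact that the borrow only reaches the block when $|E_n|$ is large enough. When $r=q^{1/d}$ (so $P(y)=by^d-a$ and $B_n=a\cdot2^{n(d-1)}$ is a fixed integer times a power of $2$) the block is simply the gap above the bits of $a$, the borrow cascades over $n-O(\log n)$ positions, and one obtains $\gamma_r(n)\ge\frac{1}{2}n^{1/d}$ for all large $n$ with no further work. For general $r$, however, $B_n$ is essentially the leading $\sim n(d-1)$ bits of $c_dr^d$ and need not be sparse at all, so one must instead locate $n$ inside a sufficiently long zero-gap of the expansion of $r$ itself --- whose existence, if $\gamma_r$ were too small, follows by pigeonhole --- or run the estimate at several neighbouring scales at once; carrying out this bookkeeping while losing only a constant factor in $n^{1/d}$ is the technical core of the Bailey--Borwein--Crandall--Pomerance argument.
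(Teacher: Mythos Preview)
The survey does not prove this theorem; it is quoted from \cite{ar:BBCP04} without argument. So there is no ``paper's proof'' to compare your attempt against, and I assess your sketch on its own.

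Your framework is the correct one and coincides with BBCP's: form the nonzero integer $E_n=2^{nd}P(T_n)$, bound $|E_n|<C\,2^{n(d-1)}$ by the mean-value theorem, and use that $M_n$ has exactly $\gamma_r(n)$ ones so that $c_dM_n^d$ is a sum of at most $w(c_d)\,\gamma_r(n)^d$ powers of~$2$. In the pure-root case $P(y)=by^d-a$ your borrow argument is clean and correct: $2^nB_n=a\cdot 2^{nd}$ is a single shifted block, subtracting $|E_n|$ converts roughly $n$ consecutive zeros into ones, and $\gamma_r(n)\gg n^{1/d}$ follows.

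The general case, however, is not proved in your write-up, and the gap is precisely where you locate it. For a generic algebraic $r$ the integer $B_n$ has leading bits that encode the binary expansion of $c_dr^d$ (and lower powers of $r$), and there is no reason for a long run of zeros to sit above the top bit of $|E_n|$; without that run the borrow produces no ones at all, and your inequality $w(c_d)\gamma_r(n)^d\ge n/d-O(\log n)$ simply does not follow. Your two proposed repairs do not close this. A pigeonhole zero-gap in the expansion of $r$ is a statement about the bits of $r$, whereas $B_n$ carries the bits of $r^d$ and of the lower-degree terms; there is no mechanism in your sketch that transfers sparsity from one to the other. And ``running the estimate at several neighbouring scales'' is a plan, not an argument. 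Calling the missing step ``bookkeeping'' understates it: this is not carry-accounting but the actual content of the theorem, and BBCP do \emph{not} obtain it via a borrow/run-of-zeros mechanism. What you have written is a correct proof for $d$-th roots of rationals together with an honest identification of why that proof does not extend; it is not yet a proof of the stated theorem.
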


In other words, if a spectral number $r_{\phi}$ 
is algebraic of degree $d \geq 2$, then 
$\gamma_{\phi}(n) \geq C_{{\phi},d}n^{\frac{1}{d}}$, 
for some positive number $C_{{\phi},d}$.

To get more information about irrational numbers $r$ we have to look
at the binary string complexity of $r \in (0,1)$.
We consider $r$ as an infinite binary word.
\begin{definition}[Binary string complexity]
The {\em binary string complexity of $r$} is the function $p_r(m)$ which counts, for each $m$ 
the number of distinct binary words $w$ of length $m$ occuring in $r$. 
Hence we have $1 \leq  p_r(m) \leq 2^m$, and the function
$p_r(m)$ is non-decreasing.
\end{definition}

\begin{conjecture}[E. Borel 1950, \cite{ar:Borel50}]
The binary string complexity of 
an irrational algebraic number $r$ should be $p_r(m) = 2^m$.
\end{conjecture}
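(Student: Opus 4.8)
The plan, at the level of current mathematical technology, cannot honestly be to \emph{prove} Borel's conjecture --- it is famously open --- but only to lay out the line of attack that has produced the strongest known partial results and to pinpoint where it stalls. First I would unwind what $p_r(m) = 2^m$ actually demands: every finite binary word of every length $m$ must occur somewhere in the binary expansion of $r$, i.e.\ $r$ is ``disjunctive''; this is already strictly weaker than Borel's companion conjecture that algebraic irrationals are normal, but still wide open. The natural machinery is Diophantine approximation: a slowly growing complexity function forces strong repetition patterns in the digit string, and such patterns manufacture unexpectedly good rational or algebraic approximations to $r$, which one then kills with the Schmidt Subspace Theorem (or with Ridout's $p$-adic strengthening of Roth's theorem).

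Concretely, following the Adamczewski--Bugeaud method, I would argue a weak contrapositive. Suppose $\liminf_{m\to\infty} p_r(m)/m < \infty$. A combinatorial lemma in the spirit of the Morse--Hedlund theorem and its quantitative refinements then shows the binary expansion of $r$ is ``stammering'': it has infinitely many prefixes of the shape $U V^{1+\varepsilon}$ with $V$ not too short relative to $U$. Truncating the expansion at the end of such a near-periodic block yields rationals $p_k/q_k$ with $|r - p_k/q_k| < q_k^{-1-\delta}$ along a subsequence, and since the $q_k$ are essentially powers of $2$, Ridout's theorem --- or the Subspace Theorem applied to a suitable system of linear forms --- contradicts the algebraicity of $r$. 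This gives $p_r(m)/m \to \infty$, which is the best unconditional statement currently available and already rules out, for algebraic irrationals, all automatic or morphic digit patterns. One might try to feed in the Bailey--Borwein--Crandall--Pomerance bound $\gamma_r(n) \ge C_{r,d}\, n^{1/d}$ quoted above \cite{ar:BBCP04}, but a density-of-ones estimate says nothing about \emph{which} blocks occur, so it does not bridge the gap on its own.

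The main obstacle is the leap from ``super-linear complexity'' to ``maximal complexity.'' The Subspace-Theorem technology only exploits \emph{repetitions}: it penalises a digit string for being too structured, but it offers no mechanism forcing a prescribed block of length $m$ to \emph{appear}. To reach $p_r(m) = 2^m$ one would need precisely such a mechanism --- a reason why the binary expansion of, say, $\sqrt{2}$ cannot omit some particular word --- and no current method produces one; disjunctiveness is not known for a single explicit algebraic irrational. Moreover the Subspace Theorem is ineffective, so even a conditional argument would be non-constructive. My honest expectation, therefore, is that the realistic target is the Adamczewski--Bugeaud lower bound and its refinements (e.g.\ polynomial lower bounds on $p_r(m)$ under extra hypotheses), while the full equality $p_r(m) = 2^m$ --- and a fortiori the corresponding question for spectral reals $r_\phi$, where $\gamma_r(n) = \gamma_\phi(n)$ ties it back to counting functions of spectra --- will require a genuinely new idea.
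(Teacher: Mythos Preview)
Your assessment is correct and matches the paper's own treatment: this is stated in the paper as a \emph{conjecture}, not a theorem, and the paper offers no proof --- it simply records Borel's conjecture as open and then quotes the Adamczewski--Bugeaud theorem (their Theorem~\ref{th:AB}, giving $\liminf_{m\to\infty} p_r(m)/m = +\infty$ for irrational algebraic $r$) as the best known partial result. Your outline of the Subspace-Theorem route to that partial result, and your identification of the gap between super-linear complexity and full disjunctiveness, accurately reflects the current state of the art; there is nothing further in the paper to compare against.
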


\begin{definition}
We call a real number $r \in (0,1)$ {\em automatic} if the $n$-th bit of its binary expansion
can be generated by a finite automaton from the binary representation of $n$.
\end{definition}
Clearly, the binary string complexity of an automatic real is $O(m)$.

\begin{openquestion}
\label{automatic}
Is every automatic real a spectral real?
\end{openquestion}

In 1968 A. Cobham, \cite{ar:Cobham68} conjectured that automatic numbers
are transcendental. This was proven in 2007 by B. Adamczewski and Y. Bugeaud, 
\cite{ar:AdamczewskiBugeaud07}.
They actually proved a stronger theorem.

\begin{theorem}[B. Adamczewski and Y. Bugeaud, 2007]
\label{th:AB}
The binary string complexity $p_r(m)$ of 
a real irrational algebraic number $r$ satisfies
$$
\liminf_{m \rightarrow \infty} \frac{p_r(m)}{m} = + \infty
$$
\end{theorem}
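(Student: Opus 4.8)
The plan is to prove the contrapositive: if $r\in(0,1)$ is irrational and $\liminf_{m\to\infty}p_r(m)/m<+\infty$, then $r$ is transcendental. Write $r=\sum_{n\geq1}a_n2^{-n}$, let $\mathbf{a}=a_1a_2a_3\cdots$ be the infinite binary word of its digits, and fix $C$ and an infinite set of $m$ with $p_r(m)=p_{\mathbf{a}}(m)\leq Cm$. The argument has two nearly independent parts: a combinatorial step that extracts long, well-placed repetitions from the low-complexity hypothesis, and a Diophantine step that turns those repetitions into a contradiction via the Schmidt Subspace Theorem.

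Combinatorial step. Fix $m$ in the subsequence. Among the length-$m$ factors of $\mathbf{a}$ occurring at positions $1,2,\dots,\lfloor Cm\rfloor+1$ there are at most $p_{\mathbf{a}}(m)\leq Cm$ distinct ones, so by pigeonhole $a_i\cdots a_{i+m-1}=a_j\cdots a_{j+m-1}$ for some $1\leq i<j\leq\lfloor Cm\rfloor+1$. Thus $\mathbf{a}$ has period $p:=j-i$ on the block $[\,i,\,j+m-1\,]$: writing $U=a_1\cdots a_{i-1}$ and $V=a_i\cdots a_{j-1}$, the prefix of $\mathbf{a}$ of length $j+m-1$ equals $UV^{w}$ with $w=1+m/p\geq1+1/C$, while $|U|+|V|=j-1\leq Cm$. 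Carrying this out for each $m$ in the subsequence produces words $U_n,V_n$ with $U_nV_n^{1+1/C}$ a prefix of $\mathbf{a}$ and the exact agreement reaching position $\geq(1+1/C)(|U_n|+|V_n|)$ (such words are called \emph{stammering}). If $|U_n|+|V_n|$ remained bounded, $\mathbf{a}$ would be eventually periodic and $r$ rational, contrary to hypothesis; so $|U_n|+|V_n|\to\infty$.

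Diophantine step. Put $s_n=|U_n|$, $t_n=|V_n|$, and let $\xi_n$ be the rational number whose binary expansion is $U_n$ followed by infinitely many copies of $V_n$, so $\xi_n=p_n/q_n$ with $q_n=2^{s_n}(2^{t_n}-1)<2^{s_n+t_n}$ and $0<p_n<q_n$. Since the digits of $r$ and $\xi_n$ agree up to position $\geq(1+1/C)(s_n+t_n)$ we get $|r-\xi_n|<q_n^{-(1+\eta)}$ with $\eta=1/C>0$. One cannot finish with Roth's theorem, because $\eta$ may be tiny; but every $q_n$ carries the large $2$-adic factor $2^{s_n}$, and this is exactly the slack that the $p$-adic Schmidt Subspace Theorem exploits. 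I would apply it to the integer points $\mathbf{x}_n=\big(q_n,\,2^{s_n},\,p_n\big)$ together with a suitable system of linear forms: at the real place, forms including $rX_1-X_3$ (small because $\xi_n\approx r$); at the place above $2$, forms including $X_1$ and $X_2$ (both highly divisible by $2^{s_n}$). With the forms normalized and chosen so that the product of their values at $\mathbf{x}_n$ is $\leq\|\mathbf{x}_n\|^{-\varepsilon}$ for infinitely many $n$, the Subspace Theorem confines the $\mathbf{x}_n$ to finitely many proper subspaces of $\mathbb{Q}^3$; an infinite subsequence lying in one of them yields a fixed nontrivial relation $c_1q_n+c_2\,2^{s_n}+c_3p_n=0$, and dividing by $q_n$ and letting $n\to\infty$ gives $c_1+c_3r=0$, hence (as $r$ is irrational) $c_1=c_3=0$ and then $c_2=0$, a contradiction --- provided the forms are chosen, following Adamczewski--Bugeaud, so that no degenerate exceptional subspace can occur. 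Therefore $r$ is transcendental.

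The main obstacle is the Diophantine bookkeeping in the second step: selecting the linear forms and the auxiliary place so that (i) the heights and the approximation quality really combine to give the exponent $-\varepsilon$ demanded by the Subspace Theorem, and (ii) the finitely many exceptional subspaces it returns can be shown to correspond only to trivial or self-contradictory relations, not to genuine ones --- this, together with the case analysis (for instance when $t_n$ stays bounded while $s_n\to\infty$, or the reverse), is the technical core of the argument. By contrast the combinatorial step is comparatively routine; its one essential point is that the repetitions be located within the first $O(m)$ positions of $\mathbf{a}$, which the pigeonhole over those positions guarantees and which is what makes $\eta$ positive.
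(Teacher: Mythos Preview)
The paper does not prove this theorem; it merely states it and cites Adamczewski and Bugeaud's 2007 paper \cite{ar:AdamczewskiBugeaud07} as the source, so there is no ``paper's own proof'' to compare against. Your sketch is, however, a faithful outline of the actual Adamczewski--Bugeaud argument: the pigeonhole extraction of a stammering prefix $U_nV_n^{1+1/C}$ from the hypothesis $p_{\mathbf a}(m)\le Cm$, followed by an application of the $p$-adic Schmidt Subspace Theorem to the triples $(q_n,2^{s_n},p_n)$ with $q_n=2^{s_n}(2^{t_n}-1)$, is exactly their strategy.

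Two points are worth flagging. First, your combinatorial step yields $|U_n|\le Cm$ and $|V_n|\le Cm$ but does not bound the ratio $|U_n|/|V_n|$; the original argument needs this ratio bounded (it is part of the definition of a stammering sequence) in order to control the heights in the Subspace Theorem application, so a small extra argument is needed --- one typically refines the pigeonhole to force $|U_n|$ comparable to $|V_n|$, or handles the unbalanced case separately. Second, as you yourself note, the real work lies in choosing the linear forms at the archimedean and $2$-adic places so that the product of their values is $\ll \|\mathbf x_n\|^{-\varepsilon}$ \emph{and} so that each of the finitely many exceptional subspaces can be eliminated; your endgame ``divide by $q_n$ and let $n\to\infty$'' only works when $t_n\to\infty$, and the bounded-$t_n$ case (eventually periodic tail, hence $r$ rational) must be ruled out beforehand. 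These are genuine technicalities rather than gaps in the overall plan, and you have correctly identified them as such.
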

Borel's Conjecture would imply that
the binary string complexity $p_r(m)$ of 
a real irrational algebraic number $r$ satisfies
$$
\liminf_{m \rightarrow \infty} \frac{p_r(m)}{2^m} = 1
$$
\begin{openquestion}
\label{spectral-lim}
Does the binary string complexity $p_r(m)$ of 
a spectral real $r$ satisfy
$$
\liminf_{m \rightarrow \infty} \frac{p_r(m)}{2^m} < 1
$$
or even
$$
\liminf_{m \rightarrow \infty} \frac{p_r(m)}{2^m} = 0?
$$
\end{openquestion}

From Theorem \ref{th:AB}
one gets that the Fibonacci numbers, which will be shown to form a spectrum
in Corollary \ref{cor rud-in-spec} of Section \ref{subsection recursion},
give us a transcendental spectral number. More generally, we get the following:
\begin{proposition}
\label{pr:gap-transcendental}
Let $r_{\phi}$ be a spectral real such that
the gap function
$\delta_{\phi}(n)$ is monotonically increasing
and grows exponentially.
Then $p_{\phi}(n) = O(n)$.
Therefore,
$r_{\phi}$ is transcendental.
\end{proposition}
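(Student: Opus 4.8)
The plan is to show that $p_\phi(m)=O(m)$ and then read off transcendence from the Adamczewski--Bugeaud theorem (Theorem~\ref{th:AB}). First I would record irrationality: the hypothesis presupposes that the gap function $\delta_\phi(n)$ is total, so $S:=\spec{\phi}$ is infinite, and since $\delta_\phi(n)\to\infty$ the binary expansion of $r_\phi$ is not eventually periodic, hence $r_\phi\notin\Q$. The integer part $a$ of $r_\phi$ and the finitely many leading bits are irrelevant to $p_\phi$, to irrationality and to transcendence, so I would work with the infinite binary word $u$ whose $1$'s occur exactly at the positions in $S$. I would also fix the reading of ``grows exponentially'' that the argument really uses: there are $\rho>1$ and $n_0$ with $\delta_\phi(n+1)\ge\rho\,\delta_\phi(n)$ for all $n\ge n_0$ (the Fibonacci gaps $\delta_\phi(n)=F_{n-1}$ being the model case).

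The main step is the bound on $p_\phi(m)=p_{r_\phi}(m)$, obtained by classifying the length-$m$ factors of $u$ according to how many $1$'s they contain. There is one factor with no $1$, namely $0^m$. A factor with exactly one $1$ has the form $0^{o}\,1\,0^{m-1-o}$ with $0\le o\le m-1$, so there are at most $m$ of these. For a factor $w=u[i..\,i+m-1]$ containing at least two $1$'s, write $S=\{s_1<s_2<\cdots\}$ (so $\delta_\phi(n)=s_{n+1}-s_n$) and let $s_j$ be the smallest element of $S$ with $s_j\ge i$; this is the absolute position of the first $1$ of $w$, and I set $o=s_j-i\in\{0,\dots,m-1\}$. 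The remaining $1$'s of $w$ sit at relative positions $o,\ o+\delta_\phi(j),\ o+\delta_\phi(j)+\delta_\phi(j+1),\dots$ truncated to length $m$, so $w$ is determined by the pair $(j,o)$. This pair is constrained: because $s_{j-1}\notin[i,i+m-1]$ we get $o<s_j-s_{j-1}=\delta_\phi(j-1)$ when $j\ge 2$, and because the second $1$ of $w$ is $s_{j+1}$ we get $o+\delta_\phi(j)\le m-1$.

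Hence the number of length-$m$ factors with at least two $1$'s is at most the number of admissible pairs $(j,o)$: the least index $j=1$ contributes at most $m$, and for each $j\ge2$ with $\delta_\phi(j)\le m-1$ there are at most $\delta_\phi(j-1)$ admissible offsets. Reindexing, this count is at most $m+\sum_{k\ge1:\ \delta_\phi(k)\le m-1}\delta_\phi(k)$; writing $K=\max\{k:\delta_\phi(k)\le m-1\}$ and using $\delta_\phi(k+1)\ge\rho\,\delta_\phi(k)$ for $k\ge n_0$ (the finitely many terms below $n_0$ absorbed into a constant), a one-line geometric-series estimate gives $\sum_{k=1}^{K}\delta_\phi(k)\le O(1)+\frac{\rho}{\rho-1}\,\delta_\phi(K)\le O(1)+\frac{\rho}{\rho-1}(m-1)=O(m)$. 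Summing the three cases yields $p_\phi(m)=O(m)$. Since $r_\phi$ is irrational, Theorem~\ref{th:AB} would force $\liminf_{m\to\infty}p_\phi(m)/m=+\infty$ if $r_\phi$ were algebraic; as $p_\phi(m)/m$ is bounded, $r_\phi$ is transcendental.

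The delicate point — and the only place the full force of exponential (rather than merely super-polynomial) growth of $\delta_\phi$ is needed — is the passage from the naive bound $O(m\log m)$ to $O(m)$. Without a uniform lower bound $\rho>1$ on the ratios $\delta_\phi(k+1)/\delta_\phi(k)$ one can still bound the number of indices $k$ with $\delta_\phi(k)\le m$ by $O(\log m)$, each contributing up to $m$ pairs, giving only $O(m\log m)$ — which does not contradict Theorem~\ref{th:AB}. The saving comes precisely from the constraint $o<\delta_\phi(j-1)$: a window cannot reach back into the much larger preceding gap, so the admissible offsets for a given $j$ number at most $\delta_\phi(j-1)$, and these bounds form a geometric series summing to $O(m)$. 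So the technical heart of the proof is making that constraint do its work, and everything else is bookkeeping.
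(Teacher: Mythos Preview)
The paper does not actually give a proof of this proposition; it merely states it as a consequence of Theorem~\ref{th:AB} (Adamczewski--Bugeaud), preceded by the remark that the Fibonacci spectrum yields a transcendental spectral real. Your argument supplies precisely the missing details along the route the paper implicitly has in mind: bound the factor complexity $p_\phi(m)$ by $O(m)$ and then invoke Theorem~\ref{th:AB}.

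Your counting argument is correct. The key observation --- that a length-$m$ factor with at least two $1$'s is determined by the pair $(j,o)$, and that the offset constraint $o<\delta_\phi(j-1)$ turns the naive $O(m\log m)$ bound into a geometric sum dominated by its last term $\delta_\phi(K)\le m-1$ --- is exactly right, and your closing paragraph isolates cleanly why the exponential growth of the gaps (rather than any weaker rate) is what makes the $O(m)$ bound go through. The reindexing step is a harmless overcount (you replace the condition $\delta_\phi(k+1)\le m-1$ by the weaker $\delta_\phi(k)\le m-1$, valid since $\delta_\phi$ is increasing). The irrationality check is also needed and correctly handled. There is nothing to correct.
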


The analysis of computable reals in binary or $b$-adic presentation
is tricky because of the behaviour of the carry, cf. \cite{ar:ChenSuZheng2007a}.
Let $\mathcal{F}$ be a class of functions $f: \N \rightarrow \N$.
\begin{definition}
\item
A real number $\alpha$ is called {\em $\mathcal{F}$-Cauchy computable} if there are functions
$f,g,h \in \mathcal{F}$ such that for
$$
r_n = \frac{f(n)-g(n)}{h(n)+1}
$$
we have that for all $n \in \N$
$$
\mid r_n -\alpha \mid \leq \frac{1}{n+1}.
$$
\item
A real number $\alpha \in [0,1]$ in $b$-adic presentation is called {\em $\mathcal{F}$-computable} 
if there is $f: \N \rightarrow \{0, \ldots , b-1\}$ such that
$$
\alpha = \sum_{n \in \N} f(n) b^{-n}
$$
\end{definition}
Note that it is not clear at all how to define spectral Cauchy reals.
If $\mathcal{F}$ contains the function $2^{n}$ then the $\mathcal{F}$ computable
reals in $b$-adic presentation are also $\mathcal{F}$-Cauchy computable.
In particular, this is true for $\mathcal{F}= \mathcal{E}^i$ and $i \geq 3$.
\begin{openquestion}
\label{e2-reals}
Are the $b$-adic $\mathcal{E}^2$-computable reals
$\mathcal{E}^2$-Cauchy computable? 
\end{openquestion}

Recently, $\mathcal{E}^2$-Cauchy computable reals have received quite a bit of attention,
cf. \cite{ar:Skordev2002,ar:Skordev2008}.
The following summarizes what is known.

\begin{proposition}[D. Skordev]
\begin{renumerate}
\item
The $\mathcal{E}^2$-Cauchy computable reals form a real closed field.
\item
The transcendental numbers $e$ and $\pi$, and the Euler constant
$\gamma$ and the Liouville number $\sum_{n \in \N} 10^a{-n!}$ are
$\mathcal{E}^2$-Cauchy computable. 
\item
There are
$\mathcal{E}^3$-Cauchy computable reals which are not
$\mathcal{E}^2$-Cauchy computable. 
\end{renumerate}
\end{proposition}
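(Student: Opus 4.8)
The plan rests on one structural fact, already used for Theorem~\ref{thm:friedman}: by Ritchie's theorem \cite{ar:Ritchie63}, $\mathcal{E}^2$ is exactly the class of functions computable in space linear in the length of the argument. Hence every $f\in\mathcal{E}^2$ is polynomially bounded, $\mathcal{E}^2$ is closed under composition and under the integer arithmetic operations on polynomially sized data, and a rational $r_n=(f(n)-g(n))/(h(n)+1)$ with $f,g,h\in\mathcal{E}^2$ has numerator and denominator of only $O(\log n)$ bits. Respecting this polynomial-size budget everywhere is the recurring issue, and the delicate point in each part.

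For (i) I would first dispose of the field operations. Given $\mathcal{E}^2$-Cauchy data for $\alpha$ and $\beta$, take $r^\alpha_{cn}\pm r^\beta_{cn}$, respectively $r^\alpha_{cn}\cdot r^\beta_{cn}$, for a constant $c$ absorbing the loss in the error estimate (using that $|r^\alpha_n|,|r^\beta_n|$ are eventually bounded by a constant for the product), reduce the resulting fraction by cross-multiplication, and note that numerators and denominators stay polynomially bounded, being products of two polynomially bounded integers; for $1/\beta$ with $\beta\neq 0$, first find a fixed $k$ with $|\beta|\geq 1/(k+1)$ and then invert $r^\beta_{cn}$ with $c$ enlarged accordingly. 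The order is inherited from $\mathbb{R}$ and is plainly $\mathcal{E}^2$. Real-closedness I would obtain by showing the field is relatively algebraically closed in $\mathbb{R}$: a real root $\theta$ of a polynomial $q$ with $\mathcal{E}^2$-Cauchy coefficients is itself $\mathcal{E}^2$-Cauchy computable, by locating $\theta$ among the finitely many roots, bounding it away from the others via an effective root-separation estimate (computed from rational approximations of the coefficients, after passing to the squarefree part $q/\gcd(q,q')$), and then refining through rational-coefficient approximations of $q$. The hard part — the crux of the whole theorem — is to arrange this so that the precision needed at each refinement step, in particular to decide signs near $\theta$, is controlled uniformly by an $\mathcal{E}^2$ function; this is precisely where the squarefree reduction and the separation bound are indispensable, and where a careless argument would overrun the linear-space budget.

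For (ii) the idea is that $e$, $\pi$, $\gamma$ and the Liouville number $\ell=\sum_{k\in\N}10^{-k!}$ each admit an algorithm one can run in linear space producing rational approximants of polynomial size. For $e$, truncate $\sum_k 1/k!$ after $m$ terms with $m$ least such that $(m+1)!\geq 4(n+1)$ (so $m!$ is polynomial in $n$), evaluate the partial sum in fixed point with $O(\log n)$ bits after the point, and read off a rational with power-of-two denominator; for $\pi$, do the same with a Machin-type arctangent series, which converges geometrically so $O(\log n)$ terms suffice; for $\ell$, the series is so sparse that $m=O(\log\log n)$ terms already overshoot the required precision. The one needing a second look is $\gamma$: its defining limit $H_m-\ln m$ converges only like $1/m$, so one must take $m=\Theta(n)$ terms — still fine, since $\Theta(n)$ iterations of $O(\log n)$-bit fixed-point arithmetic (with $\ln$ computed by its own geometrically convergent series) run in polynomial time and logarithmic space — or one invokes an exponentially convergent evaluation of $\gamma$ to keep the term count logarithmic. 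In each case it only remains to check that the truncation length and every intermediate value lie within the polynomial-size budget.

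For (iii) the plan is a diagonalization. The $\mathcal{E}^2$-Cauchy computable reals are countable, being indexed by triples of $\mathcal{E}^2$-functions, and $\mathcal{E}^3$ is large enough to enumerate these triples and to simulate the underlying linear-space machines, absorbing the blow-up in the machine index that $\mathcal{E}^2$ itself cannot. I would then build $\alpha$ by reserving the $i$-th block of its binary expansion and choosing its bits so that $\alpha$ disagrees with the limit of the $i$-th scheme by more than that scheme's guaranteed error on that block; spreading the blocks out keeps the $n$-th approximant of $\alpha$ computable by simulating only the first few schemes for boundedly many steps, an $\mathcal{E}^3$ computation, so $\alpha$ is $\mathcal{E}^3$-Cauchy computable and equals no $\mathcal{E}^2$-Cauchy computable real. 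Overall I expect the real difficulties to be the size bookkeeping in (i) and (ii) rather than anything conceptual — above all making the root-refinement in the real-closedness argument fit into linear space.
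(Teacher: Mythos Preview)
The paper does not prove this proposition at all: it is stated without proof as a summary of Skordev's results, with citations to \cite{ar:Skordev2002,ar:Skordev2008}. So there is no ``paper's own proof'' to compare against; your sketch already goes well beyond what the survey provides.

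That said, your outline is broadly the right one and matches the standard arguments in Skordev's papers. A few places deserve more care. For (i), relative algebraic closure in $\mathbb{R}$ is indeed the clean route, but the root-separation bound you invoke must itself be computed from $\mathcal{E}^2$-Cauchy data for the coefficients, and the passage to the squarefree part involves a gcd of polynomials with only approximately known coefficients; making this rigorous is where the actual work lies. For (ii), your treatment of $\gamma$ is the weak spot: computing $H_m$ for $m=\Theta(n)$ as an exact rational would give denominators of size $\mathrm{lcm}(1,\dots,m)\sim e^m$, far outside the polynomial budget, so the fixed-point approach you mention is not optional but essential, and you should check that the accumulated rounding error over $\Theta(n)$ terms is still $O(1/n)$ with $O(\log n)$ bits of working precision. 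For (iii), note that you are building $\alpha$ via its binary expansion but must conclude it is $\mathcal{E}^3$-\emph{Cauchy} computable; this is fine precisely because $2^n\in\mathcal{E}^3$ (the paper remarks on this just before the proposition), but it is worth saying explicitly, since the analogous implication fails for $\mathcal{E}^2$. Also, in the diagonalization you must avoid not the limit $\beta_i$ (which you cannot compute) but the single rational $r^{(i)}_{n_i}$, with a margin of $2/(n_i+1)$ rather than $1/(n_i+1)$, and the block lengths must be chosen so that enough room remains after the earlier bits are fixed.
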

Let $\mathcal{F}_{low}$ be the smallest class of functions in $\mathcal{E}^2$
which contains the constant functions, projections, successor, modified difference,
and which is closed under composition and bounded summation.
A real $\alpha$ is {\em low} if $\alpha$ is $\mathcal{F}_{low}$-Cauchy computable.
The low reals also form a real closed field.
In \cite{ar:TentZiegler2009} low reals are studied and some very deep theorems about
low transcendental numbers are
obtained, the discussion of which would take too much space.
\begin{openquestion}
\label{low}
Is the inclusion $\mathcal{F}_{low} \subseteq \mathcal{E}^2$ proper?
\end{openquestion}

\ 
\section{Approach I: Recursion Theory}~\label{section recursion}
\label{recursion}
\ 
\\
\ 

This approach has generated all in all four papers (namely \cite{ar:Asser55} 
by G. Asser in 1955, \cite{ar:Mostowski56} by A. Mostowski in 1956, \cite{ar:Ritchie63} by
R. Ritchie, and \cite{ar:Mo91} 
by S. Mo in 1991) and two Ph.D. dissertations, namely
\cite{phd:Ritchie60} by R. Ritchie in 1960 and
\cite{phd:Bennett62} by 
J. Bennett in 1962. 
These works share the common feature of being hardly available for many readers 
on various grounds: 
Asser's and Mostowski's papers are difficult to read because they are more 
than fifty years old and Asser's paper is in German. 
Bennett's thesis, cited in many papers, is almost equally old and in 
addition has remained unpublished. 
Finally, Mo's paper, though more recent, is in Chinese. 
This is the reason why we propose in Section \ref{section technical} 
a detailed review of these references, 
including several sketches of proofs in modern language.
In the present section, after some background material, 
we present a synthetic survey of the recursive approach of the spectrum problem.

\ 
\subsection{Grzegorczyk's Hierarchy}
For a detailed presentation of the material in this subsection, see eg. \cite{bk:Rose84}. 
A. Grzegorczyk's seminal paper \cite{ar:Grzegorczyk53} about 
classification of primitive recursive functions was published in 1953, one year 
after Scholz's question, and two years before Asser's paper. Hence, 
Grzegorczyk's Hierarchy was not the standard way to consider primitive 
recursive functions  in the mid-fifties. And actually, G. Asser and A. Mostowski deal 
with recursive aspects of spectra, but not explicitly with Grzegorczyk's 
classes, though it is the usual framework in which their results are presented. 
It is only in J. Bennett's thesis in 1962 and especially in S. Mo's paper in 
1991 that one finds an explicit study of spectra in terms of Grzegorczyk's 
classes.
\medskip

In the sequel a function is always intended to be a function from some 
${\mathbb N}^k$ to $\mathbb N$ (total, unless otherwise specified).
\begin{definition}[Elementary functions]
The class $\mathcal E$ of elementary functions is the smallest class of functions 
containing the zero, successor, projections, addition, multiplication and 
modified subtraction functions and which is closed under 
composition and bounded sum and product 
(i.e. $f(n,\vec{x})=\sum_{i=0}^ng(i,\vec{x})$ and 
$f(n,\vec{x})=\prod_{i=0}^ng(i,\vec{x})$, with previously defined $g$). 
We denote by ${\mathcal E}_{\star}$ the elementary relations, 
i.e. the class of relations whose characteristic functions are elementary.
\end{definition}

The class $\mathcal E$ was introduced by Kalm\'ar \cite{ar:Kalmar43} 
and Csillag \cite{ar:Csillag47} in the forties, and contains most usual 
number-theoretic functions. 
It also corresponds to Grzegorczyk's class ${\mathcal E}^3$, that we define below.

\begin{definition}[Primitive recursion]
Let $f,g,h$ be functions. 
We say that $f$ is defined from $g$ and $h$ by primitive recursion when it obeys a schema: 
$\left\{
\begin{array}{lll}
f(0,\vec{x})&=&g(\vec{x})\\f(n+1,\vec{x})&=&h(n,\vec{x},f(n,\vec{x}))
\end{array}
\right.$.
 
The class of primitive recursive functions, denoted by 
$\mathcal PR$, is the smallest class of functions containing the zero function, 
the successor function, the projection functions, and which is closed under composition 
and primitive recursion. 
\end{definition}

For instance, elementary functions are primitive recursive. 
The following binary function $Ack$, known as Ackermann's function, 
is provably not primitive recursive, 
whereas all unary specialised functions $Ack_x:y\mapsto Ack(x,y)$ are primitive recursive: 

$\left\{\begin{array}{l}Ack(0,y)=y+1\\
Ack(x+1,0)=Ack(x,1)\\
Ack(x+1,y+1)=Ack(x,Ack(x+1,y))\end{array}\right.$
\medskip

In order to introduce Grzegorczyk's hierarchy, 
we need a weaker version of primitive recursion, 
in which the newly defined functions have to be bounded by some previously defined function.

\begin{definition}[Bounded recursion]
Let $f,g,h,j$ be functions. 
We say that $f$ is defined from $g$, $h$ and $j$ by bounded recursion when it obeys a schema: 
$\left\{
\begin{array}{lll}
f(0,\vec{x})&=&g(\vec{x})\\f(n+1,\vec{x})&=&h(n,\vec{x},f(n,\vec{x}))\\f(n,\vec{x})&\leq&j(n,\vec{x})
\end{array}
\right.$
\end{definition}

Let $f_n$ ($n=0,1,2,\ldots$) be the following sequence of primitive recursive functions : 
\begin{itemize}
\item[-] $f_0(x,y)=y+1$,
\item[-] $f_1(x,y)=x+y$,
\item[-] $f_2(x,y)=(x+1)\cdot(y+1)$, 
\item[-] and  for $k\geq 0$ 
$\left\{
\begin{array}{ll}f_{k+3}(0,y)&=f_{k+2}(y+1,y+1)\\ f_{k+3}(x+1,y)&=f_{k+3}(x,f_{k+3}(x,y))
\end{array}
\right.$
\end{itemize}

Roughly speaking, the important feature is that the functions $f_n$ are 
more and more rapidly growing. 
Several other similar sequences of increasingly growing functions can be used 
to define Grzegorczyk's classes.

\begin{definition}[Grzegorczyk's hierarchy]
The Grzegorczyk's class ${\mathcal E}^n$ is the smallest class of functions 
containing the zero function, the projections functions and $f_n$ and 
which is closed under composition and bounded recursion. 
The associated classes of relations ${\mathcal E}_{\star}^n$ are defined 
as the class of relations on integers with a characteristic function in ${\mathcal E}^n$.
\end{definition}

Note that, for sake of simplicity, we use the same notation for a class of relations 
of various arities (eg. ${\mathcal E}_{\star}^3$) 
and the class of unary relations (i.e. sets) it contains. 
Which one is intended will always be clear from the context.
\medskip

The main features of Grzegorczyk's classes were studied by A. Grzegorczyk 
in \cite{ar:Grzegorczyk53} and by R. Ritchie in \cite{ar:Ritchie63}.

\begin{theorem}[A. Grzegorczyk (1953)]\ 
\begin{itemize}
\item[-] The functional hierarchy is strict for $n\geq 0$, i.e. we have ${\mathcal E}^n\subsetneq{\mathcal E}^{n+1}$.
\item[-] The relational hierarchy is strict for $n\geq 3$, i.e. we have ${\mathcal E}^n_{\star}\subsetneq{\mathcal E}^{n+1}_{\star}$.
\item [-] For the initial levels of the relational hierarchy, we have ${\mathcal E}_{\star}^0\subseteq{\mathcal E}_{\star}^1\subseteq{\mathcal E}_{\star}^2\subseteq{\mathcal E}_{\star}^3$.
\item[-] The Kalm\'ar-Csillag class of elementary functions $\mathcal E$ is equal to ${\mathcal E}^3$.
\item[-] Finally, the full hierarchy corresponds to primitive recursion, i.e. ${\mathcal PR}=\bigcup_{n=0}^{+\infty}{\mathcal E}^n$.
\end{itemize}
\end{theorem}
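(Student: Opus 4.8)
The plan is to dispatch the five assertions in order of increasing difficulty, resting everything on one \emph{growth lemma} plus, for the last item only, a diagonal construction. Write $g_n(x):=f_n(x,x)$ and let $g_n^{(k)}$ denote $k$-fold iteration. The growth lemma says that every $f\in\mathcal E^n$ satisfies $f(\vec x)\le g_n^{(k)}(\max\vec x)$ for some $k$ depending only on $f$; I would prove it by induction on the derivation of $f$ --- base functions are dominated by $g_n$ itself (using monotonicity of $f_n$ in all arguments), a composition just adds up the iterate counts, and in a bounded recursion $f\le j$, so the bound is inherited from the induction hypothesis applied to $j\in\mathcal E^n$. Dually, a short induction on $m$ gives $f_m\in\mathcal E^{m+1}$: rewriting the nested recursion defining $f_m$ as ordinary iteration of $f_m(0,\cdot)$ (which is built from $f_{m-1}\in\mathcal E^m\subseteq\mathcal E^{m+1}$) turns it into a \emph{bounded} recursion, the intermediate values staying below a suitable application of $f_{m+1}$. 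This already yields all the inclusions $\mathcal E^n\subseteq\mathcal E^{n+1}$ --- hence the chain $\mathcal E^0_\star\subseteq\mathcal E^1_\star\subseteq\mathcal E^2_\star\subseteq\mathcal E^3_\star$, since an inclusion of function classes passes to the associated classes of relations --- and it gives functional strictness: since $f_{n+1}(x{+}1,y)=f_{n+1}(x,f_{n+1}(x,y))$ makes $f_{n+1}(x,\cdot)$ an $x$-fold iteration of lower functions, $g_{n+1}$ eventually dominates every iterate $g_n^{(k)}$, so $f_{n+1}\notin\mathcal E^n$ by the growth lemma; the cases $n=0,1,2$ also follow at once, because $\mathcal E^0$, $\mathcal E^1$, $\mathcal E^2$ contain only functions bounded respectively by $\max\vec x+c$, by $c\cdot\max\vec x+c$, and by a polynomial, whereas $f_1,f_2,f_3$ break these bounds.

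For $\mathcal E=\mathcal E^3$ I would establish the two inclusions separately. For $\mathcal E\subseteq\mathcal E^3$ it suffices that the generators of $\mathcal E$ lie in $\mathcal E^3$ --- $+$ and $\times$ are $f_1$ and (up to a shift) $f_2$, and modified subtraction is obtained by a bounded recursion inside $\mathcal E^2$ --- together with closure of $\mathcal E^3$ under bounded sum and bounded product; the latter holds because $\mathcal E^3$ is closed under bounded recursion \emph{by its very definition}, and by the growth lemma a bounded sum or product of an $\mathcal E^3$-function is again dominated by an $\mathcal E^3$-function, so the defining recursion is admissible. For the converse $\mathcal E^3\subseteq\mathcal E$ I would invoke the classical closure of $\mathcal E$ under bounded recursion: one codes the sequence $f(0),\dots,f(n)$ by a single number through an elementary pairing (or the $\beta$-function), recovers $f(n)$ by an elementary decoding, and produces the sequence-code by a bounded search, the bound existing because the entries are dominated by the prescribed bounding function $j\in\mathcal E$. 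Granting this, $f_3$ --- definable by iterated application of $f_2\in\mathcal E$ with an explicit elementary bound --- lies in $\mathcal E$, and closure of $\mathcal E$ under composition gives $\mathcal E^3\subseteq\mathcal E$. Finally, for $\mathcal{PR}=\bigcup_n\mathcal E^n$ the inclusion $\supseteq$ is immediate (each $f_n$ is patently primitive recursive and bounded recursion is a restriction of primitive recursion), and for $\subseteq$ I would induct on the construction of a primitive recursive $f$: the only nontrivial case is an unbounded primitive recursion with parameters $g,h\in\mathcal E^m$, where the standard estimate --- iterating $h$ at most $n$ times is controlled by a single application of an $f_{m+1}$-type function --- shows $f$ is dominated by a function of $\mathcal E^{m+1}$, so that recursion is in fact bounded inside $\mathcal E^{m+1}$ and $f\in\mathcal E^{m+1}$.

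The one real difficulty is the relational strictness $\mathcal E^n_\star\subsetneq\mathcal E^{n+1}_\star$ for $n\ge3$, where the growth argument is useless ($0$--$1$-valued characteristic functions) and one must diagonalise. Here I would exploit that for $n\ge3$ the class $\mathcal E^n\supseteq\mathcal E$ is rich enough to code its own computations: fix a standard indexing $e\mapsto R_e$ of the bounded-recursion derivations of unary $\mathcal E^n$-relations, each tagged with an iterate-of-$f_n$ bound on its intermediate values, and form the universal relation $U_n=\{(e,x):x\in R_e\}$ by unfolding the derivation named by $e$. By plain self-reference the diagonal set $D_n=\{e:(e,e)\notin U_n\}$ is not in $\mathcal E^n_\star$ (an index for it would give $e\in D_n\iff e\notin D_n$), whereas $D_n\in\mathcal E^{n+1}_\star$: deciding $(e,e)\in U_n$ amounts to simulating at most $\ell(e)$ recursion steps with values below $f_n^{(k(e))}$, and such a simulation can be encoded and performed by an $\mathcal E^{n+1}$-function --- the parametrised iterate $\lambda t,\vec x.\,f_n^{(t)}(\vec x)$ is a bounded recursion bounded by an $f_{n+1}$-term, hence lies in $\mathcal E^{n+1}$, and the bookkeeping needs only pairing, $\mathrm{Bit}$ and bounded quantification, all available because $\mathcal E^{n+1}\supseteq\mathcal E^3=\mathcal E$. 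I expect the real work to concentrate exactly here: verifying that the simulation of a length-$\ell$, $f_n^{(k)}$-bounded derivation is performable \emph{within} $\mathcal E^{n+1}$ --- this is precisely where $n\ge3$, i.e. the availability of enough elementary coding, is essential, and where the low-level cases genuinely break down --- and pinning down the growth lemma with bounds sharp enough to make all the ``escapes from $\mathcal E^n$'' above rigorous.
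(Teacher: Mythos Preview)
The paper does not prove this theorem. It is a survey, and Grzegorczyk's result is stated as background with a citation to \cite{ar:Grzegorczyk53}; the surrounding text refers the reader to Rose \cite{bk:Rose84} for details. There is therefore no in-paper proof to compare your proposal against.

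That said, your plan is the standard one and is essentially what one finds in Rose. A few remarks. For the inclusion $\mathcal E^n\subseteq\mathcal E^{n+1}$ you need $f_n\in\mathcal E^{n+1}$; your sketch of this is fine, but note that with the particular $f_n$'s used here one usually shows by induction that $f_0,\dots,f_n$ all lie in $\mathcal E^n$ (not merely $f_{n-1}\in\mathcal E^n$), which streamlines several later steps. For $\mathcal E^3\subseteq\mathcal E$ the cleanest route is bounded minimisation rather than bounded search on sequence codes: one shows $\mathcal E$ is closed under bounded $\mu$, hence under bounded recursion, and then checks $f_3\in\mathcal E$ directly. Your diagonalisation for $\mathcal E^n_\star\subsetneq\mathcal E^{n+1}_\star$ ($n\ge3$) is the right idea, and you have correctly located both the difficulty and the reason the argument needs $n\ge3$ (elementary coding of derivations requires the arithmetic available only from $\mathcal E^3$ upward). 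The one place to be careful is making the universal relation $U_n$ genuinely land in $\mathcal E^{n+1}_\star$: the iterate bound $k(e)$ must be read off the code $e$ by an $\mathcal E^{n+1}$ function, and the step-by-step simulation must be packaged as a single bounded recursion with an $\mathcal E^{n+1}$ bound; this is routine but is exactly where hand-waving can hide a circularity.
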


\begin{theorem}[R. Ritchie (1963)]
We have ${\mathcal E}^2_{\star}\neq{\mathcal E}^3_{\star}$ \cite{ar:Ritchie63}.
\end{theorem}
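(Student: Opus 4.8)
The plan is a diagonal argument carried out at the level of predicates: I would produce a set $D \subseteq \N$ lying in $\mathcal{E}^3_{\star}$ but not in $\mathcal{E}^2_{\star}$. It should be stressed that the functional separation $\mathcal{E}^2 \subsetneq \mathcal{E}^3$ recorded in Grzegorczyk's theorem above does \emph{not} give this for free: a fast-growing function in $\mathcal{E}^3 \setminus \mathcal{E}^2$, such as $2^n$ or $n!$, need not yield a hard \emph{relation}, which is exactly why the strictness of the relational hierarchy is a separate theorem. So a genuine diagonalisation over $\mathcal{E}^2$ is needed.

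First I would fix a G\"odel numbering $e \mapsto \varphi_e$ of the unary functions of $\mathcal{E}^2$, obtained by coding the finite build-sequences (generating terms) admitted in the definition of $\mathcal{E}^2$, so that every unary $f \in \mathcal{E}^2$ equals $\varphi_e$ for some code $e$. The reason to stay inside $\mathcal{E}^2$ rather than all of $\mathcal{PR}$ is this: an easy induction along build-sequences shows that every $f \in \mathcal{E}^2$ is bounded by a polynomial in its arguments (the base functions are, and composition and bounded recursion both preserve polynomial boundedness). Consequently, at argument $n$ all the intermediate values produced by a build-sequence of length $|e|$ are bounded by an elementary function of $e$ and $n$, so the whole simulation of $\varphi_e$ on $n$ --- iterating the bounded recursions and composing the finitely many steps --- stays within an elementary resource bound. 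Hence the universal function $U(e,n) = \varphi_e(n)$ belongs to $\mathcal{E}^3$, and in particular the predicate ``$U(n,n) = 0$'' lies in $\mathcal{E}^3_{\star}$. (This is the familiar ``one level up'' phenomenon: $\mathcal{E}^2$ has a universal function in $\mathcal{E}^3$, though none in $\mathcal{E}^2$.)

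Then I would diagonalise: put $D = \{\, n \in \N : U(n,n) = 0 \,\}$, so $\chi_D \in \mathcal{E}^3$ by the previous paragraph, hence $D \in \mathcal{E}^3_{\star}$. If $D$ were in $\mathcal{E}^2_{\star}$, then $\chi_D = \varphi_{e_0}$ for some code $e_0$, and then $e_0 \in D \iff U(e_0,e_0) = 0 \iff \varphi_{e_0}(e_0) = 0 \iff \chi_D(e_0) = 0 \iff e_0 \notin D$, a contradiction. Therefore $D \in \mathcal{E}^3_{\star} \setminus \mathcal{E}^2_{\star}$, which gives $\mathcal{E}^2_{\star} \subsetneq \mathcal{E}^3_{\star}$. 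One could instead run the argument through Ritchie's identification of $\mathcal{E}^2_{\star}$ with a linear-space decidable class together with the deterministic space hierarchy theorem (a linear space bound is dominated by $2^n$, and $\DSpace{2^n} \subseteq \mathcal{E}^3_{\star}$), but that is the same diagonalisation in complexity-theoretic clothing.

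I expect the only real work to be the verification that $U \in \mathcal{E}^3$: arranging the enumeration so that a polynomial bound on $\varphi_e$ can be read off uniformly from $e$, and checking that simulating a length-$|e|$ build-sequence on input $n$ costs an elementary --- not a super-elementary --- amount of resource. This is precisely where the restriction to $\mathcal{E}^2$ (as opposed to $\mathcal{PR}$) is essential. Everything else --- the closure properties of $\mathcal{E}^2$ and $\mathcal{E}^3$ quoted above, the diagonal step, and the optional appeal to Ritchie's characterisation --- is routine.
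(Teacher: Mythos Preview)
The paper does not supply its own proof of this statement; it simply records Ritchie's result with a citation. So there is nothing in the paper to compare against directly.

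Your proposal is sound and is in fact the standard way to obtain this separation. The two ingredients you identify are the right ones: (a) a uniform enumeration of the build-terms for $\mathcal{E}^2$ together with the polynomial boundedness of all $\mathcal{E}^2$-functions, from which one extracts a universal function $U(e,n)=\varphi_e(n)$ lying in $\mathcal{E}^3$; and (b) the usual diagonal flip. You are also right that the functional strictness $\mathcal{E}^2\subsetneq\mathcal{E}^3$ is irrelevant here and that the real work sits entirely in checking $U\in\mathcal{E}^3$. Your alternative via $\mathcal{E}^2_\star=\LINSPACE$ (stated later in the survey as Theorem~\ref{ritchie-kuroda}) together with a space-hierarchy style diagonalisation is closer in spirit to Ritchie's original route, and as you say it is the same diagonal argument dressed in machine language. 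Either packaging is acceptable; the survey is content to quote the result.
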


Note that the possible separation of the relational classes 
${\mathcal E}_{\star}^0,{\mathcal E}_{\star}^1,{\mathcal E}_{\star}^2$ 
is still an open question.

\begin{openquestion}
\label{e0e1e2}
Are the inclusions in
$$
{\mathcal E}_{\star}^0 \subseteq {\mathcal E}_{\star}^1 \subseteq {\mathcal E}_{\star}^2
$$ 
proper?
\end{openquestion}
An important point is that the functional hierarchy deals with the rate at 
which functions may grow: 
intuitively, functions in the low level of the hierarchy grow very slowly, 
while functions higher up in the hierarchy grow very rapidly. 
However, this feature does not hold for the relational hierarchy, 
because characteristic functions do not grow at all (they are $0-1$ valued). 
For instance, the ternary relations 
$z=x+y$, $z=x\times y$, $z=x^y$ as well as $z=Ack(x,y)$ 
all belong to ${\mathcal E}_{\star}^0$, 
whereas the corresponding functions provably do not lie in ${\mathcal E}^0$.

\ 
\subsection{Rudimentary relations and strictly rudimentary relations}\label{rudimentary}

In addition to primitive recursive classes of relations, 
we also introduce two new classes of relations with an arithmetical flavour, 
namely the rudimentary and strictly rudimentary relations. 
These classes were originally introduced by R. Smullyan \cite{bk:Smullyan61}, 
and a major reference about rudimentary relations and subclasses 
is J. Bennett's thesis \cite{phd:Bennett62}.

\begin{definition}[Rudimentary relations]
Denote by $\RUD$ the smallest class of relations over integers 
containing the graphs of addition and multiplication (seen as ternary relations) 
and closed 
under 
Boolean operations ($\neg$, $\wedge$, $\vee$)
and bounded quantifications 
($\forall x<y  \ldots$ and 
$\exists x<y  \ldots$).
\end{definition}

In spite of its very restricted definition, the class $\RUD$ is surprisingly 
robust (eg. it has several equivalent definitions in the fields of 
computational complexity, recursion theory, formal languages etc.) and large. 
For instance, the following formula defines the set of prime numbers: 
$$x>1 \ \wedge \ \forall y<x \ \forall z<x \ \neg (\ x=y.z \ )$$ 
More (sometimes VERY) 
sophisticated formulas prove that the ternary relation $z=x^y$ is rudimentary 
(Bennett \cite{phd:Bennett62}), as well as the graph $z=Ack(x,y)$ of 
Ackermann's function (Calude \cite{ar:Calude87}), which is not primitive 
recursive (as a function), or the four-ary relation $x^y\equiv z\ [mod\ t]$ 
(Hesse, Allender, Barrington \cite{ar:HesseAB02}). Actually, we are not aware 
of a natural number theoretic relation which is provably not rudimentary.

The following is easy to see.
\begin{proposition}
$\RUD \subseteq 
{\mathcal E}^0_{\star} \subseteq
{\mathcal E}^1_{\star} \subseteq
{\mathcal E}^2_{\star}$. 
\end{proposition}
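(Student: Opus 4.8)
The plan is to reduce the statement to its nontrivial part. The inclusions $\mathcal{E}^0_{\star} \subseteq \mathcal{E}^1_{\star} \subseteq \mathcal{E}^2_{\star}$ are exactly the chain recorded in Grzegorczyk's theorem quoted above (``for the initial levels of the relational hierarchy, we have $\mathcal{E}_{\star}^0\subseteq\mathcal{E}_{\star}^1\subseteq\mathcal{E}_{\star}^2\subseteq\mathcal{E}_{\star}^3$''), so nothing is left to do there. For $\RUD \subseteq \mathcal{E}^0_{\star}$ I would argue by structural induction on the definition of $\RUD$, carrying along the invariant that the relation built so far has its characteristic function in $\mathcal{E}^0$. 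The base case is the remark made just before this proposition: the graphs of addition and multiplication, read as ternary relations, already lie in $\mathcal{E}^0_{\star}$ (even though the functions $+$ and $\times$ themselves are not in $\mathcal{E}^0$). Hence the entire content is to check that $\mathcal{E}^0_{\star}$ is closed under $\neg,\wedge,\vee$ and under the bounded quantifiers $\exists x<y$ and $\forall x<y$.

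Before checking those closures it is convenient to assemble a small toolkit of functions known to be in $\mathcal{E}^0$: the constant $1$ (successor composed with zero), the predecessor (bounded recursion iterating nothing, bounded by the identity projection), modified subtraction $x \dot{-} y$ (bounded recursion on $y$ iterating the predecessor, bounded by $x$), the binary minimum $\min(x,y)=x\dot{-}(x\dot{-}y)$, and the ``Boolean maximum'' $1\dot{-}\min(1\dot{-}u,1\dot{-}v)$, which on inputs from $\{0,1\}$ returns $\max(u,v)$. With this toolkit the closure properties become routine: if $\chi_R,\chi_S\in\mathcal{E}^0$ then $\chi_{\neg R}=1\dot{-}\chi_R$, $\chi_{R\wedge S}=\min(\chi_R,\chi_S)$, and $\chi_{R\vee S}$ is obtained by De Morgan (or by the Boolean maximum), all of them compositions of $\mathcal{E}^0$ functions. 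For the bounded existential, the characteristic function $e(y,\vec z)$ of $\exists x<y\; R(x,\vec z)$ obeys $e(0,\vec z)=0$ and $e(y+1,\vec z)=\max(e(y,\vec z),\chi_R(y,\vec z))$ with the Boolean maximum; since $e(y,\vec z)\le 1$ this is a bounded recursion whose bounding function is the constant $1\in\mathcal{E}^0$, so $e\in\mathcal{E}^0$. The bounded universal quantifier is handled dually, or simply as $\neg\exists\neg$.

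The one genuinely delicate point — and the only place where a careless version of the argument would fail — is that $\mathcal{E}^0$ is an extremely thin class: addition and multiplication are not among its functions, and bounded sum and bounded product (which would make everything trivial, as in $\mathcal{E}^3$) are not among its closure operations. So every bounded recursion invoked above must have its output bounded by something already in the class, namely a projection-like term or a constant. What makes the proof go through is precisely that we only ever manipulate $0$--$1$-valued characteristic functions: all Boolean bookkeeping is done with $\dot{-}$ and $\min$, whose values never exceed their arguments, and every bounded-quantifier recursion produces a value bounded by the constant $1$. Verifying that each of the handful of auxiliary functions really does fit the bounded-recursion schema of $\mathcal{E}^0$ (with an explicit projection or constant as the bound $j$) is the routine-but-essential calculation I would actually carry out in full.
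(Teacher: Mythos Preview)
Your argument is correct and is exactly the natural way to unpack what the paper dismisses in one line as ``easy to see'' (no proof is given there). The structural induction on the formation rules of $\RUD$, together with your care that every auxiliary recursion stays bounded by the constant $1$ (so that the thinness of $\mathcal{E}^0$ is never an obstacle), is precisely what is required; the base cases are legitimately quoted from the paper's own remark that the graphs of $+$ and $\times$ lie in $\mathcal{E}^0_{\star}$.
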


However, the equality remains an open question (and would 
imply $\RUD = {\mathcal E}^2_{\star}$ as well, 
since the closure of ${\mathcal E}^0_{\star}$ 
by polynomial substitution is ${\mathcal E}^2_{\star}$ whereas
$\RUD$ is closed under polynomial substitution). 

\begin{openquestion}
\label{rud=e0}
Are the inclusions in
$\RUD \subseteq 
{\mathcal E}^0_{\star} \subseteq
{\mathcal E}^1_{\star} \subseteq
{\mathcal E}^2_{\star} 
$ proper?
\end{openquestion}
\medskip

It remains to introduce the strictly rudimentary relations.
Let us consider the dyadic representation of integers, 
i.e. $n\in{\mathbb N}$ is represented by a word in $\{1,2\}^*$. 
Compared to binary notation, dyadic notation avoids the problem 
of leading $0$s and yields a bijection between integers and words. 
When integers are seen as words, it is natural to consider 
subword quantifications instead of ordinary bounded quantification. 
We say that $w=w_1\ldots w_k$ is a subword of $v=v_1\ldots v_p$ and 
we denote $w\harp  v$ when there exists $1\leq i\leq p$ 
such that $w_1=v_i,\ldots,w_k=v_{i+k-1}$. 
Of course, if $x\harp  y$, then $x\leq y$.

\begin{definition}[Strictly rudimentary relations]
Denote by $\SRUD$ the smallest class of relations over integers 
containing the graphs of dyadic concatenation (seen as a ternary relation) 
and closed 
under 
Boolean operations ($\neg$, $\wedge$, $\vee$)
and subword quantifications 
($\forall x\harp  y  \ldots$ and 
$\exists x\harp  y  \ldots$).
\end{definition}

There are only few examples of strictly rudimentary relations, 
e.g. $x$ begins (or ends or is a part of) $y$ (as dyadic words), 
$x=y$, the dyadic representation of $x$ is a single symbol, 
the dyadic representation of $x$ contains only one type of symbol. 
On the other hand, several relations are provably not strictly rudimentary 
such as $x\leq y$, $x=y+1$, $x$ and $y$ have the same dyadic length 
and the dyadic representation of $x$ is of the form $1^n2^n$ for some $n$ 
(V. Nepomnjascii 1978, see \cite{ar:Nepomnjascii78}).

Note that rudimentary relations were originally (and equivalently) 
defined by Smullyan \cite{bk:Smullyan61} using dyadic concatenation 
as a basis relation instead of addition and multiplication. 
Clearly, we have $\SRUD\subsetneq \RUD$.

\ 
\subsection{Recursive and arithmetical characterizations of spectra}
\label{subsection recursion}
In the fifties and sixties, following the tastes of their time, 
logicians aim at characterizing $\SPEC$ via recursion and arithmetics. 
Typically, they wished to obtain the characteristic functions 
of spectra as the $0$-$1$-valued functions in a class defined 
by closure of a certain set of simple functions under certain operators 
(such as composition or various recursion schemas). 
From this point of view, their results are not totally satisfactory 
because they are either partial, or somehow cumbersome or unnatural.
However, these studies show that the class of spectra is very broad, 
and that most classical arithmetical sets are spectra. 
\medskip

The class $\SPEC$ is set within Grzegorczyk's hierarchy (by G. Asser in \cite{ar:Asser55} and A. Mostowski in \cite{ar:Mostowski56}), from which we can deduce that all rudimentary sets are spectra. 

\begin{theorem}[G. Asser (1955)]\label{theorem asser}
$\SPEC\subsetneq {\mathcal E}_{\star}^3$
\end{theorem}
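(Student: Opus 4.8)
The plan is to prove the two halves of $\SPEC \subsetneq {\mathcal E}_{\star}^3$ separately: first the inclusion $\SPEC \subseteq {\mathcal E}_{\star}^3$, then strictness by a diagonalization/counting argument showing some set in ${\mathcal E}_{\star}^3$ is not a spectrum.

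\textbf{The inclusion.} First I would observe that for a fixed first-order sentence $\phi$ over a relational vocabulary $\tau$, deciding whether $n \in \spec{\phi}$ amounts to deciding whether there is an interpretation of the relation symbols of $\tau$ on $[n]$ satisfying $\phi$. Coding such an interpretation as a tuple of natural numbers below $2^{n^{r}}$ (where $r$ is the maximal arity in $\tau$), as in the ``$\varphi'(n,y)$'' encoding used in the proof of the proposition on prescribed spectra, the condition ``$n \in \spec{\phi}$'' becomes an arithmetic formula of the shape $\exists \vec{y} < 2^{n^r}\, \varphi'(n,\vec{y})$ where $\varphi'$ is obtained from $\phi$ by relativizing quantifiers to $[n]$ and replacing atoms by $Bit$-expressions. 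The key points are: (i) the $Bit$ predicate and the graph of $x \mapsto 2^{x}$ are elementary, i.e.\ in ${\mathcal E}^3_\star$; (ii) elementary relations are closed under bounded quantification; and (iii) the existential quantifier $\exists \vec{y} < 2^{n^r}$ is a bounded quantifier with an elementary bound. Since ${\mathcal E}^3 = {\mathcal E}$ is closed under composition and bounded sum/product (hence bounded quantification), the characteristic function of $\spec{\phi}$ is elementary. So $\SPEC \subseteq {\mathcal E}^3_\star$. This part is essentially bookkeeping once the arithmetization is set up, and it can borrow directly from the machinery already displayed earlier in the paper.

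\textbf{Strictness.} The harder and more interesting half is to exhibit a set $A \in {\mathcal E}^3_\star$ that is not a first-order spectrum. The natural route is a counting argument: there are only countably many first-order sentences, and more usefully, for each fixed vocabulary $\tau$ and each bound on quantifier rank (equivalently, each bound on the Gödel number of the sentence) the number of sentences of ``size $\leq s$'' is at most some fixed elementary function of $s$, while the number of distinct spectra obtainable from them is correspondingly limited. One then diagonalizes: enumerate all first-order sentences $\phi_0, \phi_1, \dots$ in an elementary way, and define $A$ so that $A$ differs from $\spec{\phi_i}$ at some witness point that can be located within an elementary budget. Concretely, one can try to define $A$ by: place $m$ into $A$ (or not) so as to disagree with $\spec{\phi_i}$ for the appropriate $i$ at a value of $m$ of size elementary in $i$, using that membership of a given $m$ in $\spec{\phi_i}$ is uniformly elementary in $(i,m)$ by the inclusion proof. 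The main obstacle is to carry out this diagonalization so that the resulting characteristic function stays inside ${\mathcal E}^3$ rather than merely being primitive recursive: one must control the size of the witnesses and the cost of the uniform membership test simultaneously, so that $\chi_A$ is computable by bounded recursion with an elementary bound. A cleaner alternative, which I would pursue if the direct diagonalization gets delicate, is to invoke a time-hierarchy-style separation: since spectra coincide (by the Jones--Selman / Fagin theorem, stated later in the paper) with $\bigcup_c \NTime{2^{cn}}$-sets, whereas ${\mathcal E}^3_\star$ contains sets of much higher complexity (e.g.\ sets complete for elementary time, which by the nondeterministic time hierarchy lie strictly above $\NTime{2^{cn}}$), one gets strictness immediately --- though this route uses machinery the paper develops only afterward, so for a self-contained proof at this point the counting/diagonalization argument is preferable.

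\textbf{Summary of the plan.} Step 1: arithmetize ``$n \in \spec{\phi}$'' via the $2^{n^r}$-bounded encoding and conclude $\SPEC \subseteq {\mathcal E}^3_\star$ using closure of $\mathcal E$ under bounded quantification. Step 2: fix an elementary enumeration of first-order sentences, establish that ``$m \in \spec{\phi_i}$'' is uniformly elementary in $(i,m)$, and diagonalize to build $A \in {\mathcal E}^3_\star \setminus \SPEC$, taking care that witness sizes and test costs remain elementarily bounded so that $\chi_A$ is definable by bounded recursion. The expected sticking point is Step 2: ensuring the diagonal set lands in ${\mathcal E}^3_\star$ and not merely in $\mathcal{PR}$, which forces one to be careful about the growth rates and to use, if necessary, a padded/sparse version of the diagonal so that each bit is computed within a fixed elementary bound.
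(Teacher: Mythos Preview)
Your proposal is correct and follows essentially the same route as the paper: the inclusion is obtained by arithmetizing ``$n\in\spec{\phi}$'' into an elementary expression (Asser does this via the product form $\chi(n)=\prod_{y_1<2^{n^{a_1}}}\cdots\prod_{y_t<2^{n^{a_t}}}\Psi^{**}(\bar y,n)$, which is just a variant of your bounded-quantifier encoding), and strictness is obtained by diagonalization, with exactly the care you flag about keeping the diagonal set inside ${\mathcal E}^3_\star$ rather than merely primitive recursive. Your alternative via Jones--Selman plus the time hierarchy is valid but, as you note, anachronistic here; the paper likewise sticks to the direct diagonalization.
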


Asser's theorem is based on a rather complicated and artificial 
arithmetical characterization of spectra 
(see Subsection \ref{subsection asser}). 
In particular, Asser's construction is of no help in 
proving that a particular set is (or not) spectrum.

Though he actually uses a slightly different class 
(see Subsection \ref{subsection mostowski}), 
the following result is usually attributed to Mostowski:

\begin{theorem}[A. Mostowski (1956)]
\label{th:most56}
${\mathcal E}_{\star}^2\subseteq\SPEC$
\end{theorem}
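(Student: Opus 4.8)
The plan is to show that every set $M \subseteq \N^+$ whose characteristic function lies in $\mathcal{E}^2$ is a first order spectrum, by exploiting Ritchie's characterization (cited earlier as \cite{ar:Ritchie63}) of $\mathcal{E}^2_{\star}$ as exactly the relations decidable by a deterministic Turing machine in linear space. So the first step is to reduce the statement to the following: \emph{every set of integers decidable in deterministic linear space is a first order spectrum}. Given $M \in \mathcal{E}^2_{\star}$, fix a deterministic Turing machine $T$ and a constant $c$ so that on input $1^n$ (or the dyadic/binary representation of $n$; the choice of encoding only changes things by a polynomial and does not matter here), $T$ decides ``$n \in M$'' within $c\cdot n$ tape cells and halts.

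Next I would encode an accepting computation of $T$ inside a finite structure. The classical trick is to build a structure of cardinality $n$ (or a fixed polynomial in $n$, which is harmless since $\SPEC$ is closed under the polynomial maps by Proposition~\ref{pr:easyclosures}, or one can work directly on $[n]$) that carries a linear order and enough relations to describe the computation table of $T$: the table has $O(n)$ columns (tape cells) and at most some number $T(n)$ of rows (time steps). The subtlety is the number of steps: a linear-space machine may run for exponentially many steps, $2^{O(n)}$, which is far too many to index with elements of an $n$-element universe. The standard resolution is to use tuples: a $k$-tuple over $[n]$ gives us $n^k$ ``addresses'', and since $n^k$ grows polynomially we still cannot reach $2^{O(n)}$ that way either. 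The genuinely correct move for linear space is instead to note that a halting linear-space computation that does not repeat a configuration runs for at most $(\text{number of configurations}) = n \cdot |Q| \cdot |\Gamma|^{cn} = 2^{O(n)}$ steps, and to index time steps not by elements but by \emph{subsets} coded via a unary predicate, or — cleaner — to use the well-known fact (R\"odding--Schwichtenberg, Bennett--Ritchie, as the survey notes) that one encodes the tape contents at a single moment using a second-order object. Concretely: existentially guess, inside a first order sentence, unary predicates $P_1, \dots, P_m$ on $[n]$ (equivalently subsets of $[n]$) that are meant to encode, in binary on the $n$ ``positions'', the successive instantaneous descriptions; but since a first order sentence cannot quantify over the exponentially many IDs, one instead uses the device of coding an entire computation by a single number $< 2^{2^{O(n)}}$ and arguing within arithmetic.

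The cleanest route, and the one I would actually carry out, is to go through the arithmetization already visible in the proof of the Proposition on prescribed spectra in this paper: work over a universe $[N]$ with $N$ a suitable power, equip it with $0, \max, \leq, +, \times$ via the first order sentence $Arithm$, so that inside the structure we can speak of all numbers up to $N$ and, using $Bit$ and $a = b^c$ (both definable from $+,\times$), of binary strings of length up to $N$. Choosing $N \approx 2^{cn}$ lets a single element of $[N]$ index one tape cell of the $cn$-cell tape \emph{and} lets strings of length $N$ encode one full ID; choosing $N \approx 2^{2^{cn}}$ (still a power, still obtainable by closure under the polynomial/exponential scaffolding) lets a single string of length $N$ encode the \emph{entire} computation history as a block of consecutive IDs. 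Then the first order sentence $\psi$ asserts: ``$\max$ has the special form tying it to some $n$; there exists $y < 2^{\max}$ (the coded history) such that the first ID is the initial configuration on input $n$, each successive ID follows from the previous by the transition function of $T$ (a purely local, $Bit$-definable condition on consecutive windows), and the last ID is accepting.'' All of this is first order over $\{0,\max,\leq,+,\times\}$ because the machine $T$ is fixed, so its transition relation is a fixed finite lookup expressible by a fixed formula. Finally I would arrange, exactly as in the earlier proposition's $\psi_1$-style padding, that $\spec{\psi}$ consists precisely of the cardinalities of the form $g(n)$ for $n \in M$, where $g$ is the explicit (polynomial-in-$2^{2^{cn}}$) size of the universe; since $g$ has a rudimentary graph and is strictly increasing, and since $\SPEC$ is robust under such reindexing, one converts $\{g(n) : n \in M\}$ back to $M$ itself being a spectrum.

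The main obstacle is the ID-count blow-up: making sure the finite structure is simultaneously \emph{large enough} to encode an exponentially long computation and \emph{describable} by a fixed first order sentence. This is exactly where the $a = b^c$ and $Bit$ definability (in finite structures, from $+$ and $\times$) does the heavy lifting, letting $\max = 2^{2^{cn}}$ be pinned down first order and letting a single variable range over a history that would otherwise need second order quantification; the correctness of the step-by-step transition check is then routine but tedious, and I would only sketch it. A secondary point to get right is the interface with the encoding of the input $n$ into $T$ and the final translation from $\{g(n):n\in M\}$ to $M$; both are handled by the closure properties in Proposition~\ref{pr:easyclosures} together with rudimentary reindexing, matching the technique already used in this paper, so I would invoke that machinery rather than redo it.
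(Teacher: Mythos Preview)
Your high-level plan---invoke Ritchie's $\mathcal{E}^2_\star = \LINSPACE$ and encode a space-bounded computation in a finite structure---is the right one and is essentially what R\"odding--Schwichtenberg do. But you make a decisive slip at the first step: the parenthetical ``the choice of encoding only changes things by a polynomial and does not matter here'' is \emph{false} for space complexity. Ritchie's theorem (Theorem~\ref{ritchie-kuroda}(i)) is stated under the paper's standing convention of \emph{binary} input, so for the number $n$ the input has length $\lceil\log n\rceil$, the machine uses $O(\log n)$ cells, and hence halts in at most $2^{O(\log n)} = n^{O(1)}$ steps. With these correct bounds the whole computation history has $n^{O(1)}$ bits and fits into a fixed-arity relation over a universe of size $n$; the sentence is written directly, with no need for the $a=b^c$ machinery, no doubly-exponential universe, and no ``convert back'' step.

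Your unary reading (``on input $1^n$ \ldots within $c\cdot n$ tape cells'') instead yields $O(n)$ space and $2^{O(n)}$ time in the \emph{value} $n$, which is a strictly larger class than $\mathcal{E}^2_\star$, and your repair---build a structure of size about $2^{2^{cn}}$ and then reindex back to $n$---does not work. Proposition~\ref{pr:easyclosures} gives closure of $\SPEC$ under unions, intersections and sum/product (hence polynomial \emph{images}), not under inverse images of exponentials; and the survey explicitly records (after Bennett and Fagin) that there is a spectrum $S$ for which $\{n : 2^{n^k} \in S\}$ is \emph{not} a spectrum. So the final ``rudimentary reindexing'' you invoke is exactly the step that fails.

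For comparison, the paper does not argue via Turing machines in Mostowski's own proof: Mostowski introduces the class $K$ of value-bounded functions (Definition~\ref{classK}) and, for each $f\in K$, writes a first-order sentence whose non-logical symbols are the graphs of the functions occurring in the recursive build-up of $f$; the full inclusion $\mathcal{E}^2_\star\subseteq\SPEC$ is then recovered as Corollary~\ref{bennett-corr}(\ref{bennett-corr-iii}) of Bennett's characterization.
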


Note that equality in Mostowski's theorem remains an open question.
\begin{openquestion}
\label{e2=spec}
Is the inclusion in
${\mathcal E}_{\star}^2\subseteq\SPEC$
proper?
\end{openquestion}
The following corollary is not stated by A. Mostowski, 
but can be found in J. Bennett's thesis. 
It is worth noting because one of the most fruitful ways in proving 
that various arithmetical sets are spectra is to prove that they are actually rudimentary.

\begin{corollary}\label{cor rud-in-spec}
$\RUD\subseteq\SPEC$
\end{corollary}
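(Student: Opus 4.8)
The corollary to prove is $\RUD \subseteq \SPEC$, given Mostowski's theorem $\mathcal{E}^2_\star \subseteq \SPEC$ (Theorem \ref{th:most56}) and the proposition $\RUD \subseteq \mathcal{E}^0_\star \subseteq \mathcal{E}^1_\star \subseteq \mathcal{E}^2_\star$.

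The plan is essentially a one-line deduction by chaining the two results already available in the excerpt. First I would invoke the Proposition stating $\RUD \subseteq \mathcal{E}^0_\star \subseteq \mathcal{E}^1_\star \subseteq \mathcal{E}^2_\star$, which gives in particular $\RUD \subseteq \mathcal{E}^2_\star$. Then I would apply Mostowski's Theorem \ref{th:most56}, namely $\mathcal{E}^2_\star \subseteq \SPEC$. Composing these two inclusions yields $\RUD \subseteq \SPEC$ immediately.

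There is really no obstacle here: the corollary is a transitive-closure observation once both enclosing results are in hand. The only thing worth being careful about is that "$\RUD$" and "$\mathcal{E}^2_\star$" are being read as classes of sets (unary relations), not relations of arbitrary arity, so that membership in $\SPEC$ makes sense; the excerpt has already flagged this convention for the Grzegorczyk classes, and $\RUD$ inherits it. So the proof is just: $\RUD \subseteq \mathcal{E}^2_\star \subseteq \SPEC$, where the first inclusion is the Proposition in Section \ref{rudimentary} and the second is Mostowski's theorem.

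\begin{proof}
By the Proposition in Section~\ref{rudimentary}, $\RUD \subseteq {\mathcal E}^0_{\star} \subseteq {\mathcal E}^1_{\star} \subseteq {\mathcal E}^2_{\star}$, so in particular every rudimentary set is in ${\mathcal E}^2_{\star}$. By Mostowski's Theorem~\ref{th:most56}, ${\mathcal E}^2_{\star} \subseteq \SPEC$. Combining the two inclusions gives $\RUD \subseteq \SPEC$.
\end{proof}
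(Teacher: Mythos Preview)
Your proof is correct and is exactly the deduction the paper intends: the corollary is stated immediately after Theorem~\ref{th:most56} and the Proposition $\RUD \subseteq {\mathcal E}^2_{\star}$, and no separate argument is given in the main text because it follows by composing these two inclusions, just as you do. The only nuance the paper adds (in the appendix, Subsection~\ref{subsection mostowski}) is historical: Mostowski's original argument actually goes through the class $K$ rather than ${\mathcal E}^2_{\star}$, and one can verify directly that rudimentary relations have characteristic functions in $K$; but once Theorem~\ref{th:most56} is granted in the form stated, your one-line chain is all that is needed.
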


For instance, any set defined by a linear or polynomial recurrence condition, 
such as the Fibonacci numbers (i.e. those numbers 
appearing in the sequence defined by $u_0=u_1=1$ and $u_{n+2}=u_n+u_{n+1}$), is rudimentary
(see \cite{ar:EsbelinM98}). 
From Corollary \ref{cor rud-in-spec}, we deduce that such sets are spectra, as 
announced in Section \ref{se:counting}. 
Similarly, using the fact that the set of prime numbers 
is rudimentary and the exponentiation has a rudimentary graph, 
one proves that the sets of Fermat primes (of the form $2^{2^n}+1$), 
Mersenne primes (of the form $2^p-1$ with $p$ a prime), or
twin primes ($p$ prime such that $p+2$ is also a prime) are rudimentary (hence also spectra).

Note that the question of whether the inclusion in Corollary \ref{cor rud-in-spec} 
is 
proper
is still open. 

\begin{openquestion}
\label{rud=spec}
Do we have $\RUD=\SPEC$?
\end{openquestion}

This problem is further investigated in Subsubsection \ref{subsubsection rud voc}.
\medskip

An arithmetic characterization of $\SPEC$ in terms of 
strictly rudimentary relations is also given, 
among many other results 
(see Subsection \ref{subsection bennett}), by J. Bennett in his thesis.

\begin{theorem}[J. Bennett (1962)]
\label{th spec srud}
\ \\
A set $S\subseteq{\mathbb N}$ is in $\SPEC$ iff it can be defined 
by a formula of the form $\exists y\!\leq\! 2^{x^j} R(x,y)$ 
for some $j\geq 1$, where $R$ is in $\SRUD$.
i.e.,
$$S =
\{ x \in \N \ | \ \exists y\!\leq\! 2^{x^j} R(x,y)\}$$
for some   $R\in \SRUD\hbox{ and }j\geq 1$.
\end{theorem}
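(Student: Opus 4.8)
The plan is to establish both inclusions by connecting $\SPEC$ to the complexity-theoretic characterization (a set is a first-order spectrum iff it is recognized by a nondeterministic Turing machine in time $2^{O(n)}$, where $n$ is the length of the binary encoding of the input integer), and then to re-express that bound in the arithmetic language of $\SRUD$. For the easier direction, suppose $S = \{x : \exists y \le 2^{x^j}\, R(x,y)\}$ with $R \in \SRUD$. The key point is that $\SRUD \subseteq \RUD$, and by Corollary~\ref{cor rud-in-spec} every rudimentary set is a spectrum; but here I need more than "$R$ is a spectrum" — I need to handle the bounded existential quantifier over $y$ up to $2^{x^j}$. The idea is that a first-order sentence over a structure of size roughly $x$ can, using auxiliary relations encoding arithmetic on $\{0,\dots,x-1\}$, existentially guess the $x^j$ bits of $y$ (encoded as a $j$-ary relation on the universe) and then verify the $\SRUD$-predicate $R(x,y)$. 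Verifying a strictly rudimentary predicate is cheap because subword quantification over the dyadic representation of a number bounded by $2^{x^j}$ ranges over subwords of length at most $x^j$, which is addressable by $j$-tuples from the universe; hence $R(x,y)$ becomes first-order expressible over the expanded structure of size $\Theta(x)$. So $S$ is a first-order spectrum.

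For the harder direction, let $S \in \SPEC$, say $S = \spec{\phi}$. By the Jones–Selman / Fagin characterization (Section~\ref{complexity}), membership $x \in S$ is decided by a nondeterministic machine running in time $2^{c x}$ on input of size $\Theta(\log x)$ — equivalently, there is a nondeterministic computation of length $2^{O(x)}$, with a witness (the model of $\phi$ on $x$ points, i.e.\ the interpretations of $\phi$'s relation symbols) of size $O(x^k)$ bits where $k$ is the maximal arity. Take $y$ to be the integer coding this witness together with the full accepting computation tableau; its dyadic length is $2^{O(x)}$, so $y \le 2^{x^j}$ for a suitable $j$ after absorbing constants (possibly padding). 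The remaining task is to show that the predicate "$y$ codes a valid accepting computation of the machine on input $x$" is strictly rudimentary. This is where the bulk of the work lies: one must express cell-by-cell consistency of successive configurations using only Boolean operations and subword quantifications over $y$. The standard tableau-consistency conditions are local, so each is a statement about fixed-length subwords of $y$, which is exactly the kind of thing $\SRUD$ can say; the delicate part is addressing — relating the position of a cell in one row to the corresponding positions in the adjacent row — since $\SRUD$ lacks $\le$, $+1$, and even the relation "$x$ and $y$ have equal length." One handles this by choosing the encoding of the tableau carefully (e.g.\ interleaving row and position markers, or writing the tableau in a self-delimiting format with explicit separators) so that "next cell" and "cell directly below" become subword-adjacency relations rather than arithmetic ones.

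The main obstacle, and the part I expect to consume most of the proof, is precisely this encoding engineering: making the computation-verification predicate strictly rudimentary rather than merely rudimentary. Since $\SRUD$ is provably weaker than $\RUD$ — it cannot even define $x \le y$ — one cannot freely use ordinary bounded quantifiers or arithmetic on indices, and the whole argument hinges on designing a tableau format in which all the needed bookkeeping reduces to subword operations. A secondary subtlety is matching the exponent: one must verify that the length of the encoded tableau is genuinely $2^{O(x)}$ and not, say, $2^{O(x \log x)}$, which requires being slightly careful about how the computation time $2^{cx}$ and the witness size $O(x^k)$ combine and about the overhead of the self-delimiting encoding; a polynomial blow-up in the exponent is harmless (it only increases $j$), but a superpolynomial one would break the statement. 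I would also cross-check the easy direction's bound $\exists y \le 2^{x^j}$ against the guessing budget of a first-order sentence to make sure the two characterizations meet at the same exponent class, which they do since first-order existential second-order quantification over a $j$-ary relation is exactly a guess of $x^j$ bits.
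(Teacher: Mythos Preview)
Your treatment of the direction $\{\exists y \le 2^{x^j}\,R(x,y):R\in\SRUD\}\subseteq\SPEC$ agrees with the paper's: encode the dyadic digits of $y$ by predicates of arity $j$ on a universe of size $x$, turn each subword quantifier into a pair of first-order quantifiers marking the start and end positions of the subword, and translate concatenation atoms accordingly.

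For $\SPEC\subseteq\{\exists y\le 2^{x^j}\,R(x,y)\}$ you take a genuinely different route, and it is worth comparing. You detour through the Jones--Selman characterization $\SPEC=\NE$ and let $y$ encode an accepting \emph{computation tableau}, deferring the hard work to showing tableau-verification is strictly rudimentary. The paper instead follows Bennett's original (and necessarily pre-1972) argument, which bypasses machines: given $\phi$ with relation symbols $Z_1,\ldots,Z_k$, it lets $y$ be a direct string encoding of a \emph{model}. Over an auxiliary alphabet $\{1,2,\ast,\star,\bullet\}$ one writes each $Z_i$ as the explicit list $z_i=\star\ast v_1^1\ast\cdots\ast v_a^1\ast\star\cdots\star$ of its tuples, writes the domain as $x_0=\ast x\ast(x{-}1)\ast\cdots\ast 1\ast$, and sets $y=\bullet z_1\bullet\cdots\bullet z_k\bullet x_0\bullet$. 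Then a first-order quantifier over the domain becomes a subword quantifier over the maximal $\{1,2\}$-blocks of $x_0$, and an atom $Z(v_1,\ldots,v_a)$ becomes the subword test $\star\ast v_1\ast\cdots\ast v_a\ast\star\harp z$; the extra letters are finally re-coded as fixed $\{1,2\}$-strings. This sidesteps exactly the obstacle you flag as the crux of your approach: nothing here needs to address a cell by numerical position, so the absence of $\le$, successor, and equal-length in $\SRUD$ never bites. Your machine-based route is not wrong in principle, but the encoding engineering you gesture at (interleaving rows, self-delimiting markers) is doing the same work that Bennett's listing-with-separators does directly and more transparently.

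One correction to your bookkeeping: on input $x$ of binary length $n=\Theta(\log x)$, an $\NE$ machine runs in time $2^{cn}=x^c$, not $2^{cx}$; the tableau thus has $x^{O(1)}$ bits, not $2^{O(x)}$. Your conclusion $y\le 2^{x^j}$ is right, but the intermediate ``computation of length $2^{O(x)}$'' and ``dyadic length $2^{O(x)}$'' are off by an exponential.
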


J. Bennett also characterizes spectra of higher order logics 
and shows that the union of spectra of various orders equals 
the class of elementary relations ${\mathcal E}_{\star}^3$. 

The characterization of spectra stated in Theorem \ref{th spec srud} 
is rather simple and elegant. 
However, once again, it is not really useful in proving that 
a given set is a spectrum, now because $\SRUD$ is very restrictive. 
A somehow similar characterization of $\SPEC$ using $\RUD$ instead of 
$\SRUD$ would have been more powerful - but, one gets this way second-order spectra.
\medskip

Finally, let us note a late paper on the recursive aspect of spectra, 
namely \cite{ar:Mo91}, due to the Chinese logician Mo Shaokui in 1991 
(see Subsection \ref{se:Mo}). 
The solution to Scholz's problem proposed there is of the same type as 
Bennett's characterization. 
However, the only bibliographic references in Mo's paper are 
Scholz's question \cite{ar:Scholz52} and Grzegorczyk's paper \cite{ar:Grzegorczyk53}, 
so that it can be considered completely independent 
from all other contributions about spectra. 
Section \ref{section technical} summarises this paper's results.


\ 
\section{Approach II: Complexity Theory}
\label{complexity}

The spectrum problem, formulated in the early 1950s, predates complexity theory 
since the notions of time or space bounded Turing machines first emerged in the 
1960s (see \cite{ar:Hartmanis65,ar:Kuroda64}). However, the first results about 
complexity of spectra appeared very soon (see 
Subsection~\ref{earlycomplexity}), and computational complexity 
characterisations of spectra were found, in at least three independent early 
contexts (see Subsection~\ref{crucialcomplexity}). Later on, several 
refinements and developments of these seminal results have been published (see 
Subsections \ref{descriptivecomplexity} and ~\ref{latecomplexity}).

Turing machines and other standard models of computation operate on words, not 
on numbers. Let $L \subseteq \Sigma^*$  be a set of finite words over a fixed 
finite alphabet  $\Sigma$. Without loss of generality we assume $\Sigma = 
\{0,1\}$, and that input words have no leading zeros.

The archetypical  task,  given a language $L$, is  to study the complexity of 
deciding membership in $L$ of a word $x$ as a function of the length $|x|$, 
i.e., asymptotic growth rate of the time, space or other computational 
resources needed to decide whether $x \in L$.

\subsection{Complexity and spectra.} 
In this section, for a fixed sentence 
$\phi$, the set of natural numbers $\spec{\phi}$ is seen as the set of positive 
instances of a decision problem (given a number $n$, is there a model of $\phi$ 
with $n$ elements?).

When dealing with computational complexity, we convert spectra (sets of natural 
numbers) into languages (over alphabet $\{0,1\}$). The spectrum problem can 
thus be rephrased as: What is the computational complexity of the decision 
problems for spectra?

\subsubsection*{Complexity classes}
\label{complexityclasses}

Denote by $\NTime{f(n)}$ (resp.\ $\DTime{f(n)}$) the class of binary languages 
accepted in time $O(f(n))$ by some non-deter\-mi\-ni\-stic (resp. 
deterministic) Turing machine, where $n$ is the length of the input. Similarly, 
let us denote by $\DSpace{f(n)}$ the class of languages accepted in space 
$O(f(n))$ by some deterministic Turing machine. Some well-known complexity 
classes which concern us here are: 
$$ 
\L = \DSpace{\log n} \subseteq
\NL = \NSpace{\log n}\ \ 
$$
$$ 
\LINSPACE = \DSpace{n} \subseteq
\NLINSPACE = \NSpace{n}\ \ 
$$
$$
\classP=\displaystyle\bigcup_{c\geq 1}\DTime{n^{c}} \subseteq
\NP=\displaystyle\bigcup_{c\geq 1}\NTime{n^{c}}\ \ 
$$ 
$$\E=\displaystyle\bigcup_{c\geq 1}\DTime{2^{c\cdot n}}\subseteq  
\NE=\displaystyle\bigcup_{c\geq 1}\NTime{2^{c\cdot n}} $$ 
Finally, if 
$\mathrm{C}$ denotes a complexity class, we denote its complement class, i.e. 
the class of binary languages $L$ such that $\Sigma^*-L\in\mathrm{C}$, by 
$\mathrm{coC}$.

Of course, the perennial open questions are:
\begin{openquestion}
\label{p=np}
\begin{renumerate}
\item
Are any of the inclusions 
\\
$\L \subseteq \NL$, $\LINSPACE \subseteq \NLINSPACE$,
$\classP \subseteq \NP$ and $\E \subseteq \NE$ proper?
\item
Do any of the equalities  $\NP = \coNP$ and $\NE = \coNE$ hold?
\end{renumerate}
\end{openquestion}
Surprisingly, the following was shown independently
by N. Immermann and R. Szelepc\'zenyi in 1982, cf. \cite{bk:Immerman99}:
\begin{theorem}[Immermann, Szelepc\'zenyi 1982]\ \\
\label{l=nl}
$\NL = \coNL$ and $\NLINSPACE = \coNLINSPACE$.
\end{theorem}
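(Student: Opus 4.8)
The plan is to reduce everything to a single assertion: that $\NL$ is closed under complement, i.e.\ $L\in\NL$ implies $\Sigma^*-L\in\NL$. Both inclusions $\NL\subseteq\coNL$ and $\coNL\subseteq\NL$ follow formally from this (apply it to $L$, resp.\ to $\Sigma^*-L$). Since $s$-$t$ connectivity in directed graphs (STCON) is $\NL$-complete under logspace reductions, it suffices to put \emph{$s$-$t$ non-connectivity} — given $(G,s,t)$, decide that $G$ has \emph{no} directed path from $s$ to $t$ — into $\NL$.

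The device is \textbf{inductive counting}. Fix $G$ on vertices $\{1,\dots,n\}$. For $i\ge 0$ let $R_i$ be the set of vertices reachable from $s$ by a walk of at most $i$ edges and put $c_i=|R_i|$; then $c_0=1$, $R_i\subseteq R_{i+1}$, and $R_n$ is the full reachable set. A single nondeterministic logspace pass can \emph{verify} a claim ``$u\in R_i$'': guess a walk $s=v_0,v_1,\dots$ of length at most $i$, storing only the current vertex and a step counter (each $O(\log n)$ bits), and check that it ends at $u$. The crux is computing $c_{i+1}$ from $c_i$. For each candidate $v$, cycle through all $u\in\{1,\dots,n\}$; for each $u$ nondeterministically either skip it or guess ``$u\in R_i$'' and verify it by the pass above, killing the branch if verification fails; maintain a running tally of the $u$'s so certified, and a flag recording whether $v=u$ or $(u,v)$ is an edge for some certified $u$. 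After all $u$ are exhausted, if the tally is $<c_i$ the branch has missed part of $R_i$ and aborts; if the tally equals $c_i$ it has certified exactly $R_i$, and then $v\in R_{i+1}$ iff the flag is set. Summing the flags over all $v$ gives $c_{i+1}$. Iterating $n$ times produces $c_n$ on the (nonempty) set of branches that never abort, and all surviving branches agree by the same argument. A final analogous pass tries to certify $c_n$ reachable vertices all distinct from $t$; this succeeds on some branch iff $t\notin R_n$, and the machine accepts exactly then. Throughout, only a constant number of vertex names, counters and flags are stored, so this runs in $\NL$.

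For $\NLINSPACE=\coNLINSPACE$ one applies the identical algorithm to the \emph{configuration graph} of a linear-space NTM $M$ on input $x$: vertices are the $2^{O(|x|)}$ configurations, $s$ the initial one, $t$ a canonical accepting one, and edges the one-step relation (checkable in linear space). Now vertex names and counters occupy $O(|x|)$ bits, so the simulation runs in nondeterministic linear space and witnesses $\Sigma^*-L(M)\in\NLINSPACE$. (Verbatim, the scheme gives $\NSpace{f}=\coNSpace{f}$ for every space-constructible $f(n)\ge\log n$.)

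The point to get exactly right is the \textbf{soundness} of the counting step: a branch may ``lie'' by guessing $u\in R_i$ when this is false, but then no verifying walk reaches $u$ and the branch dies — so no surviving branch ever overcounts — while some branch guesses correctly for every $u$ and survives. Hence the tally reaching $c_i$ is an infallible certificate that precisely $R_i$ has been enumerated, which is what licenses inferring both membership and \emph{non}-membership in $R_{i+1}$ from purely existential moves. That is the whole content of the theorem; the space bookkeeping is routine.
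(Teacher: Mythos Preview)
Your proof is correct and is precisely the standard inductive-counting argument of Immerman and Szelepcs\'enyi. However, the paper itself does \emph{not} prove this theorem: it is a survey, and Theorem~\ref{l=nl} is merely stated with a reference to \cite{bk:Immerman99}, with no proof or even sketch given. So there is nothing in the paper to compare your argument against --- you have supplied the (correct, canonical) proof where the paper only cites the result.
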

In Section \ref{vocabularies} we shall also make use of the
polynomial time hierarchy $\PH$ and its linear analogue $\LTH$.

The class $\RUD$ lies between $\L$ and $\LINSPACE$,
and must be different from one of them.
\begin{proposition}
\ 
\begin{renumerate}
\item(\rm Nepomnjascii 1970, \cite{ar:Nepomniaschy70})
$\L \subseteq \RUD$
\item(\rm Wrathall 1978, \cite{ar:Wrathall78})
$\RUD = \LTH$
\item(\rm Myhill 1960, \cite{rep:Myhill60})
$\LTH  \subseteq \LINSPACE$
\end{renumerate}
\end{proposition}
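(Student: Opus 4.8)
The three items are classical; the plan is to prove them in the order (iii), (ii), (i), since the ideas accumulate. Part~(iii), $\LTH\subseteq\LINSPACE$, is the soft one: a language in the $k$-th level of $\LTH$ is defined by a prefix of $k$ alternating quantifiers, each ranging over strings of length $O(n)$ (with $n$ the input length), applied to a matrix in $\DTime{n}$. To decide it deterministically in space $O(n)$ I would run through the candidate witnesses for the outermost quantifier with an $O(n)$-bit counter, for each one recursively evaluate the remaining $(k-1)$-quantifier predicate reusing the same work space, and at the bottom run the linear-time matrix (which uses linear space); since $k$ is a constant the total space stays $O(n)$.

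For part~(ii), $\RUD=\LTH$, I would prove $\RUD\subseteq\LTH$ by induction on the construction of a rudimentary relation: the base relations (graphs of $+$ and $\times$ on integers) sit low in $\LTH$; Boolean connectives are free; and a bounded quantifier $\exists x<t(\vec y)\,\psi$, in which $t(\vec y)$ is a polynomial in $\vec y$ so that the witness $x$ has length $O(n)$, is exactly a linear-length existential quantifier in front of the (inductively placed) $\psi$, and dually for $\forall$; hence every rudimentary relation lies in some finite level of $\LTH$. For the converse, $\RUD$ is closed under linear-length quantification because a string of length $O(n)$ is a number polynomially bounded in the value of the input, so it remains to show that every $\DTime{n}$ predicate is rudimentary. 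Here one must not quantify over the full time$\times$space tableau, whose size is quadratic in $n$ and hence superpolynomial in the numerical value of the input; instead I would encode an accepting computation by its crossing sequences on all tapes — a certificate of total length $O(n)$, hence a number polynomially bounded in the input — and observe that local consistency of adjacent crossing sequences and their agreement with the input is a quantifier-free rudimentary condition (using that the $\mathrm{Bit}$ predicate is rudimentary). One bounded existential quantifier over the certificate then puts $\DTime{n}$ inside $\RUD$, and closure under linear-length quantification gives $\LTH\subseteq\RUD$.

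For part~(i), $\L\subseteq\RUD$, I would use the identity $\RUD=\LTH$ and a Nepomnjascii-style divide-and-conquer. A deterministic log-space machine on input of length $n$ has configurations of size $O(\log n)$ and halts within $n^{d}$ steps for some constant $d$ (it has only $n^{O(1)}$ configurations); write $\mathrm{Reach}(C,C',t)$ for ``the machine goes from $C$ to $C'$ in at most $t$ steps'', so acceptance is $\mathrm{Reach}(C_{0},C_{\mathrm{acc}},n^{d})$. Splitting the $t$ steps into $k=\lceil n/\log n\rceil$ equal blocks, $\mathrm{Reach}(C,C',t)$ holds iff there are configurations $D_{1},\dots,D_{k-1}$ with, writing $D_{0}=C$ and $D_{k}=C'$, $\mathrm{Reach}(D_{j-1},D_{j},\lceil t/k\rceil)$ for every $j\le k$. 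Guessing $D_{1},\dots,D_{k-1}$ costs $(k-1)\cdot O(\log n)=O(n)$ bits — one linear-length existential quantifier — and ``$\forall j\le k$'' is one short universal quantifier; then one recurses. Each round divides the step bound essentially by $n/\log n$, so after $O(d)$ rounds it is $O(\log n)$, at which point $\mathrm{Reach}$ between $O(\log n)$-bit configurations is decided by direct deterministic simulation in time $O(\log^{2}n)=O(n)$, which serves as the matrix. The whole predicate is thus an alternation of $O(d)$ linear-length quantifier blocks over a linear-time matrix, so it lies in a fixed level of $\LTH=\RUD$; ranging over all log-space machines yields $\L\subseteq\RUD$.

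I expect the real obstacle to be the lemma ``$\DTime{n}\subseteq\RUD$'' inside part~(ii): every other step is bookkeeping of quantifier complexity, but getting a linear-time computation into $\RUD$ genuinely requires the succinct crossing-sequence certificate (or an equivalent device), precisely because the naive tableau is too large for the polynomially-bounded quantifiers $\RUD$ provides. A secondary subtlety is in part~(i): one must split into $n/\log n$ blocks rather than two, which is exactly what keeps the number of rounds and the number of guessed bits per round within the constant-alternation and linear-length budget simultaneously.
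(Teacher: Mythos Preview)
The paper does not supply a proof of this proposition at all: it is stated with attributions to Nepomnjascii, Wrathall, and Myhill and then used as background. There is therefore nothing in the paper to compare your argument against; you have written substantially more than the survey itself does.

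Your sketches are the standard ones and are essentially correct. Two small remarks. First, in part~(ii), calling the linear-size certificate for a multi-tape $\DTime{n}$ computation a ``crossing sequence'' is a slight misnomer---crossing sequences in the strict sense are a single-tape device---but the underlying idea (record, per step, the head positions and written symbols, a total of $O(n)$ bits, and check local consistency rudimentarily using $\mathrm{Bit}$) is exactly what is needed, and your identification of this lemma as the one place where real work happens is right. Second, in part~(i) your recursion depth analysis is correct but worth stating a shade more carefully: each round reduces the step bound from $t$ to roughly $t\cdot(\log n)/n$, so starting from $n^{d}$ one reaches $\mathrm{poly}(\log n)$ after a number of rounds depending only on $d$, which is what keeps the alternation depth constant.
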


\begin{openquestion}
\label{lth}
\ \\
Are the inclusions
$\L \subseteq \RUD = \LTH  \subseteq \LINSPACE$ proper?
\end{openquestion}

\subsubsection*{Number representations by binary or unary words}

It is natural to use binary notation for natural numbers (an alternative 
without leading zeros is Smull\-yan's {\em dyadic} notation 
\cite{bk:Smullyan61}). The shortest binary  length and dyadic length of the 
natural number $n$ are very close to $\lceil\log_2n\rceil$, whereas its unary 
length is of course $n$, and we have $n=2^{\log_2n}$. Consequently, the same 
(mathematical) computation that is performed by some Turing machine in time eg. 
$O(2^{c\cdot |n|})$ when $|n|$ is the binary length of the natural number 
input, is also performed (by a slightly different Turing machine) in time 
$O(n^c)$ when $n$ is the (unary length of the) natural number input.

Unary notation (also called {\em tally} notation, i.e. the number $n$ is 
represented by the word $1\ldots 1$ composed of $n$ ones)  also has its fans, 
for reasons explained in the description of Fagin's work. Most results in this 
section may be stated in either notation, but for sake of simplicity, and 
unless explicitly stated otherwise, we use binary notation. The length of a 
binary or unary word $x$ is written $|x|$.

Recall that $\SPEC$ denotes the set of spectra of first-order sentences, i.e., 
$$\SPEC = \{\spec{\phi}\ |\ \phi \mbox{\ is a first-order sentence} \}$$

\subsection{Spectra, formal languages, and complexity theory}

Formal language  theory was much studied in the early 1960s, cf. \cite{bk:Harrison78}, 
in particular the Chomsky hierarchy. While the regular and context-free language classes were 
well-understood, several questions remained open for  larger classes.
We need here the following:

\newpage
\begin{theorem}
\label{ritchie-kuroda}\ 
\begin{renumerate}
\item
{\rm (Ritchie 1963, \cite{ar:Ritchie63})}
${\mathcal E}^2_{\star}  = \LINSPACE $ 
\item
{\rm (Kuroda 1964, \cite{ar:Kuroda64})}
A language $L$ is {\em context sensitive} iff \\$L \in\NLINSPACE$. 
\end{renumerate}
\end{theorem}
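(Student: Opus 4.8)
These are two independent classical equivalences, and the plan is to treat each by a matched pair of simulations. Since ${\mathcal E}^2_{\star}$ is a class of relations on $\mathbb{N}$ whereas $\LINSPACE$ and $\NLINSPACE$ are classes of word languages, in (i) I read the equality under the usual identification of a relation $R\subseteq\mathbb{N}^k$ with the language of $k$-tuples of \emph{binary} codes of its members; ``linear space'' for integer relations then means space linear in the total bit-length of the arguments, i.e.\ $O(\log x)$ for a single argument $x$. For the inclusion ${\mathcal E}^2_{\star}\subseteq\LINSPACE$ I would prove, by induction on the generation of ${\mathcal E}^2$, the stronger statement that every $f\in{\mathcal E}^2$ is computed by a deterministic Turing machine in space $O(n)$ and returns output of bit-length $O(n)$, where $n$ is the total bit-length of the inputs. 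The base functions (zero, the projections, and $f_2(x,y)=(x+1)(y+1)$) are schoolbook computations of this kind, and the decisive point is that every ${\mathcal E}^2$ function is bounded by a fixed polynomial in its arguments, so ``polynomial in the value'' coincides with ``linear in the bit-length''. Composition is immediate; for bounded recursion $f(0,\vec x)=g(\vec x)$, $f(i{+}1,\vec x)=h(i,\vec x,f(i,\vec x))$, $f\le j$, one evaluates $f(m,\vec x)$ by sweeping a counter $i$ from $0$ to $m$ while retaining only the running value $v=f(i,\vec x)$, whose bit-length is $O(n)$ because $v\le j(i,\vec x)$ and $j$ obeys the inductive bound; each update calls the machine for $h$ on arguments of bit-length $O(n)$, so the whole run fits in space $O(n)$. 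In particular the characteristic function of any $R\in{\mathcal E}^2_{\star}$ is linear-space computable, i.e.\ $R\in\LINSPACE$.

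For the reverse inclusion $\LINSPACE\subseteq{\mathcal E}^2_{\star}$, let $R$ be decided by a deterministic machine $M$ that on input $x$ of bit-length $n$ uses at most $c\cdot n$ worktape cells over an alphabet of size $a$. I would code a configuration of $M$ (worktape word of length $cn$, head position $\le cn$, state) as a single number $\mathrm{code}(C)$; since $a^{cn}=x^{O(1)}$, these codes are polynomially bounded in $x$. The one-step map $\mathrm{Step}$ (which also consults the fixed input $x$) lies in ${\mathcal E}^2$: it only extracts and rewrites a bounded window of base-$a$ ``digits'' of the code near the head — which reduces to arithmetic with the powers $a^{y}$, whose graph is in ${\mathcal E}^0_{\star}\subseteq{\mathcal E}^2_{\star}$ as recorded earlier, together with a finite case split on $M$'s transition table — and a polynomially bounded function with an ${\mathcal E}^2_{\star}$ graph is in ${\mathcal E}^2$. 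As $M$ has at most $a^{cn}\cdot cn\cdot|Q|=:P(x)$ configurations, an accepting computation finishes within $P(x)$ steps, so I define $F(i,x)=$ the code of $M$'s configuration after $i$ steps on $x$ by bounded recursion from $\mathrm{Step}$ and the (${\mathcal E}^2$) initial-configuration function, with bound $P(x)$; thus $F\in{\mathcal E}^2$. Finally $\chi_R(x)=1$ iff $\exists\, i\le P(x)$ such that $F(i,x)$ codes an accepting configuration, a bounded quantifier over an ${\mathcal E}^2$ predicate, so $\chi_R\in{\mathcal E}^2$ and $R\in{\mathcal E}^2_{\star}$.

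For (ii), recall that the context-sensitive languages are exactly those generated by non-contracting grammars (all productions $\alpha\to\beta$ with $|\alpha|\le|\beta|$), up to the empty word, and that $\NLINSPACE$ is exactly the class accepted by linear-bounded automata. Given a non-contracting grammar $G$ and a word $w$ with $|w|=n$, a linear-bounded automaton decides $w\in L(G)$ by writing $w$ on its length-$n$ tape and repeatedly guessing a production $\alpha\to\beta$ and an occurrence of $\beta$ in the current string, replacing it in place by $\alpha$ and left-shifting to close the resulting gap; since $|\alpha|\le|\beta|$ the sentential form never exceeds length $n$, so the run stays in space $O(n)$, and it accepts once the tape equals $S$. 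Only $\le|V|^{n}$ sentential forms of length $\le n$ exist, so a counter of $O(n)$ bits forces halting and $L(G)\in\NSpace{n}$. Conversely, given a nondeterministic $M$ accepting $L$ in space $c\cdot n$, I would build a non-contracting grammar whose sentential forms are, after a start-up phase, strings of $n$ composite symbols, the $i$-th recording both the intended terminal output at position $i$ and the $c$ worktape cells of $M$ in block $i$ (plus the state/head marker if it is located there). Non-contracting productions first generate such a string encoding $M$'s initial configuration on an arbitrary word $w$ of length $n$ (rules of the shapes $S\to[\,\cdot\,]\,T$ and $T\to[\,\cdot\,]\,T\mid[\,\cdot\,]$); length-preserving productions then rewrite a bounded window of composite symbols to simulate one move of $M$, leaving the output track untouched; and once an accepting state appears, length-preserving productions propagate a ``finish'' signal that converts each composite symbol into its bare output letter, yielding $w$. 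A terminal word is derivable exactly when some computation of $M$ accepts $w$, so $L(G)=L$ and $L$ is context-sensitive.

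The step I expect to fight with is the bookkeeping in (i): one has to be sure the step map and the step-count bound really stay inside ${\mathcal E}^2$ rather than slipping up to ${\mathcal E}^3$, which hinges on the robustness of ${\mathcal E}^2_{\star}$ under Boolean operations and bounded quantification, on the fact that base-$a$ digit surgery on polynomially bounded integers is expressible there (via the exponentiation graph), and on bounded recursion being strong enough to iterate a polynomial number of times under a polynomial bound. In (ii) the only delicate part is arranging that every production is non-contracting while the grammar still faithfully mirrors the linear-bounded automaton, especially in the initial ``produce $w$ alongside its encoded start configuration'' phase and the final clean-up phase; the length-preserving simulation proper is routine.
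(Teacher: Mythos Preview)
The paper does not actually prove this theorem: it is stated purely as background, attributed to Ritchie (1963) and Kuroda (1964), and the text moves straight on to its consequences for spectra without giving any argument. Your sketch is a correct outline of the standard proofs one finds in those original references and in later textbooks—the polynomial-boundedness of $\mathcal{E}^2$ functions together with a bounded-recursion iteration of a step map for (i), and the reverse-derivation linear-bounded automaton paired with the two-track grammar construction for (ii)—so there is nothing in the paper to compare your approach against beyond the bare attributions. The points you flag as needing care (keeping digit surgery and the step-count bound inside $\mathcal{E}^2$, and ensuring every grammar rule stays non-contracting) are indeed the places where the real work lies, and your handling of them is sound.
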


For our discussion one should remember that at that time
it was then (as now) unknown whether $\LINSPACE = \NLINSPACE$
and also unknown whether $\NLINSPACE$ was closed under complementation. 
The latter was only resolved positively
more than 20 years later, see Theorem \ref{l=nl}.

These open questions showed a tantalising similarity to Scholz' and Asser's 
questions. If we identify characteristic functions with sets, then Bennett's 
$1962$ thesis 
combined with Asser, Mostowski and Ritchie's results, 
yield 
$$ 
\LINSPACE \subseteq \SPEC \subseteq {\mathcal E}^3_{\star} 
\mbox{\ and\ }
\LINSPACE \subseteq \NLINSPACE \subseteq {\mathcal E}^3_{\star} .
$$ 
This led to 
a conjecture $\SPEC  \stackrel{?}{=}  \NLINSPACE $, that spectra might be 
coextensive to the context sensitive languages. 
The analogy fails, though, since more 
than $n$ ``bits of storage'' are needed to store an $n$-element model $\mathcal 
M$ of a  sentence $\phi$.

\subsection{An early paper}\label{earlycomplexity}

One of the first papers explicitly relating spectra to bounded resource machine 
models of computation is \cite{ar:RoeddingS72} (in German), due to R\"odding 
and Schwichtenberg from M\"unster in 1972. This switch from recursion theory to 
complexity theory had been prepared ten years before by Bennett and Ritchie,
and R\"odding and Schwichtenberg made a  step further. The model of computation 
they use is not Turing machines, but register machines. As Bennett does, 
R\"odding and Schwichtenberg not only consider spectra of first-order 
sentences, but also higher order spectra, namely spectra of sentences using 
$i$-th order variables. Let us denote by  $\HOSPEC{i}$ the class of spectra of 
sentences using $i$-th order variables. Let us define the following sequence of 
functions : let $exp_0(n) =n$, and $exp_{i+1}(n) = 2^{exp_i(n)}$. Along with 
other results in the field of recursion theory, R\"odding and Schwichtenberg 
prove the following theorem.

\begin{theorem}[R\"odding and Schwichtenberg $1972$ \cite{ar:RoeddingS72}] For 
all $i\in\N$, we have $\DSpace{exp_{i}(n)}\subseteq \HOSPEC{i+1}$.
\end{theorem}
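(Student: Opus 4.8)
The plan is to generalize, level by level, the classical argument that $\LINSPACE \subseteq \SPEC$ — which is precisely the case $i=0$ here, available from ${\mathcal E}^2_{\star} = \LINSPACE$ (Theorem \ref{ritchie-kuroda}) together with ${\mathcal E}^2_{\star} \subseteq \SPEC$ (Theorem \ref{th:most56}) — namely, to encode an accepting deterministic computation as extra structure on a finite model and to write a sentence asserting that this structure is a valid run. The one new ingredient for general $i$ is that the computation to be encoded no longer fits into a first-order relation, so one uses relations of order $i+1$ (equivalently, register machines with registers of the corresponding magnitude, as R\"odding and Schwichtenberg do).

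Fix $L \in \DSpace{exp_i(n)}$, decided by a deterministic machine $M$, where $n$ is the binary length of the input. I want an $(i+1)$-th order sentence $\phi_M$ over the empty vocabulary (equality only) with $\spec{\phi_M} = L$. The gain from measuring resources against a model rather than against a word is that, in a model $\mathcal M$ with $m$ elements, the relevant input is the binary representation of $m$, which has length $n = \lceil\log_2 m\rceil$; hence $M$ runs in space $s := exp_i(\lceil\log_2 m\rceil)$ and, being deterministic, in time $t := 2^{O(s)}$, so its full computation table has $O(t\cdot s)$ cells. The first task is the bookkeeping: one must verify that, over $\mathcal M$, the objects of order $i+1$ of a suitable type, together with the arithmetic definable at that order, suffice to index all $O(t\cdot s)$ table cells and to store the table as a single $(i+1)$-th order relation $\mathrm{Tab}$; the finitely many small $m$ for which these estimates fail are disposed of separately, by adjoining disjunctively first-order sentences with singleton spectra $\{m\}$. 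For the arithmetic one uses the basic fact that a bare linear order on the domain — which $\phi_M$ may existentially postulate, since a spectrum records only the existence of a model — bootstraps definably to linear orders and the predicates $+$, $\times$, $\mathrm{Bit}$ on tuples of elements, and then, iterating the construction, on objects of every order up to $i+1$. This is the higher-order analogue of the coding lemmas behind the descriptive characterisations of $\LINSPACE$ and $\PH$, and carrying it out cleanly, one order at a time, is the technical heart of the construction.

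With that machinery, $\phi_M$ existentially quantifies the objects of order $\le i+1$ that code: a linear order on the domain; and the table $\mathrm{Tab}$, assigning to each time/cell pair the tape symbol together with state and head information. It then asserts, all within $(i+1)$-th order logic, the familiar local conditions: the time-$0$ row is the start configuration of $M$ on the binary digits of $m$ (definable from the order, since $m$ is the domain size); every later row follows from the one above by $M$'s transition function (a universal statement, using determinism); no cell past position $s$ is ever scanned; and some row carries an accepting state. Since $M$ is deterministic, any $\mathrm{Tab}$ satisfying the initialisation and transition clauses must coincide with the genuine run of $M$ on the binary representation of $m$, whence $m \in \spec{\phi_M}$ iff $M$ accepts that input iff $m \in L$.

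I expect the main obstacle to be exactly the fit between resources and logical order: checking that the computation table of a deterministic space-$exp_i(n)$ machine — of size roughly $2^{O(exp_i(\log m))}$ when one passes to a model of size $m$ — can genuinely be addressed and stored using objects of order only $i+1$ (and not $i+2$), possibly forcing one to exploit determinism to get away with a more economical certificate than the full history, and that all the operations needed (successor, comparison, locating cells $c-1$ and $c+1$, extracting the binary digits of the domain size) remain expressible with the quantifiers available at order $i+1$. Once the counting and the order-by-order arithmetization are secured, the transition/acceptance formula, the correctness argument, and the small-$m$ case distinction are all routine.
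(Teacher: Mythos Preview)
The survey states this theorem with attribution to R\"odding and Schwichtenberg but gives no proof or proof sketch of its own; the only additional information is that they work with register machines rather than Turing machines and that they did not treat nondeterministic classes. So there is nothing in the paper to compare your argument against.

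Your plan is the natural one and correctly generalises the $i=0$ case. The bookkeeping you outline is right: over a domain of size $m$ (input length $n\approx\log m$), a configuration of a space-$exp_i(n)$ machine has $exp_i(O(n))$ bits and so fits in a single object of order $i{+}1$, as does a time stamp (which needs $O(exp_i(n))$ bits). The concern you flag --- whether the full tableau, with $2^{O(exp_i(n))}=exp_{i+1}(O(n))$ bits, can itself live at order $i{+}1$ --- is exactly the crux, and your caution is warranted. If ``$(i{+}1)$-th order relation'' means an order-$(i{+}1)$ \emph{object} (a relation on order-$i$ things), it holds only $exp_i(O(n))$ bits, one exponential short. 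What makes the argument go through, and what genuinely mirrors the first-order case, is that $\mathrm{Tab}$ sits in the \emph{vocabulary}: just as a first-order sentence carries relation symbols whose arguments are individuals, an $(i{+}1)$-th order sentence may carry relation symbols whose arguments are objects of order up to $i{+}1$. Such a symbol is interpreted by an order-$(i{+}2)$ object, but the sentence still quantifies only over variables of order $\le i{+}1$, which is the constraint defining $\HOSPEC{i+1}$ as the paper phrases it (``spectra of sentences using $i$-th order variables''). Under this reading your local-consistency checks go through verbatim, and determinism is in fact not needed for the inclusion --- consistent with the paper's remark that R\"odding and Schwichtenberg simply did not pursue the nondeterministic strengthening.
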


In particular, taking $i=0$, first-order spectra are thereby shown to contain 
$\DSpace{n}$.
Let us finally remark that R\"odding and Schwichtenberg did not consider 
non-deterministic complexity classes.

\subsection{First-order spectra and non-deterministic exponential time}
\label{crucialcomplexity}

Scholz's original question (see \cite{ar:Scholz52}) was finally answered after 
twenty years, when Jones and Selman related first-order spectra to 
non-deter\-mi\-ni\-stic time bounded Turing machines. Their result was first 
published in a conference version in 1972 (see \cite{proc:JonesS72}), and the 
journal version appeared in 1974 (see \cite{ar:JonesS74}).  The following 
theorem holds.

\begin{theorem}[Jones and Selman $1972$ \cite{proc:JonesS72}] $\SPEC = \NE$.
\end{theorem}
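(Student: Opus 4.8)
The plan is to prove the two inclusions $\SPEC \subseteq \NE$ and $\NE \subseteq \SPEC$ separately, using a standard encoding between $n$-element structures and binary strings of length roughly $n^{O(1)}$, combined with the observation that a unary-length-$n$ input corresponds to a binary-length-$\log n$ input, so that ``exponential in the input length'' and ``polynomial in $n$'' match up.

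\textbf{The easy direction: $\SPEC \subseteq \NE$.} Fix a first-order sentence $\phi$ over a relational vocabulary $\tau = \{R_1,\dots,R_k\}$ of maximal arity $r$. Given a natural number $n$ (presented in binary, so the input length is $|n| \approx \log_2 n$), I want a nondeterministic Turing machine that decides $n \in \spec\phi$ in time $2^{O(|n|)}$, i.e.\ in time $n^{O(1)}$ as a function of $n$. The machine simply guesses an interpretation of each $R_i$ on the universe $[n] = \{0,\dots,n-1\}$; each such relation is a subset of $[n]^{r}$, describable by a bit-string of length at most $n^r$, so the total guess has length $n^{O(1)}$, writable in time $n^{O(1)}$. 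Then the machine deterministically checks whether $([n], R_1,\dots,R_k) \models \phi$. Since $\phi$ is fixed, model-checking a fixed first-order sentence in an explicitly given structure of size $n$ takes time $n^{O(1)}$ (iterate over the $n^{O(1)}$ tuples for each quantifier block). So the whole computation runs in nondeterministic time $n^{O(1)} = 2^{O(|n|)}$, hence $\spec\phi \in \NE$.

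\textbf{The hard direction: $\NE \subseteq \SPEC$.} Let $A \in \NE$, so there is a nondeterministic Turing machine $M$ and a constant $c$ such that $M$ accepts the binary encoding of $n$ within $2^{c|n|} \le n^{c}$ steps iff $n \in A$. I want to build a first-order sentence $\phi_A$ whose models of size $n$ correspond exactly to accepting computations of $M$ on input $n$. The idea is the classical ``Turing machine tableau'' construction underlying the Cook--Levin theorem, but carried out with the universe $[n]$ serving as the supply of ``addresses'': a computation of length $T \le n^c$ on a tape of length $T$ uses a $T \times T$ tableau, and a pair $(i,j)$ with $0 \le i,j < T$ can be coded as a $c$-tuple of elements of $[n]$ (since $T \le n^c$). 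So I introduce relation symbols (of arity $O(c)$) encoding, for each tableau cell indexed by such tuples, which symbol/state/head-position configuration it holds; I also need an auxiliary linear order on $[n]$ (first-order axiomatizable up to isomorphism) so that arithmetic on cell-indices — successor, comparison — is first-order definable on $c$-tuples via the lexicographic order. Then $\phi_A$ asserts: there is a linear order; the relations encode a valid tableau whose first row is the initial configuration of $M$ on the input that the number $n$ itself denotes (here I use that the binary digits of $n$ are first-order recoverable from the order via a $Bit$-style predicate — the same trick used in the proof sketched earlier in the paper); successive rows respect $M$'s transition relation locally; and some row is accepting. Then $([n],\dots) \models \exists(\text{relations})\,\phi_A$ — equivalently $n \in \spec{\phi_A}$ after projecting the existential second-order quantifiers away, which for spectra is free since $\spec{\exists \bar S\, \psi} = \spec\psi$ — iff $M$ has an accepting run on $n$, iff $n \in A$.

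\textbf{Main obstacle.} The genuine work is entirely in the hard direction, and specifically in handling the \emph{input}: the sentence $\phi_A$ is fixed, yet it must ``know'' the $\lceil\log_2 n\rceil$ bits of $n$ when the only thing available is an anonymous $n$-element set. The fix is to definably impose a linear order and then use that the bit-extraction relation $Bit(i,n)$ (``the $i$-th bit of $n$ is $1$'') is first-order definable from the order together with built-in $+$ and $\times$ on $[n]$ — exactly the device the authors already invoked in the proof of the proposition on prescribed spectra. A secondary technical point is bookkeeping: the transition constraints of $M$ are local (each cell depends on three cells in the previous row), so they are expressible by a bounded first-order formula over the cell-index tuples, but one must set up successor/carry on $c$-tuples carefully; this is routine but must be done. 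I would also remark that the unary-versus-binary presentation of $n$ is immaterial here (a point the excerpt stresses): whichever one uses, $\NE$ in binary length equals ``$n^{O(1)}$ nondeterministic time'' in $n$, and that is the quantity the tableau of size $n^c$ realizes.
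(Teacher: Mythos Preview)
Your proposal is correct and follows essentially the same approach as the paper's proof sketch: guess-and-verify for $\SPEC \subseteq \NE$, and for $\NE \subseteq \SPEC$ encode an accepting computation of length $2^{c\cdot|n|}=n^c$ inside a model of size $n$ by using $c$-tuples over $[n]$ as addresses into a tableau, with auxiliary relations (order, arithmetic) to recover the bits of $n$ and check local transition constraints. The paper explicitly omits the technical details you spell out (the tableau, the linear order, bit extraction), so your write-up is simply a more fleshed-out version of the same argument.
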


\noindent This leads to a complexity theory counterpart of Asser's question:

\begin{corollary} $\SPEC=\coSPEC$ if and only if $\NE = \coNE$.
\end{corollary}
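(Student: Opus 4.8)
The plan is to reduce both implications to the Jones--Selman theorem $\SPEC = \NE$, which has just been proved. First I would observe that, under the standard identification of a spectrum $S \subseteq \N^+$ with the binary language $L_S = \{\,\mathrm{bin}(n) : n \in S\,\}$, the complement operation on spectra corresponds (up to the harmless removal of $0$ and of binary words with leading zeros, which does not affect membership in $\NE$ or $\coNE$) to the complement operation on languages. Hence $\coSPEC = \{\,\N^+ \setminus S : S \in \SPEC\,\}$ is carried by this identification precisely to the class $\coNE$ restricted to (syntactically well-formed) binary encodings of natural numbers. The content of the corollary is therefore just that $\SPEC = \coSPEC$ holds iff $\NE = \coNE$ holds, and the only thing to check is that the identification behaves well in both directions.

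For the ``if'' direction, assume $\NE = \coNE$. Let $S \in \SPEC$; by Jones--Selman $S \in \NE$, so $\N^+ \setminus S$, viewed as a language, lies in $\coNE = \NE$, and applying Jones--Selman in the reverse direction gives $\N^+ \setminus S \in \SPEC$, i.e.\ $\coSPEC \subseteq \SPEC$; the reverse inclusion is symmetric, so $\SPEC = \coSPEC$. For the ``only if'' direction, assume $\SPEC = \coSPEC$ and let $L \in \NE$. I would first arrange $L$ to be a set of natural numbers in binary without leading zeros — padding or a trivial recoding handles the general case — and then use $L \in \NE = \SPEC$ to get a sentence $\phi$ with $\spec{\phi} = L$. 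By the hypothesis $\SPEC = \coSPEC$, the complement $\N^+ \setminus L$ is again a spectrum, hence in $\NE$, hence $L \in \coNE$; so $\NE \subseteq \coNE$ and by symmetry $\NE = \coNE$.

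The only slightly delicate point — and the one I would be most careful about — is the bookkeeping around the encoding: a spectrum is a set of \emph{positive} integers, whereas an arbitrary language in $\NE$ is a set of binary words that may include the empty word, words with leading zeros, and (the code of) $0$. One must check that these discrepancies are absorbed by finite modifications and by a linear-time recoding, none of which take a language out of $\NE$ or a spectrum out of $\SPEC$ (the latter because $\SPEC$ is closed under finite modification and under such simple recodings, as already implicit in Proposition~\ref{pr:easyclosures} and the Jones--Selman characterization). This is routine but is where a sloppy argument could go wrong. Everything else is a direct appeal to $\SPEC = \NE$ applied once in each direction, so there is no real obstacle; the proof is essentially a two-line consequence of the theorem once the encoding conventions are pinned down.
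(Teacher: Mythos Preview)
Your proposal is correct and follows exactly the approach the paper has in mind: the corollary is stated immediately after the Jones--Selman theorem $\SPEC = \NE$ and is treated as an immediate consequence, with no separate proof given. Your careful handling of the encoding bookkeeping (leading zeros, the exclusion of $0$, recoding arbitrary $\NE$ languages as sets of positive integers) is more detail than the paper spells out, but it is the right place to be careful and your treatment of it is sound.
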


They note that this does not answer Asser's question, but it shows the link 
with a wide range of closure under complement questions, in complexity theory. 
Presently, we know that many of them are very difficult questions.

\begin{proof}[Proof ideas] To see that $\SPEC \subseteq \NE$, consider 
$\spec{\phi}\in \SPEC$. Since the sentence $\phi$ is fixed, satisfaction ${\mathcal 
M} \models \phi$ can be decided in time that is at most polynomial in the size 
of model ${\mathcal M} $, where the polynomial's degree depends on the quantifier 
nesting depth in $\phi$. A simple guess-and-verify algorithm is: given number 
$x$, non-deterministically guess an $x$-element model $\mathcal M$, then decide 
whether ${\mathcal M} \models \phi$ is true. Time and space $2^{c\cdot n}$ 
suffice to store an $x$-element model and check ${\mathcal M} \models \phi$, 
where constant $c$  is independent of $x$ and $n$ is the length of $x$'s binary 
notation. Thus the algorithm works in non-deterministic exponential time (as a 
function of input length $n$).

To show $\SPEC \supseteq \NE$, let $Z$ be a nondeterministic time-bounded 
Turing machine that runs in time ${2^{c\cdot n}}$ on inputs of length $n$. Here 
the input is a word $x$ of length $n$. We think of $x$ as  a binary-coded 
natural number. Computation $C$  can be coded as a word $C = {\it config}_0 
{\it config}_1 \ldots {\it config}_{2^{c\cdot n}}$ where ${\it config}_0$ 
contains the Turing machine's input, and each $ {\it config}_t$ encodes the 
tape contents and control point at its $t$-th computational step.

Now we have to
find a first-order sentence $\phi$ such that:
\begin{enumerate}
\item For every input-accepting $Z$ computation,  ${\mathcal M} \models \phi$  
for some model ${\mathcal M}$  of cardinality $x$
\item  If $Z$ has no computation that accepts its input, then $\phi$  has no 
model   of cardinality $x$
\end{enumerate}
Each ${\it config}_t$ length is at most $2^{c\cdot n}$, so $C$ has length bound 
$2^{c'\cdot n} = x^{c'}$ for $c'$ independent of $n$. A   model ${\mathcal M}$  
of cardinality $x$ contains, for each $k$-ary predicate symbol $P$ of sentence 
$\phi$, a relation $\overline{P}\subseteq \{0,1,\ldots,x-1\}^k$. Thus a model 
can in principle ``have enough bits'' 
$x^k = 2^{k\log x} = 2^{O(|x|)}$
to encode all the symbols of computation 
$C$.

The remaining task is to actually construct $\phi$ so it has a model ${\mathcal 
M}$  of cardinality $x$ if and only if  $Z$ has a well-formed computation $C$ 
that accepts input $x$. In effect, the task is to use predicate logic to check 
that ¤C¤ is well-formed and accepts $x$. The technical details are omitted from 
this survey paper; some approaches may be seen in
\cite{ar:JonesS74,phd:Fagin73,phd:Christen74}
\end{proof}

\subsection{Relationship to the question $\ \classP \stackrel{?}{=} \NP $ }
\label{relationshippequalsnp}

Let $\UN = \{ L | L \subseteq 1^* \}$ be the set of {\em tally languages} (each 
is a set of unary words over the one-letter alphabet $\{1\}$) and let $\NP_1 = 
\NP \cap \UN $. Since there is a natural identification between $\NP_1$ and 
$\NE$, we can deduce that if $\classP=\NP$, then $\NP=\coNP$ and $\NE = \coNE$, 
i.e. the complement of a spectrum is a spectrum. Of course, it also holds that 
if there is a spectrum whose complement is not a spectrum, i.e. if $\NE \neq 
\coNE$, then $\NP\neq \coNP$ and $\classP\neq\NP$. The converse implication 
remains open. \smallskip

\subsection{Independent solutions to Scholz's problem }

The characterization of spectra via non-deterministic complexity classes was 
independently found also by Christen on the one hand and Fagin on the other 
hand during their PhD studies.

Claude Christen's thesis\footnote{Claude Christen, born 1943, joined the faculty of CS 
at the University of Montreal in 1976 and died there, a full professor, 
prematurely, April 10, 1994.} \cite{phd:Christen74} (1974, ETH 
Z\"urich, E. Specker) remains unpublished, and only a small part was published 
in German \cite{proc:Christen76}. Christen discovered all his results 
independently, and only in the late stage of his work his attention was drawn 
to Bennett's work \cite{phd:Bennett62} and the paper of Jones and Selman 
\cite{proc:JonesS72}. It turned out that most of his independently found 
results were already in print or published by Fagin after completion of 
Christen's thesis.

Ronald Fagin's thesis (1973, UC Berkeley, R. Vaught) is treasure of results 
introducing {\em projective classes of finite structures}, which he called
{\em generalized spectra} (see Subsection 
\ref{descriptivecomplexity}) that had wide impact on what is now called 
descriptive complexity and finite model theory. Most of our knowledge about 
spectra till about 1985  and, to some extent far beyond that, is contained in 
the published papers (see \cite{proc:Fagin74,ar:Fagin75a,ar:Fagin75b}) 
emanating from Fagin's thesis \cite{phd:Fagin73}. In this survey, these papers 
are pervasive. Right now, let us begin with reviewing what is said in 
\cite{proc:Fagin74} about the consequences of the complexity characterization 
of spectra {\em per se}.

Recall that $\E=\bigcup_{c\geq 1}\DTime{2^{c\cdot n}}\subseteq\NE$ and let us examine 
the closure under complementation problem. Since $\E=\coE$, it is clear that 
if a first-order spectrum is in $\E$, then its complement is also a 
first-order spectrum. Of course, the question $\E \stackrel{?}{=} \NE$ is 
still open. Fagin notes that $\E$ contains the spectra of categorical 
sentences, i.e. sentences that have at most one model for every cardinality. 
Thus, one obtains a model theoretic question closely related to Asser's question. 

\begin{openquestion}
\label{asser-cat}
Is every 
spectrum the spectrum of a categorical sentence ?
\end{openquestion}

Besides reviewing many natural sets of numbers that are spectra, Fagin also 
proves by a complexity argument the existence of a spectrum $S$ such that 
$\{n\, \mid 2^n\in S\}$ is {\em not} a spectrum (see also~\cite{Hunter04} for a 
recent proof by diagonalization).

\subsection{Generalized first-order spectra and $\NP$: Fagin's result}
\label{descriptivecomplexity}

Let us spend some time on what is called {\em generalized first-order spectra} 
by Fagin in his $1974$ paper \cite{proc:Fagin74}, and is nowadays more usually 
refered to as (classes of finite structures definable in) {\em existential 
second-order logic}. Our main goal is to clarify the differences and the 
connections with ordinary first-order spectra.

In this subsection, we are no longer interested  in the {\em size} of the 
finite models of some given sentence, but in the models themselves. Hence, let 
$\sigma$ and $\tau$ be two disjoint vocabularies, and let $\phi$ be a 
first-order $\sigma\cup\tau$-sentence. The {\em generalized spectrum} of $\phi$ 
is the class of finite $\tau$-structures that can be expanded to models of 
$\phi$. In other words, it is the class of finite models of the existential 
second-order sentence $\exists\sigma\phi$ with vocabulary $\tau$. The vocabulary 
$\tau$ is usually refered to as the {\em built-in} vocabulary, whereas $\sigma$ 
is often called the {\em extra} vocabulary. Note that generalized spectra are 
finite counterparts to Tarski's {\em projective classes} (see 
\cite{ar:Tarski54}). Fagin's theorem states the equivalence between generalized 
spectra and classes of finite structures accepted in $NP$.

\begin{theorem} [Fagin 1974 \cite{proc:Fagin74}] Let $\tau$ be a non-empty
vocabulary. A class of finite $\tau$-structures $K$ is a generalized 
spectrum iff $K \in \NP$.
\end{theorem}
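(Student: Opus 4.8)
The plan is to prove the two inclusions separately. For the easy direction, suppose $K$ is the generalized spectrum of $\phi$ over built-in vocabulary $\tau$ and extra vocabulary $\sigma = \{S_1, \ldots, S_m\}$. Given a finite $\tau$-structure $\mathcal{A}$ presented as an input word of length $n$ (so $|A| \leq n$), a nondeterministic machine guesses interpretations for each $S_i$ on the universe $A$; since each $S_i$ has fixed arity $r_i$, this is $\sum_i |A|^{r_i} = O(n^{r_i})$ bits, hence polynomially many nondeterministic choices. Then, as in the proof idea for Jones--Selman already given in the excerpt, checking $(\mathcal{A}, \bar{S}) \models \phi$ is done in deterministic time polynomial in $|A|$ (degree depending on the quantifier depth of $\phi$), because first-order satisfaction over a fixed sentence is evaluated by nested loops over the universe. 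So $K \in \NP$.

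The harder direction is $\NP \subseteq$ generalized spectra. First I would fix a class $K$ of finite $\tau$-structures decided by a nondeterministic Turing machine $Z$ running in time $n^c$ on inputs of length $n$, where a $\tau$-structure with universe of size $k$ is encoded as a string of length roughly $k^d$ for some $d$ depending on $\tau$. The key point — and where this differs from Jones--Selman — is that now the universe of the structure we get to play with is already (essentially) as large as the input, so an accepting computation of $Z$, being of length $n^c \leq k^{cd}$, can be indexed by $c'$-tuples of universe elements for suitable $c'$. So I would introduce existentially quantified relation symbols $\sigma$ encoding: a linear order on the universe (if one is not assumed built in) together with its induced arithmetic; the contents of each tape cell of each configuration, as a relation whose arguments are tuples coding a time step and a tape position; and the head position and state at each time step. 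The first-order sentence $\phi$ over $\sigma \cup \tau$ then asserts the Turing-machine "local consistency" conditions: the zeroth configuration spells out the correct encoding of the input $\tau$-structure (this is where $\tau$ enters — one writes first-order formulas translating "cell $i$ of the input encodes the fact $R(\bar{a})$" into a statement about $R^{\mathcal{A}}$), successive configurations respect $Z$'s transition relation at every cell, and some configuration is accepting. Because $Z$ is nondeterministic, the transition clauses are disjunctions over the transition table, and the existential second-order quantifier over $\sigma$ precisely captures the nondeterministic guesses. A finite $\tau$-structure $\mathcal{A}$ has an expansion satisfying $\phi$ iff $Z$ accepts the encoding of $\mathcal{A}$, i.e. iff $\mathcal{A} \in K$.

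The main obstacle is the bookkeeping in the $\NP \subseteq$ direction: one must arrange the arities of the relations in $\sigma$ so that tuples of universe elements can address every cell of the (polynomially long) computation — this forces $c'$ to be chosen in terms of both $c$ and the encoding parameter $d$ — and one must verify that the ordering/arithmetic needed to say "position $p+1$ follows position $p$" and "time step $t+1$ follows $t$" is itself first-order definable from an existentially guessed linear order on the universe (the standard fact that $\mathrm{BIT}$, ${+}$, ${\times}$ are first-order definable from a successor relation on an initial segment). Assembling $\phi$ from these pieces is routine but lengthy; I would state the encoding conventions carefully and then cite the detailed constructions in \cite{proc:Fagin74,phd:Fagin73}, as the excerpt does for the parallel Jones--Selman argument. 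One subtlety worth flagging: the hypothesis that $\tau$ is non-empty is used so that an input structure actually carries information of size comparable to its universe; the purely-equality case reduces instead to ordinary spectra via Jones--Selman.
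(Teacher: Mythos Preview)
The paper does not actually prove Fagin's theorem; it merely states it, having just given proof ideas for the closely related Jones--Selman result $\SPEC=\NE$. Your proposal is the standard argument and is precisely the natural adaptation of those Jones--Selman ideas to the generalized-spectrum setting, so it matches what the paper gestures toward and defers to \cite{proc:Fagin74,phd:Fagin73} for.

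One technical correction is needed. Your parenthetical that ``$\mathrm{BIT}$, $+$, $\times$ are first-order definable from a successor relation on an initial segment'' is false on finite structures: already parity is not first-order definable over $(\{0,\ldots,n-1\},<)$, so addition certainly is not. Fortunately the proof does not need this. To say ``position $p+1$ follows $p$'' and ``step $t+1$ follows $t$'' you only need the \emph{lexicographic successor on $c'$-tuples}, and that \emph{is} first-order definable from a guessed linear order on the universe. If you ever wanted genuine $+$, $\times$, $\mathrm{BIT}$, the correct move (and the one used throughout the appendix, e.g.\ the sentences $Arithm(\leq,+,\times)$) is to include them among the existentially quantified relations in $\sigma$ and axiomatize them by a first-order sentence, rather than to define them from the order alone.

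A minor point on your last sentence: non-emptiness of $\tau$ is not actually used in the argument. The same construction with $\tau=\emptyset$ goes through with the input encoded as the tally word $1^{|A|}$ and yields exactly the $\NP_1$ characterization of ordinary spectra that the paper states immediately after the theorem; the hypothesis is there only to separate the genuinely new content from the Jones--Selman case.
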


If the built-in vocabulary $\tau$ is empty, then the $\tau$-structures are 
merely sets. From a computational point of view, it is natural to see such 
empty structures as unary representations of natural numbers. From a logical 
point of view, one obtains ordinary spectra. Hence, Fagin rephrases Jones and 
Selman's complexity characterization of first-order spectra as follows:

\begin{proposition}
A set $S,$ if regarded as a set of unary words,  is a first-order spectrum
if and only if $S\in\NP_1$.
\end{proposition}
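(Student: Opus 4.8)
The plan is to unwind the definitions on both sides and invoke the Jones--Selman Theorem $\SPEC = \NE$ as a black box. First I would fix the natural bijection between a subset $S \subseteq \N^+$ and the tally language $T_S = \{1^n : n \in S\} \subseteq 1^*$, and likewise between a word $x \in \{0,1\}^*$ (with no leading zeros) of length $n$ and the natural number it denotes in binary, whose \emph{unary} length is roughly $2^n$. The point already flagged in the text (``Number representations by binary or unary words'') is that a Turing machine running in time $O(2^{c\cdot n})$ on a binary input of length $n$ is, after an inessential modification, a machine running in time $O(m^c)$ on the corresponding tally input $1^m$ with $m$ the number so encoded. This gives the identification $\NP_1 = \NP \cap \UN \cong \NE$ that the excerpt asserts; I would state it as a lemma and sketch the back-and-forth simulation in one sentence each direction.

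Given that identification, the proof is a two-line chain of equivalences. Starting from a set $S$: $S$ is a first-order spectrum $\iff S \in \SPEC$ (definition of $\SPEC$) $\iff S \in \NE$ (Jones--Selman) $\iff T_S \in \NP_1$ (the tally/binary correspondence above) $\iff S$, regarded as a set of unary words, lies in $\NP_1$. The only substantive content beyond citing $\SPEC = \NE$ is making the third step precise, i.e.\ checking that the passage from ``nondeterministic $2^{cn}$-time acceptor of the binary encoding'' to ``nondeterministic $m^c$-time acceptor of $1^m$'' is effective in both directions and preserves membership; this is exactly the ``$n = 2^{\log_2 n}$'' remark, so there is essentially nothing to do.

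I do not anticipate a genuine obstacle here: the statement is a cosmetic reformulation of the Jones--Selman Theorem together with the standard tally/binary padding argument, and the excerpt has already set up all the vocabulary ($\NP_1$, $\UN$, the length conventions) needed to phrase it cleanly. If anything, the one place to be slightly careful is the convention that binary inputs have no leading zeros, so that the map between binary words and natural numbers is a genuine bijection and the length of $1^m$ is exactly $m$ while the length of the binary encoding of $m$ is $\lceil \log_2 m\rceil$; with that convention fixed, the ``time $O(2^{c\cdot|n|})$ versus time $O(n^c)$'' equivalence is literally a change of variables and the proposition follows immediately.
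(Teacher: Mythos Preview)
Your proposal is correct and matches the paper's treatment: the paper presents the proposition precisely as a rephrasing of Jones--Selman via the identification $\NP_1 \cong \NE$ already noted in Section~\ref{relationshippequalsnp}. The only cosmetic difference is that the paper also frames it as the empty-vocabulary special case of Fagin's theorem (a $\tau$-structure with $\tau=\emptyset$ is just a set, i.e.\ a number in unary, so Fagin's $\NP$ becomes $\NP_1$ and the generalized spectrum becomes the ordinary spectrum), but this is the same content viewed from the other direction and requires no additional argument.
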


Concerning the complement problem for generalized spectra, in view of Fagin's 
theorem, it is not surprising that the general case remains open. However, the 
following is known. If $\sigma$ consists of unary predicates only, it is called 
unary. It has been proved in several occasions that unary generalized spectra 
are not closed under complement (see Fagin 1975 \cite{ar:Fagin75b}, Hajek 1975 
\cite{proc:Hajek75}, Ajtai and Fagin 1990 \cite{ar:AjtaiF90}). For instance, it 
is shown in \cite{ar:Fagin75b} by a game argument that the set of connected 
simple graphs is not a unary generalized spectrum. In contrast, it is easy to 
design a monadic existential second-order sentence defining the class of 
non-connected simple graphs. \smallskip

Since our survey deals with spectra and not with descriptive complexity as a 
whole, we will not say any more on this subject. However, let us note that {\em 
descriptive complexity} \cite{bk:Immerman99} emerged as a specific field of research out of Fagin's 
paper about generalized spectra.

\subsection{Further results and refinements}\label{latecomplexity}
 
During the late 1970s and the 1990s, several results were published that generalize 
Jones and Selman's result to higher order spectra on the one hand, and that 
refine this result, in order to obtain correspondences between certain 
complexity classes and the spectra of certain types of sentences.

In 1977, Lov\'asz and G\'acs \cite{ar:LovaszGacs77}, 
it is shown essentially that there are generalized first order spectra such that
their complement cannot be expressed with a smaller number of variables.
To do this they introduced first order reductions, which became a very important tool
in finite model theory and descriptive complexity. 
In fact they were the first to show the existence of decision problems
which are $\bNP$-complete with respect to first order reductions.

First order reductions were used in (un)decidability results early on,
\cite{bk:TarskiMR53}, and more explicitely in \cite{pr:Rabin65}.
For a systematic survey, see \cite{ar:HensonCompton,ar:MakowskyTARSKI}.
In the context of generalized spectra they were rediscovered independently
also by Immerman in \cite{ar:IMM1}, Vardi in \cite{pr:vardi82} and  Dahlhaus in \cite{phd:dahlhaus}.
First order reductions are of very low complexity, essentially they are uniform 
$\mathbf{AC}^0$ transductions. The first use of low complexity reduction techniques
seems to be Jones \cite{ar:Jones75} who termed them log-rudimentary.
Allender and Gore \cite{ar:AllenderGore91} showed them equivalent to
uniform $\mathbf{AC}^0$ reductions.

\begin{openquestion}
\label{compl-spec}
Is there a universal (complete) spectrum $S_0$ and a suitable notion of
reduction such that every spectrum $S$ is reducible to $S_0$?
\end{openquestion}
Note that this question has two flavors, one where we look at spectra
in terms of sets of natural numbers, and one where we look at sets  of (cardinalities of)
finite models and their defining sentences.
First order reductions may be appropriate
in the latter case.

In 1982, Lynch \cite{ar:Lynch82a} relates the computation time needed to 
decide property on set of integers to the maximal arity of symbols required in 
the sentence to define this property. Below, we refine the definition of classes of 
spectra in order to take into account some specific syntactic restrictions on 
sentences.

\smallskip

\begin{definition} Let $d\in\mathbb{N}^*$, the following classes are defined as 
follows.
\begin{renumerate}
  \item $\RSpectra{d}{}$ (resp.  $\FSpectra{d}{}$) is the class of spectra of 
  first-order sentences over arbitrary  predicate  (resp. predicate and 
  function) symbols of arity at most $d$.
  \item $\RSpectra{}{}(k\forall)$ (resp. $\RSpectra{d}{}(k\forall)$) is the 
  class of spectra of prenex first-order sentences involving at most $k$ 
  variables that are all universally quantified (resp. and involving predicate 
  symbols of arity at most $d$). The classes $\FSpectra{}{}(k\forall)$ and 
  $\FSpectra{d}{}(k\forall)$ are analogously defined.
  \item  Finally, let $\RSpectra{d}{}(+)$ be the class of spectra of 
  first-order sentences over the language containing one ternary relation 
  interpreted as the addition relation over finite segments of $\mathbb{N}$ and 
  predicate symbols of arity at most $d$
\end{renumerate}
\end{definition}

Some inclusions between these classes are easy to obtain: more resources (in 
terms of arity or number of variables) means more expressive power. Hence, for 
example,  for all $i,j\in\N$ such that $i<j$:

\[ \RSpectra{i}{}\subseteq \RSpectra{j}{}, \  \FSpectra{i}{}\subseteq 
\FSpectra{j}{}  \mbox{ and }  \RSpectra{i}{}\subseteq \FSpectra{i}{}. \]

The following results proves a first relationship between time complexity of 
computation  and ``syntactic'' complexity of definition.

\medskip 

\begin{proposition}[Lynch 1982~\cite{ar:Lynch82a}]
\label{prop Lynch 82}
\ \\
For all $d\geq 1$, $\NTime{2^{d\cdot n}}\subseteq \RSpectra{d}{}(+)$.
\end{proposition}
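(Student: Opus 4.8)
This is a quantitatively sharpened form of the ``$\NE\subseteq\SPEC$'' half of the Jones--Selman theorem ($\SPEC=\NE$), so the plan is to carry out that guess-and-verify simulation while keeping the arities of all auxiliary predicates down to $d$. Fix a nondeterministic Turing machine $Z$ deciding $A$ in time $2^{dn}$ on inputs of length $n$, and normalise it: one work tape, head never to the left of the input, a fixed $O(1)$-bit nondeterministic choice at each step, exactly $T(x)=2^{d|x|}$ steps, acceptance by being in a designated state at the last step. We seek a first-order sentence $\phi$ over the vocabulary consisting of the ternary symbol $+$ together with finitely many relation symbols of arity at most $d$, with $\spec{\phi}=A$; a model of size $x$ is intended to code an accepting run of $Z$ on the binary representation of $x$. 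On a universe of size $x$ the built-in symbol $+$, read as the graph of truncated addition, first-order defines the linear order, the constants $0,1,\max$ and bounded addition on $\{0,\dots,x-1\}$; reading a $d$-tuple $\bar a$ as the base-$x$ integer $\nu(\bar a)=\sum_i a_i x^{i-1}$ one then first-order defines the order on $d$-tuples, the least and greatest $d$-tuple, and the ``successor'' operation on $d$-tuples (propagating carries across the $d$ coordinates via $+$ -- routine). Since $2^{dn}\le 2^{d}x^{d}$, $d$-tuples -- with an $O(1)$ slack absorbed into finitely many constants -- suffice as time stamps of the computation; the finitely many small $x$ are dealt with by an extra first-order disjunct, which leaves membership in $A$ unchanged.

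The substance of the argument is that a model of size $x$ offers only $O(x^d)$ bits, whereas a run of $Z$ written naively as a $T(x)\times T(x)$ space--time tableau has about $x^{2d}$ cells: storing that tableau outright would force arity $2d$ and only give $\NTime{2^{\lfloor d/2\rfloor n}}\subseteq\RSpectra{d}{}(+)$. Instead one stores a \emph{succinct} certificate of size $O(x^d)$ -- the sequence, indexed by a time stamp $\bar t$, of the nondeterministic choice, the state entered, and the head move at step $\bar t$ -- and recovers the tableau only implicitly: the symbol scanned at step $\bar t$ is obtained by a first-order ``last write to the current cell before $\bar t$'' query, the transition actually taken at step $\bar t$ is required to be one that $Z$ permits given the recovered (state, scanned symbol, recorded choice), and the initial configuration is read directly off the binary digits of $x$. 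Legality of the start, of every transition, and of the acceptance condition are then bounded first-order conditions over $d$-tuples, and $\phi$ is their conjunction; it will have a model of size $x$ iff $Z$ has an accepting run on $x$, i.e. iff $x\in A$.

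The main obstacle is precisely making this encoding fit within arity $d$. The quantities the verification needs -- above all the head position at time $\bar t$, and the identity of the cell whose last write one is looking up -- are themselves numbers of size about $x^d$, so recording them as relations of $\bar t$ would again cost arity $2d$; the delicate part of the proof is to avoid materialising them, re-deriving what is needed by bounded first-order queries against the stored direction/choice data with the help of the built-in addition, so that the ``last write'' comparisons and the transition checks all close inside arity $d$. This bookkeeping -- choosing exactly which $O(x^d)$ bits of a run to store so that first-order logic over $d$-tuples can validate the whole run -- is the real content of Lynch's argument and is where I would expect to spend the effort.
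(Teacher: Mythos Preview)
The survey does not give its own proof of this proposition; it attributes the result to Lynch, remarks that Lynch works with ``word-models'', and explicitly notes that the straightforward Fagin--Jones--Selman encoding only yields arity $2d$. So there is no detailed argument in the paper to hold your sketch up against.

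That said, your sketch is a fair description of the \emph{shape} of the difficulty but stops exactly at the point where the real work lies. You correctly isolate the obstruction: the naive $T\times T$ tableau needs arity $2d$, so one must store only an $O(x^d)$-bit trace and recover the rest. Your concrete proposal---store state, written symbol, head direction, and nondeterministic choice at each time step, then answer ``symbol under the head at time $\bar t$'' by a first-order \emph{last-write} query---has a genuine gap, and it is the gap you yourself flag. The last-write query must decide, for two times $\bar t'<\bar t$, whether the head occupies the same cell at both; with only the direction data stored, that is the condition ``net displacement over $(\bar t',\bar t]$ is $0$'', a counting statement about the stored $d$-ary move predicate. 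First-order logic cannot count, and the built-in addition on the base domain does not change this. Every obvious certificate one might guess to discharge the condition---the head-position function $\bar t\mapsto \bar p(\bar t)$, the previous-visit function $\bar t\mapsto\mathrm{prev}(\bar t)$, a permutation sorting events by (cell, time)---is a map from $d$-tuples to $d$-tuples, hence an arity-$2d$ relation, which is precisely what you are trying to avoid. Even storing the base-$x$ digits of the head position separately gives $d$ functions of arity $d{+}1$, still too high.

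So the sentence ``re-deriving what is needed by bounded first-order queries against the stored direction/choice data with the help of the built-in addition'' is not a step one can simply carry out; it \emph{is} Lynch's theorem. As it stands your proposal accurately diagnoses why arity $2d$ is naive and what an arity-$d$ encoding must accomplish, but it does not supply the mechanism that accomplishes it.
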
 

The converse of this result remains open. It refines the complexity 
characterization of first-order spectra and has many further developments 
that we present in Section \ref{vocabularies}. From a technical point of view, 
note that Lynch works with so-called ``word-models''. Namely, a binary word 
$w$ with length $n$ is seen as a structure with universe $\{0,\ldots,n-1\}$, 
equipped with some arithmetics (eg. successor predicate or addition predicate) 
and with a unary predicate that indicates the positions of the digits $1$ of 
$w$. The methods developed in this paper are re-used later on by several 
authors.

\begin{openquestion}
\label{lynch}
Is the inclusion
$d\geq 1$, $\NTime{2^{d\cdot n}}\subseteq \RSpectra{d}{}(+)$ proper?
\end{openquestion}

Finally, Lynch explains that, from an attentive reading of Fagin's 
proof, one can only deduce that if some language $L$ is in $\NTime{2^{d\cdot 
n}}$, then $L$ is in $\RSpectra{2d}{}$ i.e. is a the spectrum of a first-order 
sentence involving predicates of arity at most $2d$. Even though Lynch's result 
is not an exact characterization, but only an inclusion, it has been very 
influential to other researchers.

In a series of papers published between $1983$ and $1990$ (see 
\cite{ar:Grandjean83a,proc:Grandjean83b,ar:Grandjean84,ar:Grandjean85,proc:Grandjean87,ar:Grandjean90b}), 
Grandjean proposes two fruitful ideas. The first one is to use RAM machines as 
a natural model of computation for general logical structures instead of Turing 
machines, which are best fitted for languages (or word structures). The second 
idea is to remark that the time complexity seems closely related to the 
syntactical form of the sentences (and more specifically in this case with the 
number of universally quantified variables). Let $\NRam{f(n)}$ be the class of 
binary languages accepted in time $O(f(n))$ by a non-deterministic RAM (with 
successor), where $n$ is the length of the input.

\medskip

\begin{theorem}[Grandjean $1983$
\cite{proc:Grandjean83b,ar:Grandjean85,proc:Grandjean87,ar:Grandjean90b}]
\label{THE Grandjean}
\ \\
For all $d\geq 1$, we have  $\NRam{2^{d\cdot 
n}}=\FSpectra{}{}(d\forall)=\FSpectra{d}{}(d\forall)$.
\end{theorem}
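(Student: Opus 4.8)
The goal is the triple equality
\[
\NRam{2^{d\cdot n}} \;=\; \FSpectra{}{}(d\forall) \;=\; \FSpectra{d}{}(d\forall),
\]
and the natural strategy is a cycle of three inclusions, most of which amount to careful but routine simulations. The inclusion $\FSpectra{d}{}(d\forall) \subseteq \FSpectra{}{}(d\forall)$ is trivial, since restricting the arity of the permitted symbols only shrinks the class. So the real content is the pair
\[
\FSpectra{}{}(d\forall) \;\subseteq\; \NRam{2^{d\cdot n}}
\qquad\text{and}\qquad
\NRam{2^{d\cdot n}} \;\subseteq\; \FSpectra{d}{}(d\forall).
\]
For the first of these I would argue as in the easy direction of Jones--Selman ($\SPEC \subseteq \NE$): given a prenex sentence $\varphi = \forall x_1 \cdots \forall x_d\, \psi$ over a vocabulary of function symbols and $n \in \spec{\varphi}$, a nondeterministic RAM on (the unary or binary encoding of) $n$ guesses an interpretation of each function symbol on the universe $[n]$ — this is a table of size $O(n)$ per symbol, storable in $2^{O(\lvert n\rvert)}$ RAM cells using the RAM's unit-cost indexed memory — and then verifies $\varphi$. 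Crucially, because there are only $d$ universally quantified variables and the matrix $\psi$ is quantifier-free, the check loops over the $d$-tuples from $[n]^d$, i.e. over $n^d = 2^{d\cdot\lvert n\rvert}$ tuples, evaluating $\psi$ in constant time on each; hence total time $O(2^{d\cdot n})$ in the unary-length convention, giving membership in $\NRam{2^{d\cdot n}}$. One must be slightly careful that guessing the tables and addressing them stays within the time bound, but on a RAM with successor and unit-cost operations this is standard.

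The harder and more delicate direction is $\NRam{2^{d\cdot n}} \subseteq \FSpectra{d}{}(d\forall)$: a nondeterministic RAM running in time $T = 2^{d\cdot n}$ must be encoded by a first-order sentence using only $d$ universally quantified variables and only function (and predicate) symbols of arity at most $d$. The idea is to take a model whose universe has size $n$ (the input), so that $d$-tuples of universe elements index the $n^d = T$ time steps of the computation; a register contents or tape cell at a given step is then encoded by function symbols applied to such a $d$-tuple, which is exactly why arity $d$ suffices and no more. The sentence asserts, with its $d$ universal quantifiers ranging over the "digits" of a time-step index, the local consistency conditions of the RAM computation: the initial configuration codes the input $n$, each step follows from the previous one by one of the (nondeterministically chosen) transitions, and the final step is accepting. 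Making the successor relation on $d$-tuples (lexicographic successor on $[n]^d$), together with the needed arithmetic to decode register values and to express RAM indirect addressing, definable by first-order formulas with only $d$ universal variables is the technical heart; this is where Grandjean's machinery — choosing RAMs precisely so that address arithmetic matches the available quantifier budget — does the real work, and where I would lean on the detailed constructions in \cite{proc:Grandjean83b,ar:Grandjean85,proc:Grandjean87,ar:Grandjean90b}.

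\textbf{Main obstacle.} The bottleneck is the bookkeeping in the last inclusion: simulating RAM indirect addressing (where a computed value is used as a memory address) inside first-order logic without spending extra universal quantifiers. Turing-machine-style codings (as in Jones--Selman or Fagin) would naturally cost $2d$ rather than $d$ in the arity and more variables, which is exactly the loss Lynch noted; the whole point of Grandjean's theorem is that the RAM model, together with a parsimonious encoding of time-steps as $d$-tuples and of register values via bounded arithmetic on those tuples, lets one hit the bound $d$ exactly and get an equality rather than the two-sided gap. I expect that verifying the $d$-variable normal form for the transition-consistency formula — in particular re-using the same $d$ bound variables across the several sub-assertions by appropriate relativization — is the step requiring the most care, and I would present it by first isolating the definability of lexicographic successor and of the arithmetic predicates on $d$-tuples as lemmas with explicit $d$-variable formulas, and only then assembling the full sentence.
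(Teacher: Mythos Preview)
Your cycle-of-inclusions plan is exactly Grandjean's original strategy, and the paper says as much (``The original proof of this result relies on complexity arguments based on the characterization of $\FSpectra{}{}(d\forall)$''). But your ``easy'' direction $\FSpectra{}{}(d\forall)\subseteq\NRam{2^{d\cdot n}}$ is not as routine as you present it. You claim each function symbol yields ``a table of size $O(n)$'', yet in $\FSpectra{}{}(d\forall)$ the symbols have \emph{unrestricted} arity: a $k$-ary function on a domain of size $m$ (with $n=\lvert m\rvert$) needs a table of $m^{k}=2^{kn}$ entries, and for $k>d$ guessing that table already exceeds the $2^{dn}$ time bound. The repair is to guess, for each of the finitely many \emph{terms} $\tau$ of $\psi$, a $d$-ary ``term-value'' function $\vec{x}\mapsto\tau(\vec{x})$; but then one must verify that these guessed functions are mutually consistent (equal subterm values force equal compound values), and doing that in time $O(m^{d})$ rather than the naive $O(m^{2d})$ is where the RAM's random-access addressing, together with a sorting/enumeration trick, actually does nontrivial work. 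So the direction you call easy is where the arity issue bites, not only the direction you call hard.

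The paper does not reproduce Grandjean's full machine simulation; instead it isolates the equality $\FSpectra{}{}(d\forall)=\FSpectra{d}{}(d\forall)$ as a corollary and supplies a \emph{purely syntactic} proof that avoids complexity entirely: associate to each term $\tau$ a fresh unary symbol $f_{\tau}$, so that high-arity symbols survive only as the outermost symbol of atoms; then replace each such occurrence by a new unary $F_{i}$ and add consistency axioms. These axioms naively introduce one extra universal variable, which is then eliminated by defining an increasing bijection $\overline{N}$ that enumerates the relevant tuples contiguously, so consistency need only be checked between successive tuples. Your route (once the gap above is addressed) gives the whole three-way equality at once via machines; the paper's route handles the arity collapse directly at the level of formulas and is independent of the RAM characterisation.
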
      

The case $d=1$ i.e. the case of sentences with one universally 
quantified variable, is more involved: it requires to encode arithmetic 
predicates such as linear order or addition that appear intrinsically in the 
characterization of computation by sentences with one variable. It is developed 
in the papers \cite{proc:Grandjean87,ar:Grandjean90b}). In passing, this 
implies that the presence of the addition relation  is not mandatory in 
Proposition~\ref{prop Lynch 82} provided (unary) functions are allowed in the 
language.

An interesting corollary of the latter characterization is that  when the number 
of (universally quantified) variables is fixed, restricting the language to 
contain function or relation symbols of arity bounded by $d$ only does not 
weaken the expressive power of sentences and define the same class of spectra. 
In other words, the following holds.

\medskip

\begin{corollary}[Grandjean $1983$~\cite{proc:Grandjean83b,ar:Grandjean85,proc:Grandjean87,ar:Grandjean90b}]~\label{COR Grandjean}
For all $d\geq 1$, it holds that 
  $\FSpectra{}{}(d\forall)=\FSpectra{d}{}(d\forall)$.
\end{corollary}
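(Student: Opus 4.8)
This corollary is an immediate consequence of Theorem~\ref{THE Grandjean}: the class $\NRam{2^{d\cdot n}}$ is defined with no reference to any vocabulary, so the theorem already asserts $\FSpectra{}{}(d\forall)=\NRam{2^{d\cdot n}}=\FSpectra{d}{}(d\forall)$ and one simply keeps the first and last terms. Thus, granting Theorem~\ref{THE Grandjean}, nothing remains to prove. What follows is a sketch of how one would instead argue the only non-trivial inclusion, $\FSpectra{}{}(d\forall)\subseteq\FSpectra{d}{}(d\forall)$, \emph{directly} --- i.e.\ without passing through the RAM model --- since this makes transparent where the arity collapse is delicate.

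The reverse inclusion $\FSpectra{d}{}(d\forall)\subseteq\FSpectra{}{}(d\forall)$ is free, since a vocabulary of arity $\le d$ is a particular arbitrary vocabulary. For the other direction, fix $\phi=\forall x_1\cdots\forall x_d\,\psi(x_1,\dots,x_d)$ with $\psi$ quantifier-free over a vocabulary $\tau$ that may contain relation and function symbols of arity exceeding $d$. The plan is: (i) list the finite, subterm-closed set $T$ of terms occurring in $\psi$ and introduce for each $\theta\in T$ a fresh function symbol $h_\theta$ of arity $\le d$, intended to compute $\theta(\bar x)$, and for each atom $R(\bar\theta)$ of $\psi$ a fresh relation $S$ of arity $\le d$ meaning $S(\bar x)\iff R(\bar\theta(\bar x))$; (ii) rewrite $\psi$ over this new vocabulary $\tau'$; (iii) conjoin $\forall^d$ ``chaining'' axioms tying the $h_\theta$ and $S$ together according to the syntactic shape of the terms and atoms. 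The construction keeps the variable count at $d$ and the arity at $\le d$, and a model of $\phi$ yields a model of the constructed $\tau'$-sentence by interpreting $h_\theta,S$ as prescribed, so this direction of the spectrum equality is routine.

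The obstacle is the converse: a $\tau'$-model of the constructed sentence need not come from a $\tau$-structure. The chaining axioms can only force $h_{f(\theta_1,\dots,\theta_k)}(\bar x)$ to agree with the value of the original $k$-ary symbol $f$ on $(h_{\theta_1}(\bar x),\dots,h_{\theta_k}(\bar x))$, and for $k>d$ there is no way, using only arity-$\le d$ symbols and $d$ variables, to state even that this $h_{f(\dots)}$ depends on those $k$ argument values alone --- that well-definedness assertion wants $2d$ variables. The standard remedy is to make the model carry a linear order, addition and a pairing function, then code the graphs of the offending high-arity symbols --- restricted to the relevant finite family of argument patterns --- arithmetically; but installing that arithmetic by a sentence with only $d$ universal variables, acutely so when $d=1$, is precisely the technical heart of Theorem~\ref{THE Grandjean} as carried out in \cite{proc:Grandjean87,ar:Grandjean90b}. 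Hence any genuinely self-contained proof of the corollary must redo essentially that construction; for $d\ge 2$, where two variables already suffice to express a successor relation and a little arithmetic, the direct route is shorter, whereas for $d=1$ one is better off quoting the theorem.
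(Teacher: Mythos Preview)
Your first paragraph is a complete and correct proof, and the paper explicitly acknowledges this: it says that ``the original proof of this result relies on complexity arguments based on the characterization of $\FSpectra{}{}(d\forall)$'', i.e.\ on Theorem~\ref{THE Grandjean}, exactly as you argue. So as a derivation of the corollary there is nothing to add.

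Where the paper goes further is that it then supplies a \emph{purely logical} proof, and your sketch tracks it closely: introduce a fresh unary function $f_\tau$ for each subterm $\tau$, rewrite $\psi$ so that high-arity symbols occur only outermost, then replace each such occurrence $f(\overline{\sigma_i}(t))$ by a fresh unary $F_i(t)$. You correctly pinpoint the obstacle --- the coherence axiom $\overline{\sigma}_i(t)=\overline{\sigma}_j(t')\rightarrow F_i(t)=F_j(t')$ costs a second universal variable --- and you are right that the fix involves a linear order. The paper's specific device, which you only gesture at, is worth knowing: rather than coding arithmetic wholesale, it introduces unary functions $\overline{N}_j$ defining an increasing bijection from $\{1,\dots,h\}\times D$ onto the set of triples $(\overline{\sigma}_i(t),i,t)$, so that tuples sharing the same first component $\overline{\sigma}_i(t)$ become \emph{contiguous} in the enumeration. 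Coherence then reduces to comparing each pair with its successor, which needs only one universal variable (plus existentials, which can be Skolemised). The remaining linear order is eliminated by a separate result of Grandjean. So the paper does carry the $d=1$ case through directly; your instinct that this is where the work lies is exactly right.
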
      

The original proof of this result relies on  complexity arguments based on the
characterization of $\FSpectra{}{}(d\forall)$. We give here a purely logical
proof~\footnote{We thank \'Etienne Grandjean for kindly giving us this proof.}.

\begin{proof}[Proof of Corollary~\ref{COR Grandjean}]
For simplicity of notation, we give the proof in the case $d=1$.
Let $\varphi\equiv \forall t \Psi$ where $\Psi$ is quantifier-free and whose
vocabulary is composed of function symbols of various arities. Let
$\textsf{Term}(\Psi)$
be the set of terms and subterms of $\Psi$. The first idea is to associate with
each element $\tau$ of $\textsf{Term}(\Psi)$ a new unary function $f_{\tau}$.
The definition of $f_{\tau}$ is as follows:

\begin{renumerate}
\item if $\tau=t$ or $\tau$ is a constant symbol, then $f_{\tau}(t)=\tau$,
\item if $\tau=f(\tau_1(t),\ldots,\tau_k(t))$ for some function symbol 
$f$ of
arity $k$, then $f_{\tau}(t)=f(f_{\tau_1}(t),\ldots,f_{\tau_k}(t))$.
\end{renumerate}

One obtains a new sentence $\Psi'$ instead of $\Psi$ by replacing each term 
$\tau\in\textsf{Term}(\Psi)$ by $f_{\tau}(t)$ in conjunction with the definition  
of each function symbol $f_{\tau}$. Let us explain the 
transformation on some example.

Let $\Psi$ be the following very simple sentence with $f$ of arity $2$ and $g$ of
arity $1$:

\[
E(\underbrace{f(\overbrace{f(t,\underbrace{g(t)}_{\tau_1})}^{\tau_2},t)}_{\tau_3},t)
\]

Then, $\Psi'$ corresponds to:

\[
\begin{array}{l}
E(f_{\tau_3}(t),f_t(t)) \wedge f_{\tau_3}(t)=f(f_{\tau_2}(t),f_{t}(t)) \\
\wedge f_{\tau_2}(t)=f(f_t(t),f_{\tau_1}(t)) \wedge
f_{\tau_1}(t)=g(f_t(t))\wedge f_t(t)=t 
\end{array}
\]

It is easily seen that the only non unary symbols (here $f$) appear (if they do at all)
 only as an outermost symbol in atomic formula. Let now $f(\overline{\sigma_1}(t))$,
 \ldots, $f(\overline{\sigma_h}(t))$ be the list of terms in $\Psi'$ involving $f$. The
 idea is now to replace in $\Psi'$ each $f(\overline{\sigma_i}(t))$ by some new  term
 $F_i(t)$ where $F_i$ is of arity one (let's call  this new sentence $\Psi''$) and to
 write down
 the relations between each pair $F_i(t)$ and $F_{j}(t)$ for $i,j\leq h$. 
 This provides a new sentence $\varphi'=\forall t\forall t'\ (\Psi''\wedge
 \Delta)$ where
 
 \[
 \Delta\equiv  \bigwedge_{i,j\leq h} 
 (\overline{\sigma}_i(t)=\overline{\sigma}_j(t')\rightarrow F_i(t)=F_j(t')) \]
 
  The above method shows, when the number of variables is $d=1$ how to replace
  $h$-ary functions by unary functions. However, in order to control the
  definition of the $F_is$ we introduce one additional quantified variable. 
To get rid of this additional variable one can proceed as follows. First, the vocabulary is enriched with a binary predicate $<$ interpreted as a linear order on the domain, and $h$ unary functions $\overline{N}_j$ for $j\leq h$. Let $\Delta'$ be the following sentence. 

\[
\begin{array}{l}
\Delta'\equiv \\
\forall (i,t) \exists (j,x) \ \overline{N}(j,x)=(\overline{\sigma}_i(t),i,t)\wedge\\
\forall (j,x)\neq (h,max) \exists (j',x') \ (j',x')=(j,x)+1 \wedge \overline{N}(j,x)<\overline{N}(j',x')\wedge\\
\forall (j,x)\neq (h,max) \exists (j',x') \exists (i,t) \exists (i',t')\\
\qquad (j',x')=(j,x)+1 \wedge \overline{N}(j,x)=(\overline{\sigma}_i(t),i,t) \wedge \overline{N}(j',x')=(\overline{\sigma}_{i'}(t'),i',t')\\
\qquad \wedge (\overline{\sigma}_i(t) = \overline{\sigma}_{i'}(t')\rightarrow F_i(t)=F_{i'}(t')). 
\end{array}
\]

\noindent where $\forall (i,t)$ stands 
for $\bigwedge_{1\leq i \leq h}\forall t$ and $\exists (j,x)$ for 
$\bigvee_{1\leq i \leq h}\exists x$; $N(j,x)$ stands for $N_j(x)$. Similarly, $(j,x)+1$ represents the 
successor of pair $(j,x)$ in the lexicographic ordering of pairs $(j,x)$, 
$j\in\{1,\ldots,h\}$ and $x\in D$. The above sentence expresses the fact that the function 
$\overline{N}$ (in fact the union of functions $\overline{N}_j$, $j\leq h$) is 
an increasing bijection from the set $\{1,\ldots,h\}\times D$ to the set 
$\{(\overline{\sigma}_i(t),i,t) | i\leq h, t\in D\}$. This sentence plays the 
same role as the sentence $\Delta$ but this time tuples 
$(\overline{\sigma}_i(t),i,t)$ with the same first component 
$\overline{\sigma}_i(t)$ are contiguous in the numbering $\overline{N}$.
Using a result of Grandjean \cite{ar:Grandjean90b}, one can replace the linear ordering $<$ by additional unary functions.
\end{proof}

To be complete, one should also mention the earlier (and weaker) result obtained
by Pudl\'ak~\cite{ar:Pudlak75} by purely logical argument at that time.

\begin{proposition}[Pudl\'ak 75~\cite{ar:Pudlak75}] 
\ \\
$\FSpectra{}{}(d\forall)\subseteq \FSpectra{d}{}(2d\forall)$
for all $d\geq 1$.
\end{proposition}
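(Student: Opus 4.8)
The plan is to prove the inclusion by a syntactic transformation that lowers the arity of every function (and relation) symbol to at most $d$, at the cost of doubling the number of universally quantified variables. Start from a prenex sentence $\varphi\equiv\forall t_1\cdots\forall t_d\,\Psi$ with $\Psi$ quantifier-free over a vocabulary whose symbols may have arbitrary arity; a model of $\varphi$ of cardinality $n$ should be turned into a model of some $\varphi'\in\FSpectra{d}{}(2d\forall)$ of the same cardinality, and conversely. The guiding idea is the same currying trick used above in the proof of Corollary~\ref{COR Grandjean} for the case $d=1$: every subterm $\tau$ of $\Psi$ is represented by a fresh $d$-ary function symbol $f_\tau$ whose intended value at $(t_1,\ldots,t_d)$ is the value of $\tau$ under that assignment, and an application of a symbol of arity $>d$ is then replaced by a new $d$-ary function whose coherence as a genuine function of its (curried) arguments is enforced by an additional universal block of $d$ variables $t'_1,\ldots,t'_d$.

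Concretely, I would proceed in three steps. \emph{(i) Flattening:} introduce for every subterm $\tau$ of $\Psi$ a fresh $d$-ary symbol $f_\tau$, rewrite $\Psi$ into $\Psi'$ by replacing each maximal term $\tau$ with $f_\tau(t_1,\ldots,t_d)$, and conjoin the defining equations $f_{t_i}(\bar t)=t_i$ and $f_\tau(\bar t)=g(f_{\tau_1}(\bar t),\ldots,f_{\tau_m}(\bar t))$ for every composite $\tau=g(\tau_1,\ldots,\tau_m)$; after this every original symbol of arity $>d$ occurs only as the outermost symbol of a term of the simple shape $g(f_{\tau_1}(\bar t),\ldots,f_{\tau_m}(\bar t))$, and relation atoms $R(\sigma_1,\ldots,\sigma_m)$ with $m>d$ are normalised in the same way. \emph{(ii) Currying the high-arity symbols:} list the finitely many terms $g_1(\bar\sigma_1(\bar t)),\ldots,g_h(\bar\sigma_h(\bar t))$ of this shape that involve a symbol of arity $>d$ (each $\bar\sigma_i$ a tuple of the $f_\tau$'s), introduce a fresh $d$-ary symbol $F_i$ for each, replace $g_i(\bar\sigma_i(\bar t))$ by $F_i(\bar t)$ throughout (yielding $\Psi''$), and add the coherence axiom
\[
\Delta\;\equiv\;\forall\bar t\,\forall\bar t'\ \bigwedge_{i,j\,:\,g_i=g_j}\Big(\bigwedge_{\ell}\,f_{\tau_{i,\ell}}(\bar t)=f_{\tau_{j,\ell}}(\bar t')\ \longrightarrow\ F_i(\bar t)=F_j(\bar t')\Big),
\]
together with the analogous biconditional clause for each curried relation symbol. \emph{(iii) Assembling:} set $\varphi'\equiv\forall\bar t\,\forall\bar t'\,(\Psi''\wedge\Delta)$; this is prenex, purely universal in the $2d$ variables $t_1,\ldots,t_d,t'_1,\ldots,t'_d$, and uses only symbols of arity $\le d$ (the unchanged original low-arity symbols together with the $d$-ary $f_\tau$ and $F_i$), so $\varphi'$ witnesses membership in $\FSpectra{d}{}(2d\forall)$ once $\spec{\varphi'}=\spec{\varphi}$ is verified.

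For the easy direction, given $\mathcal M\models\varphi$ of cardinality $n$, interpret $f_\tau$ by the actual term function $\bar a\mapsto\tau^{\mathcal M}[\bar a]$ and $F_i$ by $\bar a\mapsto g_i^{\mathcal M}(\bar\sigma_i^{\mathcal M}[\bar a])$; the defining equations and $\Delta$ then hold because these interpretations are literally compositions of total functions, while $\Psi''$ holds exactly when $\Psi$ does, so $\varphi'$ has a model of size $n$. The delicate direction is the converse: from $\mathcal M'\models\varphi'$ one must reconstruct interpretations of the original symbols of arity $>d$ on \emph{all} of the domain, whereas $\mathcal M'$ only constrains their behaviour on tuples actually realised as some $\bar\sigma_i(\bar a)$. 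Here one sets $g(\bar c):=F_i^{\mathcal M'}(\bar a)$ whenever $\bar c=\bar\sigma_i^{\mathcal M'}(\bar a)$ for some $i$ (whose symbol is $g$) and some $\bar a$, and fixes $g$ arbitrarily off the realised set; the coherence axiom $\Delta$ is precisely what makes this well defined, and a structural induction on subterms then shows $f_\tau^{\mathcal M'}(\bar a)=\tau^{\mathcal M}[\bar a]$ in the resulting reduct $\mathcal M$, so that $\mathcal M'\models\forall\bar t\,\Psi''$ yields $\mathcal M\models\forall\bar t\,\Psi=\varphi$; relation symbols are recovered the same way, taking the reconstructed relation to be false off the realised tuples. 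I expect this well-definedness-plus-induction step to be the only real obstacle — the remaining points (prenexing the conjunction, checking that constants and low-arity symbols are left untouched, and counting that one extra $\forall$-block of $d$ variables suffices, which gives exactly $\FSpectra{}{}(d\forall)\subseteq\FSpectra{d}{}(2d\forall)$) are routine bookkeeping.
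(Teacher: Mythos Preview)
Your proof is correct and takes essentially the same approach as the paper: replace each occurrence of a high-arity symbol by a fresh $d$-ary one and enforce consistency via a coherence axiom ranging over a second block of $d$ universal variables, exactly as in the first half of the proof of Corollary~\ref{COR Grandjean} (which you explicitly mirror) and in the paper's own suppressed sketch for this proposition. Your write-up is in fact more detailed than the paper's, spelling out both the term-flattening step and the well-definedness argument needed to reconstruct the high-arity symbols in the converse direction.
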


\raus{
\begin{proof}[Proof ideas] We sketch the proof on an example. Suppose $d=2$ and
$\varphi$ is the following sentence involving arity at most $3$ predicate and
function symbols.

\[ \varphi\equiv \forall x_1\forall x_2 \ R(x_2,x_1,x_2) \wedge 
R(g(x_1,f(x_1),x_2),g(x_1,x_1,x_2),x_2) \]

For every occurrences of $R$ and $g$ (function $f$ may be left unchanged), new
symbols $R_1,R_2,\ldots$ and $g_1$ are introduced and $\varphi$ is transformed
into a new sentence $\phi$ where $R(x_2,x_1,x_2)$ is replaced by
$R_1(x_1,x_2)$ and $R(g(x_1,f(x_1),x_2),g(x_1,x_1,x_2),x_2)$ by $R_2(x_1,x_2)$. In
conjunction with $\phi$ the following axioms using $2d$ variables are stated which
relates the truth value of the new added symbols :

\[
\begin{array}{l}
\forall x_1\forall x_2\forall y_1\forall y_2 \\
\ \ x_1=y_1\wedge f(x_1)=y_1\wedge x_2=y_2 \rightarrow
(g_1(x_1,x_2)=g_2(y_1,y_2))\\
\ \ x_2=g_1(y_1,y_2) \wedge x_1=g_2(y_1,y_2) \wedge x_2=y_2 \rightarrow
(R_1(x_1,x_2)\leftrightarrow R_2(y_1,y_2)) 
\end{array}
\]
 
 \end{proof}
 }
In the next section, we will examine more closely the expressive power of 
spectra on restricted vocabulary. The results of this section show that a tight 
connection exists between nondeterministic complexity classes and classes of 
spectra defined by limiting the number of universally quantified variables in 
sentences. A natural question is whether such a connection exists when the 
language itself is limited. In particular

\begin{openquestion}
\label{fspec}
Is there a characterization as a complexity class of the classes 
$\FSpectra{d}{}$ for all $d\geq 1$ ?
\end{openquestion}

This question has also some connections with problems addressed in 
Section~\ref{SEC unary and arity hierarchies}.

\raus{

\subsection{First order reductions
} 
\comment{TBC : jam}

First order reductions (translations schemes) were introduced first in {\bf 
Tarski, Mostowski, Robinson 1953} \\
and further developed by {\bf Rabin 1965}.

{\bf Lovasz, Gacs 1977} used them to show that certain
generalized spectra are $\bNP$-complete via first order reductions.

{\bf Dahlhaus  1983 \cite{proc:Dahlhaus83}} showed that SAT, CLIQUE and 
\\
DHC (directed hamiltonian cycle) are such
generalized spectra.

{\bf Immerman 1982} 
and
{\bf Vardi 1982} 
used the same technique to characterize the
generalized spectra in  $\bP$.

}

\section{Approach III: Restricted vocabularies}
\label{vocabularies}
\subsection{Spectra for monadic predicates}

Maybe the simplest way to restrict vocabularies is by limiting the arity of the 
symbols. In that direction, the smallest restricted class of spectra that can 
be studied is that of sentences involving only relation symbols of arity one 
(so-called monadic in the literature). In this case, the following can be 
proved:

\begin{proposition}[L\"owenheim 1915~\cite{ar:Loewenheim15}, Fagin 1975 ~\cite{ar:Fagin75b}]
\label{RES MSO finite}
Let $\tau$ be a vocabulary consisting of unary relation symbols only
and $\phi \in \MSOL{\tau}$.
Then the spectrum of $\phi$ is finite or co-finite.
\end{proposition}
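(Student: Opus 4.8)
\emph{Proof plan.} Write $\tau=\{P_1,\dots,P_k\}$ and let $q$ be the quantifier rank of $\phi$. The starting observation is that a $\tau$-structure $\mathcal A$ is completely described by its \emph{colour census}: for $c\in\{0,1\}^k$ let $A_c$ be the set of $a\in A$ with $a\in P_i^{\mathcal A}$ exactly when $c_i=1$; then $\mathcal A$ is the disjoint union of the $2^k$ ``pure'' sets $A_c$, each carrying only its colour label, and $\mathcal A\cong\mathcal B$ precisely when $|A_c|=|B_c|$ for all $c$. The first and main step is to prove that whether $\mathcal A\models\phi$ depends only on the \emph{truncated} census $\big(\min(|A_c|,N)\big)_{c\in\{0,1\}^k}$, where we may take $N=2^{q}$.

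For this I would use the Ehrenfeucht--Fra\"{\i}ss\'e game for $\Msol$ (alternating point moves and set moves), or, equivalently, a Feferman--Vaught composition argument over disjoint unions. Playing the $q$-round game between $\mathcal A$ and a structure $\mathcal A'$ with $|A'_c|=\min(|A_c|,N)$, Duplicator maintains the invariant that after round $i$ the atoms of the Boolean algebra generated by the original colours together with the $\le i$ sets chosen so far are matched across the two structures: corresponding atoms have either equal size, or both size $\ge 2^{\,q-i}$. A point move preserves this with room to spare; for a set move, inside a matched pair of atoms of sizes $m=m'$ Duplicator copies Spoiler's split exactly, while inside a pair with $m,m'\ge 2^{\,q-i+1}$ a short case check (on whether the part Spoiler keeps is small, ``middle'', or co-small) shows Duplicator can split so that both new atoms again satisfy the invariant with threshold $2^{\,q-i}$. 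This bookkeeping is routine but is the one place that needs care; it yields $\mathcal A\equiv_q\mathcal A'$ and hence $\mathcal A\models\phi$ iff $\mathcal A'\models\phi$.

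Given Step~1 the proposition follows from a dichotomy. If $\phi$ has models of arbitrarily large cardinality, fix one, $\mathcal A$, with $|A|=n_0\ge 2^k N$; by pigeonhole some colour class $A_c$ has $|A_c|\ge N$, so its truncated value equals $N$, and adjoining any number of fresh elements to that one class leaves the truncated census --- hence, by Step~1, the truth of $\phi$ --- unchanged. Thus $\mathcal A$ can be enlarged to a model of every cardinality $n\ge n_0$, so $\spec{\phi}$ contains $[n_0,\infty)$ and is cofinite. If instead $\phi$ has no arbitrarily large model, $\spec{\phi}$ is bounded, hence finite. Either way $\spec{\phi}$ is finite or cofinite.

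The expected main obstacle is Step~1, the $\Msol$ Ehrenfeucht--Fra\"{\i}ss\'e argument. One can instead quote a Feferman--Vaught theorem for $\Msol$ over disjoint unions together with the fact that the $\Msol$ $q$-theory of a pure set is determined by its cardinality truncated at $2^{q}$, but some form of that fact still has to be proved; everything else is packaging.
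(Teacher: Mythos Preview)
Your proof is correct and follows exactly one of the two routes the paper indicates: the paper's entire proof reads ``Use quantifier elimination or Ehrenfeucht--Fra\"{\i}ss\'e games,'' and you have carried out the Ehrenfeucht--Fra\"{\i}ss\'e argument in detail, including the standard truncated-census invariant for the $\Msol$ game on structures over unary predicates. The dichotomy in your second step (unbounded spectrum $\Rightarrow$ cofinite via padding a large colour class; bounded $\Rightarrow$ finite) is the natural way to finish and is implicit in the paper's one-line proof.
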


\begin{proof} 
Use quantifier elimination or Ehrenfeucht-Fra\"{\i}ss\'e games.
\end{proof}

Remark that the even numbers are a spectrum of the following  sentence
with one unary function:

\[
\forall x \ f(x)\neq x \wedge f^2(x)=x.
\]

Hence, the most trivial extension of monadic relational vocabulary already provide a spectrum which is neither finite nor co-finite. Then, a natural question is whether the converse of Proposition~\ref{RES MSO finite} is true or not. The following observation can be made by remarking that one can express the cardinality of a finite domain set by an existential first-order formula.

\begin{obs}
Every finite or co-finite set $X \subseteq \N$
is a first-order spectrum for a sentence with equality only (i.e., no relation or function symbols).
\end{obs}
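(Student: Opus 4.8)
The plan is to show that for any finite or co-finite $X \subseteq \N$, there is a first-order sentence $\phi$ over the empty vocabulary (equality only) such that $\spec{\phi} = X$. The key point is that over the empty vocabulary a finite structure is determined up to isomorphism by its cardinality, and that ``having exactly $k$ elements'' and ``having at least $k$ elements'' are both first-order expressible.

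First I would introduce, for each $k \in \N^+$, the sentence $\lambda_{\geq k}$ asserting the existence of at least $k$ distinct elements:
\[
\lambda_{\geq k} \ \equiv \ \exists x_1 \cdots \exists x_k \bigwedge_{1 \leq i < j \leq k} x_i \neq x_j.
\]
Then a structure on a universe of size $n$ satisfies $\lambda_{\geq k}$ iff $n \geq k$, so $\spec{\lambda_{\geq k}} = \{n \in \N^+ : n \geq k\}$. Dually, $\spec{\neg \lambda_{\geq k+1}} = \{n \in \N^+ : 1 \leq n \leq k\}$, and ``having exactly $k$ elements'' is $\lambda_{= k} \equiv \lambda_{\geq k} \wedge \neg \lambda_{\geq k+1}$, with $\spec{\lambda_{= k}} = \{k\}$.

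Next I would handle the two cases. If $X = \{k_1, \dots, k_m\}$ is finite, take $\phi \equiv \bigvee_{i=1}^m \lambda_{= k_i}$ (and $\phi \equiv \lambda_{\geq 1} \wedge \neg \lambda_{\geq 1}$, an unsatisfiable sentence, if $X = \emptyset$); then $\spec{\phi} = X$. If $X$ is co-finite, write $\N^+ \setminus X = \{k_1, \dots, k_m\}$ and take $\phi \equiv \bigwedge_{i=1}^m \neg \lambda_{= k_i}$; a structure of size $n$ satisfies this iff $n \notin \{k_1,\dots,k_m\}$, i.e.\ iff $n \in X$ (note $0$ is automatically excluded since $0$ is never in any spectrum). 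This covers all finite and co-finite subsets of $\N$.

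There is essentially no obstacle here: the only things to verify are the two routine semantic claims that $\lambda_{\geq k}$ has models exactly in cardinalities $\geq k$ and that finite structures over the empty vocabulary are classified by cardinality, after which the Boolean combinations are immediate. The observation is really just the base case of the expressiveness of first-order logic with equality.
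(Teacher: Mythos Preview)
Your proof is correct and follows exactly the approach the paper indicates: expressing ``there are exactly/at least $k$ elements'' by an existential formula with equality only, and then taking Boolean combinations. The paper states this as a one-line remark without spelling out the sentences $\lambda_{\geq k}$ and the case split; you have simply supplied those routine details.
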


This contrast  with the fact that  every $\Sol$-spectrum is also an
$\Sol$-spectrum  over equality only.
This allows to conclude.

\begin{proposition}
If $\tau$ consists of a finite (possibly empty) set
of unary relation symbols, the $\MSOL{\tau}$-spectra
are exactly all finite and cofinite subsets of $\N$.
\end{proposition}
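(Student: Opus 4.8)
The plan is to establish the two inclusions separately; both follow almost immediately from results already in hand, so there is essentially no obstacle beyond a bookkeeping remark.

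For the inclusion ``every $\MSOL{\tau}$-spectrum is finite or cofinite'', I would simply invoke Proposition~\ref{RES MSO finite}: it already states that for $\tau$ consisting of unary relation symbols only and $\phi\in\MSOL{\tau}$, the set $\spec{\phi}$ is finite or cofinite. Since the hypothesis here merely asks $\tau$ to be a finite (possibly empty) set of unary relation symbols, which is a special case, nothing further is required.

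For the converse inclusion I would use the preceding Observation, which says that every finite or cofinite subset of $\N$ is the spectrum of a first-order sentence over equality only. The remaining point is monotonicity of logics under enlargement of the vocabulary: a first-order sentence over the empty vocabulary is in particular a sentence of $\FOL{\tau}$, hence of $\MSOL{\tau}$, for any $\tau$; and passing to the larger vocabulary does not change the spectrum, since every model of size $n$ of the equality-only sentence can be expanded to a $\tau$-structure of size $n$ by interpreting the unary predicates arbitrarily, while conversely the $\tau$-reduct of any $\tau$-model of that sentence is still a model of it. Therefore every finite or cofinite set is already realized as an $\MSOL{\tau}$-spectrum.

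Putting the two inclusions together gives the stated equality. The only subtlety worth flagging is the standing convention that $0$ belongs to no spectrum, so ``finite and cofinite subsets of $\N$'' must be understood as finite or cofinite subsets of $\N^+$; with that reading the argument is complete, and the single genuinely non-trivial ingredient---that monadic second-order logic over unary predicates defines nothing beyond cardinality thresholds---is exactly what Proposition~\ref{RES MSO finite} (proved by quantifier elimination or Ehrenfeucht-Fra\"{\i}ss\'e games) provides.
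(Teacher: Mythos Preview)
Your proof is correct and follows exactly the paper's approach: the paper simply says ``This allows to conclude'' after stating Proposition~\ref{RES MSO finite} and the Observation about finite/cofinite sets being spectra of equality-only sentences, and your two inclusions spell out precisely this combination. Your added remarks about vocabulary enlargement and the $0\notin\spec{\phi}$ convention are reasonable clarifications but not strictly needed.
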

\ 

\subsection{Spectra for one unary function}
\label{SEC one unary function}

As remarked above, one unary function is enough to define nontrivial spectra. 
It turns out, however, that a complete characterization of spectra for 
one unary function (with additional unary relations) is possible.

\begin{definition}
A set $X \subseteq \N$ is {\em ultimately periodic} if there
are $a, p \in \N$ such that for each $n \geq a$ we have that $n \in X$ iff
$n+p \in X$.
\end{definition}

The set of even numbers is ultimately periodic with $a=p=2$. Again, one may observe 

\begin{obs}
Every ultimately periodic set $X \subseteq \N$
is a first order spectrum for a sentence with one
unary function and equality only (this is already true if the function is restricted to be a permutation).
\end{obs}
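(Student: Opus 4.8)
The plan is to reduce an arbitrary ultimately periodic $X$ to a finite union of ``atomic'' sets, each realized by a short sentence over the single unary function symbol $f$ in which $f$ is a permutation, and then to glue these sentences by a disjunction while keeping the \emph{same} symbol $f$ throughout.

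First I would record the normal form for ultimately periodic sets. Fix $a,p$ witnessing periodicity; replacing $p$ by $2p$ if necessary, we may assume $p\geq 2$. Since $n\in X\iff n+p\in X$ for every $n\geq a$, membership of an $n\geq a$ in $X$ depends only on $n\bmod p$; hence $X=F\cup\bigcup_{r\in R}A_r$, where $F=X\cap\{1,\ldots,a-1\}$ is finite, $A_r=\{\,n\geq a:n\equiv r\ (\mathrm{mod}\ p)\,\}$, and $R\subseteq\{0,\ldots,p-1\}$ is the set of residues eventually occurring in $X$. (The one point to verify is that if some $n\geq a$ with $n\equiv r$ lies in $X$, then all such $n$ do, moving both up and down by steps of $p$ while staying $\geq a$; this is immediate from the defining equivalence.)

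Next I would build the pieces. For a singleton $\{k\}$ take $\phi_k$ to be ``there are exactly $k$ elements'' conjoined with $\forall x\,f(x)=x$: its only model size is $k$ and there $f$ is the identity. For $A_r$, choose the representative $c\in\{1,\ldots,p\}$ with $c\equiv r\ (\mathrm{mod}\ p)$ and the least $k_0\geq 0$ with $c+pk_0\geq a$, and let $\phi_r$ assert: $f$ has exactly $c$ fixed points; every non-fixed point $y$ satisfies $f^{p}(y)=y$ and $f^{j}(y)\neq y$ for $1\leq j<p$ (so its cycle has length exactly $p$, using $p\geq 2$); and there exist $k_0$ elements lying in pairwise distinct non-trivial cycles. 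Each clause is first-order over $\{f\}$ with equality (``exactly $c$ fixed points'' and ``$k_0$ distinct cycles'' are the usual counting formulas, and $f^{j}(y)$ is just a term). A structure satisfies $\phi_r$ exactly when $f$ is a permutation that is the disjoint union of $c$ fixed points and $m\geq k_0$ cycles of length $p$; such permutations of $[n]$ exist precisely when $n=c+pm$ with $m\geq k_0$, so $\spec{\phi_r}=\{c+pm:m\geq k_0\}=A_r$, and in every model $f$ is a permutation.

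Finally, set $\phi\equiv\bigvee_{k\in F}\phi_k\vee\bigvee_{r\in R}\phi_r$, all disjuncts re-using the single symbol $f$. A structure of size $n$ satisfies $\phi$ iff some choice of $f$ makes it a model of one disjunct, whence $\spec{\phi}=\bigcup_{k\in F}\{k\}\cup\bigcup_{r\in R}A_r=X$; and one may insist $f$ be a permutation, since it is in each disjunct. The only mildly delicate point in the write-up is this last merging: closure of spectra under union (Proposition~\ref{pr:easyclosures}) is normally invoked with disjoint vocabularies, while the economy of a single unary function forces all disjuncts to share $f$ — but this does no harm precisely because the witnessing $f$ can be chosen independently in each disjunct. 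The period-$1$ case needs nothing beyond the preliminary step of passing to period $2$.
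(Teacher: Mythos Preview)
Your proof is correct. The paper does not actually supply a proof of this observation---it is stated as self-evident---so there is nothing to compare against. Your decomposition of $X$ into a finite piece plus finitely many residue classes $A_r$ modulo $p$, and your realization of each $A_r$ by a permutation with exactly $c$ fixed points, all remaining elements on $p$-cycles, and at least $k_0$ such cycles, is the natural construction; the verification that $\spec{\phi_r}=A_r$ and that every model has $f$ a permutation goes through as you indicate. The point you flag at the end is also handled correctly: even with a shared symbol $f$, one has $\spec{\phi_1\vee\phi_2}=\spec{\phi_1}\cup\spec{\phi_2}$, since a structure of size $n$ models the disjunction iff \emph{some} interpretation of $f$ satisfies \emph{some} disjunct, and these choices are independent. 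The passage from period $1$ to period $2$ is harmless for the reason you give.
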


Surprisingly, ultimately periodic sets are precisely the spectra of sentences with one unary function
 \cite{proc:DurandFL97,proc:GurevichS03}.

\begin{theorem}[Durand, Fagin, Loescher 1997, 
Gurevich, Shelah 2003]\label{th:gur-she}
Let $\phi$ be a sentence of $\MSOL{\tau}$ where $\tau$
consists of

\begin{itemize}
\item[-] finitely many unary relation symbols, 

\item[-] one unary function and equality only.
\end{itemize}

Then $\spec{\phi}$ is ultimately periodic,
\end{theorem}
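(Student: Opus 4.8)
The plan is to analyze the structure of a model $\mathcal{M}$ of $\phi$ in terms of its \emph{functional graph}: the directed graph on the universe where each element $a$ has a single outgoing edge to $f(a)$. A standard observation is that every connected component of such a graph consists of a unique directed cycle (possibly a self-loop) together with a forest of trees hanging off the cycle nodes, edges directed toward the cycle. Since $\phi$ is an $\Msol$-sentence over the vocabulary $\tau$ of $f$ together with finitely many unary predicates $P_1,\dots,P_r$, each component carries in addition a colouring of its nodes by the $2^r$ possible truth-value combinations of the $P_i$. The key point is to show that, up to $\Msol$-equivalence at a fixed quantifier rank $q$ (equivalently, up to the $q$-round Ehrenfeucht--Fra\"{\i}ss\'e game), there are only finitely many isomorphism types of such coloured components, and that a model is determined up to $\equiv_q$ by the \emph{multiset of types of its components, counted up to some finite threshold}. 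This is the Feferman--Vaught / composition method for disjoint unions, which applies because $\Msol$ has the composition property over disjoint unions of structures.

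From there the argument is combinatorial. First I would fix $q$ to be the quantifier rank of $\phi$ and let $\mathcal{T}$ be the (finite) set of $\equiv_q$-types of connected coloured functional graphs, noting that whether $\mathcal{M}\models\phi$ depends only on the vector $(\min(c_t(\mathcal{M}),N))_{t\in\mathcal{T}}$ where $c_t(\mathcal{M})$ is the number of components of type $t$ and $N$ is a suitable threshold depending on $q$ and $|\mathcal{T}|$. Next I would record, for each type $t$, the set $\mathrm{Sz}(t)\subseteq\N^+$ of cardinalities of connected components realizing $t$: the crucial structural fact is that each such $\mathrm{Sz}(t)$ is \emph{ultimately periodic}, because a connected functional graph of a given type can be enlarged in controlled ways (e.g. inserting an extra copy of a ``pumpable'' segment of a tree branch, or lengthening the cycle while preserving the colour pattern periodically), so the achievable sizes form a set closed, above some bound, under adding a fixed period. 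Then $\spec{\phi}$ is the set of all sums $\sum_{t} (\text{sizes of the chosen components of type } t)$ ranging over all component-multisets satisfying the threshold constraints. Since $\SPEC$-like families are closed under finite unions and sumsets (Proposition~\ref{pr:easyclosures}), and ultimately periodic sets are closed under these same operations, one concludes $\spec{\phi}$ is ultimately periodic.

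More concretely, I would split into cases according to which types appear ``boundedly'' versus ``unboundedly''. For the finitely many types $t$ that are required to appear at least once but whose count is capped at some $n_t<N$, the contribution to the total size is a sum of $n_t$ elements of $\mathrm{Sz}(t)$, hence lies in a finite sumset of ultimately periodic sets, which is ultimately periodic. For a type $t$ whose count may be arbitrarily large (i.e. the constraint is $c_t\geq$ something, with no upper cap once past threshold), the contribution ranges over the numerical semigroup-like set $\{k\cdot s : k\geq k_0,\ s\in\mathrm{Sz}(t)\}$ of all sums of at least $k_0$ elements of the ultimately periodic set $\mathrm{Sz}(t)$; this set is itself ultimately periodic (in fact it is eventually an arithmetic progression with common difference $\gcd$ of the relevant sizes). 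Taking the union over the finitely many admissible threshold-vectors and then the sumset over $t\in\mathcal{T}$ keeps us inside the ultimately periodic sets, giving the theorem.

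The main obstacle, and the step deserving the most care, is the structural claim that for each $\equiv_q$-type $t$ the size-set $\mathrm{Sz}(t)$ is ultimately periodic, together with the precise statement of the pumping operation that preserves the type. One must argue that any sufficiently large connected coloured functional graph of type $t$ contains a segment --- either along a tree branch or around the cycle, with a repeating colour pattern --- that can be duplicated (or, symmetrically, a short tree can be grown at a node) without changing the $q$-round game type; this is where the finiteness of $q$ and a pigeonhole on colour/type patterns along long paths is used. Once this pumping lemma for functional graphs is in place, the rest is the routine Feferman--Vaught bookkeeping sketched above. (This is essentially the route taken in \cite{proc:DurandFL97,proc:GurevichS03}, and it also makes transparent why adding even a second unary function, or a binary relation, breaks the argument: the component structure is no longer a cycle-of-trees and the size-sets need not be ultimately periodic.)
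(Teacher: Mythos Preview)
Your proposal is correct and follows essentially the same route that the paper points to: the paper's proof section is only a two-line citation, saying that \cite{proc:DurandFL97} uses an Ehrenfeucht--Fra\"{\i}ss\'e game argument (for the $\EMSOL{\tau}$ case) and that \cite{proc:GurevichS03} extends this to full $\Msol$ via the Feferman--Vaught--Shelah composition method. Your sketch combines exactly these two ingredients---Feferman--Vaught over disjoint unions to reduce to a threshold-counted multiset of component types, then a pumping/EF-game argument inside each connected cycle-of-trees component to get ultimate periodicity of the per-type size sets---and you correctly flag the pumping step as the place where the real work lies.
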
 

\begin{proof} The proof of~\cite{proc:DurandFL97} uses Ehrenfeucht-Fra\"{\i}ss\'e game argument and is  restricted to the $\EMSOL{\tau}$ case. The generalization of~\cite{proc:GurevichS03} is an application of the Feferman-Vaught-Shelah decomposition method.
\end{proof}

There exists alternative ways to characterize ultimately periodic sets. Among others, they can  also be seen as sets of integers definable in Presburger Arithmetic. Also, since ultimately periodic sets are  closed under complementation, one have:

\begin{corollary}
Spectra involving a single unary function symbol are closed under complement.
\end{corollary}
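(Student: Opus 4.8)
The corollary states that spectra involving a single unary function symbol are closed under complement, and the plan follows immediately from the machinery already in place. The key observation is that Theorem~\ref{th:gur-she} tells us that any such spectrum $\spec{\phi}$ is ultimately periodic, and ultimately periodic sets are trivially closed under complementation: if $X \subseteq \N$ is ultimately periodic with parameters $a, p$ (so that $n \in X \iff n+p \in X$ for all $n \geq a$), then $\N^+ \setminus X$ satisfies the same recurrence relation beyond the same threshold $a$, hence is also ultimately periodic. So the content reduces to: every ultimately periodic subset of $\N^+$ is itself the spectrum of a sentence with one unary function symbol.

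The plan is therefore to invoke the observation already recorded in the excerpt (the \texttt{obs} immediately preceding Theorem~\ref{th:gur-she}), namely that every ultimately periodic set $X \subseteq \N$ is a first-order spectrum for a sentence with one unary function and equality only. Concretely, given $a$ and $p$ witnessing ultimate periodicity of the complement, one writes down a sentence $\psi$ whose models of size $n$ exist precisely when $n \in \N^+ \setminus \spec{\phi}$: for the finitely many exceptional small values one uses disjunctions of cardinality sentences (expressible with equality alone), and for the periodic tail one uses a unary function that is forced to be a permutation decomposing the universe into cycles whose lengths are constrained modulo $p$, together with the residues that belong to the complement. This is the standard construction alluded to in Section~\ref{se:intro} for building $a + b\N$ as a spectrum via one unary function symbol, extended to arbitrary finite unions of such arithmetic progressions plus a finite set.

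Putting the pieces together: given $\phi$ with a single unary function symbol, Theorem~\ref{th:gur-she} yields that $\spec{\phi}$ is ultimately periodic; its complement $\N^+ \setminus \spec{\phi}$ is then ultimately periodic by the elementary closure remark; and by the \texttt{obs} there is a sentence $\psi$ with one unary function symbol and equality only such that $\spec{\psi} = \N^+ \setminus \spec{\phi}$. Hence the class of spectra of sentences over a single unary function symbol is closed under complement.

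There is no real obstacle here --- the corollary is an immediate consequence of two facts already established in the excerpt. If anything, the only point requiring a line of care is making sure the complement is taken within $\N^+$ (the positive integers) rather than all of $\N$, since $0$ is never in any spectrum; but this is harmless, as one simply excludes $0$ explicitly and the ultimate-periodicity argument is unaffected for large $n$.
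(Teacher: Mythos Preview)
Your proposal is correct and follows essentially the same approach as the paper: the paper simply remarks that ultimately periodic sets are closed under complementation and combines this with Theorem~\ref{th:gur-she} and the observation that every ultimately periodic set arises as such a spectrum. You have supplied more detail than the paper does (which merely asserts the closure in one sentence), but the argument is the same.
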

\  

\subsection{Beyond one unary function and transfer theorems}
\ \\ 
There exist several ways to extend a vocabulary with one unary 
function: one may choose to add one (or several) new unary function(s) or, in 
the opposite direction, one may consider vocabularies with only one unrestricted binary relation 
symbol. It turns out that, up to what will be called "transfer theorems" in 
the sequel, both kinds of extension lead to very expressive formulas.

Before going further, notations about classes of spectra need to be refined to 
take into account the number of symbols of distinct arities. Again, we will 
distinguish in the sequel {\em whether function symbols are allowed or not}.
We write $\textsc{f-spec}$ with various indices when function symbols are allowed,
and $\textsc{spec}$ when function symbols are not allowed.

\begin{definition}
A set $S$ of integers is in  $\FSpectra{i}{\alpha,\beta}$ if there exists a 
first-order sentence $\phi$ such that $S=\spec{\phi}$ and the vocabulary of 
$\varphi$ contains  only

\begin{itemize}
\item[-] $\alpha$ function symbols of arity $i$ and 

\item[-] at most $\beta$ function  symbols   of arity less than $i$, or 
relation symbols of arity less or equal to $i$.
\end{itemize}

Said another way: a set of integers is in $\RSpectra{i}{\alpha,\beta}$ if it can be 
defined by a first order formula over the language of $\alpha$ 
relation symbols of arity $i$ and  $\beta$ relation symbols of arity less than $i$.
\end{definition}

When the number of symbols of arity less than $i$ is not restricted, the 
respective class of spectra are denoted by $\RSpectra{i}{\beta,\omega}$ and 
$\FSpectra{i}{\beta,\omega}$. For example, the class of first order spectra 
over one unary function and an arbitrary number of monadic relation 
and constant symbols (studied in Section~\ref{SEC one unary function}) is denoted 
$\FSpectra{1}{1,\omega}$. Similarly $\RSpectra{i}{\alpha}$ and 
$\FSpectra{i}{\alpha}$ are abbreviations for $\RSpectra{i}{\alpha,0}$ and 
$\FSpectra{i}{\alpha,0}$ Finally, in this setting $\RSpectra{i}{}$ abbreviates
 for $\RSpectra{i}{\omega,\omega}$ (the same holds for  $\FSpectra{i}{}$).

Let us examine what are the relations between these different classes of 
spectra. Recall that for all $i,j\in\N$ such that $i<j$:

\[ \RSpectra{i}{}\subseteq \RSpectra{j}{}, \  \FSpectra{i}{}\subseteq 
\FSpectra{j}{}  \mbox{ and }  \RSpectra{i}{}\subseteq \FSpectra{i}{}. \]

The following inclusions are also easy to see. For all $i,\beta\in\N$,

\[
\RSpectra{i}{\alpha,\omega}\subseteq \RSpectra{i}{\alpha+1} \mbox{ and }  \FSpectra{i}{\alpha,\omega}\subseteq \FSpectra{i}{\alpha+1}.
\]

The relationships between spectra of $i$-ary functions and spectra of $i+1$-ary 
relations can be made more precise. In~\cite{ar:DurandR96}, it is shown that a 
spectrum of a first-order formula involving any number of unary function symbols is 
also the spectrum of a formula using only one binary relation. This can be 
generalized to any arity.

\begin{proposition}[\cite{ar:DurandR96,phd:Durand96}]
For every integer $i\geq 1$, $\FSpectra{i}{}\subseteq\RSpectra{i+1}{1}$
\end{proposition}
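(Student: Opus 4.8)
The plan is to pack a whole $\tau$-model, together with just enough arithmetic to read it back, into a single $(i{+}1)$-ary relation, and to run the construction in reverse. First some harmless normalizations of $\phi$: replace each relation symbol of arity $j\le i$ by its characteristic function (with two fixed constants as truth values), each function symbol of arity $j<i$ by an $i$-ary function ignoring its last arguments, and each constant by an $i$-ary constant function. So it suffices to treat a sentence $\phi$ over a vocabulary $\tau=\{g_1,\ldots,g_p\}$ of $p$ function symbols, all of arity exactly $i$ (plus equality). Moreover every finite set of positive integers already lies in $\RSpectra{i+1}{1}$ — a sentence using equality alone fixes finitely many cardinalities, leaving the relation symbol unused — so we may argue only for cardinalities $n$ above a fixed threshold $n_0$, appending at the end a finite disjunct for the small elements of $\spec{\phi}$.

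Next, the encoding. For $n\ge n_0$ large enough that $p\lceil\log_2 n\rceil+3<n$, and a $\tau$-structure $\mathcal{M}$ on a universe identified with $[n]=\{0,\ldots,n-1\}$, build an $(i{+}1)$-ary relation $R$ on $[n]$ using its \emph{one extra coordinate} as an address: a first argument $a$ singles out the $i$-ary ``slice'' $R(a,-,\ldots,-)$. Reserve a bounded number of addresses: one slice holds a discrete linear order $\le$ of the universe; one holds the companion $Bit$ predicate relative to $\le$; and for each $l\le p$ and each bit position $b<\lceil\log_2 n\rceil$, one slice holds the $i$-ary relation $B_{l,b}=\{\bar{x} : \mbox{bit }b\mbox{ of }g_l(\bar{x})\mbox{ is }1\}$. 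There is room since $p\lceil\log_2 n\rceil+3<n$, and the addresses are chosen first-order from $\le$ (the $\le$-least few elements, in a fixed order, with a mild bootstrapping so that the order-slice itself can be located without circularity). The point of storing $\le$ and $Bit$ is that first-order logic over $(\le,Bit)$ has full arithmetic ($+$, $\times$, pairing, bit extraction, iterated sums over a logarithmic range); hence a first-order $\{R\}$-formula can locate all these slices and recover $g_l(\bar{x})=\sum_b [B_{l,b}(\bar{x})]\,2^b$.

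Now let $\psi$ be the $\{R\}$-sentence asserting: (a) the universe has more than $n_0$ elements; (b) the order-slice of $R$ is a linear order of the universe and the $Bit$-slice is a legitimate bit predicate for it — a standard first-order condition over a finite linear order; (c) for every $l\le p$ and every $\bar{x}$ the number coded by the bits $(B_{l,b}(\bar{x}))_b$ is $<n$, so the decoded $g_l$ is a genuine total operation on the universe; and (d) the translation $\phi^*$ of $\phi$ obtained by interpreting each term $g_l(t_1,\ldots,t_i)$ through the decoding above, with quantifiers still ranging over the whole universe. For $n>n_0$: if $\phi$ has an $n$-element model, its honest encoding is an $n$-element model of $\psi$; and any $n$-element model of $\psi$ decodes, by (b)--(c), to a genuine $\tau$-structure $\mathcal{M}_R$ on its universe, with $\mathcal{M}_R\models\phi$ by (d). Hence $\spec{\psi}$ and $\spec{\phi}$ agree above $n_0$; the appended finite disjunct fixes the remaining cardinalities, so $\spec{\phi}=\spec{\psi}\in\RSpectra{i+1}{1}$.

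The delicate point is the razor-thin arity budget: the whole scheme runs on the single coordinate separating arity $i$ (the raw data and the bit relations $B_{l,b}$) from arity $i{+}1$ ($R$ itself), which is just enough to ``tag'' an $i$-ary slice while keeping the auxiliary arithmetic beside it. This is tightest at $i=1$, where a slice of the binary $R$ is merely a subset, so $\le$ and $Bit$ cannot occupy single slices and must instead be spread over reserved rows — exactly the base case established by Durand and Ranaivoson~\cite{ar:DurandR96}, and for $i\ge2$ the same plan goes through with coordinates to spare. A secondary nuisance is making clause (b) literally first-order and locating the order-slice without circularity; both are routine but must be handled carefully.
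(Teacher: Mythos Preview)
The survey states this proposition without proof, deferring entirely to the cited references, so there is no in-paper argument to compare your sketch against.

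For $i\ge 2$ your argument is essentially correct and standard. Slicing $R$ on its first coordinate yields $n$ many $i$-ary relations; since $i\ge 2$, two of these can host the binary $\le$ and $Bit$, and $p\lceil\log_2 n\rceil$ further slices hold the bit-planes $B_{l,b}$ of the function values. The decoding relies on the well-known fact that $\FO(\le,Bit)$ defines $+$ and $\times$ over finite ordered sets. The bootstrapping worry dissolves once $\psi$ begins with $\exists a_\le\,\exists a_{Bit}$ and defines $\le$ as the slice at the witnessed $a_\le$; the clause ``$a_\le$ is $\le$-minimal'' is then a harmless consistency check, not a circularity. One small sharpening: your clause~(b) must force $Bit$ to be the \emph{correct} bit predicate for $\le$, not merely some binary relation parked in that slot; the successor-with-carry axiomatisation does this in first order over a finite linear order.

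For $i=1$ you do not give a proof but defer to Durand--Ranaivoson. That is the substantive case: with $R$ binary, slices are unary and cannot carry $\le$ or $Bit$, and the phrase ``spread over reserved rows'' names the obstacle without resolving it --- packing $p$ unary functions \emph{together with} a linear order and enough arithmetic into one binary relation on the unchanged domain is precisely the technical content of the cited paper. In a survey context the deferral is reasonable, and you are right to flag $i=1$ as the tight case; but as a self-contained proof of the full proposition your write-up is incomplete exactly where the original result does its work, and the $i\ge 2$ extension you do supply is, by comparison, the routine part.
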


The converse is not known not be true. All in once, the following chain of 
inclusions holds.

\[
\RSpectra{i}{} \subseteq \FSpectra{i}{1} \subseteq \FSpectra{i}{} \subseteq \RSpectra{i+1}{1}\subseteq \RSpectra{i+1}{}
\]

It seems difficult to prove the converse of any of the inclusions given above. 
Then, a natural way to study the expressive power of languages relatively to 
spectrum definition is to reduce them through functional (here polynomial) 
transformation. 

\begin{definition}
If $f : \N \rightarrow \N$ and $S$ is a set of integers, then 
$f(S)=\{f(n) : n\in S\}$ and $S^i=\{n^i : n\in S\}$.
\end{definition}

Let ${\cC_1}$ and ${\cC_2}$ be  two classes of spectra and 
$f:\N\rightarrow \N$ be a function. A natural question is then the 
following:\medskip

\textit{Let $S$ be a spectrum in ${\cC_1}$ and $f$ be a $\mathbb{N}\rightarrow \mathbb{N}$ function,  is $f(S)$ a spectrum in ${\cC_2}$?}

\medskip

 In~\cite{ar:Fagin75a}, Fagin showed an interesting equivalence between spectra defined by different relational languages. Such results have been called transfer theorems since then.
 The proof  can be seen as an extension of the well-known interpretation method of Rabin~\cite{proc:Rabin64} with an additional constraint that describes how the domain size of the structure needs to change.

\begin{theorem}[Fagin 1975~\cite{ar:Fagin75a}]~\label{Fagin transfer theorem}
For every $i\geq 1$, $S\in \RSpectra{2i}{} \iff S^i\in \RSpectra{2}{}$. 
\end{theorem}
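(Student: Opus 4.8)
The plan is to prove both directions of $S\in \RSpectra{2i}{} \iff S^i\in \RSpectra{2}{}$ by a size-changing interpretation argument. The underlying idea is that a structure of cardinality $m$ with relations of arity $2i$ can be simulated by a structure of cardinality $m^i$ carrying a single binary relation, because a $2i$-tuple over an $m$-element universe is the same thing as a pair of $i$-tuples, and an $i$-tuple over $[m]$ is naturally an element of $[m]^i$, a set of cardinality $m^i$.

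For the direction $S\in \RSpectra{2i}{} \Rightarrow S^i\in \RSpectra{2}{}$: given $\phi$ over a vocabulary with relation symbols of arity at most $2i$ such that $\spec{\phi}=S$, I would construct a sentence $\psi$ over a single binary relation $E$ with $\spec{\psi}=S^i$. The sentence $\psi$ must assert that its universe $V$ (of size $N$) can be organized as $[m]^i$ for some $m$ with $N=m^i$ — i.e., it must first-order-define an $i$-dimensional grid structure on $V$ using only $E$ (this is where one encodes ``coordinates'' of each vertex via the binary edge relation, defining the projection/equality-of-$j$-th-coordinate predicates), and then, relative to that grid, re-express each atom $R(x_1,\dots,x_{2i})$ of $\phi$ — where now the variables range over ``rows'' of the grid, i.e. over $[m]$ — by a formula speaking about the $2i$ coordinates, packaged as two grid-points. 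The axioms forcing $V\cong[m]^i$ have a model of size $N$ exactly when $N$ is a perfect $i$-th power, and then $m$ is recovered as the side length; combining with the relativized $\phi$ gives $\spec{\psi}=\{m^i : m\in S\}=S^i$.

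For the converse $S^i\in \RSpectra{2}{} \Rightarrow S\in \RSpectra{2i}{}$: given $\psi$ with a single binary relation and $\spec{\psi}=S^i$, I would build $\phi$ over arity-$2i$ relations with $\spec{\phi}=S$. Here the trick runs the other way: a model of $\phi$ of size $m$ should encode, via its arity-$2i$ relations, a binary structure on the set of $i$-tuples (of size $m^i$). One introduces a $2i$-ary relation symbol $E'$ meant to hold of $(\bar x,\bar y)$ exactly when the pair of $i$-tuples $(\bar x,\bar y)$, viewed as vertices of the $m^i$-vertex graph, is an edge; then $\phi$ asserts that $(\,[m]^i, E')$, with quantifiers over $i$-tuples simulated by blocks of $i$ ordinary quantifiers, satisfies $\psi$. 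Since $\psi$ has a model of size $N$ iff $N\in S^i$, and every such $N$ is of the form $m^i$ with $m\in S$, and conversely every $m\in S$ yields $m^i\in S^i$ hence a witnessing $E'$, we get $\spec{\phi}=S$. Note the $\iff$ in the statement also hides the elementary number-theoretic fact that $n\in S^i$ forces $n$ to be a perfect $i$-th power whose $i$-th root lies in $S$, so no ``spurious'' cardinalities intrude.

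The main obstacle is the first direction: one must \emph{define the grid} $[m]^i$ purely from a binary relation, with no built-in order or coordinate functions, in such a way that the axioms are satisfiable precisely in cardinalities $m^i$ and that one can recover first-order access to ``the $j$-th coordinate of $x$'' and ``$x$ and $y$ agree on coordinates $1,\dots,j$''. The standard device is to have $E$ encode a successor-style adjacency making $V$ into an $i$-dimensional discrete torus or grid and then derive the coordinate predicates; checking that this is first-order definable and that the cardinality spectrum of the grid axioms is exactly the perfect $i$-th powers is the technical heart. Everything else — relativizing quantifiers, rewriting atoms, and the bookkeeping matching $2i$-ary atoms to pairs of $i$-tuples — is routine once the grid is in place. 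I expect this is essentially Rabin's interpretation method augmented, as the paper notes, by the bookkeeping that tracks how the universe size transforms under the interpretation.
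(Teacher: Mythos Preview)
Your proposal is correct and matches the paper's one-line sketch (``an extension of the interpretation method of Rabin with an additional constraint that describes how the domain size of the structure needs to change''); both directions are exactly the $i$-tuple interpretation you describe.

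One small misreading makes your forward direction harder than necessary: $\RSpectra{2}{}$ denotes spectra of sentences over \emph{arbitrarily many} relation symbols of arity at most $2$, not a single binary relation. So you need not manufacture an $i$-dimensional grid from one edge relation. The standard (and easier) encoding uses a unary predicate $U$ for the ``base'' set and $i$ binary relations $P_1,\dots,P_i$ acting as coordinate projections $V\to U$; the axioms ``each $P_j$ is a total function into $U$'' and ``the map $x\mapsto(P_1(x),\dots,P_i(x))$ is a bijection $V\to U^i$'' are first-order over this binary vocabulary and have models exactly in cardinalities $|U|^i$. Each $2i$-ary relation $R$ of $\phi$ then becomes a single binary relation $R'$ on $V$ via $R'(x,y)\leftrightarrow R(P_1(x),\dots,P_i(x),P_1(y),\dots,P_i(y))$, and lower-arity relations are handled by padding. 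Likewise, in the converse direction $\psi$ may carry several binary (and unary) symbols, each of which becomes a $2i$-ary (resp.\ $i$-ary) symbol in $\phi$; your argument goes through unchanged.
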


Since relational spectra of arity one are finite or cofinite the ``transfer" 
theorem above cannot be extended to $\RSpectra{1}{}$. Not too surprisingly, if 
function symbols are allowed, a similar and more uniform equivalence can be 
proved.

\begin{proposition}~\label{transfer theorem function}
For every $i\geq 1$, $S\in \FSpectra{i}{} \iff S^i\in \FSpectra{1}{}$.
\end{proposition}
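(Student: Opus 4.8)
The plan is to prove this by an $i$-th power interpretation, in the spirit of Rabin's interpretation method and of Fagin's transfer theorem (Theorem~\ref{Fagin transfer theorem}), together with a careful accounting of domain sizes. The only number-theoretic input needed is that $n\mapsto n^i$ is strictly increasing on $\N^+$, so that for $S\subseteq\N^+$ one has $n\in S$ iff $n^i\in S^i$.

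I would first handle the direction $S^i\in\FSpectra{1}{}\Rightarrow S\in\FSpectra{i}{}$, which is the more routine one. Write $S^i=\spec{\psi}$ for a sentence $\psi$ over a purely unary vocabulary (unary predicates $U_1,\dots,U_p$, unary functions $f_1,\dots,f_q$, constants). I would introduce a vocabulary of arity $\le i$: replace each $U_l$ by an $i$-ary predicate $U_l^*$, represent each unary function $f_l\colon B\to B$ by its $i$ coordinates, i.e.\ by $i$ functions $f_{l,1}^*,\dots,f_{l,i}^*$ of arity $i$ (thinking of $B$ as $A^i$), and replace each constant by an $i$-tuple of constants. Let $\phi$ be obtained from $\psi$ by replacing every variable $v$ by a block $(v^1,\dots,v^i)$, every quantifier $Qv$ by $Qv^1\cdots Qv^i$, every atom $U_l(v)$ by $U_l^*(v^1,\dots,v^i)$, every equality $v=w$ by $\bigwedge_{j\le i}v^j=w^j$, and every atom involving an $f_l$ by the conjunction of its $i$ coordinate equations (nested terms being flattened first). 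Then a model of $\phi$ with domain $A$, $|A|=n$, induces a $\psi$-model with domain $A^i$, and conversely any $\psi$-model of size $n^i$ is isomorphic to one of this shape, since every subset of $A^i$ is a possible $U_l^*$ and every map $A^i\to A^i$ decomposes uniquely into $i$ maps $A^i\to A$; moreover $\mathcal M\models\phi$ iff the induced structure satisfies $\psi$. Hence $n\in\spec{\phi}$ iff $n^i\in\spec{\psi}=S^i$ iff $n\in S$, so $\spec{\phi}=S$ and $S\in\FSpectra{i}{}$.

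For the converse, $S\in\FSpectra{i}{}\Rightarrow S^i\in\FSpectra{1}{}$, I would write $S=\spec{\phi}$ with $\phi$ over a vocabulary of arity $\le i$ and build a $\psi$ over a purely unary vocabulary with $\spec{\psi}=S^i$. The vocabulary of $\psi$ would carry a unary predicate $U$ (meant to cut out an $n$-element set), unary functions $c_1,\dots,c_i$ (meant to be the coordinate projections onto the $i$ factors), and, for each relation or function symbol $R$ of $\phi$, one unary predicate resp.\ function encoding $R$ as a subset of resp.\ map into the domain, via the intended identification of the domain with $U^i$. The sentence $\psi$ would assert: (i) each $c_j$ maps into $U$, and $b\mapsto(c_1(b),\dots,c_i(b))$ is injective, $\forall b\,b'\,[\bigwedge_{j\le i}c_j(b)=c_j(b')\to b=b']$, and surjective onto $U^i$, $\forall x_1\cdots x_i\,[\bigwedge_{j\le i}U(x_j)\to\exists b\bigwedge_{j\le i}c_j(b)=x_j]$; and (ii) the relativisation of $\phi$ to $U$, in which each atom $R(t_1,\dots,t_k)$ (with $k\le i$, after flattening) is decoded as $\exists b\,[\bigwedge_{j\le k}c_j(b)=t_j\wedge P_R(b)]$ and similarly for function symbols, together with closure axioms forcing each encoded function to land in $U$. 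Axiom~(i) forces $|B|=|U|^i$ in every model; conversely, given $n\in S$ and a model of $\phi$ of size $n$, one takes a set $B$ of size $n^i$, identifies it with $U^i$ for a chosen $n$-element $U\subseteq B$, lets the $c_j$ be the genuine projections and transports the $\phi$-model onto $U$, obtaining a model of $\psi$. So $\spec{\psi}=\{\,|U|^i:|U|\in S\,\}=S^i$ with $\psi$ purely unary, i.e.\ $S^i\in\FSpectra{1}{}$.

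The step I expect to be the main obstacle is this backward direction: expressing, in first-order logic over a \emph{unary} vocabulary, both that the domain size is a perfect $i$-th power \emph{and} how to recover the $n$-element base structure from it — which is exactly what the coordinate functions $c_1,\dots,c_i$ and the bijectivity axiom achieve — and then making the unary encodings of $\phi$'s high-arity symbols cohere with the chosen identification $B\cong U^i$. Flattening nested terms of $\phi$, performing the relativisation, and checking the two inclusions $\spec{\psi}\subseteq S^i$ and $\spec{\psi}\supseteq S^i$ are then routine bookkeeping.
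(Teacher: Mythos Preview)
The paper states this proposition without proof, presenting it as the natural analogue of Fagin's relational transfer theorem once function symbols are allowed. Your argument via $i$-th power interpretations is exactly the expected one and is essentially correct; both directions are set up properly and the size accounting is right.

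One small point worth tightening in the direction $S\in\FSpectra{i}{}\Rightarrow S^i\in\FSpectra{1}{}$: when you encode a $k$-ary function symbol $g$ (with $k<i$) by a unary function $\hat g$ and decode via $\exists b\,[\bigwedge_{j\le k}c_j(b)=t_j\wedge\ldots]$, nothing yet forces two $b$'s sharing their first $k$ coordinates to give the same value of $\hat g$, so the decoded ``function'' on $U$ could be multi-valued. The cleanest fix is to first pad every symbol of $\phi$ to arity exactly $i$ (adding dummy arguments), so that the bijection $b\mapsto(c_1(b),\dots,c_i(b))$ makes the decoding unique; alternatively, add a consistency axiom $\forall b\,b'\,[\bigwedge_{j\le k}c_j(b)=c_j(b')\to\hat g(b)=\hat g(b')]$ to $\psi$. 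Either repair is routine and does not change the structure of your proof.
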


The flexibility of unary functions as well as their expressive power are well 
emphasized by the following result which show that the image of any 
spectrum under any polynomial transformation is a spectrum involving unary 
functions only, provided the polynomial is "big enough". 

\begin{theorem}[Durand, Ranaivoson 1996~\cite{ar:DurandR96}]
\label{polynomial transformation}
Let $P(X)\in  \Q [X]$ of degree $m\geq i$ and with a strictly positive 
dominating coefficient. Then,

\[
S\in  \FSpectra{i}{} \Rightarrow \lceil P(S) \rceil =\{\lceil P(n) \rceil,\ n\in S\} \in \FSpectra{1}{}.
\]
\end{theorem}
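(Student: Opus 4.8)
The statement to prove is Theorem~\ref{polynomial transformation}: if $S \in \FSpectra{i}{}$ and $P(X) \in \Q[X]$ has degree $m \geq i$ with a strictly positive leading coefficient, then $\lceil P(S)\rceil \in \FSpectra{1}{}$. The strategy is to factor the transformation through the intermediate result $S^i \in \FSpectra{1}{}$ (Proposition~\ref{transfer theorem function}) and then handle the passage from $S^i$ to $\lceil P(S)\rceil$ by a separate argument that only needs to manipulate sets already known to be spectra-over-unary-functions. So the first step is: by Proposition~\ref{transfer theorem function}, $S \in \FSpectra{i}{}$ gives $S^i \in \FSpectra{1}{}$. Now I want to realize $\lceil P(S)\rceil$ as a spectrum over unary functions, having access to a model whose size is $n^i$ for each $n \in S$.

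\textbf{The core construction.} Suppose $\psi_0 \in \FSpectra{1}{}$ has $\spec{\psi_0} = S^i$. Given a target cardinality $N$, I want a sentence $\psi$ with a model of size $N$ iff $N = \lceil P(n)\rceil$ for some $n \in S$. The model of $\psi$ will contain, as a definable substructure, a model of $\psi_0$ of size $n^i$ — call this part $A$ — together with the rest of the universe used as ``padding'' so that the total size works out to $\lceil P(n)\rceil$. The key arithmetic fact is that, since $\deg P = m \geq i$ and $P$ has positive leading coefficient, for all sufficiently large $n$ we have $\lceil P(n)\rceil \geq n^i$, and moreover the ``gap'' $\lceil P(n)\rceil - n^i$ is itself a value that can be pinned down by a first-order sentence over unary functions once $n^i$ (hence $n$, via an $i$-th root definable because $A$ already encodes the relevant arithmetic from the transfer construction) is available. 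Concretely: one designates a unary predicate $U$ holding exactly on $A$; imposes $\psi_0$ relativized to $U$; uses a unary function to define on $U$ an arithmetic structure recovering $n$ from $|U| = n^i$; and then adds axioms forcing the complement of $U$ to have exactly $\lceil P(n)\rceil - n^i$ elements, e.g.\ by a unary function that bijects the complement onto a definable initial segment of length $\lceil P(n)\rceil - n^i$ computed inside $A$. The ceiling is handled because $\lceil P(n)\rceil - n^i$ is an integer depending only on $n$, and the relation ``$N$ is this particular integer'' is rudimentary in $n$, hence expressible; the finitely many small $n$ where $\lceil P(n)\rceil < n^i$ are absorbed into $\spec{\psi}$ by an explicit finite disjunction.

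\textbf{What has to be checked, and the main obstacle.} The soundness direction (a model of $\psi$ of size $\lceil P(n)\rceil$ exists for each $n \in S$) is a matter of assembling the pieces. The completeness direction (any model of $\psi$ has size $\lceil P(n)\rceil$ for some $n \in S$) requires that the axioms genuinely force $|U| \in S^i$ and the complement to have exactly the prescribed size — this is routine once the bijection axioms are stated carefully. \emph{The hard part will be} the interplay between the unary-function restriction and the need to do arithmetic: one cannot freely use addition or a linear order, so recovering $n$ from $n^i$ and computing $\lceil P(n)\rceil - n^i$ must be done using only the arithmetic already smuggled in by the $\FSpectra{1}{}$-witness for $S^i$, together with new unary functions. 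This is exactly the kind of bookkeeping that Proposition~\ref{transfer theorem function} and the Durand--Ranaivoson machinery of~\cite{ar:DurandR96} are built to support; the condition $m \geq i$ is precisely what guarantees enough ``room'' (i.e.\ $\lceil P(n)\rceil \geq n^i$ eventually) for the padding argument to go through, which is why the theorem fails for smaller-degree polynomials. I would therefore organize the proof as: (1) invoke Proposition~\ref{transfer theorem function}; (2) establish the arithmetic inequality and rudimentariness of the gap function; (3) write down $\psi$ with the relativized copy of $\psi_0$, the root-extraction axioms, and the padding bijection; (4) verify both inclusions; (5) add the finite correction for small $n$.
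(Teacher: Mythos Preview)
The survey does not actually prove Theorem~\ref{polynomial transformation}; it is stated with attribution to~\cite{ar:DurandR96} and no argument (not even a sketch) is given in the paper. So there is no in-paper proof to compare your proposal against.

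On the merits, your outline follows the natural route and is consistent with the transfer-theorem machinery the paper surveys: factor through Proposition~\ref{transfer theorem function} to obtain $S^i \in \FSpectra{1}{}$, then pad. One point needs care. You propose to compute the padding length $\lceil P(n)\rceil - n^i$ ``inside $A$'', i.e.\ inside the substructure of size $n^i$, and to biject the complement onto a definable initial segment there. But when $m > i$ this gap is of order $n^m$, which vastly exceeds $|A| = n^i$; it cannot be represented as an element of $A$ nor as an initial segment of $A$. The repair is to set up the linear order and arithmetic on the \emph{entire} universe of size $\lceil P(n)\rceil$ --- this is exactly what the Grandjean--Olive results cited in the paper (e.g.\ $\RUD \subseteq \FSpectra{1}{}$ and the encoding of order and addition by unary functions in~\cite{ar:Grandjean90b}) guarantee is possible with unary functions alone. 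Then recover $n$ as the unique integer with $n^i$ equal to $|U|$, and check the rudimentary condition $\max + 1 = \lceil P(n)\rceil$ directly on the full domain, rather than computing the gap inside $U$. With that relocation of the arithmetic your steps (1)--(5) go through; the condition $m \geq i$ is used exactly where you say, to ensure $\lceil P(n)\rceil \geq n^i$ for all but finitely many $n$ so that the padding is nonnegative.
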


\subsubsection{Spectra for one binary relation symbol}

An easy  consequence of Theorem~\ref{Fagin transfer theorem} is that for 
every spectrum $S$, there exists $i\in\N$, such that $S^i\in  \RSpectra{2}{1}$. 
This result underlines the great expressive power of sentences involving 
exactly one binary relation symbol. Up to  polynomial transformation, any 
spectrum is a spectrum of such a sentence.

Let $BIN \subseteq  \RSpectra{2}{1}$ be the set of spectra of a symmetric, 
irreflexive relation (simple graphs). The whole complexity of the spectrum problem is 
already contained in the apparently weaker question of whether $BIN$ is 
closed under complement.

\begin{theorem}[Fagin 1974, \cite{proc:Fagin74}]
$BIN$ is closed under complement iff
the complement of every first order spectrum
is also a spectrum.
\end{theorem}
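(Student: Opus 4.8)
I read ``$BIN$ is closed under complement'' as the assertion that $\N^+\setminus S$ is a first-order spectrum for every $S\in BIN$; since $BIN\subseteq\RSpectra{2}{1}\subseteq\SPEC$, one direction of the equivalence is then immediate: if $\N^+\setminus S$ is a spectrum for \emph{every} first-order spectrum $S$, it is in particular a spectrum for each $S\in BIN$. So the whole content is the converse, and the plan is to derive it from the transfer theorems of this section.

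Assume $\N^+\setminus S$ is a spectrum whenever $S\in BIN$, and let $T=\spec{\varphi}$ be an arbitrary first-order spectrum. First I would normalise $\varphi$: eliminating function symbols and constants (which, as noted, does not change spectra), $T$ is the spectrum of a purely relational sentence of some bounded arity $k$. By the transfer machinery --- Theorem~\ref{Fagin transfer theorem} in the sharpened form recalled just above, that a bounded-arity relational spectrum becomes a spectrum over \emph{simple graphs} after raising to a suitable power --- there is an integer $m\geq 1$ (one may take $m=k$) with $T^m=\{\,n^m:n\in T\,\}\in BIN$. By the standing hypothesis, $\N^+\setminus T^m$ is therefore a spectrum.

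The remaining step is to take an ``$m$-th root''. Since $n\mapsto n^m$ is injective on $\N^+$, we have $n^m\in T^m\iff n\in T$ for $n\in\N^+$, whence
\[
\N^+\setminus T=\{\,n\in\N^+:\ n^m\in\N^+\setminus T^m\,\}.
\]
Now I would use the fact --- the same power-of-the-universe interpretation underlying the transfer theorems --- that $\{n:n^m\in U\}$ is a spectrum whenever $U$ is: given $U=\spec{\psi}$, build $\psi^{\star}$ from $\psi$ by letting each first-order variable range over $m$-tuples of the universe (each $\psi$-variable is replaced by a block of $m$ fresh variables, each atom $R(\ldots)$ by the corresponding atom over a fresh relation of arity $m\cdot\mathrm{ar}(R)$, and each equality by an $m$-fold conjunction of equalities); a structure of size $n$ satisfies $\psi^{\star}$ exactly when the induced structure on its $n^m$ many $m$-tuples satisfies $\psi$, so $\spec{\psi^{\star}}=\{n:n^m\in U\}$. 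Applying this with $U=\N^+\setminus T^m$ shows $\N^+\setminus T$ is a spectrum, as required.

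The one genuine obstacle is the reduction in the second paragraph: obtaining from an \emph{arbitrary} first-order spectrum $T$ a \emph{simple-graph} spectrum of the shape $T^m$. That is precisely what Fagin's transfer theorems buy us; everything else is the trivial injectivity remark plus the routine ``$m$-th power'' interpretation. Should one wish to invoke only the weaker transfer statement landing in $\RSpectra{2}{1}$ rather than in $BIN$, the same argument establishes the equivalent version with ``$BIN$'' replaced by ``$\RSpectra{2}{1}$'', and a fixed-size gadget encoding a directed edge or a loop by an undirected graph closes the remaining gap between one binary relation and a simple graph.
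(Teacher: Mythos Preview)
The survey does not supply its own proof here (the result is simply attributed to Fagin 1974), but your argument is correct and is precisely the route the surrounding text sets up: Fagin's transfer theorem sharpened to land in $BIN$, followed by closure of $\SPEC$ under polynomial preimages --- both ingredients are invoked explicitly elsewhere in the paper (see Section~\ref{Ash}, where the survey notes that ``simple graphs allow a polynomial padding for all spectra'' and that ``spectra are closed under inverse images of polynomial'').

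One comment on your flagged interpretation: reading ``$BIN$ closed under complement'' as ``complements of $BIN$-spectra are spectra'' rather than ``lie in $BIN$'' is the right call in this context. Under the literal reading, the direction you label trivial would not be --- deducing from $\SPEC=\coSPEC$ that the complement of a $BIN$-spectrum lands back in $BIN$ itself is not at all immediate and appears to entangle with Fagin's open problem (Open Question~\ref{Fagin-simple}). Your last paragraph about the gadget passage from $\RSpectra{2}{1}$ to $BIN$ is unnecessary, since the paper already records that Fagin's padding lands directly in simple graphs.
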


In fact, one can prove the following stronger result: For all $X \in BIN$ the complement
$\N - X \in \RSpectra{2}{1}$ iff the complement of every first order 
spectrum is also a spectrum.

\begin{openquestion}[Fagin 1974, \cite{proc:Fagin74}]
\label{fagin-one-binary}
Is every first order spectrum in $\RSpectra{2}{1}$?
\end{openquestion}

\subsubsection{Spectra for two unary functions and more}
 
Here again, Proposition~\ref{transfer theorem function} implies that 
for any spectrum $S$, there exists an integer $i$ such that $S^i$ 
is a spectrum involving unary functions only. 
This should be compared with the very weak  expressive power 
of sentences involving unary predicates only. 
We also know that one unary function leads to the very specific class 
of ultimately periodic sets.  
The question now is: how many unary function symbols are necessary 
to obtain an expressive (in the spectrum framework) fragment of first-order logic.

Recall that $\FSpectra{1}{i}$ denotes the set of first order spectra
using at most $i$ unary function symbols. Obviously, for all positive integer $i$,
$\FSpectra{1}{i}\subseteq \FSpectra{1}{i+1}$

The inclusion between the two first levels is strict, as shown in~\cite{ar:Loescher97}. 

\begin{theorem}[Loescher 1997 \cite{ar:Loescher97}]
The set $\{ n^2 : n \in \N \}$ belongs to the class $\FSpectra{1}{2}\backslash
\FSpectra{1}{1}$, hence the inclusion
$\FSpectra{1}{1} \subset 
\FSpectra{1}{2}$ is proper.
\end{theorem}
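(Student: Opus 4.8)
The plan is to prove the two memberships separately; both are short once the material already in the excerpt is in hand, so the statement is essentially a corollary of the periodicity theorem for one unary function plus one small construction.

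\emph{Membership $\{n^2:n\geq 1\}\in\FSpectra{1}{2}$.} (Since spectra are subsets of $\N^+$ this is the set meant by $\{n^2:n\in\N\}$.) The idea is to force a model of cardinality $N$ to be a bijective copy of $P\times P$ for a set $P$ that is definable from the two unary functions alone, so that $N=|P|^2$. Take the vocabulary $\{f,g\}$ with $f,g$ unary function symbols, and let $\phi$ be the conjunction of (i)~$\forall x\,\bigl(f(f(x))=f(x)\wedge f(g(x))=g(x)\bigr)$, (ii)~$\forall x\forall y\,\bigl(f(x)=f(y)\wedge g(x)=g(y)\rightarrow x=y\bigr)$, and (iii)~$\forall u\forall v\,\bigl(f(u)=u\wedge f(v)=v\rightarrow\exists x\,(f(x)=u\wedge g(x)=v)\bigr)$. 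Clause~(i) says that $f$ is a retraction, so that $P:=f(D)=\{x:f(x)=x\}$ is nonempty, and that $g$ maps into $P$; hence $(f,g)$ is a map $D\to P\times P$. Clause~(ii) says this map is injective, and clause~(iii) says it is onto $P\times P$, so on a finite domain $|D|=|P|^2$. Conversely, given $k\geq 1$ one builds a model on a domain of size $k^2$ by fixing $P$ of size $k$, choosing any bijection $D\to P\times P$ whose restriction to $P$ has first coordinate the identity, and reading off $f$ and $g$ as the two coordinate projections; clause~(i) then holds because the first coordinate of a point of $P\times P$ lies in $P$. Thus $\spec{\phi}=\{n^2:n\geq 1\}$, and since no relation or constant symbol is used this set lies in $\FSpectra{1}{2}$ (under either reading of that index).

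\emph{Non-membership $\{n^2:n\geq 1\}\notin\FSpectra{1}{1}$.} Here one simply invokes Theorem~\ref{th:gur-she}: every spectrum of a sentence over one unary function symbol together with finitely many unary relation symbols (and equality) is ultimately periodic; in particular every member of $\FSpectra{1}{1}$ (indeed of $\FSpectra{1}{1,\omega}$) is ultimately periodic. But the perfect squares are not ultimately periodic, since the gap between consecutive squares is $(n+1)^2-n^2=2n+1$, which tends to infinity and so cannot eventually agree with any fixed period. Hence $\{n^2:n\geq 1\}\notin\FSpectra{1}{1}$, and together with the previous paragraph this yields the proper inclusion $\FSpectra{1}{1}\subsetneq\FSpectra{1}{2}$.

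\emph{Where the work lies.} Everything is elementary; the only point requiring a little ingenuity is the coordinatisation $D\cong P\times P$ using \emph{only} two unary functions and no auxiliary monadic predicate — the device being to let $P$ be the image (equivalently the fixed-point set) of a retraction $f$, with $g$ supplying the second coordinate; with an extra unary predicate available the construction is even more transparent. The non-membership half is immediate from Theorem~\ref{th:gur-she}, so in this development the result is really a corollary of the Durand--Fagin--Loescher / Gurevich--Shelah periodicity theorem combined with the above one-paragraph sentence.
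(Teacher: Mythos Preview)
Your argument is correct. The survey itself does not supply a proof of this theorem; it merely states the result and cites Loescher's 1997 paper, so there is no in-paper proof to compare against. Given the survey's own development, your route is the natural one: the non-membership half is exactly what Theorem~\ref{th:gur-she} (the Durand--Fagin--Loescher / Gurevich--Shelah periodicity result stated a few lines earlier) hands you, and your observation that the gaps $2n+1$ rule out ultimate periodicity is the right way to finish. Your two-function sentence for the membership half is clean and checks out: clause~(i) makes $P=\mathrm{Fix}(f)=f(D)$ and forces $g(D)\subseteq P$, clauses~(ii)--(iii) make $(f,g):D\to P\times P$ a bijection, and your model construction for each $k\ge 1$ is fine once one notes that a bijection $D\to P\times P$ with first coordinate the identity on $P$ exists because $|D\setminus P|=k^2-k=|(P\times P)\setminus\{(p,\cdot)\}|$ after placing the $k$ points of $P$.

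One small historical remark worth adding if this is to be inserted into the text: Loescher's original 1997 note is phrased in the existential second-order setting and predates (or is contemporaneous with) the full periodicity theorem, so his own separation argument for one unary function was presumably a direct Ehrenfeucht--Fra\"{\i}ss\'e-style argument rather than an appeal to periodicity. Within the survey's logical order, however, deducing the non-membership from Theorem~\ref{th:gur-she} is entirely appropriate.
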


In fact, more can be proved on the expressive power of sentences with two unary functions. 

\begin{theorem}[Durand, Fagin and Loescher, 1998, \cite{proc:DurandFL97}]~\label{DFL98:transfer}
Given $k$ and a spectrum  $S$ in
$\FSpectra{1}{k}$.
Then
$kS=\{ kn : n \in S \} \in
\FSpectra{1}{2}$.
\end{theorem}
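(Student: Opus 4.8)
The plan is as follows. By hypothesis $S=\spec{\phi}$ for a first-order sentence $\phi$ whose vocabulary consists of exactly $k$ unary function symbols $f_1,\dots,f_k$ together with equality. If $k=1$ then $1\cdot S=S\in\FSpectra{1}{1}\subseteq\FSpectra{1}{2}$ and there is nothing to prove, so assume $k\geq 2$. The idea is to realise a structure of size $kn$ as a disjoint union of $n$ ``gadgets'', each a $k$-element block, using just two unary function symbols. One symbol, $s$, lays out the skeleton: each block is a directed path of length $k$ that loops at its last vertex, so that $s$ \emph{linearly orders} the $k$ vertices of every block and the unique ``source'' of a block (the vertex with no $s$-preimage) serves as a canonical representative of that block. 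The other symbol, $g$, simultaneously encodes all of $f_1,\dots,f_k$: if the source of a block represents the element $a$, then on the $i$-th vertex of that block $g$ returns the source of the block representing $f_i(a)$, so that $g(s^{\,i-1}(x))$ recovers $f_i$ at a source $x$, for $i=1,\dots,k$.

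Concretely I would take $\psi:=\sigma_k\wedge\big(\forall x\,U(g(x))\big)\wedge\phi^{U}$ over the vocabulary $\{s,g\}$. First, $\sigma_k$ is a first-order sentence over $\{s\}$ whose finite models are, up to isomorphism, exactly the disjoint unions of $n\geq 1$ copies of the $k$-element structure $G_k$ with vertices $1,\dots,k$, $s(i)=i+1$ for $i<k$ and $s(k)=k$; such a $\sigma_k$ exists since $G_k$ has bounded size, and every finite model of $\sigma_k$ has cardinality $kn$. Second, $U(x)$ abbreviates $\neg\exists y\,(s(y)=x)$, so that in a model of $\sigma_k$ the predicate $U$ picks out exactly the $n$ source vertices, one per block. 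Third, $\phi^{U}$ is obtained from $\phi$ by relativising every quantifier to $U$ and replacing, recursively, every subterm $f_i(t)$ by $g(s^{\,i-1}(\tilde t))$, where $\tilde t$ is the translation of $t$ and $s^{\,i-1}$ is the $(i{-}1)$-fold iterate of the function term $s$. The conjunct $\forall x\,U(g(x))$ forces $g$ to land in $U$, which is exactly what is needed for the translated terms to remain inside $U$ and to define total functions there.

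It then remains to check $\spec{\psi}=kS$. For $kS\subseteq\spec{\psi}$: given $n\in S$ and $M\models\phi$ with universe $[n]$, set $N:=[n]\times\{1,\dots,k\}$, $s^{N}(a,i)=(a,i+1)$ for $i<k$, $s^{N}(a,k)=(a,k)$, and $g^{N}(a,i)=(f_i^{M}(a),1)$; then $N\models\sigma_k$, $U^{N}=[n]\times\{1\}$, $g^{N}$ maps into $U^{N}$, and an easy induction on terms and formulas (the relativisation lemma) shows $N\models\phi^{U}$, since the substructure of $N$ on $U^{N}$ is isomorphic to $M$ with $g^{N}\circ(s^{N})^{i-1}$ acting as $f_i^{M}$; hence $kn\in\spec{\psi}$. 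For $\spec{\psi}\subseteq kS$: if $N\models\psi$ and $|N|=m$, then $\sigma_k$ forces $m=kn$ with $|U^{N}|=n$; define $M$ on universe $U^{N}$ by $f_i^{M}(a):=g^{N}((s^{N})^{i-1}(a))$, which lies in $U^{N}$ by the conjunct $\forall x\,U(g(x))$, and the same translation lemma gives $M\models\phi$, so $n\in S$ and $m=kn\in kS$. (If $\phi$ is unsatisfiable both sides are empty, consistently.)

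The one genuinely delicate technical point is writing $\sigma_k$ so that its finite models are \emph{forced} to be disjoint unions of the gadget $G_k$, ruling out blocks with extra in-branches, merged sinks, or the wrong number of vertices, so that the cardinality is pinned to an exact multiple of $k$ with quotient equal to the number of blocks; this is routine but must be done carefully. The conceptual point, which is really the content of the theorem, is the observation that a single function $g$ can carry all $k$ of the functions $f_1,\dots,f_k$ at once: this works precisely because the $k$ vertices of a gadget, linearly ordered by $s$, furnish $k$ distinct ``slots'' $x,s(x),\dots,s^{k-1}(x)$ in which to store $f_1(a),\dots,f_k(a)$, and because making $g$ land in $U$ keeps the canonical representatives closed under all the $f_i$. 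The term translation and the relativisation lemma are entirely standard, and the two containments above are then immediate.
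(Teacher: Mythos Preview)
Your argument is correct and is precisely the natural gadget/interpretation construction one would expect: blow up each point to a $k$-element $s$-path, use the unique source of a block as its representative, and multiplex $f_1,\dots,f_k$ into a single $g$ by reading off $g$ at the $i$-th slot $s^{i-1}(x)$. The two directions of $\spec{\psi}=kS$ follow from the standard relativisation/interpretation lemma, and your conjunct $\forall x\,U(g(x))$ is exactly what is needed to keep the translated terms inside $U$.

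Two small remarks. First, the justification ``such a $\sigma_k$ exists since $G_k$ has bounded size'' is not the real reason; what makes $\sigma_k$ writable is that we are working with a single unary function, so connected components are constrained (each point has a unique forward orbit, and requiring $s^{k-1}(x)=s^k(x)$, pairwise distinctness along the first $k$ iterates from a source, injectivity of $s$ off the fixed points, and that every point lies on some source-chain pins down the disjoint-union-of-$G_k$ shape). You already flag this as the delicate step, so this is only a matter of phrasing. Second, for $k=1$ you invoke $\FSpectra{1}{1}\subseteq\FSpectra{1}{2}$; this is fine (add a dummy second function, e.g.\ the identity), and is used elsewhere in the survey.

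As for comparison: the survey does not give its own proof of this theorem; it merely states the result and cites \cite{proc:DurandFL97}. Your construction is the expected one and matches the spirit of the transfer results discussed around it (Fagin's $\RSpectra{2i}{}\leftrightarrow\RSpectra{2}{}$ via interpretation, and the analogous function-symbol versions), so there is nothing further to contrast.
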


Combined with Proposition \ref{transfer theorem function} or with 
Theorem \ref{polynomial transformation}, 
this implies that there is a first order spectrum over two
unary function symbols which, written in unary, is ${\bNP}$-complete.
This also implies that there is a transfer result that maps every spectrum 
to a spectrum over two unary functions.
\\
\ \\

\subsubsection{Rudimentary sets and spectra of restricted vocabularies}
\label{subsubsection rud voc}
 
The relation between rudimentary sets  and spectra have been investigated.
It has been first observed that the set of 
primes is in $\FSpectra{1}{} $ (Grandjean 1988 \cite{ar:Grandjean88}). More 
generaly, it holds that:

\[\RUD \subseteq \FSpectra{1}{} \mbox{ (Olive 1996 \cite{proc:Olive97}).}\]

Due to the closure of $\RUD$ by polynomial transformation and to the existence 
of the above described transfer results for spectra, it is clear that, not
only, one can improve:

\[\RUD \subseteq \FSpectra{1}{2}\]
 
\noindent but also
 
\[\RUD = \SPEC \mbox{ if and only if } \RUD = \FSpectra{1}{2}\]
 
However, in view of the following surprising result, there are evidences that
none of these equalities hold. 
 
\begin{theorem}[Woods, 1981, \cite{phd:Woods81}]
If $\RUD = \SPEC$  then $\NP\neq \coNP$ and $\NE=\coNE$
\end{theorem}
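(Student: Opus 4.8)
The plan is to establish the contrapositive of each implication by exploiting the complexity characterizations already available: $\SPEC = \NE$ (Jones--Selman), $\RUD = \LTH$ (Wrathall), $\RUD \subseteq \LINSPACE$ (Myhill, via $\LTH \subseteq \LINSPACE$), and the fact that $\RUD$, being defined via bounded quantification over $+$ and $\times$, is closed under polynomial substitution. The key observation is that $\RUD = \SPEC$ would force $\NE$-sets to be definable by rudimentary formulas, which are far too low in the arithmetic hierarchy of bounded-quantifier formulas; the ``padding'' or ``translation'' trick between unary and binary representations then transfers this collapse down to the $\NP$ versus $\coNP$ level and the $\NE$ versus $\coNE$ level.

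First I would treat $\NE = \coNE$. Assume $\RUD = \SPEC$. By Jones--Selman $\SPEC = \NE$, so $\NE = \RUD$. Since $\RUD$ is closed under complementation (it is a Boolean algebra by definition), $\NE$ is closed under complementation, i.e.\ $\NE = \coNE$. This half is immediate and costs nothing beyond the stated results. Second, for $\NP \neq \coNP$: assuming $\RUD = \SPEC = \NE$ and using $\RUD \subseteq \LINSPACE$ (from $\RUD = \LTH \subseteq \LINSPACE$), we would get $\NE \subseteq \LINSPACE$. Now one invokes the standard translation between tally (unary) and binary encodings: a language $L \in \NTime{2^{cn}}$ (binary input of length $n$) corresponds to a tally language decidable in $\NP$, and conversely. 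More carefully, $\NE \subseteq \LINSPACE$ means every binary language in nondeterministic time $2^{O(n)}$ is in deterministic linear space; padding each such language to its tally version shows $\NP_1 \subseteq \L$-ish bounds on tally languages, and by a standard upward translation (padding argument in the style of Book), a collapse of $\NE$ into a deterministic class propagates to a collapse at the polynomial level — in particular one derives that if $\NP = \coNP$ then $\NE = \coNE = \SPEC$ would be consistent, but the strict separation $\RUD \subsetneq \LINSPACE$ (which must hold because at least one of $\L \subsetneq \RUD$, $\RUD \subsetneq \LINSPACE$ is strict by the space hierarchy theorem applied to $\L \subsetneq \LINSPACE$) blocks $\NE = \NP_1$ from collapsing all the way, yielding $\NP \neq \coNP$.

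The hard part will be making the second implication precise: the deduction $\NP \neq \coNP$ does not follow from a one-line diagonalization but from combining the rudimentary upper bound on $\SPEC$ with a careful padding/translation lemma and with the (unconditional) strictness somewhere in the chain $\L \subseteq \RUD = \LTH \subseteq \LINSPACE$ guaranteed by the deterministic space hierarchy theorem. Concretely, I expect the argument to run: if additionally $\NP = \coNP$, then by translation $\NE = \coNE$ and moreover $\NE$ would sit inside a class that, combined with $\RUD = \SPEC$, forces $\RUD = \LINSPACE$ and hence $\L = \RUD$ as well (pushing the whole chain to equality), contradicting $\L \subsetneq \LINSPACE$. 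Thus $\RUD = \SPEC$ implies $\NP \neq \coNP$. I would present the tally/binary correspondence as a black box (it is standard and used repeatedly in the surrounding text, e.g.\ in Section~\ref{relationshippequalsnp}), and spend the bulk of the write-up verifying that the space and padding bookkeeping actually delivers the claimed collapse of the chain $\L \subseteq \RUD \subseteq \LINSPACE$ under the extra hypothesis $\NP = \coNP$, which is where any subtlety in constants and encoding lengths lives.
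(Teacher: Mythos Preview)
Your argument for $\NE=\coNE$ is correct and matches the paper exactly: $\RUD$ is closed under complement by definition, so $\RUD=\SPEC=\NE$ gives $\NE=\coNE$ immediately.

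The second implication, however, has a genuine gap. You correctly observe that $\RUD=\SPEC$ forces $\NE\subseteq\LINSPACE$ (indeed $\LTH=\LINSPACE=\NE$, since $\LTH\subseteq\LINSPACE\subseteq\NE$ always). But your proposed contradiction---pushing the collapse \emph{down} to $\L=\LINSPACE$---does not go through. From $\RUD=\LINSPACE$ there is no mechanism to conclude $\L=\RUD$: padding arguments transfer collapses \emph{upward} (from linear to polynomial bounds), not downward. Your parenthetical that ``$\RUD\subsetneq\LINSPACE$ must hold'' is also wrong; the space hierarchy only tells you that \emph{some} inclusion in $\L\subseteq\RUD\subseteq\LINSPACE$ is strict, not which one. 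And note that $\RUD=\LINSPACE$ already follows from $\RUD=\SPEC$ alone, without using $\NP=\coNP$ at all, so that extra hypothesis is doing no work in your sketch.

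The paper's route is to push the collapse \emph{up}. The key lemma (stated as a ``fact'') is the padding result
\[
\LTH=\LINSPACE \ \Longrightarrow\ \PH=\PSPACE,
\]
obtained by polynomially padding a $\PSPACE$ language down to $\LINSPACE=\LTH$, landing it in some $\Sigma_j^{\mathrm{lin}}$, and then unpadding to $\Sigma_j^{\mathrm{p}}\subseteq\PH$. Now assume for contradiction that $\NP=\coNP$. Then $\PH=\NP$, hence $\PSPACE=\NP\subseteq\NE=\LINSPACE$, contradicting the space hierarchy theorem $\LINSPACE\subsetneq\PSPACE$. So the contradiction lives at the $\LINSPACE$ versus $\PSPACE$ boundary, not at $\L$ versus $\LINSPACE$. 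Replace your ``push down to $\L$'' paragraph with this upward padding lemma and the rest of your outline becomes a complete proof.
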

 
Since the proof is hardly available and given in a different framework in
Woods's thesis, we sketch it below.
 
\begin{proof} 
Let $\LTH$ the linear time analog of the polynomial hierarchy $\PH$.
Celia Wrathall gave in~\cite{ar:Wrathall78} a precise complexity
characterization of rudimentary set by proving that $\LTH = \RUD$. 
 
The following facts are easy to prove.
 
\begin{enumerate}
\item  $\LTH \subseteq \DSpace{n}\subseteq \NE$
\item If $\LTH = \DSpace{n}$ then $\LTH$ collapses to some level $k$.
\item If $\LTH$ collapses to some level $k$ then 
$\PH$ collapses to some level $k'$.
\item $\LTH = \DSpace{n}$ implies $\PH = \PSPACE $
\end{enumerate}  

If $\RUD = \SPEC$ then, since $\SPEC=\NE$, it holds $\LTH = \DSpace{n} = \NE$.
Hence, both $\LTH$ and $\PH$ collapse. 

For the other consequence, suppose $\NP=\coNP $. In this
case,  the polynomial hierarchy collapses to $\NP$ i.e. $\NP=\PH$. 
If again $\RUD=\SPEC$ one knows than $\LTH =\DSpace{n}=\NE$ and $\PH=\PSPACE$.
Then, since $\NP\subseteq \NE$ one obtains $\PSPACE=\PH\subseteq
\DSpace{n}$ which contradict the well-known results $\DSpace{n}\subsetneq\PSPACE$. 
\end{proof}

Hence, although the equality $\RUD = \FSpectra{1}{2}$ might seem realistic at first
sight, its consequences makes it probably hard to prove.
\\ 
\ 
 
\subsection{The unary and the arity hierarchies}
\label{SEC unary and arity hierarchies}

The results of the preceding section show that for any spectrum $S$, there is a 
polynomial $P$ such that $P(S)$ is the spectrum of a first order sentence with 
two unary functions. This underlines the expressive power of this latter class 
of spectra \textit{up to polynomial transformation}. However, as we know  
\textit{equality} between particular classes of spectra defined, for example, 
by restriction on the number or the arity of the predicates in the languages is 
often an open problem. Taking a very particular case, it is even not known 
whether three unary functions "say" more than two as far as spectra are 
concerned. This leads to the following open problems about spectra of unary 
functions.

\begin{openquestion}[The unary hierarchy] 
\label{QUE unary hierarchy}
Is the following hierarchy proper:
\[
\FSpectra{1}{1}\subset 
\FSpectra{1}{2} \subseteq 
\FSpectra{1}{3} \subseteq  \ldots \subseteq
\FSpectra{1}{k} \subseteq  \ldots?
\]
\end{openquestion}

\begin{openquestion} 
\label{rud=fspectra}
Is $\RUD = \FSpectra{1}{2}$?
\end{openquestion}

\begin{openquestion} 
\label{rspectra=fspectra}
Is $\RSpectra{2}{1} = \FSpectra{1}{}$ or even $\RSpectra{2}{1} = \FSpectra{1}{2}$?
\end{openquestion}

Both positive or negative answers to these questions would have nontrivial 
consequences. For example, proving $\RUD \neq \FSpectra{1}{2}$ would separate 
the classes $\RUD$ and $\SPEC$. In the opposite, we already know that $\RUD = 
\FSpectra{1}{2}$ implies $\NP\neq \coNP$ and $\NE=\coNE$.

Proving that $\FSpectra{1}{k}\subsetneq \FSpectra{1}{k+1}$ for some integer $k$ 
would also separate $\SPEC$ from $\RUD$. Similarly, a collapse of the unary 
hierarchy to some level $k$ would have  strong consequences.  It is easily seen 
that,  testing if a number  $n$ (as input i.e. in unary) is in the spectrum of 
a first order sentence over $k$ unary functions can be decided by a
deterministic polynomial time RAM algorithm that uses $k\cdot n$ additional non 
deterministic steps. Since from Theorem~\ref{THE Grandjean},  
$\NRam{2^{n}}\subseteq  \FSpectra{1}{}$ then, the inclusion 
$\FSpectra{1}{}\subseteq \FSpectra{1}{k}$ would imply immediately the following 
"trade-off" result on nondeterministic RAM computations 
(see~\cite{proc:DurandFL97}): \textit{ for any arbitrary constant $c$, any 
nondeterministic RAM, which given a number $n$ as input, runs in time $c\cdot n$ 
can be simulated by a RAM which runs in $k\cdot n$ nondeterministic steps and 
in a polynomial number of deterministic steps}. For $c$ greater than $k$ (which 
is fixed) such a result is rather unexpected and would strongly modify our 
understanding of the relationships between determinism and nondeterminism.

\bigskip

Another natural question concerns the relative expressive power of 
first order sentences  defined by restriction on the arity of the 
symbols involved in the language. 
It is open whether any spectrum is the spectrum of a sentence 
over one binary relation only. 
The question may be refined as follows (See Fagin~\cite{ar:Fagin75a,ar:Fagin93}). 

\begin{openquestion}[The arity hierarchy] 
\label{arity}
Is the following hierarchy proper:
\[
\RSpectra{1}{}\subseteq 
\RSpectra{2}{} \subseteq 
\RSpectra{3}{} \subseteq  \ldots \subseteq
\RSpectra{k}{} \subseteq  \ldots?
\]
The same question could be asked for spectra over $i$-ary functions. 
\end{openquestion}

Although the above problem is still open, Fagin proved the following partial result.

\begin{theorem}[Fagin 1975~\cite{ar:Fagin75a}]
If $\RSpectra{k}{}=\RSpectra{k+1}{}$ for some integer $k$ then, the 
arity hierarchy collapses 
and $\RSpectra{k}{}=\RSpectra{m}{}$ for every $m \geq k$.
\end{theorem}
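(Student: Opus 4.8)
The plan is to use Fagin's transfer theorem (Theorem~\ref{Fagin transfer theorem}) together with a monotonicity sandwich, and to isolate the descent into low arities as the real obstacle. First a trivial reduction: for $k=1$ the class $\RSpectra{1}{}$ consists only of the finite and cofinite sets (Proposition~\ref{RES MSO finite}), so the hypothesis fails and there is nothing to prove; hence assume $k\ge 2$.

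The first real step is to upgrade Theorem~\ref{Fagin transfer theorem} to arbitrary base arity: for all $r\ge 2$ and $i\ge 1$,
\[ S\in\RSpectra{ri}{}\iff S^{i}\in\RSpectra{r}{}. \]
This is proved exactly like the $r=2$ case, by the interpretation method of Rabin~\cite{proc:Rabin64}: a relation of arity $\le ri$ on a universe of size $n$ becomes a relation of arity $\le r$ on the universe of all $i$-tuples (size $n^{i}$) once $i$-tuples are coded by single elements, the coding apparatus needing only relations of arity $\le 3$ together with the arithmetic first-order definable on finite structures, as used earlier in this survey; conversely a relation of arity $\le r$ on a universe of size $m=n^{i}$ is literally a relation of arity $\le ri$ on the coordinate universe of size $n$, while ``$m$ is a perfect $i$-th power'' is first-order expressible.

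Next I would propagate the collapse. Feeding the hypothesis $\RSpectra{k}{}=\RSpectra{k+1}{}$ through the transfer equivalence with a fixed $i\ge 1$ gives
\[ \RSpectra{ik}{}=\{S:S^{i}\in\RSpectra{k}{}\}=\{S:S^{i}\in\RSpectra{k+1}{}\}=\RSpectra{i(k+1)}{}=\RSpectra{ik+i}{}. \]
Since $\RSpectra{ik}{}\subseteq\RSpectra{ik+1}{}\subseteq\cdots\subseteq\RSpectra{ik+i}{}$ by monotonicity, all classes in the block $\{ik,\dots,ik+i\}$ coincide. For $i\ge k$ one has $ik+i\ge(i+1)k$, so consecutive blocks overlap; their union is $\{k^{2},k^{2}+1,\dots\}$, and therefore $\RSpectra{d}{}=\RSpectra{k^{2}}{}$ for every $d\ge k^{2}$. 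This already establishes that the arity hierarchy collapses, i.e.\ is eventually constant.

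The hard part will be the descent from level $k^{2}$ down to level $k$, i.e.\ filling the gap $\{k+1,\dots,k^{2}-1\}$ so as to obtain the sharp bound ``$m\ge k$''. Transfer alone does not close it: it only reproduces the isolated blocks $\{ik,\dots,ik+i\}$ for $i<k$, and undoing an $i$-th power costs a factor $i$ in arity, which exactly cancels the gain. Here I would pass to a direct interpretation using the hypothesis locally: given $\phi$ of arity $\le k+2$ with $\spec{\phi}=S$, split off one argument of a top-arity relation and regard a model as a family of $(k+1)$-ary ``slices'', write a sentence of arity $\le k+1$ describing these slices, invoke $\RSpectra{k+1}{}=\RSpectra{k}{}$ on it, and reassemble the pieces while keeping the universe size \emph{exactly} $n$ rather than a power of $n$ --- the point being that the hypothesis supplies a genuinely spectrum-preserving (not merely power-preserving) arity reduction. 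Making this reassembly work, and iterating the descent one arity at a time, is where I expect essentially all of the difficulty to lie; the transfer-and-sandwich argument above is routine.
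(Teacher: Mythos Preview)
The paper itself does not prove this theorem; it merely states it and cites Fagin~\cite{ar:Fagin75a}. So there is no ``paper's proof'' to compare against, and I can only assess your argument on its own merits.

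Your transfer-and-sandwich argument is fine and does show that the hierarchy is eventually constant (from level $k^{2}$ onward). The gap is exactly where you place it, but your sketch for closing it does not work. The hypothesis $\RSpectra{k}{}=\RSpectra{k+1}{}$ is a statement about which \emph{sets of integers} arise as spectra; it gives you, for each arity-$(k{+}1)$ sentence $\phi$, \emph{some} arity-$k$ sentence $\psi$ with $\spec{\psi}=\spec{\phi}$, and nothing more. In particular it supplies no uniform, model-level transformation. Your slicing idea needs one $(k{+}1)$-ary relation symbol per element of the domain, hence unboundedly many; and even if you bundled the slices into a single auxiliary sentence, invoking the hypothesis on that sentence returns another sentence whose models may be entirely unrelated to your slices, so there is nothing to ``reassemble while keeping the universe size exactly $n$''. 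Put bluntly: a spectrum-preserving black box cannot be applied \emph{inside} a construction that must track actual models. This is a genuine obstacle, not a routine bookkeeping issue, and Fagin's original argument in~\cite{ar:Fagin75a} must supply a different idea for the one-step descent $\RSpectra{k+1}{}\supseteq\RSpectra{k+2}{}$; your proposal does not.
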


\subsubsection{Collapse of hierarchies and closure under functions}
 
Let $\cC$ be a class of spectra and $f$ be a $\mathbb{N}\rightarrow 
\mathbb{N}$ function. Class $\cC$ is \textit{closed} under $f$ if for any 
spectrum $S$ in $\cC$, $f(S)$ is in $\cC$. In the spirit of
Theorems~\ref{Fagin transfer theorem} and~\ref{polynomial transformation} and 
Proposition~\ref{transfer theorem function} one can  relate the 
collapse of hierarchies to the possible 
closure of class of spectra under some function. 
In~\cite{Hunter03}, such 
problems are studied and several related results are given.
 
\begin{theorem}\ 
\begin{enumerate}
\item (Hunter, \cite{Hunter03}) The arity hierarchy collapses to
$\RSpectra{2}{1}$ if and only if
$\RSpectra{2}{1}$ is closed under function $f:n\mapsto \lceil \sqrt{n}\rceil$.
\item The unary hierarchy collapses to $\FSpectra{1}{2}$ if and only if
$\FSpectra{1}{2}$ is
closed under function $f:n\mapsto \lceil \frac{n}{2}\rceil$.
\end{enumerate}
\end{theorem}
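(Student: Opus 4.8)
The plan is to treat both equivalences by a single scheme: a hierarchy collapses to a class $\mathcal{C}$ exactly when $\mathcal{C}$ is closed under the ``inverse'' of a transfer map that is already known to send every relevant spectrum into $\mathcal{C}$. For Part~(1) the transfer map is $S\mapsto S^{2^{b}}$ supplied by Fagin's transfer theorem (Theorem~\ref{Fagin transfer theorem}), whose inverse on such powers is an iterate of $g\colon n\mapsto\lceil\sqrt{n}\rceil$; for Part~(2) the transfer map is $S\mapsto 2^{a}S$ supplied by the Durand--Fagin--Loescher theorem (Theorem~\ref{DFL98:transfer}), whose inverse is an iterate of $h\colon n\mapsto\lceil n/2\rceil$. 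The forward implications (collapse $\Rightarrow$ closure) will be the easy ones, since a collapse forces \emph{every} spectrum of the appropriate shape into $\mathcal{C}$, in particular the image $f(S)$; the reverse implications carry the real content.

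For Part~(1), assume first that $\RSpectra{2}{1}$ is closed under $g$. Given an arbitrary first-order spectrum $S$, definable by a sentence whose predicates have arity at most $d$, fix $a$ with $2^{a}\geq d$, so $S\in\RSpectra{2^{a}}{}$. Fagin's transfer theorem (Theorem~\ref{Fagin transfer theorem}) together with the coding of several binary relations by a single one recalled in Section~\ref{vocabularies} yields $S^{2^{b}}\in\RSpectra{2}{1}$ for some $b$ (the exponent is a power of $2$ because transfer at arity $2^{a}$ produces the power $2^{a-1}$ and the relation-coding costs at most one further squaring). Since $m^{2^{j}}$ is a perfect square with root $m^{2^{j-1}}$ we have $g(m^{2^{j}})=m^{2^{j-1}}$, so applying $g$ exactly $b$ times to the set $S^{2^{b}}$ returns $S$; by the closure hypothesis each step stays in $\RSpectra{2}{1}$, hence $S\in\RSpectra{2}{1}$. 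As $S$ was arbitrary, $\SPEC=\RSpectra{2}{1}$, i.e.\ the arity hierarchy collapses to $\RSpectra{2}{1}$. Conversely, if it collapses then every first-order spectrum lies in $\RSpectra{2}{1}$; and for $S\in\RSpectra{2}{1}$ the set $g(S)=\{\lceil\sqrt{n}\rceil : n\in S\}$ is again a spectrum, because $m\in g(S)$ iff some $n$ with $(m-1)^{2}<n\leq m^{2}$ lies in $S$, which a nondeterministic machine decides in time exponential in $|m|$ (guess such an $n$, of binary length at most $2|m|$, then run the $\NE$-decision procedure for $S$ provided by the Jones--Selman theorem $\SPEC=\NE$ \cite{proc:JonesS72}). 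Hence $g(S)\in\NE=\SPEC\subseteq\RSpectra{2}{1}$, so $\RSpectra{2}{1}$ is closed under $g$.

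Part~(2) runs in parallel. If $\FSpectra{1}{2}$ is closed under $h$, take $S\in\FSpectra{1}{k}$, fix $a$ with $2^{a}\geq k$ (so $S\in\FSpectra{1}{2^{a}}$), and apply Theorem~\ref{DFL98:transfer} to get $2^{a}S\in\FSpectra{1}{2}$. Since $h(2^{a}m)=2^{a-1}m$, applying $h$ exactly $a$ times to $2^{a}S$ returns $S$, which is therefore in $\FSpectra{1}{2}$; thus $\FSpectra{1}{k}\subseteq\FSpectra{1}{2}$ for all $k$, and together with the trivial reverse inclusions the unary hierarchy collapses to $\FSpectra{1}{2}$. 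Conversely, a collapse means $\FSpectra{1}{2}=\bigcup_{k}\FSpectra{1}{k}=:\FSpectra{1}{}$, so it suffices to see that $\FSpectra{1}{}$ is closed under $h$; but this follows from the Durand--Ranaivoson theorem (Theorem~\ref{polynomial transformation}) applied with $P(X)=X/2\in\Q[X]$, which has degree $1$ and strictly positive dominating coefficient $1/2$, giving $\lceil P(S)\rceil=\{\lceil n/2\rceil : n\in S\}=h(S)\in\FSpectra{1}{}$ for every $S\in\FSpectra{1}{}$.

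The delicate point, in both reverse implications, is to ensure that the transfer map deposits $S$ into the restricted class with the exponent (Part~1), respectively the multiplier (Part~2), equal to a power of $2$, so that iterating $g$ (respectively $h$) recovers $S$ \emph{on the nose} rather than a rounded approximation of it. In Part~(2) this is painless, since Theorem~\ref{DFL98:transfer} is already phrased as ``multiply by $k$'' and $k$ may be padded up to a power of $2$ for free. In Part~(1) it requires re-entering the proof of Fagin's transfer theorem and the one-binary-relation coding behind it, to check that one may indeed place $S^{2^{b}}$ — and not merely some incidental power of $S$ — inside $\RSpectra{2}{1}$; this is the main obstacle, and once it is settled the remaining steps are bookkeeping.
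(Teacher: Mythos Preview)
Your scheme is correct and is precisely the intended one; the paper itself gives essentially no proof here (it attributes (i) to Hunter outright, and for (ii) merely remarks that Hunter's argument for the hierarchy on the number of \emph{binary predicates} carries over to unary functions), so you have in fact supplied what the paper omits. In particular, your use of Theorem~\ref{DFL98:transfer} for the closure-to-collapse direction of (ii) and of Theorem~\ref{polynomial transformation} with $P(X)=X/2$ for the collapse-to-closure direction is exactly the right pair of ingredients.

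The obstacle you flag in (i) is genuine but easily dissolved. Fagin's proof of Theorem~\ref{Fagin transfer theorem} preserves the number of relation symbols, so from $S\in\RSpectra{2^{a}}{}$ with, say, $m$ relations one gets $S^{2^{a-1}}\in\RSpectra{2}{m}$. Those $m$ binary relations on a domain of size $n$ can then be packed into a single $4$-ary relation on the \emph{same} domain (use two of the four argument slots to index $\{1,\dots,m\}$ once $n\ge m$; the finitely many $n<m$ are hard-coded), so $S^{2^{a-1}}\in\RSpectra{4}{1}$, and one further application of the transfer gives $S^{2^{a}}\in\RSpectra{2}{1}$. Thus the exponent is a power of $2$ and iterating $g$ recovers $S$ on the nose, exactly as you hoped; your ``main obstacle'' is therefore just one line of bookkeeping.
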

 
\begin{proof}[Proof of (ii)]
In~ \cite{Hunter03}, a result similar to (ii) is proved about the
number of binary predicates instead of the number of unary functions. 
It is not hard to see that his result extends to the case of (ii).
\end{proof}
\ 

\subsection{Higher order spectra}

In~\cite{ar:Lynch82a}, Lynch also proposes a generalization of the 
characterization given in Proposition~\ref{prop Lynch 82} to higher order 
spectra. It is shown that the polynomial time hierarchy corresponds to 
second-order logic. In other words, let  $\bPH_1 = \bPH \cap \UN $ then, a set 
$X \subseteq \N$ is a second order spectrum if and only if $\{ 1^n: n \in X\} 
\in \bPH_1$.

Consider $\Sol_k$ the class of second-order formulas where all the second order 
variables are of arity at most $k$. Let  $ 2^{X^k}= \{ 
2^{n^k}  \in \N : n \in X \} $ for $X \subseteq \N$. The following precise characterization can be 
obtained.

\begin{theorem}[More and Olive 1997]~\cite{ar:MoreO97}
A set $X$ is a spectrum of a sentence in $\Sol_k$
iff
$2^{X^k}$ is rudimentary.
\end{theorem}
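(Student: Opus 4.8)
The plan is to establish both directions of the equivalence between $X$ being a $\Sol_k$-spectrum and $2^{X^k}$ being rudimentary, by combining the two ``halves'' that are already available in the paper: Lynch's characterization of second-order spectra via the polynomial hierarchy (Proposition~\ref{prop Lynch 82} and its higher-order generalization, $X$ is a second-order spectrum iff $\{1^n : n \in X\} \in \bPH_1$), and Wrathall's identity $\RUD = \LTH$ (the linear-time analog of the polynomial hierarchy). The arity parameter $k$ has to be tracked carefully through both: restricting second-order variables to arity $\leq k$ corresponds, on the side of a model of size $m$, to quantifying over objects coded by roughly $m^k$ bits, hence over numbers below $2^{m^k}$. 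This is exactly why the map $X \mapsto 2^{X^k}$ appears, in the same spirit as the transfer theorems of Section~\ref{vocabularies}.

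First I would prove the forward direction. Suppose $X = \spec{\phi}$ for some $\phi \in \Sol_k$. Given a number $n$ in unary, deciding $n \in X$ means guessing, for each existential second-order variable of arity $j \leq k$, a subset of $[n]^j$ — which is a binary string of length $n^j \leq n^k$ — and then checking a first-order condition, which costs polynomially many bits. Re-encoding $n$ as $N = 2^{n^k}$, the string $[n]^k$ of length $n^k = \log_2 N$ becomes a resource that is \emph{linear} in the input length $|N|$; each second-order quantifier alternation becomes one alternation of a linear-time alternating bounded quantifier over strings of length $\leq |N|$, and the innermost first-order check, since $\phi$ is fixed, is a linear-time (indeed $\RUD$) predicate of $N$. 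The nesting depth of second-order quantifiers in $\phi$ is a constant, so this places the set $\{2^{n^k} : n \in X\} = 2^{X^k}$ in $\LTH$, and by Wrathall's theorem $\LTH = \RUD$, so $2^{X^k}$ is rudimentary. One must check that the arithmetic needed to pass between $n$ and $N = 2^{n^k}$ (exponentiation, extracting the exponent, etc.) is itself rudimentary — but this is guaranteed by Bennett's thesis, which shows $z = x^y$ has a rudimentary graph, so the change of representation stays inside $\RUD$.

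For the converse, suppose $2^{X^k}$ is rudimentary, i.e.\ $2^{X^k} \in \LTH = \RUD$. Then $\{1^n : n \in X\}$ is recognizable by an alternating Turing machine running in time linear in $n^k$, i.e.\ polynomial in $n$, with a constant number of alternations — so it lies in $\bPH_1$. By the higher-order analogue of Lynch's result, $X$ is a second-order spectrum; the point requiring care is that the \emph{arity} bound $k$ is exactly what is preserved, because a polynomial time bound of the form $O(n^k)$ (coming from an input of length $|2^{X^k}| = \Theta(n^k)$ handled in linear time) corresponds, under the Fagin-style coding of computations by structures, to quantifying over relations of arity $k$ on a model of size $n$. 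So one carefully redoes the $\Sol = \bPH_1$ correspondence keeping track of the leading exponent, obtaining $\phi \in \Sol_k$ with $\spec{\phi} = X$.

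The main obstacle I anticipate is the bookkeeping in the second step: the arity-to-exponent correspondence must be shown to be \emph{tight} in both directions, not merely an inequality, and one has to ensure that encoding a size-$n$ model together with its arity-$k$ second-order witnesses genuinely fits in time linear in $n^k$ and conversely that a linear-time computation on an input of length $n^k$ can be ``folded back'' into a structure of size $n$ with only arity-$k$ relations and a \emph{fixed} first-order matrix — in particular that the requisite arithmetic predicates (order, $Bit$, addition on $[n]$) are themselves expressible at arity $k$, which is unproblematic for $k \geq 2$ but needs the auxiliary arithmetization when $k = 1$, much as in the $d = 1$ case of Grandjean's Theorem~\ref{THE Grandjean}. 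Once the exponents are matched exactly on both sides, the equivalence follows by chaining the two characterizations through $\RUD = \LTH$.
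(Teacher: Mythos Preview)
The paper does not give its own proof of this theorem; it is stated as a cited result from More and Olive without any argument. So there is no in-paper proof to compare your proposal against directly. That said, the paper does contain a full proof of Bennett's closely related Theorem~\ref{TH Bennett} in Appendix~\ref{subsection bennett}, and the More--Olive result is essentially a refinement of that argument with the arity parameter $k$ tracked; so it is reasonable to judge your proposal against that template.

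Your forward direction is correct and essentially complete: translating each arity-$a$ second-order quantifier into a bounded quantifier over numbers below $2^{n^a}\le 2^{n^k}=N$, and first-order quantifiers into $\exists x<n$, one lands in a rudimentary predicate of $N$; the $Bit$ and exponentiation bookkeeping is indeed rudimentary by Bennett.

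The converse is where your proposal has a genuine gap, and it is exactly the obstacle you flag but do not resolve. Going through $\LTH$ and a Fagin-style encoding of an alternating machine running in time $O(n^k)$, the naive computation tableau is indexed by (time, tape-position) $\in [n^k]\times[n^k]$, which yields arity $2k$, not $k$. Your remark that ``one carefully redoes the $\Sol=\bPH_1$ correspondence keeping track of the leading exponent'' does not explain how the doubling is avoided. The route that actually works is closer to the syntactic translation in the Bennett proof: use the characterization of $\RUD$ as $\FO[+,\times,<,Bit]$ on the \emph{positions} $[|N|]\cong[n^k]$, not a machine encoding. First-order quantifiers over $[n^k]$ become $k$ first-order quantifiers over $[n]$; the position-arithmetic $+,\times,<$ on $[n^k]$ is then first-order definable from $+,\times,<$ on $[n]$ (base-$n$ arithmetic on $k$-digit numbers), and the input predicate $Bit$ on a number $N=2^{n^k}$ is trivial. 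This keeps the second-order quantifiers that arose from the rudimentary bounded quantifiers at arity exactly $k$, at the cost of introducing $<^{(2)},+^{(3)},\times^{(3)}$ on $[n]$ in the vocabulary. So the argument closes cleanly for $k\ge 3$; for $k\in\{1,2\}$ one needs either a separate trick to encode arithmetic with lower-arity relations or a check of how More and Olive actually delimit $\Sol_k$. Your proposal does not address how multiplication of $n^k$-bit numbers is expressed at arity $k$, and that is the missing idea.
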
 

\subsection{Spectra of finite variable logic $\Fol^k$}

We denote by $\Fol^k$ first order logic with only $k$
distinct variables (bound or free),
and by $Spec\Fol^k$ the set of spectra of sentences of $\Fol^k$.
$\Fol^k$ has been studied extensively in finite
model theory, \cite{Otto95b,GraedelKolaitisVardi97}, but somehow the spectrum
problem was not adressed for $\Fol^k$ in the literature\footnote{
While the third author was lecturing in Chennai in January 2009 on
the spectrum problem,
Dr. S. P. Suresh asked about it.
The results below are the fruit of discussions
with Amaldev Manuel and Martin Grohe.
}.

With the same proof as for Proposition \ref{RES MSO finite}
one gets easily the following.

\begin{proposition}
The spectrum of a sentence in $\Fol^1$ is finite or cofinite.
\end{proposition}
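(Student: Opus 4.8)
The plan is to mimic the proof of Proposition \ref{RES MSO finite}, observing that first-order logic with a single variable over a relational vocabulary is essentially monadic logic: an atomic formula $R(x,x,\ldots,x)$ in one variable behaves exactly like a unary predicate $P_R(x)$ asserting ``$x$ lies on the diagonal of $R$''. So the first step is to restrict attention to vocabularies $\tau$ consisting of relation symbols only (function symbols of arity $\geq 1$ are incompatible with having only one variable, since a term $f(x)$ would be allowed and $f(f(x))$ needs no new variable but $R(x,f(x))$ is still a one-variable formula — actually one must be slightly careful here, see below).

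Assuming the purely relational case, here is how I would carry it out. Given $\phi\in\Fol^1$ over $\tau=\{R_1,\ldots,R_m\}$ with $R_i$ of arity $k_i$, introduce for each $i$ a fresh unary predicate symbol $P_i$ and let $\tau'=\{P_1,\ldots,P_m\}$. For a $\tau$-structure $\mathcal M$, let $\mathcal M'$ be the $\tau'$-structure on the same universe with $P_i^{\mathcal M'}=\{a : (a,a,\ldots,a)\in R_i^{\mathcal M}\}$. The key observation: every atomic subformula of $\phi$, having only the single variable $x$ available, is of the form $R_i(x,x,\ldots,x)$ or $x=x$; replacing $R_i(x,\ldots,x)$ by $P_i(x)$ and $x=x$ by $\top$ yields a formula $\phi'\in\MSOL{\tau'}$ (in fact plain first-order monadic) with $\mathcal M\models\phi \iff \mathcal M'\models\phi'$. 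Conversely every $\tau'$-structure arises as some $\mathcal M'$ (take $R_i^{\mathcal M}=\{(a,\ldots,a): a\in P_i^{\mathcal M'}\}$). Hence $\spec{\phi}=\spec{\phi'}$, and by Proposition \ref{RES MSO finite} the latter is finite or cofinite.

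The step I expect to be the actual obstacle is the treatment of \emph{function symbols}, because $\Fol^1$ still permits nested terms like $f(g(f(x)))$, and an atom $R(f(x), g(x))$ uses only one variable. So the honest argument is: with one variable, although infinitely many terms are formable, only \emph{finitely many subterms} of $\phi$ occur, and each is, on a given structure, a unary function of $x$'s value; introduce one new unary predicate $P_t(x)$ per atomic subformula $t$, or more cleanly, replace each such atom by a fresh unary predicate and verify (as above) that the realizable expansions are exactly the arbitrary ones — one checks that distinct atoms whose truth values are \emph{not} logically linked can be witnessed independently by suitable choice of the interpretation of the function symbols on the finitely many relevant points. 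Once this translation to a monadic relational vocabulary is established, Proposition \ref{RES MSO finite} (via quantifier elimination / Ehrenfeucht--Fra\"iss\'e games, exactly as cited) closes the argument, and indeed shows moreover that $Spec\Fol^1$ is precisely the class of finite and cofinite subsets of $\N$, matching the known characterization for one unary predicate.
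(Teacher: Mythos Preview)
Your relational argument is correct and is exactly what the paper intends: with one variable, every atom is $x=x$ or $R(x,\ldots,x)$, so $\Fol^1$ over a relational vocabulary collapses to monadic first-order logic, and Proposition~\ref{RES MSO finite} (via quantifier elimination or Ehrenfeucht--Fra\"{\i}ss\'e games) applies verbatim. The paper's one-line proof ``With the same proof as for Proposition~\ref{RES MSO finite}'' is precisely this observation.

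Your treatment of function symbols, however, is a genuine error. The claim that the truth values of the finitely many atoms ``can be witnessed independently'' is false: for instance the atoms $f(x)=x$ and $f(f(x))=x$ are logically entangled, since the former implies the latter. More decisively, the proposition is simply \emph{false} once function symbols are allowed. The paper itself exhibits the counterexample in Section~\ref{vocabularies}: the sentence
\[
\forall x\ \bigl(f(f(x))=x \ \wedge\ f(x)\neq x\bigr)
\]
is an $\Fol^1$-sentence over one unary function symbol, and its spectrum is the set of even numbers, which is neither finite nor cofinite. So no reduction to monadic predicates can succeed in that setting. Recall the paper's standing convention (Section~\ref{se:intro}): ``unless otherwise stated, vocabularies will be without function symbols.'' The proposition is stated under that convention, and your relational argument is all that is required; the paragraph on function symbols should be dropped.
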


Let $M$ be a set of natural numbers. 
Recall that a 
{\em gap} of $M$ is a pair of integers $g_1, g_2$ such that
$g_1, g_2 \in M$ but for each $n$ with $g_1 < n < g_2$ we have that
$n \not\in M$.
We define $Tow(k,n)$ inductively: $Tow(0,n)=n$ and $Tow(k+1,n) = 2^{Tow(k,n)}$.

By coding structures which model iterated powersets one gets immediately
the following.
\begin{proposition}
For every $k$ there is a $\phi \in FO^4$
such that the spectrum $sp(\phi)$ contains gaps of size
$Tow(k,n)$.
\end{proposition}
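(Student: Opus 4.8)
The plan is to build, for each fixed $k$, a structure that internally carries the first $k$ iterates of the powerset operation applied to a set of arbitrary size, and to let $\phi_k$ axiomatize exactly this situation. Work over the vocabulary of unary predicates $P_0,\dots ,P_k$ and binary predicates $E_0,\dots ,E_{k-1}$, reading $E_i(\cdot ,b)$ as naming the ``$P_i$-extension'' $\{a\in P_i : E_i(a,b)\}$ of $b\in P_{i+1}$. Let $\phi=\phi_k$ be the conjunction of: \emph{(a)} ``$P_0,\dots ,P_k$ partition the universe and $\exists x\,P_0(x)$''; \emph{(b)} for each $i<k$, ``$\forall x\,\forall y\,(E_i(x,y)\rightarrow P_i(x)\wedge P_{i+1}(y))$''; \emph{(c)} for each $i<k$ the extensionality axiom $\forall y\,\forall z\,\bigl(P_{i+1}(y)\wedge P_{i+1}(z)\wedge\forall x\,(E_i(x,y)\leftrightarrow E_i(x,z))\rightarrow y=z\bigr)$; \emph{(d)} for each $i<k$, ``$\exists y\,(P_{i+1}(y)\wedge\forall x\,\neg E_i(x,y))$'' (an empty set at level $i{+}1$); and \emph{(e)} for each $i<k$ the single-element toggle axiom
\[\forall y\,\forall x\,\Bigl[\bigl(P_{i+1}(y)\wedge P_i(x)\bigr)\rightarrow\exists z\,\bigl(P_{i+1}(z)\wedge(E_i(x,z)\leftrightarrow\neg E_i(x,y))\wedge\forall w\,(w\neq x\rightarrow(E_i(w,z)\leftrightarrow E_i(w,y))))\bigr)\Bigr].\]

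For correctness, let $M\models\phi$ and put $n_i=|P_i|$, $n=n_0\geq 1$. Clause (c) makes $b\mapsto\{a:E_i(a,b)\}$ injective on $P_{i+1}$, so $n_{i+1}\leq 2^{n_i}$; clauses (d) and (e) say that its image contains $\emptyset$ and is closed under symmetric difference with a singleton, hence — since every subset of $P_i$ is reachable from $\emptyset$ by repeatedly adding or deleting one element — equals all of $\mathcal P(P_i)$, so $n_{i+1}\geq 2^{n_i}$, and therefore $n_{i+1}=2^{n_i}$. By induction $n_i=Tow(i,n)$, so $|M|=\sum_{i=0}^{k}Tow(i,n)$; conversely every $n\geq 1$ is realized, taking the tagged disjoint union of $[n],\mathcal P([n]),\dots$ with the $E_i$ the membership relations. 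Hence $sp(\phi)=\{\,g(n):n\geq 1\,\}$ with $g(n)=\sum_{i=0}^{k}Tow(i,n)$, a strictly increasing sequence; so each pair $\bigl(g(n),g(n+1)\bigr)$ is a genuine gap of $sp(\phi)$, and for $k\geq 1$ its length satisfies $g(n+1)-g(n)\geq Tow(k,n+1)-Tow(k,n)\geq Tow(k,n)$, using $Tow(k,n+1)=2^{Tow(k-1,n+1)}\geq 2^{\,Tow(k-1,n)+1}=2\,Tow(k,n)$. (The case $k=0$ is trivial.)

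It remains to count variables: clauses (a), (b), (d) need at most two variables, the extensionality clause (c) needs three ($x,y,z$), and the toggle clause (e) as written needs exactly the four variables $x,y,z,w$. Since $\phi$ is a conjunction of self-contained subformulas, the same four names are reused throughout, so $\phi\in FO^4$. The only genuinely delicate point is precisely this last one — expressing ``$P_{i+1}$ is the \emph{full} powerset of $P_i$'', i.e.\ forcing $n_{i+1}\geq 2^{n_i}$, within four variables; the toggle axiom (e) does exactly that, and everything else is bookkeeping.
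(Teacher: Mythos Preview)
Your proof is correct and follows essentially the same route the paper sketches: axiomatize iterated powersets, with four variables consumed by extensionality together with a one-element closure condition. The paper phrases the latter as ``closure under unions with singletons'' whereas you use a toggle (symmetric difference with a singleton), but this is a cosmetic variant---both, together with the existence of the empty set, force each level to realize the full powerset of the previous one, and both fit in four variables.
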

Four variables are used here to express extensionality of the membership relation,
and closure under unions with singletons.

Working a bit harder one can prove the following\footnote{M. Grohe, personal communication}.
\begin{theorem}[Grohe]
\begin{renumerate}
\item
For every Turing machine $T$ there is a sentence $\phi_T \in \Fol^3$ such that
$$sp_{\phi_T} = \{ t^2 : T \ \ \mbox{has an accepting run of length} \ \  t \}$$
\item
For every recursive function $f$ there is a sentence $\phi_f$ in $\Fol^3$
such that the gaps in the spectrum of $\phi_f$ grow faster than $f$.
\end{renumerate}
\end{theorem}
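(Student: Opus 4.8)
The plan is to prove (i) by encoding a computation of $T$ that lasts $t$ steps inside a structure whose universe is the grid $[t]\times[t]$ (hence of size $t^2$), read as a space--time computation tableau, and then to deduce (ii) from (i) by applying it to a machine whose accepting runs have very sparse lengths. The entire difficulty is to stay within three variables; four variables would make everything routine (as in the preceding propositions), but the two spots where one is tempted to use a fourth --- forcing the universe to be a \emph{square} grid, and expressing the local transition rule, which couples each cell with its three predecessors in the row above --- can both be handled by a single device: put into the vocabulary auxiliary binary ``displacement'' relations naming fixed short paths in the grid, and \emph{axiomatize} each of them by a formula whose third variable serves only as an intermediate point, so that the main body of the sentence never needs more than two. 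I read $T$ as nondeterministic, so that a run of length $t$ need not be unique; this is what makes the spectrum, and the gaps, nontrivial.

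For (i) I would take a vocabulary with two binary relations $H,V$ (horizontal and vertical successor of the grid), constants $o$ and $m$ for the corners $(0,0)$ and $(t-1,t-1)$, unary predicates $P_\gamma$ for the symbols $\gamma$ of a finite alphabet $\Gamma$ coding tape contents together with head-and-state information, and further binary relations $W_{-1},W_0,W_{+1}$ intended to hold of $(x,y)$ when $y$ is the up-left, up, and up-right neighbour of $x$, respectively. The sentence $\phi_T$ is the conjunction of: (a) \emph{grid axioms} --- $H,V$ are partial injective functions, they commute in the sense $\exists z(H(x,z)\wedge V(z,y))\leftrightarrow\exists z(V(x,z)\wedge H(z,y))$ (three variables), there is a unique cell ($o$) with no $H$- and no $V$-predecessor and a unique cell ($m$) with no $H$- and no $V$-successor, and the obvious domain conditions hold; these force the structure to be a rectangle $a\times b$; (b) a \emph{squareness axiom} forcing $a=b$, e.g. positing an auxiliary binary relation that is a bijection between the first row $\{x:\neg\exists y\,V(y,x)\}$ and the first column $\{x:\neg\exists y\,H(y,x)\}$, or equivalently that the diagonal relation $H\circ V$ iterated from $o$ terminates at $m$, each expressible with three variables; (c) \emph{definitions} $W_{-1}(x,y)\leftrightarrow\exists z(V(z,x)\wedge H(y,z))$, $W_0(x,y)\leftrightarrow V(y,x)$, $W_{+1}(x,y)\leftrightarrow\exists z(V(z,x)\wedge H(z,y))$; (d) the \emph{initial condition}: $o$ carries the initial state on a blank and every other first-row cell carries a blank; (e) the \emph{transition axioms}: for every triple $(a,b,c)\in\Gamma^3$, every cell $x$ with a $V$-predecessor whose up-neighbours carry $a,b,c$ (a blank being substituted when a neighbour is missing at the left or right edge, which $W_{\pm 1}$ handles automatically) carries a value permitted by $T$'s transition relation --- each conjunct has the shape $\exists y(W_s(x,y)\wedge P_a(y))$ and reuses $y$, so (d) and (e) live in $\Fol^2$ once the $W_s$ are available; (f) the \emph{acceptance condition}: some last-row cell, $\neg\exists y\,V(x,y)$, carries an accepting state; and (g) housekeeping ($\{P_\gamma\}$ partitions the universe, one head per row). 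Every conjunct uses at most three variables, so $\phi_T\in\Fol^3$, and a structure of size $n$ satisfies $\phi_T$ iff $n=t^2$ for some $t$ and $T$ has an accepting run of length $t$ on its fixed (say empty) input --- $t$ tape cells suffice, since in $t$ steps the head moves at most $t$ cells.

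For (ii), apply (i) to a nondeterministic machine $T_f$ that first guesses a number $m$, then deterministically computes and counts out $g(m)$ further steps before accepting, where $g$ is chosen, as in the proof of Proposition~\ref{pr:gaps}, to grow fast enough that the resulting set of accepting-run lengths $\{g(m)+O(\log m):m\in\N\}$, and hence its image under $t\mapsto t^2$, has its $n$-th gap larger than $f(n)$. Then $\phi_f:=\phi_{T_f}\in\Fol^3$ is as required, and this simultaneously recovers, with only three variables and for arbitrary recursive $f$, the tower-of-exponentials gaps of the earlier $\Fol^4$ proposition.

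The real obstacle, I expect, is steps (a)--(b): writing the commutation, domain, and corner-uniqueness axioms so that they genuinely pin the universe down to a single square grid --- ruling out disconnected unions of rectangles, partial grids, and cyclic ``toroidal'' configurations --- while never invoking a fourth variable. Once the displacement relations $W_s$ are in place the transition rule is immediate, and the reduction of (ii) to (i) is routine.
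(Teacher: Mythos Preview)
Your proposal is correct and follows the same approach the paper indicates: the paper's entire proof is the single sentence ``The key idea is to encode Turing machines on grids, hence the $t^2$ in the statement of the Theorem,'' and your space--time tableau on a $t\times t$ grid is exactly that. You have supplied considerably more detail than the paper does, including the device of auxiliary displacement relations $W_{-1},W_0,W_{+1}$ to keep the transition axioms two-variable, and you have correctly located the genuine technical work in pinning down a single square grid with three variables; the paper says nothing further on this point either.
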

The key idea is to encode Turing machines on grids, hence the $t^2$ in the statement
of the Theorem.

In contrast to the above, it follows from the proof that $\Fol^2$ has the finite
model property and hence is decidable,
\cite{GraedelKolaitisVardi97}, that the gaps are bounded.
\begin{theorem}
For every $\phi \in FOL^2$ the spectrum $sp(\phi)$
has gaps of size at most $2^{poly(n)}$.
\end{theorem}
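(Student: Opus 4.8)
The plan is to derive the bound from the exponential finite model property of two‑variable logic (\cite{GraedelKolaitisVardi97}): every finitely satisfiable $\Fol^2$ sentence $\psi$ has a finite model of size at most $2^{\mathrm{poly}(|\psi|)}$. Applied to $\phi$ directly this only bounds the \emph{least} element of $sp(\phi)$ by a constant, which says nothing about gaps. The idea is therefore: given a gap, conjoin to $\phi$ a \emph{short} sentence that forces the domain to be large, and then invoke the small‑model bound to obtain a model that is large but not too large. First I would fix the vocabulary $\sigma$ of $\phi$ and a gap $(g_1,g_2)$ of $sp(\phi)$, so $g_1,g_2\in sp(\phi)$ and no integer strictly between them lies in $sp(\phi)$; the goal is $g_2-g_1\le 2^{\mathrm{poly}(g_1)}$, where the polynomial may depend on $\phi$.

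Next I would introduce $g_1+1$ fresh constant symbols $c_1,\dots,c_{g_1+1}$ (adding constants affects neither the two‑variable restriction nor the exponential finite model property) and form
$$
\theta\ \equiv\ \phi\ \wedge\ \bigwedge_{1\le i<j\le g_1+1} c_i\neq c_j,
$$
a $\Fol^2$ sentence of length $|\phi|+O(g_1^2)$ over the expanded vocabulary. A routine reduct/expansion argument shows that $\theta$ is finitely satisfiable iff $\phi$ has a finite model with at least $g_1+1$ elements: the $\sigma$‑reduct of any model of $\theta$ gives one direction, and conversely any $\phi$‑model of size $\ge g_1+1$ is expanded to a model of $\theta$ by interpreting the $c_i$ as distinct elements. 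Since $g_2\in sp(\phi)$ and $g_2>g_1$, the sentence $\theta$ is finitely satisfiable, so by the exponential finite model property it has a model of size at most $2^{\mathrm{poly}(|\theta|)}=2^{\mathrm{poly}(g_1)}$, the implied constants depending only on $\phi$. Its $\sigma$‑reduct exhibits some $m\in sp(\phi)$ with $g_1<m\le 2^{\mathrm{poly}(g_1)}$; as $(g_1,g_2)$ is a gap, $m\ge g_2$, hence $g_2\le 2^{\mathrm{poly}(g_1)}$ and a fortiori $g_2-g_1\le 2^{\mathrm{poly}(g_1)}$. (For a finite spectrum there is nothing extra to check: a gap still has both endpoints in $sp(\phi)$, so the same argument applies.)

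The one point needing care is the complexity bookkeeping — making sure ``the domain has at least $g_1+1$ elements'' is expressible in $\Fol^2$ by a sentence of length only polynomial in $g_1$, since the naive $\exists x_1\cdots\exists x_{g_1+1}\,\bigwedge_{i\neq j} x_i\neq x_j$ is illegal (it uses $g_1+1$ variables). The $g_1+1$ fresh constants together with the $O(g_1^2)$ disequations above do the job; alternatively one could tag $g_1+1$ designated elements with distinct binary labels using $O(\log g_1)$ auxiliary unary predicates, giving length $O(g_1\log g_1)$. Either way $|\theta|=\mathrm{poly}(g_1)$ suffices, and the dependence of the final polynomial on $\phi$ is harmless because the statement concerns a fixed $\phi$. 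I do not expect any genuine obstacle beyond this; everything else follows from the cited finite model property.
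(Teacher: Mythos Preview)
Your proof is correct and is precisely the natural way to extract the gap bound from the exponential small-model property of $\Fol^2$. The paper itself gives no argument beyond the remark that the result ``follows from the proof that $\Fol^2$ has the finite model property and hence is decidable, \cite{GraedelKolaitisVardi97}'', so your elaboration---conjoining a polynomial-size $\Fol^2$ sentence (via fresh constants, or equivalently auxiliary unary labels) that forces the domain past $g_1$ and then invoking the $2^{\mathrm{poly}(|\theta|)}$ model bound---is exactly what is intended.
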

\begin{corollary}
The following inclusions are proper:
$$
Spec\Fol^1 \subset
Spec\Fol^2 \subset
Spec\Fol^3
$$
\end{corollary}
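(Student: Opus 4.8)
The plan is to observe first that the two inclusions $Spec\Fol^1\subseteq Spec\Fol^2\subseteq Spec\Fol^3$ are purely syntactic --- any sentence using at most $k$ variables is a fortiori a sentence using at most $k+1$ variables --- so that all the content lies in the two \emph{strictness} assertions, which I would prove by quite different arguments.

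For $Spec\Fol^1\subsetneq Spec\Fol^2$ I would separate the two classes by the single set $\{1\}$. It belongs to $Spec\Fol^2$, being the spectrum of the two-variable sentence $\exists x\,(x=x)\wedge\forall x\,\forall y\,(x=y)$, whose models are exactly the one-element structures. It does not belong to $Spec\Fol^1$, because every $\Fol^1$-spectrum is closed upwards under $n\mapsto n+1$: the truth of a $\Fol^1$-sentence in a structure $M$ depends only on which $1$-types (Boolean combinations of the unary atomic facts about a single element, including the ``diagonal'' tuples $R(a,\dots,a)$) are realised in $M$, and adjoining to $M$ a clone $a'$ of some $a\in M$ --- one carrying the same $1$-type, the tuples involving $a'$ together with other elements chosen arbitrarily --- does not change that set of realised $1$-types, so the enlarged structure still satisfies the sentence. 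Hence a nonempty $\Fol^1$-spectrum is infinite (indeed cofinite, as already noted above), whereas $\{1\}$ is a nonempty finite set; so $\{1\}\notin Spec\Fol^1$.

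For $Spec\Fol^2\subsetneq Spec\Fol^3$ I would set the two ``gap'' results against each other. Choose a recursive function $f$ that eventually dominates every map $n\mapsto 2^{p(n)}$ with $p$ a polynomial, for instance $f(n)=2^{2^n}$. By the second item of Grohe's theorem stated above, there is a sentence $\phi_f\in\Fol^3$ whose spectrum has gaps growing faster than $f$. If $sp(\phi_f)$ were a $\Fol^2$-spectrum, the theorem on $\Fol^2$-spectra would force its gaps to be bounded by some $2^{p(n)}$; but they eventually exceed $2^{2^n}\ge 2^{p(n)}$, a contradiction. Therefore $sp(\phi_f)\in Spec\Fol^3\setminus Spec\Fol^2$, and since $Spec\Fol^2\subseteq Spec\Fol^3$ the inclusion is strict.

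The argument is short because it lives entirely off the preceding results; the only ingredient not already recorded is the elementary ``cloning'' observation for $\Fol^1$. The one place to be slightly careful is the indexing in the last step --- one must check that ``the sequence of gaps grows faster than $f$'' is genuinely incompatible with ``every gap at position $n$ has size at most $2^{p(n)}$''; taking $f$ as large as $2^{2^n}$, rather than merely super-polynomial, leaves enough slack that this is immediate, so I do not expect a real obstacle.
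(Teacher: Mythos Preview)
Your argument is correct and, for the second strict inclusion $Spec\Fol^2\subsetneq Spec\Fol^3$, it is exactly what the paper sets up: play Grohe's unbounded-gap theorem for $\Fol^3$ against the $2^{\mathrm{poly}(n)}$ gap bound for $\Fol^2$, choosing $f$ large enough (your $2^{2^n}$ is safely sufficient).

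For the first strict inclusion your route differs slightly from what the paper records. The paper only states that $\Fol^1$-spectra are \emph{finite or cofinite}, which does not by itself exclude your separator $\{1\}$. You therefore supply the extra ingredient yourself --- the cloning observation that the truth of a $\Fol^1$-sentence depends only on the set of realised diagonal $1$-types, so any model can be enlarged by one element --- which sharpens the proposition to ``empty or cofinite'' and makes $\{1\}$ work. This is a genuine, if elementary, addition to what the paper states; the paper leaves the first separation implicit and does not exhibit a $\Fol^2$-spectrum that is neither finite nor cofinite, so some such argument is needed. One caveat worth making explicit: the cloning step presupposes a purely relational vocabulary (no constant symbols), since with a constant $c$ the $\Fol^1$-sentence $\forall x\,(x=c)$ already has spectrum $\{1\}$. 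This is the standard convention for $\Fol^k$ and is the setting of the references the paper cites (Otto; Gr\"adel--Kolaitis--Vardi), so it is harmless, but you should say so.
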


\begin{openquestion}[Finite Variable Hierarchy]
\label{oq:fivar}
Does the hierarchy $Sp\Fol^k$ collapse at level $3$?
\end{openquestion}


\section{The Ash conjectures}
\label{Ash}

The notion of $k$-equivalence of structures was  introduced by Fra\"{\i}ss\'e 
\cite{ar:Fraisse54} in 1954 and the game presentation is due to Ehrenfeucht 
\cite{ar:Ehrenfeucht61} in 1961.
 
\begin{definition} 
Let $\sigma$ be a vocabulary, let $\cA$ and $\cB$ be $\sigma$-structures and let 
$k$ be an integer. The two structures $\cA$ and $\cB$ are $k$-equivalent if and 
only if they satisfy the same $\sigma$-sentences with quantifier depth $\leq k$.
\end{definition} 
 
For a detailed presentation of $k$-equivalence, we refer the reader to 
\cite{bk:EbbinghausF95,bk:Hodges93}. For our purpose, the two most important 
features of the above notion are the following. For each finite vocabulary 
$\sigma$ and for each quantifier depth $k$, the number of $k$-equivalence 
classes of $\sigma$-structures is finite and we denote it by $M_{\sigma,k}$. 
For each finite vocabulary $\sigma$, for each quantifier depth $k$ and for each 
$k$-equivalence class of $\sigma$-structures $\cC$, there exists a 
$\sigma$-sentence $\Psi_{\cC}$ of quantifier depth $k$ such that, for all 
$\sigma$-structure $\cA$, we have ${\cA}\in{\cC}$ if and only if ${\cA}\models 
\Psi_{\cC}$.
 
In 1994, Ash \cite{ar:Ash94} introduces a counting function relative to the 
$k$-equivalence classes:
 
\begin{definition} 
Let $\sigma$ be a finite relational vocabulary, and let $k$ be a positive 
integer. Ash's function $N_{\sigma,k}$ counts, for each positive integer $n$, 
the number of $k$-equivalence classes of $\sigma$-structures of size $n$.
\end{definition} 
  
This function is obviously bounded by the total number of classes, $M_{\sigma, 
k}$, and Ash's conjecture deals with its asymptotic behavior.
 
\begin{openquestion}[Ash's constant conjecture] 
\label{ash-1}
Is it true that
for any finite relational 
vocabulary $\sigma$ and any positive integer $k$, the Ash function 
$N_{\sigma,k}$ is eventually constant?
\end{openquestion}   
 
A weaker version of his conjecture is also proposed:
 
\begin{openquestion}[Ash's periodic conjecture] 
\label{ash-2}
Is it true that
for any finite relational 
vocabulary $\sigma$ and any positive integer $k$, the Ash function 
$N_{\sigma,k}$ is eventually periodic?
\end{openquestion}  
 
Ash shows by a very neat proof that both conjectures imply the spectrum 
conjecture (i.e. $\SPEC=\coSPEC$). Let us present the idea of the proof. Fix 
$\sigma$ and $k$ and assume for simplicity that $N_{\sigma,k}(n)=a$ for all 
$n$. Take a quantifier depth $k$ first-order $\sigma$-sentence $\phi$. We 
exhibit a quantifier depth $k$ first-order $\sigma'$-sentence $\theta_a$ such 
that $\spec{\theta_a}={\mathbb N}^+\setminus \spec{\phi}$, where $\sigma'$ consists 
in $a$ disjoint copies of $\sigma$. Let ${\cC}_1,\ldots,{\cC}_{M_{\sigma,k}}$ 
be the classes of $k$-equivalence of $\sigma$-structures. All the structures in 
a given class ${\cC}_i$ agree on $\phi$, i.e. either $\forall {\cA}\in{\cC}_i\ 
{\cA}\models\phi$ or $\forall {\cA}\in{\cC}_i\ {\cA}\models\neg\phi$, because 
$\phi$ has quantifier depth $k$. Form the set $X_a$ consisting of all sets of 
$a$ distinct ${\cC}_i$s such that $\forall {\cA}\in{\cC}_i\ 
{\cA}\models\neg\phi$. Take $\theta_a\equiv\bigvee_{Y\in 
X_a}\bigwedge_{{\cC}\in Y}\Psi'_{\cC}$, where $\Psi'_{\cC}$ characterizes $\cC$ 
and is written using a distinct copy of $\sigma$. The sentence $\theta_a$ has 
quantifier depth $k$ and $n\in \spec{\theta_a}$ if and only if there are (at 
least) $a$ distinct $k$-equivalence classes containing a structure $\cA$ with 
size $n$ and ${\cA}\models\neg\phi$. Since the total number of $k$-equivalence 
classes containing a structure $\cA$ with size $n$ is exactly $a$, there is no 
class left for a model of $\phi$, and the anounced result follows.
 
These ideas of Ash's have not been exploited afterwards, 
and his paper has remained isolated until recently.
In 2006, Chateau and More \cite{ar:ChateauM06} published 
a second paper related to Ash's counting functions.
For all $i\in\NN^+$, note
$N_{\sigma, k}^{-1}(i) = \{ n \in \NN^+ | N_{\sigma, k}(n) = i \}$, 
the inverse image of the positive integer $i$ under the function 
$N_{\sigma, k}$. 
Both Ash's conjectures can be rephrased in terms of the 
sets $N_{\sigma, k}^{-1}(i)$ and are subsumed under the following condition: 
 
\begin{openquestion}[Ultra-weak Ash conjecture] 
\label{ash-3}
Is it true that
for any finite relational vocabulary $\sigma$, for any positive integer  
$k$ and for all $i\in\NN^+$, the set $N_{\sigma, k}^{-1}(i)$ is a 
spectrum?  
\end{openquestion} 

This last conjecture is proved to be a
necessary and sufficient condition for the complement of a spectrum to be a 
spectrum.  
 
\begin{theorem}\label{ultra} 
Let $\sigma$ be a finite relational vocabulary, and let $k$ be a 
positive integer.  
For all $i\in\NN^+$, the set $N_{\sigma, k}^{-1}(i)$ is a spectrum if 
and only if for every $\sigma$-sentence $\varphi$ of quantifier depth 
$\leq k$, the set $\NN^+\setminus \spec{\varphi}$ is a spectrum. 
\end{theorem}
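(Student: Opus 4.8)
The plan is to translate everything into the $k$-equivalence classes of $\sigma$-structures. Write $M=M_{\sigma,k}$ and let $\mathcal{C}_1,\ldots,\mathcal{C}_M$ be these classes, with $\Psi_j$ a $\sigma$-sentence of quantifier depth $\leq k$ defining $\mathcal{C}_j$ (both the finiteness of $M$ and the existence of the $\Psi_j$ are recalled above). Set $R_j=\spec{\Psi_j}$, so that $R_j$ is exactly the set of $n\geq 1$ at which $\mathcal{C}_j$ is realized. Everything rests on the identity
$$N_{\sigma,k}(n)=|\{\,j:n\in R_j\,\}|\qquad(n\geq 1).$$
I would first record a small dictionary. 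A finite union or intersection of spectra is again a spectrum, by Proposition~\ref{pr:easyclosures}. For any index set $T$ the union $\bigcup_{j\in T}R_j=\spec{\bigvee_{j\in T}\Psi_j}$ is the spectrum of a $\sigma$-sentence of quantifier depth $\leq k$ (a disjunction does not raise quantifier depth), so its complement $\NN^+\setminus\bigcup_{j\in T}R_j=\{\,n:\{j:n\in R_j\}\subseteq T\,\}$ is a spectrum \emph{whenever the complement hypothesis is available}. And the intersection $\bigcap_{j\in U}R_j=\{\,n:U\subseteq\{j:n\in R_j\}\,\}$ is a spectrum \emph{unconditionally}, being a finite intersection of the $R_j$.

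Assume first that $N_{\sigma,k}^{-1}(i)$ is a spectrum for every $i$, and let $\varphi$ be any $\sigma$-sentence of quantifier depth $\leq k$. Put $\bar P=\{\,j:\mathcal{C}_j\models\neg\varphi\,\}$, which is well defined since $\varphi$ cannot separate structures lying in the same class. Then $n\notin\spec{\varphi}$ iff every class realized at size $n$ lies in $\bar P$, and splitting on the value $m=N_{\sigma,k}(n)$ gives
$$\NN^+\setminus\spec{\varphi}=\bigcup_{m=1}^{|\bar P|}\ \bigcup_{U\subseteq\bar P,\ |U|=m}\Bigl(N_{\sigma,k}^{-1}(m)\cap\bigcap_{j\in U}R_j\Bigr),$$
a finite union of intersections of spectra (the $N_{\sigma,k}^{-1}(m)$ by assumption, the $\bigcap_{j\in U}R_j$ by the dictionary), hence a spectrum.

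Conversely, assume $\NN^+\setminus\spec{\varphi}$ is a spectrum for every $\sigma$-sentence $\varphi$ of quantifier depth $\leq k$, and fix $i\in\NN^+$. If $i>M$ then $N_{\sigma,k}^{-1}(i)=\emptyset$ is a spectrum, so suppose $1\leq i\leq M$. I would write
$$N_{\sigma,k}^{-1}(i)=\Bigl(\bigcup_{|U|=i}\bigcap_{j\in U}R_j\Bigr)\cap\Bigl(\bigcup_{|T|=i}\Bigl(\NN^+\setminus\bigcup_{j\notin T}R_j\Bigr)\Bigr),$$
where the first factor is ``at least $i$ classes are realized at $n$'' and the second is ``at most $i$ classes are realized at $n$''. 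The first factor is a spectrum by the dictionary; each $\NN^+\setminus\bigcup_{j\notin T}R_j$ is the complement of $\spec{\bigvee_{j\notin T}\Psi_j}$, a $\sigma$-sentence of quantifier depth $\leq k$, hence a spectrum by assumption (the degenerate case $T=\{1,\ldots,M\}$ yields the empty disjunction, so the factor is $\NN^+$, itself a spectrum). Thus $N_{\sigma,k}^{-1}(i)$ is again a finite Boolean combination of spectra.

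There is no deep obstacle here: once the two decompositions above are written down, the rest is an application of Proposition~\ref{pr:easyclosures} and the observation that $\bigcap_{j\in U}R_j$ never needs the hypothesis. The points that need care are purely combinatorial — checking that ``exactly $i$ classes realized'' is genuinely ``at least $i$'' intersected with ``at most $i$'' with no off-by-one, and verifying that the complement hypothesis is invoked only on bona fide $\sigma$-sentences of quantifier depth $\leq k$, which it is, since $\bigvee_{j\in T}\Psi_j$ is exactly such a sentence.
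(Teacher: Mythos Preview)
Your proof is correct. Note that the paper does not actually prove Theorem~\ref{ultra}; it states the result (attributed to Chateau and More) and only sketches Ash's earlier argument that the \emph{constant} conjecture implies closure under complement --- one direction, in the special case $N_{\sigma,k}\equiv a$. In that sketch, Ash builds an explicit sentence $\theta_a$ over $a$ disjoint copies of $\sigma$, expressing ``there are $a$ distinct classes in $\bar P$ realized at size $n$''; combined with the assumption that exactly $a$ classes are realized in total, this forces all realized classes into $\bar P$.

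Your Direction~1 is the natural generalization: rather than a fixed $a$, you split on the actual value $m=N_{\sigma,k}(n)$ and invoke the hypothesis that each level set $N_{\sigma,k}^{-1}(m)$ is a spectrum. Where Ash constructs an explicit defining sentence over an enlarged vocabulary, you work entirely through the closure of $\SPEC$ under finite union and intersection (Proposition~\ref{pr:easyclosures}), which is cleaner and avoids the bookkeeping with copies of $\sigma$. Ash's explicit construction buys the extra information about quantifier depth and arities of the complement sentence that the paper mentions just after the theorem; your closure argument does not deliver that, but it is not part of the statement. Your Direction~2 (the converse) has no counterpart in the paper's exposition; your ``at least $i$'' $\cap$ ``at most $i$'' decomposition handles it cleanly, and you correctly observe that the complement hypothesis is invoked only on genuine $\sigma$-sentences of depth $\leq k$, namely the disjunctions $\bigvee_{j\notin T}\Psi_j$.
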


All in all, the spectrum conjecture is true if and only if the ultra-weak Ash 
conjecture is true. 
 
Note that, in some cases, one gets interesting additional information about complements of 
spectra. Eg., if the Ash function $N_{\sigma,k}$ is eventually constant, then 
for every $\sigma$-sentence $\varphi$ of quantifier depth 
$\leq k$, the set $\NN^+ \backslash \spec{\varphi}$ is the spectrum 
of a sentence of the same quantifier depth as $\varphi$,  
over a vocabulary with the same arities as $\sigma$. 

In order to make some progress, 
particular cases of Ash conjectures may be considered,  
i.e., by restricting the  the sets of pairs $(\sigma,k)$. 
In his original paper, Ash \cite{ar:Ash94} already did so. 
For instance, he shows that if $\sigma$ is a unary vocabulary, then for all $k\geq 1$,  
Ash's function $N_{\sigma,k}$ is eventually constant.  
In \cite{ar:ChateauM06}, it is also proved that for all finite relational vocabulary 
$\sigma$, Ash's function 
$N_{\sigma,2}$ is eventually constant. 
However, these results are of very limited interest because unary vocabularies 
or quantifier depth two (see \cite{ar:Mortimer75}) only allow to define 
finite and cofinite spectra. 

In other cases, solving restricted versions of Ash conjectures 
happens to be as difficult as solving the full conjectures. 
Let $\mathcal{G}$ be a vocabulary restricted to a single binary relation symbol 
(i.e. the language of graphs).   
As already said, Fagin \cite{ar:Fagin75a} shows that, up to a polynomial padding, every 
spectrum is a spectrum in $\RSpectra{2}{1}$. 
Since spectra are closed under inverse images of polynomial, 
one obtains:

\begin{proposition}\label{bin} 
If for all $k\geq 1$ and for all $i\in \NN^+$, the set $N_{\mathcal{G}, 
k}^{-1}(i)$ is a spectrum, then the (full) spectrum conjecture holds.  
\end{proposition}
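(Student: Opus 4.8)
The plan is to chain together Theorem~\ref{ultra}, Fagin's transfer theorem~\ref{Fagin transfer theorem}, and the closure of $\SPEC$ under preimages of polynomials.

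First I would apply Theorem~\ref{ultra} with $\sigma=\mathcal{G}$. The hypothesis of Proposition~\ref{bin} asserts that $N_{\mathcal{G},k}^{-1}(i)$ is a spectrum for \emph{every} $k\geq 1$ and every $i\in\NN^+$; and since each $\mathcal{G}$-sentence $\varphi$ has some finite quantifier depth $k$, Theorem~\ref{ultra} then gives that $\NN^+\setminus\spec{\varphi}$ is a spectrum for \emph{every} $\mathcal{G}$-sentence $\varphi$. In other words, under the hypothesis the complement of every set in $\RSpectra{2}{1}$ is again a first-order spectrum.

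Next I would reduce the general case to $\RSpectra{2}{1}$. Let $S$ be an arbitrary first-order spectrum. By the polynomial-padding consequence of Theorem~\ref{Fagin transfer theorem} (for every spectrum $S$ there is $i$ with $S^i\in\RSpectra{2}{1}$), fix such an $i$ and set $T=\NN^+\setminus S^i$, which is a spectrum by the previous paragraph. Since $m\mapsto m^i$ is injective on $\NN^+$, we have $n\in S\iff n^i\in S^i$, hence $\NN^+\setminus S=\{\,n\in\NN^+ : n^i\in T\,\}$. So it remains to observe that $\SPEC$ is closed under preimages of the polynomial $n\mapsto n^i$. This I would obtain either by the interpretation method --- given a first-order sentence $\psi$ over a vocabulary $\tau$ with $\spec{\psi}=T$, build a sentence $\psi^{(i)}$ over the vocabulary obtained by raising the arity of each $r$-ary symbol of $\tau$ to $ri$ and relativizing every quantifier of $\psi$ to range over $i$-tuples, so that a model of $\psi^{(i)}$ of size $n$ unpacks into a model of $\psi$ of size $n^i$ --- or, more cheaply, from Jones and Selman's $\SPEC=\NE$ together with the observation that $|n^i|=O(|n|)$, so membership of $n^i$ in $T$ is still decidable in nondeterministic exponential time in $|n|$. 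Either way $\NN^+\setminus S=\spec{\psi^{(i)}}$ is a spectrum.

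Putting this together, $\NN^+\setminus S$ is a first-order spectrum for every first-order spectrum $S$, which is exactly $\SPEC=\coSPEC$, the spectrum conjecture. I do not expect a serious obstacle: the whole argument is a short chain of already-established results. The one place that calls for a little care is checking that the passage from $\psi$ to $\psi^{(i)}$ (equivalently, the $\NE$ padding step) genuinely keeps the sentence first-order over a fixed finite vocabulary of bounded arity; this is routine, but it is the only step carrying content beyond quoting Theorem~\ref{ultra} and Theorem~\ref{Fagin transfer theorem}.
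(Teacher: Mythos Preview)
Your argument is correct and follows exactly the route the paper takes: apply Theorem~\ref{ultra} with $\sigma=\mathcal{G}$, invoke Fagin's polynomial padding to reduce an arbitrary spectrum to one in $\RSpectra{2}{1}$, and then use closure of $\SPEC$ under polynomial preimages to pull the complement back. The paper states the last step in a single clause (``since spectra are closed under inverse images of polynomial''), whereas you helpfully spell out two ways to see it; otherwise the proofs coincide.
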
 
 
It is less known that, as an easy corollary of a result of 
Grandjean in \cite{ar:Grandjean90b}, up to a polynomial padding, every spectrum 
is the spectrum of a quantifier depth $3$ sentence, using an unbounded number of binary relations.
Once again, it follows:

\begin{proposition}\label{trois}  
If for all binary vocabulary $\sigma$ and for all $i\in \NN^+$, the 
set $N_{\sigma, 3}^{-1}(i)$ is a spectrum, then the spectrum 
conjecture holds.   
\end{proposition}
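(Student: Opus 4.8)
The plan is to derive the spectrum conjecture $\SPEC=\coSPEC$ from the hypothesis by composing three ingredients that are all already on the table: the padding corollary of Grandjean's theorem quoted immediately before the statement, the ultra-weak Ash equivalence of Theorem~\ref{ultra}, and the closure of $\SPEC$ under inverse images of polynomials (the same property invoked just above in the passage leading to Proposition~\ref{bin}). Concretely, I would start from an arbitrary first-order sentence $\psi$, set $S=\spec{\psi}$, and aim to show that $\NN^+\setminus S$ is again a spectrum; since $\psi$ is arbitrary this is exactly the content of $\SPEC=\coSPEC$.

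First I would apply Grandjean's corollary to $S$: there is a polynomial $P$ with positive coefficients and a sentence $\varphi$ of quantifier depth at most $3$ over a finite vocabulary $\sigma$ consisting of binary relation symbols only, with $\spec{\varphi}=P(S)=\{P(n)\colon n\in S\}$. Next I would feed precisely this $\sigma$ into the hypothesis: for every $i\in\NN^+$ the set $N_{\sigma,3}^{-1}(i)$ is a spectrum, so Theorem~\ref{ultra}, instantiated at $k=3$ and at this $\sigma$, yields that $\NN^+\setminus\spec{\chi}$ is a spectrum for \emph{every} $\sigma$-sentence $\chi$ of quantifier depth at most $3$. In particular $\NN^+\setminus P(S)=\NN^+\setminus\spec{\varphi}$ is a spectrum.

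Finally I would pull this back along $P$. Since $P$ has positive coefficients it is strictly increasing, hence injective, on $\NN^+$, so for $n\in\NN^+$ we have $n\notin S$ iff $P(n)\notin P(S)$ iff $P(n)\in\NN^+\setminus P(S)$. Thus $\NN^+\setminus S$ is the preimage under $P$ of the spectrum $\NN^+\setminus P(S)$, and closure of $\SPEC$ under polynomial preimages gives $\NN^+\setminus S\in\SPEC$. As $\psi$ was arbitrary, the complement of every first-order spectrum is a spectrum, i.e. $\SPEC=\coSPEC$.

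I do not expect a genuine obstacle at the level of this proposition itself: all of the heavy lifting is delegated. The non-trivial content lives in Grandjean's reduction of an arbitrary spectrum to quantifier depth $3$ over binary relations (at the price of a polynomial padding) and in the standard encoding behind polynomial-preimage closure (representing a $P(n)$-element model inside the $d$-tuples of an $n$-element model equipped with an axiomatized linear order and arithmetic), neither of which needs to be redone here. The one thing to be careful about is the bookkeeping: one must confirm that Grandjean's output sentence really stays within quantifier depth $3$ and uses binary relations only, and that the polynomial produced by the padding is the very one used in the pullback, so that Theorem~\ref{ultra} is applied to exactly the right pair $(\sigma,3)$.
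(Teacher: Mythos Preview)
Your proposal is correct and follows exactly the approach the paper indicates: it is the same argument as for Proposition~\ref{bin}, with Grandjean's quantifier-depth-$3$ binary padding replacing Fagin's $\RSpectra{2}{1}$ padding, then invoking Theorem~\ref{ultra} and closure of $\SPEC$ under polynomial preimages. The paper only sketches this by the phrase ``Once again, it follows''; your write-up spells out the steps faithfully.
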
 
 
Since there is no known padding result that uses a finite set of pairs 
$(\sigma,k)$, these results are presently the best possible.
 
In order to make further progress in solving particular cases of Ash 
conjectures, Chateau and More introduce a new type of restriction, concerning the 
semantics of the vocabularies. 
 
\begin{definition} 
Let $\sigma$ be a finite 
relational vocabulary, and let $\cC$ be a class of finite $\sigma$-structures.  
For any positive integer $k$, the Ash function for the class 
${\cC}$, denoted by $N_{{\cC},k}$, counts the number 
($\leq M_{\sigma,k}$)  of non $k$-equivalent structures in ${\cC}$ of 
size $n$ for all $n\geq 1$.  
\end{definition} 
 
Let $\cB$ be the set of finite Boolean algebras. 
Clearly, we have: 
$N_{{\cB}, k}^{-1}(1)=\{2^{\alpha} \in \N \ | \ \alpha\geq 2\}$,  
$N_{{\cB}, k}^{-1}(0)=\NN^+\setminus\{2^{\alpha} \in \N \ | \ \alpha\geq 2\}$ and  
$N_{{\cB}, k}^{-1}(i)=\emptyset$ for $i\not\in\{0,1\}$. 
From this example, it follows that we cannot expect 
Ash's constant or periodic conjectures to hold for classes of structures. 
A natural question arises: ``Which functions can be Ash's 
function for some class of structures?''. 
Let $f:\NN^+\mapsto\NN$ a function bounded by some constant $M_f$ and 
computable in $E$. Then, it is proved that there exist a vocabulary $\sigma$, a 
$\sigma$-sentence $\varphi$ and a quantifier depth $K$ such that 
$f=N_{Mod_f(\varphi),K}$, where $Mod_f(\varphi)$ denotes the class of finite models of $\varphi$.

Now, let us turn to the ultra-weak Ash conjecture for classes of structures. 
Consider the class $Mod_f(\Psi)$ of finite models of a first-order sentence $\Psi$. 
 
\begin{openquestion}[Ultra-weak Ash conjecture for classes of structures] 
\label{ash-4}
Is it true that
for any finite relational vocabulary $\sigma$, for any first-order 
$\sigma$-sentence $\Psi$, for any positive integer $k$ and for all 
$i\in\NN$, the set $N_{Mod_f(\Psi), k}^{-1}(i)$ is a spectrum?
\end{openquestion} 
 
Only somehow expressive classes of structures are interesting, and in 
particular it is natural to require that $Mod_f(\Psi)$ contains at least one structure of  
size $n$, for all positive 
integer $n$. 

\begin{theorem}\label{theory} 
Let $\sigma$ be a finite relational vocabulary, let $\Psi$ be a 
first-order $\sigma$-sentence such that $Mod_f(\Psi)$ contains at least one structure of  
size $n$, for all positive 
integer $n$ and let $k$ be a positive 
integer. For all $i\in\NN^+$, the set $N_{Mod_f(\Psi), k}^{-1}(i)$ is a 
spectrum if and only if for every $\sigma$-sentence $\varphi$ of 
quantifier depth $\leq k$ which implies $\Psi$ (i.e. every model 
of $\varphi$ is also a model of $\Psi$), the 
set $\NN^+\setminus \spec{\varphi}$ is a spectrum.  
\end{theorem}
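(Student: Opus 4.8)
The plan is to treat Theorem~\ref{theory} as the analogue, relative to the class $Mod_f(\Psi)$, of Theorem~\ref{ultra}, running the argument of Ash recalled above with ``every $k$-equivalence class realised in size $n$'' replaced throughout by ``every $k$-equivalence class containing a model of $\Psi$ of size $n$''. Fix the $k$-equivalence classes $\cC_1,\dots,\cC_{M_{\sigma,k}}$ of finite $\sigma$-structures and their characteristic sentences $\Psi_{\cC_j}$, of quantifier depth $k$. Two facts will be used repeatedly: a sentence of quantifier depth $\le k$ is constant on each $\cC_j$; and if moreover $\varphi\models\Psi$, then any $\cC_j$ on which $\varphi$ holds consists of finite models of $\Psi$, hence is counted by $N_{Mod_f(\Psi),k}(n)$ at every size $n$ it realises. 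The assumption $\spec{\Psi}=\N^+$ gives $N_{Mod_f(\Psi),k}(n)\ge 1$ for all $n$, so $\N^+$ splits into the finitely many nonempty blocks $N_{Mod_f(\Psi),k}^{-1}(i)$, $1\le i\le M_{\sigma,k}$.

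For the implication from ``each $N_{Mod_f(\Psi),k}^{-1}(i)$ is a spectrum'' to ``$\N^+\setminus\spec{\varphi}$ is a spectrum'', fix $\varphi$ of quantifier depth $\le k$ with $\varphi\models\Psi$. I would first record the equivalence: $n\notin\spec{\varphi}$ iff $\neg\varphi$ holds on every $k$-equivalence class containing a size-$n$ model of $\Psi$ (one direction is immediate, the other uses the second fact above). Then, for each $i$, let $B_i$ be the set of $n$ admitting $i$ pairwise non-$k$-equivalent $\sigma$-structures of size $n$, each satisfying $\Psi\wedge\neg\varphi$; this is a spectrum, being the spectrum of a first-order sentence over $i$ disjoint copies of $\sigma$ (its quantifier depth is irrelevant). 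The equivalence together with the block partition yields
\[
\N^+\setminus\spec{\varphi}\;=\;\bigcup_{i=1}^{M_{\sigma,k}}\left(N_{Mod_f(\Psi),k}^{-1}(i)\cap B_i\right),
\]
which is a spectrum by the closure properties of Proposition~\ref{pr:easyclosures}.

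For the converse, assume $\N^+\setminus\spec{\varphi}\in\SPEC$ for every quantifier-depth-$\le k$ sentence $\varphi$ implying $\Psi$, and fix $i$. Writing $A_j$ for the (clearly spectral) set of $n$ admitting $j$ pairwise non-$k$-equivalent size-$n$ models of $\Psi$, one has $N_{Mod_f(\Psi),k}^{-1}(i)=A_i\cap(\N^+\setminus A_{i+1})$, so it suffices to realise $\N^+\setminus A_{i+1}$ as a spectrum. Here I would decompose $N_{Mod_f(\Psi),k}(n)=g(n)+b(n)$, where $g(n)$ counts the classes $\cC_j$ entirely contained in $Mod_f(\Psi)$ and $b(n)$ the ``mixed'' classes that meet $Mod_f(\Psi)$ in size $n$ but also contain a finite non-model of $\Psi$. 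For the $g$-part one argues as in Theorem~\ref{ultra}: such a $\cC_j$ is cut out by $\Psi_{\cC_j}$, which has quantifier depth $k$ and implies $\Psi$, so the hypothesis gives $\N^+\setminus\spec{\Psi_{\cC_j}}\in\SPEC$, whence by Boolean closure each level set $\{n:g(n)=j'\}$ is a spectrum; the $b$-part, expressed as the analogous Boolean combination of the spectra $\spec{\Psi_{\cC_j}\wedge\Psi}$ and their complements over copies of $\sigma$, must also be shown spectral, after which $\{n:N_{Mod_f(\Psi),k}(n)=i\}=\bigcup_{j'}\left(\{g=j'\}\cap\{b=i-j'\}\right)$ completes the proof.

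The hard part is precisely this $b$-part. When $\Psi$ has quantifier depth larger than $k$, a single $k$-equivalence class may contain both a model and a non-model of $\Psi$ of the same cardinality, so such ``mixed'' classes genuinely occur; the complements one then needs, $\N^+\setminus\spec{\Psi_{\cC_j}\wedge\Psi}$ (equivalently, the complements of the multi-copy spectra $A_{i+1}$), are complements of spectra of sentences of quantifier depth exceeding $k$, to which the hypothesis does not literally apply. Getting around this seems to require working systematically over vocabularies assembled from disjoint copies of $\sigma$, so that the constraint ``$\models\Psi$'' is imposed only on one designated copy while the depth-$\le k$ material lives on the others, and then checking that this stays within the scope of the hypothesis --- or else reducing the whole biconditional back to Theorem~\ref{ultra}. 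Once this is settled, the bound $M_{\sigma,k}$ on the number of blocks in the displayed formula yields, as in the absolute case, the sharper conclusion that if $N_{Mod_f(\Psi),k}$ is eventually constant then $\N^+\setminus\spec{\varphi}$ is the spectrum of a sentence of the same quantifier depth as $\varphi$ over a vocabulary with the same arities as $\sigma$.
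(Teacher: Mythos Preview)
The survey does not give its own proof of Theorem~\ref{theory}; the result is quoted from Chateau--More, and the only argument reproduced in the paper is Ash's original sketch for the \emph{constant} case of the absolute conjecture. So there is no ``paper's proof'' to compare against line by line; what one can do is check your argument against the Ash template the paper does present.

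Your forward direction is correct and is exactly the relativised Ash argument. The equivalence ``$n\notin\spec{\varphi}$ iff every $k$-class containing a size-$n$ model of $\Psi$ satisfies $\neg\varphi$'' uses precisely that $\varphi$ has depth $\le k$ and that $\varphi\models\Psi$; your displayed decomposition $\N^{+}\setminus\spec{\varphi}=\bigcup_{i}\bigl(N_{Mod_f(\Psi),k}^{-1}(i)\cap B_i\bigr)$ is the right analogue of Ash's $\theta_a$, and the closure properties of Proposition~\ref{pr:easyclosures} finish it. The final remark about the ``eventually constant'' refinement is also in line with what the paper states after the theorem.

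The converse, however, has a genuine gap, and you have located it accurately. The hypothesis only speaks about depth-$\le k$ sentences $\varphi$ with $\varphi\models\Psi$; up to equivalence these are exactly the disjunctions of characteristic sentences $\Psi_{\cC_j}$ for ``good'' classes $\cC_j\subseteq Mod_f(\Psi)$. It therefore gives you the complements $\N^{+}\setminus\spec{\Psi_{\cC_j}}$ for good $\cC_j$, but nothing about $\N^{+}\setminus\spec{\Psi_{\cC_j}\wedge\Psi}$ for a mixed class, and your decomposition of $N^{-1}(i)$ unavoidably needs those. Your proposed detours (multi-copy vocabularies, reducing to Theorem~\ref{ultra}) do not close the gap as stated: the sentence witnessing $A_{i+1}$ lives over $i{+}1$ copies of $\sigma$ and has depth $\max(k,\mathrm{qd}(\Psi))$, so it falls outside the hypothesis on both counts.

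The gap disappears as soon as $k\ge\mathrm{qd}(\Psi)$: then $\Psi$ is constant on each $k$-class, there are no mixed classes, $b(n)\equiv 0$, and your $g$-argument alone gives each $N^{-1}(i)$ as a Boolean combination of the spectra $\spec{\Psi_{\cC_j}}$ and their complements (available by hypothesis). This is exactly the regime of every application the survey mentions after Theorem~\ref{theory} (e.g.\ $\Psi$ says the binary relations are functional, $\mathrm{qd}(\Psi)=3$, $k\ge 3$). So either treat $k\ge\mathrm{qd}(\Psi)$ as the working hypothesis---in which case your proof is complete---or consult \cite{ar:ChateauM06} for the precise formulation, since the survey may be eliding a quantifier-depth assumption that makes the mixed-class issue vanish.
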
 
 
In some particular cases, we obtain more information about complements  
of spectra. 
For instance, if $\Psi$ expresses that all binary relations in 
$\sigma$ are functional (which can be done using a quantifier depth $3$ 
sentence), and if $k\geq 3$, then all binary relations in the sentence 
defining ${\mathbb N}^+\setminus \spec{{\varphi}}$ are functional too.  

Let us turn to classes of structures for which the study of Ash's functions 
is as difficult as the general case. 
More precisely, let us consider the following classes of structures:
let 
$\mathcal{SG}$ be the class of simple graphs; 
let 
$2\mathcal{E}$ be the class of two equivalence relations and  
let
$2\mathcal{F}$ be the class of two functions.
 
\begin{proposition}
\label{difficulttheories} 
Let ${\cC}\in\{\mathcal{SG},2\mathcal{E},2\mathcal{F}\}$. If 
for all positive integer $k$ and for all $i\in\NN^+$,  
the set $N_{{\cC}, k}^{-1}(i)$ is a spectrum, then the spectrum 
conjecture holds. 
\end{proposition}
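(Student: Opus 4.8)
The plan is to treat the three classes uniformly through Theorem~\ref{theory} and then fall back on transfer theorems already available. For $\mathcal{C}\in\{\mathcal{SG},2\mathcal{E},2\mathcal{F}\}$, write $\mathcal{C}=Mod_f(\Psi_{\mathcal{C}})$ for a first-order sentence $\Psi_{\mathcal{C}}$ over a finite relational vocabulary $\sigma_{\mathcal{C}}$ consisting of two binary relation symbols: $\Psi_{\mathcal{SG}}$ says the relevant relation is symmetric and irreflexive, $\Psi_{2\mathcal{E}}$ says both relations are equivalence relations, and $\Psi_{2\mathcal{F}}$ says both relations are graphs of total unary functions (so that the functional classes are handled through their functional graphs). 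In each case $Mod_f(\Psi_{\mathcal{C}})$ contains a structure of size $n$ for every $n\geq 1$ -- the empty graph, two copies of the identity relation, two copies of the identity function, respectively -- so Theorem~\ref{theory} applies and shows that the hypothesis ``$N_{\mathcal{C},k}^{-1}(i)$ is a spectrum for all $k\geq 1$ and all $i\in\NN^+$'' is equivalent to
\[
(\star)\qquad \NN^+\setminus\spec{\varphi}\ \text{is a spectrum, for every}\ \sigma_{\mathcal{C}}\text{-sentence}\ \varphi\ \text{with}\ Mod_f(\varphi)\subseteq\mathcal{C}.
\]

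Granting $(\star)$, it suffices to show that \emph{every} first-order spectrum is, up to a polynomial padding, the spectrum of a sentence of the kind quantified over in $(\star)$. Indeed, if $p$ is a polynomial with positive integer coefficients, $S\in\SPEC$, and $p(S)=\spec{\varphi}$ for such a $\varphi$, then $\NN^+\setminus p(S)$ is a spectrum by $(\star)$; since the class of spectra is closed under inverse images of polynomials (the fact used just before Proposition~\ref{bin}) and $p$ is strictly increasing on $\NN$, the set $\{n:p(n)\in\NN^+\setminus p(S)\}=\NN^+\setminus S$ is a spectrum as well. Letting $S$ range over all first-order spectra yields $\SPEC=\coSPEC$, i.e.\ the spectrum conjecture.

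For $\mathcal{C}=2\mathcal{F}$ the padding is already in the excerpt: a spectrum $S$ lies in $\FSpectra{m}{}$ for some $m$, so $S^m\in\FSpectra{1}{}$ by Proposition~\ref{transfer theorem function}, so $S^m\in\FSpectra{1}{k}$ for some $k$, so $kS^m\in\FSpectra{1}{2}$ by Theorem~\ref{DFL98:transfer}; thus $p(n)=kn^m$ works, and (passing from the two unary functions to their functional graphs) $kS^m$ is the spectrum of a $\sigma_{2\mathcal{F}}$-sentence with all models in $2\mathcal{F}$. For $\mathcal{C}=\mathcal{SG}$, Fagin's transfer theorems show that every first-order spectrum is, up to a polynomial padding, the spectrum of a simple-graph sentence (Theorem~\ref{Fagin transfer theorem} and its refinements, cf.\ \cite{proc:Fagin74,ar:Fagin75a} and Open Question~\ref{fagin-one-binary}), so $p$ may be taken from there. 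For $\mathcal{C}=2\mathcal{E}$ one has to work a bit: first reduce, via Fagin, to a single binary relation $R$ on $[n]$, then code $R$ by two crossing equivalence relations $E_1,E_2$ on a grid -- the cell $E_1(i)\cap E_2(j)$ gets a size recording the entry $A_R(i,j)$, the diagonal cells are given a distinguished size so that a first-order sentence can match up the $E_1$-classes with the $E_2$-classes and recover $R$ (and check the graph property defining $S$), and one extra ``garbage'' $E_1$-class together with one extra ``garbage'' $E_2$-class, recognisable because their common cell is larger than any real cell, absorb the remaining elements so that the domain size is a single fixed polynomial $p(n)$ (say $3n^2$; the finitely many small $n$ are handled separately). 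In each case the result is a $\sigma_{\mathcal{C}}$-sentence with all models in $\mathcal{C}$ and spectrum $p(S)$ (up to a finite modification), as required.

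The main obstacle is precisely the $2\mathcal{E}$ coding: representing an arbitrary binary relation by two equivalence relations is routine as soon as one tolerates an auxiliary element set whose size depends on the relation, but forcing the domain size down to one fixed polynomial in $n$ -- which is exactly what makes closure of spectra under polynomial preimages applicable -- requires the garbage-row/garbage-column device together with careful control of the cell sizes. The $\mathcal{SG}$ and $2\mathcal{F}$ cases, by contrast, are delivered directly by Fagin's and by the Durand--Fagin--Loescher transfer theorems.
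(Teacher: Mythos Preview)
Your proposal is correct and follows the same route as the paper: reduce via Theorem~\ref{theory} to complements of spectra of sentences whose models lie in $\mathcal{C}$, then invoke the appropriate polynomial padding result for each of the three classes (Fagin for $\mathcal{SG}$, Durand--Fagin--Loescher for $2\mathcal{F}$, and the two-equivalence-relations coding for $2\mathcal{E}$), combined with closure of spectra under polynomial preimages. The paper's own proof is just a pointer to these same padding results (citing \cite{ar:Fagin75a}, \cite{proc:DurandFL97}, and \cite{phd:Chateau03,ar:ErshovLTT65} respectively), so your write-up is in fact more detailed than the original, particularly in the $2\mathcal{E}$ sketch.
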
 
 
Once again, this is a consequence of various padding results.
Fagin actually proves in \cite{ar:Fagin75a}, 
that simple graphs allow a polynomial padding for all spectra.
It is also the case for two unary functions, as proved by Durand, Fagin and Loescher in 
\cite{proc:DurandFL97}. 
The last result concerning two equivalence relations is proved in 
\cite{phd:Chateau03,ar:ErshovLTT65}.

\newif\iffuse
\fusefalse
\section{Approach IV: Structures of bounded width}
\label{semantic}
\subsection{From restricted vocabularies to bounded tree-width}
In this section we look again at spectra of sentences
with one unary function and a fixed set of unary predicates.
The finite structures which have only one unary function
consist of disjoint unions of components of the form
given in Figure \ref{twfig1}.
\begin{figure}[h]

\includegraphics[height=2cm]{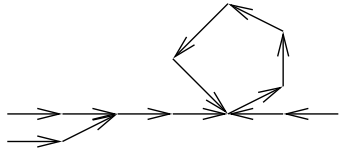}
\caption{Models of one unary function}~\label{twfig1}
\end{figure}
They look like directed forests where the roots are replaced by
a directed cycle.
The unary predicates are just colours attached to the nodes.
The similarity of labeled graphs to labeled trees can be measured
by the notion of {\em tree-width}, and 
in fact, these structures have tree width at most $2$.
Inspired by Theorem \ref{th:gur-she}, E. Fischer and J.A. Makowsky
\cite{ar:FischerM04} generalized Theorem \ref{th:gur-she}
by replacing the restriction on the vocabulary by a
purely model theoretic condition involving the width of a relational
structure.
In this section we discuss their results.
\\ 

\subsubsection{Tree-width}
In the eighties the notion of tree-width of a graph became a central
focus of research in graph theory through the monumental
work of Robertson and Seymour on graph minor closed classes of graphs,
and its algorithmic consequences \cite{ar:RobertsonSeymour86}.
The literature is very rich, but good references and orientation may be found
in \cite{bk:Diestel96,ar:Bodlaender93,ar:Bodlaender98}.
Tree-width is a parameter that measures to what extent
a graph is similar to a tree. Additional unary predicates
do not affect the tree-width. Tree-width of directed graphs
is defined as the tree-width of the underlying undirected 
graph\footnote{
In \cite{ar:JRST99} a different definition is given,
which attempts to capture the specific situation of directed
graphs. But the original definition is the one which is used
when dealing with hypergraphs and general relational structures.
}.
\begin{definition}[Tree-width]
\label{def:1}
A {$k$-tree decomposition} of a graph $G = (V,E)$ is a pair 
$(\{ X_i \mid i \in I\}, T = (I,F))$ 
with $\{X_i \mid i \in I\}$ 
a family of subsets of $V$, one for each node of $T$, and $T$ a tree
such that
\begin{enumerate}
\item ${\bigcup}_{i \in I} X_i = V$.
\item for all edges $(v, w) \in E$ there exists an $i \in I$ with $v \in X_i$ and $w \in X_i$.
\item for all $i,j,k \in I$: if $j$ is on the path from $i$ to $k$ in $T$, then $X_i \cap X_k \subseteq X_j$ in other words, the subset $\{t\mid v\in X_t\}$ is connected for all $v$.
\item for all $i \in I$, $|X_i| \leq k+1$.
\end{enumerate}
A graph $G$ is of {\em tree-width at most $k$} if there exists a $k$-tree
decomposition of $G$.
A class of graphs $K$ is a $TW(k)$-class iff all its members have tree
width at most $k$.
\end{definition}
Given a graph $G$ and $k \in \mathbb{N}$ there are polynomial time,
even linear time, algorithms, which
determine whether $G$ has tree-width $k$, and if the answer is yes,
produce a tree decomposition, cf. \cite{ar:Bodlaender98}. However, if $k$ is part of the input, the problem is $\NP$-complete \cite{ar:ArnborgCorneilProskurowski}

\begin{figure}[h]
\fbox{\epsfig{file=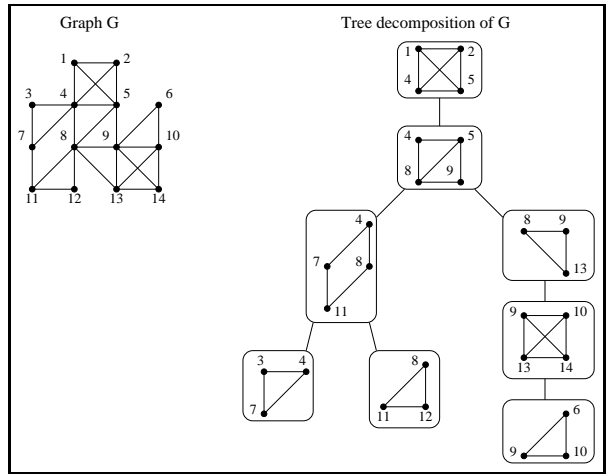,height=6cm}}
\caption{A graph and one of its tree-decompositions}
\end{figure}

Trees have tree-width $1$.
The clique $K_n$ has tree-width $n-1$.
Furthermore, for fixed $k$, the class of finite graphs of 
tree-width at most $k$ denoted by $TW(k)$, 
is $\Msol$-definable.
\begin{example}
The following graph classes are of tree-width at most $k$:
\begin{enumerate}
\item
Planar graphs of radius $r$ with $k=3r$.
\item
Chordal graphs with maximal clique of size $c$ with $k=c-1$.
\item
Interval graphs with maximal clique of size $c$ with $k=c-1$.
\end{enumerate}
\end{example}

\begin{example}
The following graph classes have unbounded tree-width
and are all $\Msol$-definable.
\begin{enumerate}
\item
All planar graphs  and
the class of all planar grids $G_{m,n}$. 
\\
Note that if $n \leq n_0$ for some fixed $n_0 \in \N$, then
the tree-width of the grids $G_{m,n}, n \leq n_0$, is bounded by $2n_0$.
\item
The regular graphs of degree $r, r \geq3 $ have unbounded tree-width.
\end{enumerate}
\end{example}
Tree-width for labeled graphs can be generalized to
arbitrary relational structures in a straightforward way.
Clause
(ii) in the above definition is replaced by
\begin{description}
\item[(ii-rel)] 
For each $r$-ary relation $R$,  if $\bar{v} \in R$, 
there exists an $i \in I$ with $\bar{v} \in X_i^r$. 
\end{description}
This was first used in \cite{ar:FederVardi99}.

It is now natural to ask, whether Theorem \ref{th:gur-she}
can be generalized to arbitrary vocabularies, provided one
restricts the spectrum to structures of fixed tree-width $k$.
Indeed, E. Fischer and J. Makowsky \cite{ar:FischerM04} have established
the following:

\begin{theorem}[E. Fischer and J.A. Makowsky (2004)]
\label{FM:1}
\ \\ 
Let $\phi$ be an $\Msol$ sentence
and $k \in \N$.
Assume that all the models of $\phi$ are in $TW(k)$.
Then $spec(\phi)$ is ultimately periodic.
\end{theorem}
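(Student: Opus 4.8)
The plan is to pass from structures to their tree decompositions and then reduce to a pumping/semilinearity statement about regular tree languages. Fix $k$ and the \Msol{} sentence $\phi$. Every finite $\tau$-structure of tree-width at most $k$ admits a tree decomposition of width at most $k$, and after a routine normalisation this may be taken to be a binary tree $t$ whose nodes carry labels from a fixed finite alphabet $\Sigma=\Sigma(k,\tau)$: the label of a node records the isomorphism type of the substructure induced on its bag, which bag elements are \emph{new} (do not occur in the parent bag), and how the non-new elements are identified with elements of the parent bag. Since bags have size at most $k+1$ and $\tau$ is finite relational, $\Sigma$ is finite, and a labelled tree of this kind reconstructs a $\tau$-structure $\mathcal{M}_t$ up to isomorphism. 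Assigning each element of the structure to the topmost bag containing it, we see that $|\mathcal{M}_t|=w(t):=\sum_{a}c(a)\,n_a(t)$, where $n_a(t)$ is the number of nodes of $t$ carrying label $a$ and $c(a)\in\{0,1,\dots,k+1\}$ is the number of new elements prescribed by the label $a$; thus $|\mathcal{M}_t|$ is a label-additive counting weight of $t$.

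Next I would invoke the Courcelle-style / Feferman--Vaught--Shelah translation. The $\Sigma$-labelled binary trees that genuinely code a $\tau$-structure (consistency of overlapping bags, the connectedness condition (iii) of Definition~\ref{def:1}, etc.) form a regular tree language, and on such a tree the \Msol{} theory of $\mathcal{M}_t$ is determined by, and computable from, the \Msol{} theory of $t$; hence there is a finite tree automaton $\mathcal{A}$ accepting exactly those valid labelled trees $t$ with $\mathcal{M}_t\models\phi$. Using the hypothesis that \emph{every} model of $\phi$ has tree-width at most $k$ (so that each $n\in\spec{\phi}$ is witnessed by a model that does admit such a decomposition), together with the faithfulness of the coding, one gets
$$
\spec{\phi}=\{\,w(t)\ :\ t\mbox{ is accepted by }\mathcal{A}\,\}.
$$

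It therefore suffices to prove the following purely combinatorial fact: if $L$ is a regular tree language over a finite ranked alphabet and $w(t)=\sum_{a}\lambda_a\,n_a(t)$ with all $\lambda_a\in\N$, then $\{w(t):t\in L\}$ is ultimately periodic. This follows from the Parikh-type theorem for regular tree languages: the map carrying a tree to the tuple $(n_a(t))_a$ of symbol counts has semilinear image on $L$, so $\{w(t):t\in L\}$, being a linear image of that semilinear set, is a semilinear subset of $\N$, and a subset of $\N$ is semilinear precisely when it is ultimately periodic. Concretely, pumping a loopable subcontext in a run of the automaton for $L$ adds a fixed increment to $w$, so above a threshold the realised weights are a finite overlay of arithmetic progressions. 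Combining the displayed identity with this fact shows that $\spec{\phi}$ is ultimately periodic.

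The main obstacle is the bookkeeping in the first two paragraphs rather than the concluding pumping argument. One must fix a normal form for width-at-most-$k$ tree decompositions in which the alphabet is genuinely finite \emph{and} the cardinality of the coded structure is recovered as a label-additive weight; and the Courcelle/Feferman--Vaught translation must be set up over exactly this alphabet, so that the single automaton $\mathcal{A}$ simultaneously certifies that a labelled tree codes a legitimate $\tau$-structure of tree-width at most $k$ and that this structure satisfies $\phi$. Once $w$ and $\mathcal{A}$ live over the same finite alphabet, the reduction to semilinearity of the Parikh image of a regular tree language is immediate.
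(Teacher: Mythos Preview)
Your proposal is correct and follows essentially the same route the paper sketches: encode width-$k$ tree decompositions as labelled trees over a finite alphabet (Courcelle's reduction to tree spectra), use the Feferman--Vaught/Courcelle compositionality to make the $\phi$-models a regular tree language, and finish with a pumping/Parikh argument for regular tree languages to get semilinearity and hence ultimate periodicity. The paper lists exactly these three ingredients (Feferman--Vaught for $k$-sums, reduction to labelled trees, pumping on labelled trees), so there is no meaningful divergence to report.
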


\subsection{Extending the logic}

First one observes that the logic $\Msol$
can be extended by modular counting quantifiers 
$C_{k,m}$, 
where $C_{k,m} x\ \phi(x)$ is interpreted as
``there are, modulo $m$, exactly $k$ elements satisfying $\phi(x)$''.
We denote the extension of $\Msol$
obtained by adding, for all $k,m \in \N$ the quantifiers
$C_{k,m}$, 
by $\Cmsol$.

Typical graph theoretic concepts expressible in $\Fol$
are
the presence or absence (up to isomorphism) of a fixed (induced) subgraph $H$,
and
fixed lower or upper bounds on the degree of the vertices
(hence also $r$-regularity).
Typical graph theoretic concepts expressible in $\Msol$ but not in $\Fol$
are
connectivity, $k$-connectivity, reachability,
$k$-colorability (of the vertices), and
the presence or absence of a fixed (topological) minor.
The latter includes planarity,
and more generally, graphs of a fixed genus $g$.
Typical graph theoretic concepts expressible in $\Cmsol$ but not in $\Msol$
are
the existence of an eulerian circuit (path),
the size of a connected component being a multiple of $k$,
and
the number of connected components is a multiple of $k$.
All the non-definability statements above
can be proved using Ehren\-feucht-Fra\"{\i}ss\'e games.
The definability statements are straightforward.

Second, one observes that very little of the definition of
tree-width is used in the proof. The techniques used in the proof
of Theorem \ref{FM:1} can be adapted to other notions of
width of relational structures, such as {\em clique-width},
which was introduced first in
\cite{ar:CourcelleEngelfrietRozenberg93}
and studied more systematically in
\cite{ar:CourcelleOlariu00},
and to {\em patch-width}, introduced in 
\cite{ar:FischerM04}.

\subsection{Ingredients of the proof of Theorem \ref{FM:1}}
To prove Theorem \ref{FM:1}, and its generalizations below,
the authors use three tools:

\begin{enumerate}
\item
A generalization of the Feferman-Vaught Theorem for $k$-sums
of labeled graphs to the logic
$\Cmsol$, due to B. Courcelle, \cite{ar:courcelle90}, and further refined
by J.A. Makowsky in \cite{ar:MakowskyTARSKI}.
\item
A reduction of the problem to spectra of labeled trees by a technique
first used by B. Courcelle in \cite{ar:CourcelleIC95} in his study
of graph grammars.
\item
An adaptation of the Pumping Lemma for labeled trees, cf.
\cite{bk:hb-forla3-1}.
\end{enumerate}
The proof of Theorem \ref{FM:1} is quite general. 
Its proof can be adapted
to stronger logics and other notions of width of relational structures.
However, the details are rather technical.

\subsection{Clique-width}

A $k$-coloured $\tau$-structures is a 
$\tau_k = \tau \cup \{P_1, \ldots, P_k\}$-structure
where $P_i, i \leq k$ are unary predicate symbols
the interpretation of which are disjoint (but can be empty).

\begin{definition}
Let $\mathfrak{A}$ be a $k$-coloured $\tau$-structure.
\begin{enumerate}
\item
(Adding hyper-edges)
Let $R \in \tau$ be an $r$-ary relation symbol.
\\
$\eta_{R, P_{j_1}, \ldots , P_{j_r}}(\mathfrak{A})$ denotes the
$k$-coloured $\tau$ structure $\mathfrak{B}$ 
with the same universe as $\mathfrak{A}$, 
and for each $S \in \tau_k$, $S \neq R$ 
the interpretation is also unchanged.
Only for $R$ we put 
$$
R^B = R^A \cup \{ \bar{a} \in A^r : a_i \in P_{j_i}^A \}.
$$
We call the operation $\eta$ {\em hyper edge creation},
or simply {\em edge creation} in the case of directed graphs.
In the case of undirected graphs we 
denote by $\eta_{P_{j_1}, P_{j_2}}$
the operation of adding the corresponding undirected edges.
\item
(Recolouring)
$\rho_{i,j}(\mathfrak{A})$ denotes the
$k$-coloured $\tau$ structure $\mathfrak{B}$ 
with the same universe as $\mathfrak{A}$, 
and all the relations unchanged but for $P_i^A$ and $P_j^A$.
We put
$$
P_i^B = \emptyset \mbox{ and }
P_j^B = P_j^A \cup P_i^A.
$$
We call this operation {\em recolouring}.
\item
(modification via quantifier free translation)
More generally, for $S \in \tau_k$ of arity $r$ and $B(x_1, \ldots , x_r)$
a quantifier free $\tau_k$-formula,  
$\delta_{S,B}(\mathfrak{A})$ denotes the
$k$-coloured $\tau$ structure $\mathfrak{B}$ 
with the same universe as $\mathfrak{A}$, 
and for each $S' \in \tau_k$, $S' \neq S$ 
the interpretation is also unchanged.
Only for $S$ we put 
$$
S^B =  \{ \bar{a} \in A^r : \bar{a} \in B^A \}.
$$
where $B^A$ denotes the interpretation of $B$ in $\mathfrak{A}$.
\end{enumerate}
Note that the
operations of type $\rho$ and $\eta$ are special cases of the operation
of type $\delta$.
\end{definition}

\begin{definition}[Clique-Width, \cite{ar:CourcelleOlariu00,ar:MakowskyTARSKI}]
\label{def:cw}
\ 
\begin{enumerate}
\item
Here $\tau = \{R_E\}$ consist of the symbol for the edge relation.
Given a graph $G=(V,E)$, the \em{clique-width} of $G$
(\em{cwd(G)}) is the minimal number of colours required to obtain
the given graph as an $\{R_E\}$-reduct from
a $k$-coloured graph constructed inductively 
from coloured singletons  and closure under the following
operations:
\begin {enumerate}
\item disjoint union ($\sqcup$)
\item recolouring ($\rho_{i \to j}$)
\item edge creation ($\eta_{E, P_i, P_j}$)
\end {enumerate}
\item
For $\tau$ containing more than one binary relation symbol,
we replace the edge creation by the corresponding hyper edge creation
$\eta_{R, P_{j_1}, \ldots , P_{j_r}}$ for each $R \in \tau$.
\item
A class of $\tau$-structures is a $CW(k)$-class if all its
members have clique-width at most $k$.
\end{enumerate}
\end{definition}
%
If $\tau$ contains a unary predicate symbol $U$,
the interpretation of $U$ is not affected by the operations
recoloring or edge creation.
Only the disjoint union affects it.

A description of a graph or a structure using these operations is called a
{\em clique-width parse term} (or {\em parse term}, if no
confusion arises). 
Every structure of size $n$ has clique-width at most $n$.
The simplest class of graphs of unbounded
tree-width but of clique-width at most $2$ are the cliques. Given
a graph $G$ and $k \in \mathbb{N}$, determining whether $G$ has
clique-width $k$ is in $\mathbf{NP}$. A polynomial time algorithm
was presented for $k \leq 3$ in \cite{ar:Corneil-etal99}.

It was shown in
\cite{ar:FellowsRosamondRoticsSzeider2005a,pr:FellowsRosamondRoticsSzeider2006}
that for fixed $k \geq 4$ the problem is $\mathbf{NP}$-complete.
The recognition problem for clique-width of relational structures
has not been studied so far even for $k=2$.
The relationship between tree-width and clique-width was studied
in \cite{ar:CourcelleOlariu00,ar:GliksonMakowsky03,pr:AdlerAdler08}.

\begin{theorem}[Courcelle and Olariu (2000)]
Let $K$ be a  $TW(k)$-class of graphs.
Then $K$ is a $CW(m)$-class of graphs with
$m \leq 2^{k+1}+1$.
\end{theorem}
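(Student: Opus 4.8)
The plan is to prove the bound by structural induction on a tree decomposition of $G$, reading off from it a clique-width parse term in the sense of Definition~\ref{def:cw}. Fix a decomposition $(\{X_i\mid i\in I\},\,T=(I,F))$ of width $k$ as in Definition~\ref{def:1}, root $T$, and assume (as one always may) that $T$ is binary and that at every node $t$ with children $t_1,t_2$ the bags $X_{t_1},X_{t_2}$ are contained in $X_t$; write $V_t$ for the union of the bags in the subtree below $t$ and $G_t=G[V_t]$. For each node $t$ pick a bijection $b_t$ from $X_t$ onto a set of at most $k+1$ ``slots''. The colours of the parse term will be \emph{subsets of the slot set}, together with one extra ``scratch'' colour, so that $m\le 2^{k+1}+1$; the rôle of the colour of a vertex $v$ of $G_t$ is to record which of the $\le k+1$ current bag vertices $v$ is adjacent to (for a bag vertex, together with its own slot, so that distinct bag vertices stay distinguishable), while a vertex that has dropped out of the current bag is sent to the scratch/sink colour. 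The structural facts that make this legitimate come from clause~(iii) of Definition~\ref{def:1}: a vertex that has left the current bag never re-enters a bag higher up, hence (with clause~(ii)) acquires no further edges, so nothing is lost by collapsing all such vertices to one colour.

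First I would dispose of the easy node types. A leaf carries a graph on at most $k+1$ vertices, built by introducing its vertices with temporarily distinct colours, creating its edges by $\eta$-operations, and recolouring each vertex to the colour prescribed above. At an introduce node ($X_t=X_{t'}\cup\{v\}$ with $v$ new) clauses~(ii) and~(iii) show that $v$ has no neighbour in $V_{t'}$ at all, so one simply disjoint-unions a fresh isolated singleton onto the term for $t'$ and, if necessary, uses the scratch colour to relabel the slots of the other bag vertices from $b_{t'}$ to $b_t$. At a forget node ($X_t=X_{t'}\setminus\{v\}$) one appends to the term for $t'$ the recolourings that delete the slot $b_{t'}(v)$ from every colour containing it, in particular sending $v$ to the sink colour; the scratch colour serves to first separate $v$ from any bag vertex with which it happens to share a colour.

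The heart of the argument — and the step I expect to be the main obstacle — is the \emph{branch node} $t$ with children $t_1,t_2$, where $G_t$ is obtained from $G_{t_1}$ and $G_{t_2}$ by identifying the shared part, which lies inside $X_t$. The clique-width algebra offers only \emph{disjoint} union, never a gluing operation, so one cannot simply combine the two parse terms; instead one must re-create the construction of $G_{t_2}$ \emph{on top of} the coloured graph $G_{t_1}$ already built. This is exactly what the subset-colouring is designed for: having recorded in the colour of every vertex of $G_{t_2}$ its full adjacency pattern to the $\le k+1$ vertices of the shared bag, one splices the $t_2$-term into the $t_1$-term, omitting the introductions of the shared vertices and re-using the copies already present, and re-issues the $\eta$- and $\rho$-operations so that each new vertex of $G_{t_2}$ gets connected, through its recorded adjacency colour, to precisely the right bag vertices. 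The delicate part is the bookkeeping that keeps the two slot-labellings, the recolourings and the scratch colour from interfering during this splice; it is also the reason the bound is exponential rather than linear in $k$, and this is genuinely unavoidable (there are graphs of tree-width $k$ whose clique-width is exponential in $k$). Checking that each $\eta$ creates exactly the edges of $G$ and no others — again from clauses~(ii) and~(iii) of Definition~\ref{def:1} — and tallying the colours used, namely subsets of a $(k+1)$-element slot set plus one scratch colour, yields $m\le 2^{k+1}+1$ and completes the induction at the root. One should add the caveat that this constant is not optimal: later work lowered it, but the exponential dependence on $k$ must remain.
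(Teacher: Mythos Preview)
The paper does not prove this theorem at all; it merely states it with attribution to Courcelle and Olariu and moves on. So there is no proof in the paper to compare against.

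Your subset-colouring idea --- encode in each vertex's colour its adjacency pattern to the at most $k+1$ vertices of the current bag, plus one extra colour --- is indeed the mechanism behind the Courcelle--Olariu bound, and your leaf and forget cases are essentially right. The real gap is at the branch node. You keep the bag vertices inside the intermediate graph $G_t$, and then at a join you are forced to ``splice the $t_2$-term into the $t_1$-term, omitting the introductions of the shared vertices''. That is not an operation of the clique-width algebra, and turning it into an honest sequence of $\sqcup$, $\eta$, $\rho$ steps --- while guaranteeing that the recolourings replayed from the $t_2$-term never touch the already-built $G_{t_1}$-part in unintended ways --- is precisely the bookkeeping you flag as delicate and then do not carry out. (Your introduce case has a smaller gap too: the new vertex $v$ may well have neighbours inside $X_{t'}$, so it is not enough to disjoint-union an isolated singleton; you must also create those edges and update the colours of both $v$ and its new bag-neighbours.)

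The route that makes the join trivial --- and the one Courcelle and Olariu actually take --- is to change the invariant: at node $t$ build a parse term for $G[V_t]\setminus X_t$ rather than for $G[V_t]$, with each vertex coloured by its adjacency set into $X_t$. Clauses~(ii) and~(iii) of Definition~\ref{def:1} then give that $V_{t_1}\setminus X_t$ and $V_{t_2}\setminus X_t$ span no edge between them, so the join is a plain disjoint union. All the work migrates to the forget node, where the forgotten vertex $v$ is introduced as a fresh singleton, connected by $\eta$ to every colour whose subset contains $v$'s slot, and then everybody is recoloured to drop that slot. This keeps every step a legal clique-width operation and makes the $2^{k+1}+1$ count immediate.
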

\begin{theorem}[Adler and Adler (2008)]
For every non-negative integer $n$ there is a structure $\mathfrak{A}$ with
only one ternary relation symbol such that $\mathfrak{A} \in TW(2)$
and $\mathfrak{A} \not\in CW(n)$.
\end{theorem}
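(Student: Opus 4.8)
The plan is to exploit the one feature that distinguishes relations of arity $\geq 3$ from graphs: by the Courcelle--Olariu theorem quoted above, every $TW(k)$-class of graphs is a $CW(2^{k+1}+1)$-class, so a separating example must genuinely use a ternary tuple. Concretely, for each $n$ I would fix some object $G_n$ of clique-width exceeding $n$ --- for definiteness an $m\times m$ grid with $m$ chosen large compared with $n$, or a bipartite graph of clique-width $>n$ --- and build from it a structure $\mathfrak{A}_n$ with a single ternary relation $R$ whose Gaifman graph is a tree-like gluing of triangles (hence $\mathfrak{A}_n\in TW(2)$), but which still ``remembers'' $G_n$ well enough that $\mathrm{cwd}(\mathfrak{A}_n)\to\infty$ with $n$.

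The point of the encoding is to keep the ``complex'' vertices of $G_n$ out of any common tuple of $R$, using auxiliary index/tag elements. One enumerates the adjacencies (or incidences) of $G_n$, gives each a fresh ``slot'' element, records an adjacency by a bounded number of $R$-tuples each of which contains one endpoint of the adjacency, its slot element, and one of boundedly many global ``tag'' elements, and arranges the slot elements and tuples along a spine so that the triangles of the Gaifman graph hang off it in a tree-like pattern. Because no two original $G_n$-vertices ever sit in a common $R$-tuple, the high-tree-width graph $G_n$ (or any minor of comparable width) never appears in the Gaifman graph of $\mathfrak{A}_n$; yet the full adjacency data of $G_n$ survives, coded now by \emph{which} tuples lie in $R$ and \emph{in what order} the slots occur. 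Checking $\mathfrak{A}_n\in TW(2)$ is then the routine half: one writes down a tree decomposition with exactly one bag per tuple of $R$ --- namely the three elements of that tuple --- glued along the shared slot and tag elements, and verifies the three axioms of Definition~\ref{def:1}, in particular that each element lies in a connected subtree of bags.

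The substantive half is $\mathfrak{A}_n\notin CW(n)$. Here I would argue either (i) directly, by a cut-rank argument in the ternary setting: a clique-width-$k$ construction of $\mathfrak{A}_n$ yields a balanced partition of the universe across which $R$ realises only boundedly many ``types'', whereas the encoding guarantees that across every balanced partition $R$ still exhibits a high-rank bipartite pattern inherited from $G_n$; or (ii) more cheaply, by a transfer lemma --- clique-width cannot drop by more than a bounded factor under a fixed quantifier-free (or monadic) interpretation --- applied to the interpretation that reads $G_n$ back out of $\mathfrak{A}_n$, together with $\mathrm{cwd}(G_n)>n$.

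\textbf{The main obstacle} is exactly the tension between the two requirements. Tree-width $2$ is extremely restrictive: it forces the Gaifman graph of $\mathfrak{A}_n$ to be a sub-$2$-tree, i.e.\ essentially a tree-structured union of triangles, and for \emph{graphs} such objects have clique-width at most $9$. The whole difficulty is to arrange that the \emph{content} of $R$ --- which of these tree-arranged triangles are active, and in which orientation --- carries unbounded combinatorial complexity that the clique-width operations cannot realise cheaply, because the hyper-edge creation $\eta_{R,P_i,P_j,P_k}$ adds an entire block $P_i\times P_j\times P_k$ at once and hence cannot select an arbitrary pattern spread over the whole decomposition tree with boundedly many colours. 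Finding an encoding for which this obstruction actually bites --- rather than one that the clique-width machine can process left-to-right with a fixed palette, as happens for books, fans, or single-apex encodings --- is the crux.
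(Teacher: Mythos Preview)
The paper does not prove this theorem --- it is a survey and simply cites the result from the Adler--Adler preprint --- so there is no in-paper proof to compare your attempt against. On its own merits, though, your sketch has a concrete gap precisely in the half you call ``routine''.

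Your encoding keeps the vertices of $G_n$ in the universe of $\mathfrak{A}_n$: for each edge $e=(u,v)$ you place $u$ and $v$ in separate $R$-tuples that share the slot $s_e$. In the Gaifman graph this creates the edges $\{u,s_e\}$ and $\{s_e,v\}$; ranging over all edges of $G_n$ you obtain the $1$-subdivision of $G_n$ as a subgraph of the Gaifman graph (after deleting the boundedly many global tag-apexes). Subdivision preserves tree-width, so if $G_n$ is an $m\times m$ grid the Gaifman graph of $\mathfrak{A}_n$ has tree-width at least $m-O(1)$, not $2$. Equivalently, the width-$2$ decomposition you propose --- one bag per $R$-tuple --- violates axiom~(iii) of Definition~\ref{def:1}: a $G_n$-vertex $u$ lies in one such bag for each incident edge, these bags pairwise share only the global tag, and the intermediate bags along the tree (which serve other $G_n$-vertices) do not contain $u$; inserting $u$ into them blows the width. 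So the tension you correctly flag as ``the main obstacle'' is not actually overcome by the sketch. A working construction has to avoid materialising the incidence pattern of $G_n$ in the Gaifman graph altogether: either the $G_n$-vertices are not elements of $\mathfrak{A}_n$ at all but are encoded purely in the \emph{pattern} of $R$ over a genuinely path- or tree-shaped universe (after which your interpretation argument can be run on that pattern), or one drops the graph-encoding detour and exhibits directly a tree-width-$2$ ternary family whose crossing tuples at some balanced cut are not expressible as a bounded union of colour-boxes $P_i\times P_j\times P_k$.
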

The following examples are from
\cite{ar:MakowskyRotics99,ar:GolumbicRotics01}.
\begin{example}[Classes of clique-width at most $k$]
\ 
\begin{enumerate}
\item
The cographs with $k=2$.
\item
The distance-hereditary graphs with $k=3$.
\item
The cycles $C_n$ with $k=4$.
\item
The complement graphs $\bar{C_n}$ of the cycles $C_n$ with $k=4$.
\end{enumerate}
The cycles $C_n$ have tree-width at most $2$,
but the other examples have unbounded tree-width.
\end{example}

\begin{example}[Classes of unbounded clique-width] 
\ 
\begin{enumerate}
\item 
The class of all finite graphs.
\item
The class of unit interval graphs.
\item
The class of permutation graphs.
\item
The regular graphs of degree $4$ have unbounded clique-width.
\item The class grids $Grid$, consisting of the graphs $Grid_{n \times n}$.
\end{enumerate}
\end{example}
For more non-trivial examples, cf. 
\cite{ar:MakowskyRotics99,ar:GolumbicRotics01}.
In contrast to $TW(k)$, we do not know whether the class of all
$CW(k)$-graphs is $\Msol$-definable.

To find more examples it is useful to note, cf. \cite{ar:MakowskyMarino01c}:
\begin{proposition}
If a graph is of clique-width at most $k$ and $G'$ is an induced subgraph
of $G$, then the clique-width of $G'$ is at most $k$.
\end{proposition}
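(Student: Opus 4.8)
The plan is to take a clique-width parse term $t$ (a $k$-expression) witnessing $\mathrm{cwd}(G)\le k$ and transform it into a parse term $t'$ for the induced subgraph $G'=G[V']$ on the vertex set $V'\subseteq V$, by ``erasing'' from $t$ every singleton that introduces a vertex of $V\setminus V'$. Concretely, I would define $t'$ by structural induction on $t$, allowing an auxiliary value $\varnothing$ (``the empty graph''), which is not itself a legal parse term in the algebra of Definition~\ref{def:cw} but is easy to bookkeep: a coloured singleton $i(v)$ maps to itself if $v\in V'$ and to $\varnothing$ otherwise; $\rho_{i\to j}$ and the edge creation $\eta_{E,P_i,P_j}$ are applied to the recursively obtained subterm unless that subterm is $\varnothing$, in which case the result is again $\varnothing$; and a disjoint union $t_1\sqcup t_2$ becomes $t_1'\sqcup t_2'$, or just the non-$\varnothing$ one of $t_1',t_2'$ if the other is $\varnothing$, or $\varnothing$ if both are.

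Next I would establish, by induction on the structure of $t$, the invariant: if $t$ evaluates to the $k$-coloured graph $H$ on vertex set $W$, then $t'$, when it is not $\varnothing$, is a legitimate clique-width parse term using only colours that occur in $t$ (hence at most $k$ of them), and it evaluates to the induced $k$-coloured subgraph $H[W\cap V']$; and if $t'=\varnothing$ then $W\cap V'=\emptyset$. The singleton and disjoint-union cases are immediate from the construction, and recolouring is clear since $\rho_{i\to j}$ acts pointwise on colours. The only step needing a short argument is edge creation: $\eta_{E,P_i,P_j}$ adds precisely the edges between currently $i$-coloured and $j$-coloured vertices, and passing to an induced subgraph commutes with this, because a pair of vertices of $W\cap V'$ keeps its colours in $H[W\cap V']$ and the edge is present there iff it is present in $H$; so $\eta_{E,P_i,P_j}(t_1')$ computes exactly $(\eta_{E,P_i,P_j}H)[W\cap V']$.

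Applying the invariant to the whole term $t$ (whose value is $G$ on $V$) shows that $t'$ evaluates to $G[V']=G'$ and uses at most $k$ colours, so $\mathrm{cwd}(G')\le k$; if $V'=\emptyset$ the statement is vacuous (or one sets $\mathrm{cwd}(\emptyset)=0$). I expect the main obstacle to be purely a matter of bookkeeping: since the clique-width algebra has no constant for the empty graph, one must carry the auxiliary symbol $\varnothing$ through the induction and check it is never passed to an operation; there is no genuine difficulty. Finally, the same argument goes through verbatim for arbitrary relational structures with the operations $\eta_{R,P_{j_1},\ldots,P_{j_r}}$ and $\delta_{S,B}$, since hyper-edge creation and quantifier-free modification likewise commute with passing to an induced substructure, which also yields the analogous closure property for clique-width of structures.
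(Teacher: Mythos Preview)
Your argument is correct and is the standard proof of this fact: prune the singletons for the deleted vertices from a $k$-expression and observe that recolouring and edge creation commute with restriction to an induced subgraph, so the pruned term evaluates to $G'$ using no new colours. The paper itself does not give a proof here; it merely states the proposition with a reference to \cite{ar:MakowskyMarino01c}, so there is nothing to compare against beyond noting that your write-up supplies exactly the expected argument.
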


In \cite{ar:FischerM04} the following is shown:
\begin{theorem}[E. Fischer and J.A. Makowsky (2004)]
\label{FM:Main}
\ \\
Let $\phi \in \CMSOL{\tau}$ be such that all
its finite models have clique-width at most $k$.
Then there are  $m_0, n_0 \in \N$
such that if $\phi$ has a model of size $n \geq n_0$
then $\phi$ has also a model of size $n+m_0$.
\end{theorem}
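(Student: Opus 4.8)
The plan is to mimic the proof of Theorem~\ref{FM:1} (the tree-width case) but with clique-width parse terms playing the role of tree decompositions. The three ingredients listed after Theorem~\ref{FM:1} — the Feferman--Vaught-style decomposition theorem for $\Cmsol$ (due to Courcelle, refined by Makowsky), the reduction to labeled trees, and a pumping argument — all survive the passage from $TW(k)$ to $CW(k)$, because a clique-width parse term \emph{is} already a labeled binary tree: its leaves are coloured singletons and its internal nodes are labeled by the operations $\sqcup$, $\rho_{i\to j}$, and $\eta_{R,P_{j_1},\dots,P_{j_r}}$ (finitely many such labels, since $k$ is fixed and $\tau$ is finite). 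So the reduction step (ingredient~2) is in some sense free here: models of $\phi$ of clique-width at most $k$ are exactly the $\tau$-reducts of structures denoted by such parse terms.

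First I would set up the decomposition machinery. By the Feferman--Vaught--Courcelle theorem for $\Cmsol$, for the fixed sentence $\phi$ and the fixed quantifier rank (augmented by the relevant moduli of the counting quantifiers) there is a finite set of ``types'' $\Theta$ such that the $\phi$-type of the structure denoted by a parse term $t$ is a function only of the $\phi$-types of the structures denoted by the immediate subterms of $t$ together with the label at the root of $t$. In other words, evaluation of the $\phi$-type is a bottom-up deterministic \emph{tree automaton} $\mathcal{A}_\phi$ running on clique-width parse terms over a finite alphabet. Accepting runs of $\mathcal{A}_\phi$ correspond to parse terms whose denoted structure satisfies $\phi$. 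Crucially I also need to track size: the size of the denoted structure equals the number of leaves of $t$ (each $\sqcup$ merges, recolouring and edge-creation are unary and size-preserving, leaves contribute one element each). So $\mathrm{spec}(\phi) = \{\, \text{number of leaves of } t : t \text{ is accepted by } \mathcal{A}_\phi \,\}$.

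Next comes the pumping step (ingredient~3). Fix an accepted parse term $t$ with sufficiently many leaves, so that by pigeonhole on the paths of $t$ some state $q$ of $\mathcal{A}_\phi$ repeats on a root-to-leaf path, say at nodes $u$ above $v$ with the same state. The subtree-with-a-hole (a ``context'' $C$) between $u$ and $v$ can then be iterated: replacing the subtree at $v$ by $C$ applied to it again yields another accepted parse term. Let $m_0$ be the number of leaves of $C$ viewed as a context, i.e.\ the increase in leaf count caused by one insertion of $C$; then $n + m_0 \in \mathrm{spec}(\phi)$ whenever $n$ is the leaf count of $t$ and $n \ge n_0$ for an appropriate threshold $n_0$ (large enough to force a repeated state on some path, e.g.\ $n_0$ exceeding $|\Theta|$ in an appropriate branching sense). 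Care is needed because a parse term is a \emph{binary} tree rather than a caterpillar, so a repeated state along one branch produces a context $C$ whose ``hole'' sits at one leaf-position and whose other leaves are extra; one must check $C$ genuinely adds leaves, i.e.\ $m_0 \ge 1$, which holds as long as $C$ is a non-trivial context (it contains at least the leaf it was built from plus whatever the branch contributes) — if some repetition gives $m_0=0$ one simply contracts it away and picks another, by a standard argument there is one with $m_0 \ge 1$ once the term is tall enough. This gives exactly the conclusion: constants $m_0, n_0$ with $n \in \mathrm{spec}(\phi), n \ge n_0 \Rightarrow n + m_0 \in \mathrm{spec}(\phi)$.

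The main obstacle, as in the tree-width case, is purely bookkeeping rather than conceptual: verifying that the Courcelle--Makowsky decomposition theorem applies to the clique-width operations $\delta_{S,B}$ (quantifier-free translations) with the counting quantifiers of $\Cmsol$ present, and in particular that the modular counting is compatible with disjoint union in the Feferman--Vaught sense — this is where the hypothesis that $\Cmsol$ (and not a stronger logic) is used is essential, since e.g.\ full second-order logic has no such decomposition. Everything downstream (the tree automaton, the pumping lemma for trees) is then routine. I would therefore spend the bulk of the write-up on stating precisely the $\Cmsol$ Feferman--Vaught theorem for the clique-width operations and deriving the automaton $\mathcal{A}_\phi$, and treat the pumping step briskly by citing the Pumping Lemma for recognizable tree languages.
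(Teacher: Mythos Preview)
Your approach aligns with the paper's three ingredients (Feferman--Vaught for $\Cmsol$, passage to labelled parse-trees, tree pumping), and you are right that for clique-width the second ingredient is essentially free: a parse term already \emph{is} a finitely-labelled tree whose leaf-count equals the size of the denoted structure.

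There is, however, a gap in the pumping step. You set ``$m_0 =$ number of leaves of $C$'', but $C$ was extracted from a \emph{particular} accepted tree $t$ by pigeonhole, so this $m_0$ depends on $t$ and hence on $n$. The standard tree pumping lemma only guarantees that for each large $t$ there is \emph{some} pumpable context, and its leaf-count may differ from tree to tree (bounded, yes --- the height of the context is at most the number of states --- but not constant). The theorem demands one $m_0$ valid for all $n\ge n_0$ simultaneously; as written, your argument only shows that for each large $n$ in the spectrum there is \emph{some} $m>0$ with $n+m$ in the spectrum. The repair is short but needs to be said: since $\mathcal{A}_\phi$ has finitely many states, fix in advance for each state $q$ that admits a self-loop one context $C_q$ with $m_q\ge 1$ leaves mapping $q$ to $q$, set $m_0=\mathrm{lcm}_q\, m_q$, and for a given large $t$ insert $m_0/m_q$ copies of the \emph{pre-chosen} $C_q$ (not the context found inside $t$) at a node where state $q$ occurs in the accepting run. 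Alternatively, bypass pumping and invoke Parikh's theorem for regular tree languages --- the set of leaf-counts is semilinear, hence ultimately periodic in $\N$ --- which is in fact how the paper packages the stronger Theorem~\ref{FM:Parikh-1}.
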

From this we get immediately a further generalization
of Theorem \ref{FM:1}.
\begin{corollary}
Let $\phi \in \CMSOL{\tau}$ be such that all
its finite models have clique-width at most $k$.
Then $spec(\phi)$ is ultimately periodic.
\end{corollary}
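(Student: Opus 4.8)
The plan is to deduce the corollary directly from Theorem~\ref{FM:Main}; the only extra ingredient is an elementary observation about subsets of $\N$ that are ``eventually closed under adding a fixed constant''. No further model theory is required.

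Set $S := spec(\phi)$. By Theorem~\ref{FM:Main}, the hypothesis that every finite model of $\phi$ has clique-width at most $k$ yields $m_0 \geq 1$ and $n_0 \in \N$ such that $n \in S$ and $n \geq n_0$ imply $n + m_0 \in S$. I would then stratify $S$ by residues modulo $m_0$: for $r \in \{0, 1, \ldots, m_0 - 1\}$ put $A_r = \{ n \in S : n \geq n_0, \ n \equiv r \ (\mathrm{mod}\ m_0) \}$. The closure property says precisely that each $A_r$ is closed under $+ m_0$ inside the progression $r + m_0 \N$, so $A_r$ is either empty or of the form $\{ c_r, c_r + m_0, c_r + 2 m_0, \ldots \}$ with $c_r = \min A_r$. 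Let $R = \{ r : A_r \neq \emptyset \}$ and $a = \max \bigl( \{ n_0 \} \cup \{ c_r : r \in R \} \bigr)$, both finite.

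The verification of ultimate periodicity is then routine with period $p = m_0$ and threshold $a$: for every $n \geq a$ one has $n \in S \iff n \in A_{\,n \bmod m_0} \iff (n \bmod m_0) \in R$, the last equivalence because $n \geq a \geq c_{\,n \bmod m_0}$ whenever the right-hand side is defined. Since $n$ and $n + m_0$ have equal residue modulo $m_0$ and are both $\geq a$, this gives $n \in S \iff n + m_0 \in S$, which is exactly the definition of $spec(\phi)$ being ultimately periodic. (If $R = \emptyset$, i.e.\ $S$ is finite, the claim holds trivially with $a = n_0$; the argument above handles this case without change.)

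There is essentially no obstacle at this level: all the genuine work --- the pumping argument on clique-width parse terms, built on the Courcelle/Feferman--Vaught--Shelah decomposition for $\Cmsol$ over $k$-sums together with the tree pumping lemma --- is packaged inside Theorem~\ref{FM:Main}, which we are entitled to use. The one point requiring a little care is purely bookkeeping: the definition of ultimate periodicity tolerates arbitrary behaviour of $S$ below the threshold, so the finitely many elements of $S$ smaller than $a$ (in particular those below $n_0$) are irrelevant, and one must keep $m_0 \geq 1$ so that ``period $m_0$'' is meaningful. Note that, specialising via $TW(k) \subseteq CW(2^{k+1}+1)$ and $\Msol \subseteq \Cmsol$, this also re-proves Theorem~\ref{FM:1}.
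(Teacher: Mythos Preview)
Your proof is correct and follows exactly the paper's route: the paper simply says the corollary follows ``immediately'' from Theorem~\ref{FM:Main}, and you have spelled out that immediate step---namely, that a set $S\subseteq\N$ which is closed under $+m_0$ above some threshold is ultimately periodic with period $m_0$. The residue-class bookkeeping you give is the standard way to make this precise, and your flag about $m_0\geq 1$ is appropriate (the pumping argument behind Theorem~\ref{FM:Main} indeed yields a strictly positive increment).
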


\subsubsection{Patch-width}
Here is a further generalization of clique-width for which
our theorem still works. 
The choice of operation is discussed 
in detail in \cite{ar:CourcelleMakowsky00}.

\begin{definition}
Given a  $\tau$-structure $\mathfrak{A}$, the \em{patch-width} of $G$
(\em{pwd(G)}) is the minimal number of colours required to obtain
$\mathfrak{S}$
as a $\{\tau\}$-reduct from
a $k$-coloured $\tau$-structure inductively 
from fixed finite number of $\tau_k$-structures and closure under the following
operations:
\begin {enumerate}
\item disjoint union ($\sqcup$),
\item recoloring ($\rho_{i \to j}$) and
\item modifications ($\delta_{S,B}$).
\end {enumerate}
A class of $\tau$-structures is a $PW_{\tau}(k)$-class if all its
members have patch-width at most $k$.
\end{definition}
A description of a $\tau$-structure using these operations is called a
{\em patch term}. 

\begin{example}
\ 
\begin{enumerate}
\item
In \cite{ar:CourcelleOlariu00} it is shown that
if a graph $G$ has clique-width at most $k$ then its complement graph $\bar{G}$
has clique-width at most $2k$.
However, its patch-width is also $k$ as $\bar{G}$ can be
obtained from $G$ by $\delta_{E, \neg E}$.
\item
The clique $K_n$ 
has clique-width $2$.
However if we consider graphs as structures on a two-sorted universe (respectively for vertices and edges), then $K_n$ has clique-width $c(n)$
and patch-width $p(n)$ where $c(n)$ and $p(n)$ are functions
which tend to infinity.
This will easily follow from Theorem \ref{FM:Parikh-1}.
For the clique-width of $K_n$ 
as a two-sorted-structure 
this was already shown in \cite{th:rotics}.
\end{enumerate}
\end{example}

\begin{remark}
In \cite{ar:CourcelleMakowsky00} it is shown that
a class of graphs of patch-width at most $k$ is 
of clique-width at most $f(k)$ for some  function $f$.
It is shown in \cite{ar:FischerMakowskyPATCH}
that this is not true
for relational structures in general.
\end{remark}

\ifskip
\else
As in the operation $\delta_{S,B}$ the formula $B$ is quantifier free
we have the following lemma.
\begin{lemma}
\label{L1}
Let 
$\mathfrak{A}$
and
$\mathfrak{B}$
be two $\tau$-structures such that
$Th_{CMSOL}^k(\mathfrak{A}) = Th_{CMSOL}^k(\mathfrak{B})$.
Then
$Th_{CMSOL}^k(\delta_{S,B}(\mathfrak{A})) = Th_{CMSOL}^k(\delta_{S,B}(\mathfrak{B}))$.
\end{lemma}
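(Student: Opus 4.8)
The plan is to recognise $\delta_{S,B}$ as a scalar (one-dimensional), non-copying quantifier-free syntactic interpretation and then to invoke the Fundamental Property of translation schemes for $\Cmsol$. Concretely, since $\delta_{S,B}$ keeps the universe of its input unchanged, replaces only the interpretation of the $r$-ary symbol $S$ by the set of $r$-tuples satisfying the quantifier-free $\tau_k$-formula $B(x_1,\dots,x_r)$, and leaves every other symbol (including the colour predicates) untouched, it is a translation scheme $\Phi$ with identity relativisation $\mathrm{dom}_\Phi(x)\equiv(x=x)$ whose only non-trivial component is $\phi_S:=B$. (We may assume the signature is relational; if function symbols occur, one substitutes terms for the free variables of $B$ in the obvious way, which affects nothing below.)

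First I would apply the backwards-translation lemma: for every $\psi\in\Cmsol(\tau)$ there is a formula $\psi^{\Phi}\in\Cmsol(\tau)$ obtained by replacing each atomic subformula $S(y_1,\dots,y_r)$ of $\psi$ by $B(y_1,\dots,y_r)$ — a single simultaneous substitution, the occurrences of $S$ \emph{inside} the inserted copies of $B$ being left untouched, since in $\delta_{S,B}(\mathfrak{M})$ the new $S$ is computed from the \emph{old} relations of $\mathfrak{M}$ — and leaving all other atoms, Boolean connectives, first- and second-order quantifiers, and the modular counting quantifiers $C_{k,m}$ unchanged, so that for every structure $\mathfrak{M}$
\[
\mathfrak{M}\models\psi^{\Phi}\quad\Longleftrightarrow\quad \delta_{S,B}(\mathfrak{M})\models\psi .
\]
This is the $\Cmsol$-form of the classical interpretation lemma and is available in \cite{ar:MakowskyTARSKI}. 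The one point that genuinely needs checking is the quantifier-rank bookkeeping: because $B$ is quantifier-free, substituting it for an atom contributes nothing to the rank, so $\mathrm{qr}(\psi^{\Phi})=\mathrm{qr}(\psi)$, where the modular counting quantifiers are counted toward the rank exactly as in the definition of $Th_{CMSOL}^{k}$. Granting this, for any $\psi$ with $\mathrm{qr}(\psi)\le k$ we have $\mathrm{qr}(\psi^{\Phi})\le k$, hence
\[
\delta_{S,B}(\mathfrak{A})\models\psi \iff \mathfrak{A}\models\psi^{\Phi} \iff \mathfrak{B}\models\psi^{\Phi} \iff \delta_{S,B}(\mathfrak{B})\models\psi ,
\]
the middle equivalence being the hypothesis $Th_{CMSOL}^{k}(\mathfrak{A})=Th_{CMSOL}^{k}(\mathfrak{B})$. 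This yields $Th_{CMSOL}^{k}(\delta_{S,B}(\mathfrak{A}))=Th_{CMSOL}^{k}(\delta_{S,B}(\mathfrak{B}))$, as required.

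No step is deep; the only real obstacle is making the rank and quantifier conventions line up and confirming that the backwards translation passes the set quantifiers and the $C_{k,m}$ through unchanged — which it does, since $\delta_{S,B}$ leaves the universe (and hence the power set over which monadic variables range) fixed, and a counting quantifier is never an atomic subformula. If one prefers to avoid the interpretation machinery, the same conclusion follows from a direct Ehrenfeucht--Fra\"{\i}ss\'e argument: the $k$-round $\Cmsol$-game on $(\mathfrak{A},\mathfrak{B})$ and the one on $(\delta_{S,B}(\mathfrak{A}),\delta_{S,B}(\mathfrak{B}))$ are played on the very same pairs of universes, and any configuration that is a partial isomorphism for the old signature is automatically one for the new signature, because the truth of $S(\bar a)$ after the modification is a fixed Boolean combination of old atomic facts involving only the components of $\bar a$; hence a winning strategy for Duplicator transfers verbatim.
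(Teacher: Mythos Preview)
Your proof is correct and aligns with the paper's approach: the paper itself offers no detailed argument, merely prefacing the lemma with the remark ``As in the operation $\delta_{S,B}$ the formula $B$ is quantifier free we have the following lemma,'' which is exactly the key observation your translation-scheme argument unpacks. You have supplied the routine details (rank preservation under substitution of a quantifier-free formula for an atom, and the alternative EF-game justification) that the paper leaves implicit.
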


As there are, up to logical equivalence, only finitely many
quantifier free $\tau$-formulas with a fixed number of free variables,
we get:
\begin{lemma}
\label{L2}
For fixed finite relational $\tau$, there are only
finitely many operations $\delta_{S,B}$.
\end{lemma}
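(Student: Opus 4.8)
The plan is a straightforward counting argument, the only point requiring a little care being the reduction to logical equivalence classes. First I would observe that, since $\tau$ is finite and relational, $\tau_k = \tau \cup \{P_1,\dots,P_k\}$ is finite and relational as well, so there are only finitely many choices for the symbol $S \in \tau_k$; fix one such $S$, of arity $r$.

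Next I would note that the operation $\delta_{S,B}$ depends on the formula $B$ only through the relation it defines: if $B$ and $B'$ are quantifier-free $\tau_k$-formulas with free variables among $x_1,\dots,x_r$ that are logically equivalent, then $B^{\mathfrak{A}} = (B')^{\mathfrak{A}}$ for every $\tau_k$-structure $\mathfrak{A}$, and hence $\delta_{S,B} = \delta_{S,B'}$ as operations on structures. So it suffices to bound, for the fixed $S$, the number of quantifier-free $\tau_k$-formulas in the variables $x_1,\dots,x_r$ up to logical equivalence.

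For this I would count the atomic $\tau_k$-formulas in the variables $x_1,\dots,x_r$: each such atom is either an equality $x_i = x_j$ with $i,j \le r$, or of the form $R(x_{i_1},\dots,x_{i_s})$ for some $R \in \tau_k$ of arity $s$ and $i_1,\dots,i_s \le r$. Since $\tau_k$ is finite, there are only finitely many such atoms, say $N = N(\tau,k,r)$ of them. Up to logical equivalence a quantifier-free formula in these variables is a Boolean combination of these $N$ atoms, hence is determined by the subset of $\{0,1\}^N$ consisting of the truth assignments to the atoms under which it holds; there are therefore at most $2^{2^N}$ such formulas up to logical equivalence. Summing this bound over the finitely many $S \in \tau_k$ (taking $r = \mathrm{ar}(S)$ in each summand) yields a finite bound on the number of distinct operations $\delta_{S,B}$, which is what is claimed.

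There is no real obstacle here; if anything, the single thing to be careful about is that equivalence of quantifier-free formulas is genuinely \emph{semantic equivalence as Boolean functions of the atoms}, so that the bound is insensitive to whether one quantifies over all structures or only finite ones — and that the operations of type $\rho$ and $\eta$, being special cases of $\delta$ as remarked just after the definition, are automatically subsumed under this count.
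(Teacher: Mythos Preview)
Your argument is correct and is exactly the approach the paper takes: the paper simply records, just before the lemma, that ``there are, up to logical equivalence, only finitely many quantifier free $\tau$-formulas with a fixed number of free variables,'' and states the lemma as an immediate consequence. Your write-up spells out the same counting (finitely many $S\in\tau_k$, finitely many atoms in the relevant variables, hence at most $2^{2^N}$ Boolean combinations up to equivalence) in more detail than the paper does, but the idea is identical.
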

\fi 

In the definition of patch-width we allowed only unary predicates
as auxiliary predicates (colours). 
We could also allow $r$-ary predicates and speak
of $r$-ary patch-width.
The theorems where bounded patch-width is required
are also true for this more general case.
The relative strength of clique-width and the various forms of patch-width
are discussed in \cite{ar:FischerMakowskyPATCH}.

In \cite{ar:FischerM04} the following is shown:
\begin{theorem}[E. Fischer and J.A. Makowsky (2004)]
\label{FM:Main-p}
\ \\
Let $\phi \in \CMSOL{\tau}$ be such that all
its finite models have patch-width at most $k$.
Then there are  $m_0, n_0 \in \N$
such that if $\phi$ has a model of size $n \geq n_0$
then $\phi$ has also a model of size $n+m_0$.
\end{theorem}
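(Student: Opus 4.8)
The plan is to run, essentially verbatim, the proof of Theorem~\ref{FM:Main} (the clique-width case), the only new point being that a patch term may also use the modification operations $\delta_{S,B}$; I would organise the argument around the three ingredients already listed for Theorem~\ref{FM:1}. First I would set up \emph{parse terms over a finite alphabet}. Because $\tau$ is finite and relational and the number of colours is bounded by $k$, every $\tau$-structure of patch-width at most $k$ is denoted by a term $t$ over a \emph{finite} ranked alphabet: the binary symbol $\sqcup$, the unary recolourings $\rho_{i\to j}$ with $i,j\le k$, the unary modifications $\delta_{S,B}$, and finitely many constants for the base $\tau_k$-structures allowed in the definition. The crucial patch-width-specific remark is that there are only finitely many operations $\delta_{S,B}$: for each $S\in\tau$ there are, up to logical equivalence, only finitely many quantifier-free $\tau_k$-formulas $B$ with $\mathrm{arity}(S)$ free variables. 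I would also record that the size of the structure denoted by $t$ equals the \emph{weighted yield} $|t|$ of $t$ --- the sum over the leaves of the sizes of the base structures placed there --- since $\rho$ and $\delta$ leave the universe untouched while $\sqcup$ adds sizes, and since every base structure is nonempty each leaf contributes at least $1$.

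Next I would build a \emph{Feferman--Vaught tree automaton}. Fix $q$ exceeding the quantifier rank of $\phi$ and accommodating the moduli of the modular counting quantifiers occurring in $\phi$. By the Feferman--Vaught theorem for $\Cmsol$ under disjoint union (Courcelle, in Makowsky's refined form), the $(\Cmsol,q)$-theory of $\mathfrak{A}\sqcup\mathfrak{B}$ is computable from those of $\mathfrak{A}$ and $\mathfrak{B}$; and since each $\rho_{i\to j}$ and each $\delta_{S,B}$ is a quantifier-free transduction, it acts on $(\Cmsol,q)$-theories, because a quantifier-free transduction pulls a rank-$q$ $\Cmsol$ formula back to a rank-$q$ $\Cmsol$ formula. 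Hence there is a bottom-up finite tree automaton $\mathcal{A}$, whose states are the finitely many $(\Cmsol,q)$-theories over $\tau_k$, computing the theory of the structure denoted by any patch term; since $\phi$ ignores the colours it is a $\tau_k$-sentence of rank $\le q$, so the root state of $\mathcal{A}$ on $t$ decides whether the denoted structure satisfies $\phi$. Letting $L_\phi$ be the set of patch terms accepted by $\mathcal{A}$, the hypothesis that \emph{all} finite models of $\phi$ have patch-width $\le k$ gives
$$\spec{\phi} \;=\; \{\, |t| : t \in L_\phi \,\}.$$

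Finally I would carry out the \emph{pumping} step. If $\spec{\phi}$ is finite, take $n_0>\max\spec{\phi}$ and $m_0=1$, and the conclusion is vacuous. Otherwise $L_\phi$ has accepted terms of unbounded size; after deleting the harmless size-$0$ loops (unary chains along which a state of $\mathcal{A}$ repeats) a large accepted term contains a long root-to-leaf path through many $\sqcup$-nodes, so a state of $\mathcal{A}$ repeats across a portion of the path containing at least one $\sqcup$-node, yielding a context $C$ with $|C|\ge 1$ that can be iterated inside an accepted term without leaving $L_\phi$. Iterating $C$ produces models whose sizes form an arithmetic progression of common difference $|C|$. I would then take $m_0$ to be the least common multiple of the finitely many such differences $|C|$ coming from loops at states that are both reachable and co-reachable in $\mathcal{A}$, and $n_0$ the associated threshold. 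Equivalently, and more cleanly, one may invoke Parikh's theorem: $\{\,|t| : t\in L_\phi\,\}$ is the image under a linear map of a semilinear set, hence semilinear, hence an ultimately periodic subset of $\N$, and $m_0$ is its eventual period; this route also re-proves the Corollary that $\spec{\phi}$ is ultimately periodic.

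The step I expect to be the main obstacle, granting the cited Feferman--Vaught theorem for $\Cmsol$, is the bookkeeping in the pumping argument: extracting a \emph{single} period $m_0$ valid for all sufficiently large $n\in\spec{\phi}$ rather than one period per term, and making sure one always finds a \emph{productive} loop (one traversing a $\sqcup$-node, so that $|C|\ge 1$). Appealing to Parikh's theorem --- or, as in the original proof, to the adaptation of the tree pumping lemma --- dissolves this difficulty. The only genuinely patch-width-specific inputs, namely that $\delta_{S,B}$ is a quantifier-free transduction (so it acts on $(\Cmsol,q)$-theories) and that for fixed finite relational $\tau$ there are only finitely many such operations, are precisely what keeps $\mathcal{A}$ finite, so the whole machinery behind Theorem~\ref{FM:Main} transfers without change.
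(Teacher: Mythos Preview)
Your proposal is correct and follows essentially the same approach the paper indicates: the paper does not spell out a proof of Theorem~\ref{FM:Main-p} but points to the three ingredients listed for Theorem~\ref{FM:1} (Feferman--Vaught for $\Cmsol$, reduction to tree languages, tree pumping) and notes that the only patch-width-specific inputs are that $\delta_{S,B}$ preserves $\Cmsol$-theories and that there are only finitely many such operations for fixed finite relational $\tau$ --- exactly the two observations you isolate. Your optional appeal to Parikh's theorem is also anticipated by the paper, which records the semilinearity route as Theorem~\ref{FM:Parikh-1}.
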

From this we get yet another generalization of Theorem \ref{FM:1}. 
\begin{corollary}
Let $\phi \in \CMSOL{\tau}$ be such that all
its finite models have patch-width at most $k$.
Then $spec(\phi)$ is ultimately periodic.
\end{corollary}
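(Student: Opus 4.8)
The plan is to read this off directly from Theorem~\ref{FM:Main-p}, since essentially all the work lies in that theorem; what remains is to unwind the definition of ``ultimately periodic''. Write $S = spec(\phi)$. Theorem~\ref{FM:Main-p} supplies constants $m_0, n_0 \in \N$ such that if $\phi$ has a model of size $n \geq n_0$ then it has a model of size $n + m_0$; equivalently, for all $n \geq n_0$, $n \in S$ implies $n + m_0 \in S$.

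The key (elementary) observation is that any $S \subseteq \N$ with this ``eventual forward-closure under $+m_0$'' property is automatically ultimately periodic with period $m_0$. I would partition the integers $\geq n_0$ into the $m_0$ residue classes modulo $m_0$. Along the arithmetic progression through any such class, the characteristic function $\chi_S$ is monotone non-decreasing: once $\chi_S$ takes the value $1$ at a point of the progression, forward-closure forces the value $1$ at every later point of that progression. Hence on each of the finitely many progressions there is a threshold beyond which $\chi_S$ is constant (identically $1$ if the progression meets $S$, identically $0$ otherwise). Taking $a$ to be the maximum of $n_0$ and of these finitely many thresholds, one obtains: for every $n \geq a$, both $n$ and $n + m_0$ lie past the threshold of their common residue class, so $\chi_S(n) = \chi_S(n+m_0)$, i.e.\ $n \in S \iff n + m_0 \in S$. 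This is exactly the definition of ultimate periodicity, with $p = m_0$, and the corollary follows. This is the same reduction already used to pass from Theorem~\ref{FM:Main} to its corollary and from Theorem~\ref{FM:1}.

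I do not anticipate any obstacle: the substantive content is entirely contained in Theorem~\ref{FM:Main-p} (which rests on the Feferman--Vaught--Courcelle decomposition method, the reduction of the problem to spectra of labelled trees, and an adaptation of the tree pumping lemma). The only point that needs a little care is that the definition of ``ultimately periodic'' requires the full biconditional $n \in S \iff n+p \in S$ for all large $n$, not merely the forward implication $n \in S \Rightarrow n+p \in S$ provided by the theorem; this is precisely why the argument must pass to the stage where $\chi_S$ has stabilized on each residue class, rather than stopping at the forward-closure statement itself.
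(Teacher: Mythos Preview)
Your proof is correct and matches the paper's approach: the paper states the corollary as following ``immediately'' from Theorem~\ref{FM:Main-p} without spelling out any argument, and what you have written is precisely the elementary unpacking of that word. Your care in passing from the one-sided implication to the full biconditional via stabilization on residue classes is exactly the point that makes ``immediately'' honest.
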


More recent work on spectra and patch-width may be found
in \cite{pr:Shelah04,pr:DoronShelah06}.
\ \\

\subsection{Classes of unbounded patch-width}
Theorem \ref{FM:Main-p} gives a new method to show that
certain classes $K$ of graphs have unbounded tree-width, clique-width or
patch-width.

To see this we look at the class $Grid$ of all grids $Grid_{n \times n}$.
They are known to have unbounded tree-width, cf. \cite{bk:Diestel96},
and in fact, every minor closed class of graphs of unbounded
tree-width contains these grids.
They were shown to have unbounded clique-width in \cite{ar:GolumbicRotics01}.
However, for patch-width these arguments do not work.
On the other hand $Grid$ is $\Msol$-definable, and its spectrum
consists of the numbers $n^2$, so by Theorems 
\ref{FM:Main} and
\ref{FM:Main-p}, the unboundedness follows directly.

In particular, as this is also true for every $K' \supseteq K$,
the class of all graphs is of unbounded 
patch-width.

Without Theorem \ref{FM:Main-p}, there was only a conditional
proof of unbounded patch-width available.
It depends on the assumption that the polynomial hierarchy
$\Sigma^{\mathrm{P}}$ does not collapse to $\bNP$.
The argument then procceds as follows:
\begin{enumerate}
\item
Checking patch-width at most $k$ of
a structure $\mathfrak{A}$,
for $k$ fixed,
is in $\bNP$.  Given a structure $\mathfrak{A}$, 
one just has to guess a patch-term
of size polynomial in the size of $\mathfrak{A}$.
\item
Using the results of \cite{ar:MakowskyTARSKI}
one gets that checking a $\CMSOL{\tau}$-property $\phi$
on the class $PW_{\tau}(k)$ is in $\bNP$, whereas, by
\cite{ar:MakPnueli96}, there are 
$\Sigma_n^{\mathrm{P}}$-hard problems
definable in $\Msol$ for every level
$\Sigma_n^{\mathrm{P}}$ of the polynomial hierarchy.
\item
Hence, if the polynomial hierarchy does not collapse to $\bNP$,
the class of all $\tau$-structures is of unbounded
patch-width, provided $\tau$ is large enough.
\end{enumerate}

\begin{openquestion}
\label{patch-complexity}
What is the complexity of checking whether a $\tau$-structure
$\mathfrak{A}$ has patch-width 
at most $k$,
for a fixed $k$?
\end{openquestion}

\subsection{Parikh's Theorem}
R. Parikh's celebrated theorem, first proved in \cite{ar:Parikh66},
counts the number of occurences of letters in $k$-letter words of context-free
languages. For a given word $w$, the numbers of these occurences
is denoted by a vector  $n(w) \in \N^k$, and the theorem states
\begin{theorem}[Parikh 1966]
\label{TH:Parikh}
For a context-free language $L$, the set
$Par(L)= \{ n(w) \in \N^k : w \in L \}$
is semilinear.
\end{theorem}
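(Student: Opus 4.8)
The plan is to run Parikh's original derivation-tree argument. Fix a context-free grammar $G=(N,\Sigma,P,S)$ with $L(G)=L$, put it in Chomsky normal form, write $r=|N|$, and let $\pi(w)\in\N^k$ count the occurrences of the $k$ letters in a word $w$. Recall that a subset of $\N^k$ is \emph{semilinear} iff it is a finite union of \emph{linear} sets $\{v_0+\sum_{j=1}^m c_jv_j : c_j\in\N\}$, and that the semilinear sets are closed under finite union; hence it suffices to exhibit $Par(L)$ as one such finite union.

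First I would isolate the finite combinatorial data. Call a derivation tree with root $S$ \emph{reduced} if along no path from the root to a leaf does a nonterminal label occur twice; since $r$ is finite and the tree is binary, reduced trees have bounded height, so there are only finitely many of them, say the members of a finite set $\mathcal T$. For $t\in\mathcal T$ let $\pi(t)\in\N^k$ be its Parikh vector and $N(t)\subseteq N$ the set of nonterminals it uses. Analogously, for a nonterminal $A$ a \emph{pump at $A$} is a tree with root and one distinguished leaf (``the hole'') both labelled $A$ and reduced in the corresponding sense; there are finitely many pumps, each contributing a period vector $p\in\N^k$ (the Parikh vector of its terminal leaves). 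Then the target identity is
\[
Par(L)\;=\;\bigcup_{t\in\mathcal T}\Bigl(\pi(t)+\bigl\{\textstyle\sum_p c_p\,p : c_p\in\N\bigr\}\Bigr),
\]
the inner sum ranging over the period vectors of the pumps that can legitimately be attached inside $t$. Each bracketed set is linear and the union is finite, so this identity proves the theorem.

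The two inclusions are carried out on trees. For ``$\supseteq$'', given $t$ and a choice of multiplicities, I splice each admissible pump into $t$ at a node carrying its root label the prescribed number of times; the result is a genuine $G$-derivation tree of some $w\in L$, and because pumps only add terminal leaves the Parikh vectors add up exactly as stated. For ``$\subseteq$'', starting from an arbitrary derivation tree $T$ of a word $w\in L$, I repeatedly locate a path with a repeated nonterminal $A$ at an ancestor $u$ and a descendant $v$, excise the pump between them (regrafting the subtree of $v$ at $u$), and record the removed period vector; this strictly shrinks the tree and terminates at some reduced $t\in\mathcal T$, with $\pi(w)=\pi(t)+\sum(\text{removed vectors})$.

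The one genuinely delicate point — and the step I expect to be the main obstacle — is to pin down which pumps count as ``legitimately attachable inside $t$'' so that the $\supseteq$ direction stays valid while every period vector removed in the $\subseteq$ direction is recognised as one of them. A pump at $A$ can be inserted only once $A$ is present, and inserting one pump may create the host nonterminal for the next, so the correct generating set is the \emph{closure} of the pumps rooted at nonterminals of $N(t)$ under ``a pump whose insertion is enabled by previously inserted pumps''; this closure is still a subset of the finitely many reduced pumps, so finiteness is not threatened, but verifying that the excision order in the $\subseteq$ direction never strands a removed pump outside this closure requires care (one may, for instance, always excise from the topmost available repetition, or pass to a trimmed grammar so that every useful nonterminal is reachable from and co-productive within any surviving context). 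An alternative that sidesteps this bookkeeping is the algebraic proof: show by induction on $|N|$ that $Par(L(G_A))$ is semilinear for every nonterminal $A$, eliminating nonterminals one at a time from the resulting system of equations over $\N^k$ and using that semilinear sets are closed under finite union, pointwise sum, and the operation $X\mapsto\{\sum_i x_i : x_i\in X\}$; there the whole obstacle is concentrated in proving that last closure property.
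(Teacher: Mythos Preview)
The paper does not prove this theorem. Parikh's theorem is stated as a classical result with a citation to \cite{ar:Parikh66} and is immediately followed by the definitions of linear and semilinear sets; the paper then moves on to Courcelle's generalisation and to Theorem~\ref{FM:Parikh-1}, without giving any argument for Theorem~\ref{TH:Parikh} itself. So there is no proof in the paper against which your proposal can be compared.

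As a standalone sketch your argument is the standard derivation-tree proof, and you have correctly identified the only real difficulty: controlling which pump vectors are admissible for a given reduced skeleton $t$, so that the excision process in the $\subseteq$ direction never produces a period vector outside the generating set attached to $t$. Your suggested fixes (taking the closure under ``pump enables pump'', or always excising the topmost repetition) are the right instincts; the cleanest way to make this rigorous is to index the linear pieces not by reduced trees $t$ but by pairs $(t,M)$ where $M\supseteq N(t)$ is a set of nonterminals, and to take as periods exactly the reduced pumps whose nonterminals lie in $M$, then argue that the excision process stabilises the set of nonterminals used. Your alternative algebraic route via closure of semilinear sets under sum and Kleene-star is also standard and works, though the closure under $X\mapsto X^*$ is itself nontrivial.
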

A set $X \subseteq \N^s$ is  {\em linear
in $\N^s$}
iff there is vector $\bar{a} \in \N^s$
and a matrix $M \in \N^{s \times r}$ such that
$$
X = A_{\bar{a}, \bar{M}}
= \{ \bar{b} \in \N^s : \mbox{ there is } \bar{u} \in \N^r \mbox{ with }
\bar{b} = \bar{a} + M\cdot \bar{u} \}
$$
Singletons are
linear sets
with $M = 0$.
If $M \neq 0$ the series is {\em nontrivial}.
$X \subseteq \N^s$ is {\em semilinear
in $\N^s$} iff
$X$ is a finite union of linear sets
$A_i \subseteq \N^s$.
The terminology is from \cite{ar:Parikh66}, and has since become
standard terminology in formal language theory.
We note that for unary languages, Parikh's Theorem looks at the spectrum
of context-free languages.

B. Courcelle has generalized 
Theorem \ref{TH:Parikh}
further to context-free
vertex replacement graph grammars,
\cite{ar:CourcelleIC95}.
We want to generalize Theorem \ref{TH:Parikh} to spectra.
Rather than counting occurences of letters, 
we look at many-sorted
structures and the sizes of the different sorts,
which we call many-sorted spectra.
In \cite{ar:FischerM04} the following theorem is proved:

\begin{theorem}[E. Fischer and J.A. Makowsky (2006)]
\label{FM:Parikh-1}
\ \\
Let $K$ be a class of
$\Cmsol$-definable many-sorted
relational structures which are of patch-width at most $k$.
Then
the many-sorted spectrum of $K$ forms a semilinear set.
\end{theorem}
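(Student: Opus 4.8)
The plan is to mimic the three-ingredient proof used for Theorem~\ref{FM:1} and its patch-width refinement Theorem~\ref{FM:Main-p}, but to keep track of the vector of sort-sizes instead of just the total size, and to replace the Pumping Lemma for trees (which gives ultimate periodicity of a one-dimensional spectrum) by Parikh's Theorem itself (Theorem~\ref{TH:Parikh}), which gives semilinearity of a multidimensional Parikh image. So the first step is to use the Feferman--Vaught--Courcelle--Makowsky decomposition theorem for $\Cmsol$ over the operations $\sqcup$, $\rho_{i\to j}$ and $\delta_{S,B}$ that generate patch-width-at-most-$k$ structures: for a fixed sentence $\phi\in\CMSOL{\tau}$ and fixed quantifier rank, there are only finitely many $\Cmsol$-theories of the relevant rank, and the theory of a structure built by a patch-term is determined by a finite bottom-up evaluation on the term. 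Since $\tau$ is finite and the number of colours is bounded by $k$, by the analogues of Lemmas~\ref{L1} and \ref{L2} there are only finitely many basic structures and finitely many operations $\delta_{S,B}$ to consider, so the set of patch-terms that evaluate to a model of $\phi$ is a \emph{recognizable} (regular) set of terms over a finite ranked alphabet.

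Second, I would set up the bookkeeping of sort sizes. Each basic $\tau_k$-structure contributes a fixed vector in $\N^s$ ($s$ = number of sorts), and the only operation that changes sizes is disjoint union, which adds the vectors componentwise; recolouring and $\delta_{S,B}$ leave the universe (hence all sort sizes) unchanged. Thus the map sending a patch-term to the sort-size vector of the structure it denotes is a homomorphism from the term algebra into $(\N^s,+)$ in which all non-$\sqcup$ symbols act as the identity. The many-sorted spectrum of $K$ (intersected with the models of $\phi$) is therefore the image under this homomorphism of the recognizable tree language $T_\phi$ of accepting patch-terms.

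Third, I would reduce to Parikh's Theorem. A recognizable tree language, when one only cares about the multiset of symbols occurring in a term (here even less: only the multiset of the basic leaf symbols, since the internal symbols contribute zero to the size vector), has a semilinear Parikh image — this is exactly the tree/context-free-grammar version of Parikh's Theorem (the yield of a recognizable tree language is a context-free language, and Parikh's Theorem, Theorem~\ref{TH:Parikh}, applies). Composing the semilinear Parikh image of $T_\phi$ over the leaf alphabet with the linear map that assigns to each leaf symbol its size vector, and using the standard fact that linear images and finite unions of semilinear sets are semilinear, yields that the many-sorted spectrum of $K$ is semilinear in $\N^s$, as required. (For the one-sorted case $s=1$ this recovers ultimate periodicity, i.e.\ Theorem~\ref{FM:1}, since semilinear subsets of $\N$ are exactly the ultimately periodic sets.)

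The main obstacle is the first step: proving that the set of accepting patch-terms is genuinely a recognizable tree language in a \emph{uniform} way, i.e.\ that the $\Cmsol$-theory of $\delta_{S,B}(\mathfrak{A})$ (and of $\mathfrak{A}\sqcup\mathfrak{B}$ and $\rho_{i\to j}(\mathfrak{A})$) is computable from the theories of the arguments. For $\sqcup$ this is the classical Feferman--Vaught step extended to modular counting quantifiers (due to Courcelle, refined by Makowsky, cited in the excerpt); for $\rho$ and $\delta_{S,B}$ one needs that these are quantifier-free (scalar) transductions, so that they induce a well-defined action on $\Cmsol$-theories — this is where the restriction to quantifier-free $B$ in $\delta_{S,B}$ and the finiteness of the colour set are essential, and it is precisely the content one imports from Theorem~\ref{FM:Main-p} and \cite{ar:MakowskyTARSKI}. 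Once recognizability is in hand, the passage through Parikh's Theorem is routine, so I would expect the write-up to spend most of its effort making the term-algebra / recognizability setup precise and almost none on the final semilinearity manipulation.
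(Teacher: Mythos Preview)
Your proposal is correct and follows essentially the same route the paper indicates: the three-ingredient scheme described for Theorem~\ref{FM:1} (Feferman--Vaught--Courcelle decomposition for $\Cmsol$, reduction to a recognizable set of labeled terms/trees, and a combinatorial lemma on trees), with the Pumping Lemma replaced by Parikh's Theorem to handle the vector of sort-sizes rather than a single cardinality. The paper does not spell out a proof of Theorem~\ref{FM:Parikh-1} here (it cites \cite{ar:FischerM04}), but the surrounding text makes clear that exactly this adaptation---tracking many-sorted sizes and invoking Parikh in place of pumping---is what is intended, and your identification of the recognizability step as the place where the real work lies matches the paper's emphasis.
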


\appendix
\section{A review of some hardly accessible references}~\label{section technical}

This section contains a detailed presentation of the material of Subsection 
\ref{subsection recursion}. 
Note that the proofs sketched here do not necessarily correspond to the original proofs.

\subsection{Asser's paper}\label{subsection asser}
In chronological ordering, the first paper related to spectra is \cite{ar:Asser55}, 
in German, due to Asser in 1955. 
Though it does not use the name ``spectrum'', nor refer to Scholz in its title or in the text, 
the long introduction clarifies the context in which the concept of spectrum was born. The author addresses the general question of classes of cardinal numbers (not only natural numbers) so-called ``representable'' by a sentence of first-order logic with equality, both in the framework of satisfiability theory and validity theory. Here a first-order sentence $\varphi$ represents a given (finite or infinite) cardinality $m$ regarding satisfiablity if there is a structure which domain has cardinality $m$ which is a model of $\varphi$ (i.e. for finite $m$, it means $m\in \spec{\varphi}$), and regarding validity, if $\varphi$ holds in every structure with cardinality $m$. Asser first notices that $\varphi$ represents $m$ regarding satisfiability if and only if $\neg\varphi$ does not represent $m$ regarding validity, so that validity reduces to satisfiability via complement. Then, he remarks that, from  L{\"o}wenheim-Skolem Theorem \cite{ar:Loewenheim15, ar:Skolem20}, the representation question in satisfiability theory for infinite cardinalities is trivial: the first-order sentence $\varphi$ either has no infinite model (and in this case it has finite models in finitely many finite cardinalities only) or has models in every infinite cardinality. Hence, the problem actually is about exactly which sets of natural numbers are the set of cardinalities of finite models of first-order sentences, i.e. what we would call spectra. In a footnote, one reads ``this question was also asked by Scholz as a problem in \cite{ar:Scholz52}''. 

With this background, Asser's aim is to give a purely arithmetical characterization of spectra. This is done via an arithmetical encoding of finite structures, first-order sentences and satisfiability.
Let us make precise Asser's construction.

Note that in the sequel ``characteristic functions'' 
(of sets or relations) are not taken in the usual way: a unary function $f$ is 
said to be the characteristic function of the set of integers $n$ such that 
$f(n)=0$. It is only a technical matter to come back to the usual definition 
with little machinery, for instance use $\chi(n)=1\dot{-}f(n)$ (so-called {\em 
modified substraction} i.e. $x\dot{-}y=x-y$ if $x\geq y$ and $0$ otherwise). 
Using this alternative definition, characteristic functions are not required to 
be 0-1-valued.
\medskip 

W.l.o.g., let $\varphi$ be a sentence in relational Skolem normal form, i.e. 
$\varphi\equiv\forall x_1\ldots\forall x_r\exists x_{r+1}\ldots\exists 
x_s\psi(x_1,\ldots x_s)$, where $\psi(x_1,\ldots x_s)$ is a Boolean combination 
of atomic formulas $R_i^{(a_i)}(x_{j_1},\ldots x_{j_{a_i}})$ with $i=1,\ldots,t$ 
and of atoms $x_{l_1}=x_{l_2}$. Assume that $\psi$ contains $u$ different atoms
of type 
$R_i^{(a_i)}(x_{j_1},\ldots x_{j_{a_i}})$ and $v$ different atoms of type 
$x_{l_1}=x_{l_2}$. Let $\Psi:\{0,1\}^{u+v}\longrightarrow\{0,1\}$ be the 
Boolean function associated to the propositional version of $\psi$ (using the 
convention that $0$ encodes true and $1$ encodes false). 

Denote by 
$Bit_k(y,z_1,\ldots,z_k,n)$ the binary digit of $y$ of rank 
$\sum_{l=1}^kz_l\cdot n^{l-1}$, assuming $y<2^{n^k},z_1<n,\ldots,z_k<n$.
 Encode the $k$-ary relation 
$R$ on the domain $\{0,\ldots,n-1\}$ by the number $y<2^{n^k}$ such that 
$Bit_k(y,z_1,\ldots,z_k,n)=0$ if and only if $R(z_1,\ldots,z_k)$ holds. 
Let 
$\delta(z_1,z_2)=0$ if $z_1=z_2$ and $1$ otherwise.
Let $\Psi^*(y_1,\ldots,y_t,x_1,\ldots,x_s,n)$ be obtained from $\Psi$ by
replacing each atom $R_i^{(a_i)}(x_{j_1},\ldots x_{j_{a_i}})$ by 
$Bit_{a_{i}}(y_{i},x_{j_1},\ldots,x_{j_{a_{i}}},n)$
and every atoms $x_{l_1}=x_{l_2}$ by $\delta(x_{l_1},x_{l_2})$.
\raus{
$\Psi^*(y_1,\ldots,y_t,x_1,\ldots,x_s,n)=\Psi(digit_{a_{i_1}}(y_{i_1},x_{j^1_1},\ldots,x_{j^1_{a_{i_1}}},n),$ 
$\ldots,Bit_{a_{i_u}}(y_{i_u},x_{j^u_1},\ldots,x_{j^u_{a_{i_u}}},n),\delta(x_{l^1_1},x_{l^1_2}),\ldots,\delta(x_{l^v_1},x_{l^v_2}))$. 
}
The first-order quantifiers $\forall x_1\ldots\forall x_r\exists 
x_{r+1}\ldots\exists x_s$ are dealt with by defining 

\[\Psi^{**}(y_1,
\ldots,y_t,n)=\sum_{x_1=0}^{n-1} 
\ldots\sum_{x_r=0}^{n-1}\prod_{x_{r+1}=0}^{n-1} \ldots\prod_{x_{s}=0}^{n-1} 
\Psi^*(\overline{y},\overline{x},n).\]

 Note the non-standard use of $\sum$ 
for $\forall$ and $\prod$ for $\exists$, due to the fact that $0$ encodes true 
and $1$ encodes false. Finally the characteristic function of the spectrum of 
the sentence $\varphi\equiv\forall x_1\ldots\forall x_r\exists 
x_{r+1}\ldots\exists x_s\psi(x_1,\ldots x_s)$ is 
$\chi(n)=\prod_{y_{1}=0}^{2^{n^{a_1}}-1} 
\ldots\prod_{y_{t}=0}^{2^{n^{a_t}}-1}\Psi^{**}(y_1,\ldots,y_t,n)$. This 
construction is clearly elementary. Conversely, it is also easy to verify that 
any function defined as $\chi(n)=\prod_{y_{1}=0}^{2^{n^{a_1}}-1} 
\ldots\prod_{y_{t}=0}^{2^{n^{a_t}}-1}\Psi^{**}(y_1,\ldots,y_t,n)$, where 
$\Psi^{**}$ is obtained from some Boolean function $\Psi$ by the same type of 
construction, is the characteristic function of the spectrum of the 
corresponding first-order sentence. Hence we have the following result.

\begin{theorem}\label{asser1}
A set $\mathcal S$ is a spectrum iff its characteristic function $\chi$ has the form 
$\chi(n)=\prod_{y_{1}=0}^{2^{n^{a_1}}-1} \ldots\prod_{y_{t}=0}^{2^{n^{a_t}}-1}\Psi^{**}(y_1,\ldots,y_t,n)$, where $\Psi^{**}$ is obtained from some Boolean function $\Psi$ by the above construction.
\end{theorem}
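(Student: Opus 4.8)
The plan is to prove Theorem~\ref{asser1} by exhibiting the two directions of the equivalence separately, each via an explicit translation between first-order sentences and the arithmetical formalism already set up in the excerpt.

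\textbf{From spectra to the arithmetical form.} Given a first-order sentence $\varphi$, first put it in relational Skolem normal form $\varphi\equiv\forall x_1\ldots\forall x_r\exists x_{r+1}\ldots\exists x_s\,\psi(x_1,\ldots,x_s)$; this is a classical normal form transformation that preserves the spectrum (introducing Skolem relations does not change the set of model cardinalities, since any model can be expanded). Then follow the construction displayed in the excerpt step by step: read off the Boolean function $\Psi$ from the propositional skeleton of $\psi$; form $\Psi^*$ by substituting $Bit_{a_i}(y_i,\ldots,n)$ for each relational atom and $\delta(x_{l_1},x_{l_2})$ for each equality atom; form $\Psi^{**}$ by converting the first-order quantifier block into the bounded $\sum/\prod$ block over $\{0,\ldots,n-1\}$; and finally quantify out the relation-encoding variables $y_1,\ldots,y_t$ by the outer bounded product $\prod_{y_1=0}^{2^{n^{a_1}}-1}\cdots\prod_{y_t=0}^{2^{n^{a_t}}-1}$. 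One then checks by unwinding the definitions that the resulting $\chi(n)$ equals $0$ precisely when there is a choice of relations $\overline{R}$ on $\{0,\ldots,n-1\}$ making $\psi$ hold under the $\forall\exists$ prefix, i.e.\ precisely when $n\in\spec{\varphi}$. This is a routine but somewhat tedious verification that the encoding of relations by bits, and of quantifiers by $\sum$ (for $\forall$) and $\prod$ (for $\exists$) under the convention ``$0=$true, $1=$false'', behaves correctly.

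\textbf{From the arithmetical form back to spectra.} Conversely, suppose $\chi(n)=\prod_{y_1=0}^{2^{n^{a_1}}-1}\cdots\prod_{y_t=0}^{2^{n^{a_t}}-1}\Psi^{**}(y_1,\ldots,y_t,n)$ where $\Psi^{**}$ arises from a Boolean function $\Psi$ by exactly the same construction. The point is that the construction is visibly invertible: from the data $(\Psi,t,(a_i),r,s,u,v)$ one reconstructs a first-order sentence $\varphi$ in Skolem normal form whose vocabulary has $t$ relation symbols $R_i$ of arity $a_i$, whose quantifier prefix is $\forall x_1\ldots\forall x_r\exists x_{r+1}\ldots\exists x_s$, and whose quantifier-free matrix $\psi$ is obtained by interpreting the $j$-th true-false input of $\Psi$ as the appropriate relational atom $R_i(x_{j_1},\ldots)$ or equality atom $x_{l_1}=x_{l_2}$. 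By the first direction applied to this $\varphi$, the associated characteristic function is exactly the given $\chi$, so $\chi$ is the characteristic function of $\spec{\varphi}$, hence $\mathcal{S}\in\SPEC$.

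\textbf{Main obstacle.} The conceptual content is light; the real work is bookkeeping. The delicate point is making sure the ``which atoms occur, and in which variable slots'' information is faithfully carried by the Boolean function $\Psi$ together with the side data $(t,(a_i),r,s)$ and the list of atom-patterns, so that the passage $\varphi\mapsto\Psi^{**}$ and the passage $\Psi^{**}\mapsto\varphi$ really are mutually inverse. One must also be careful about the arithmetical predicates $Bit_k$ and $\delta$: one needs that $Bit_k(y,z_1,\ldots,z_k,n)$ and $\delta$ are elementary (indeed they are, and this is where ``elementary'' in Theorem~\ref{theorem asser} ultimately comes from), and that the bounds $y<2^{n^{a_i}}$ in the outer products correctly range over all codes of $a_i$-ary relations on an $n$-element set. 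Once these routine facts are in place, both directions are immediate, and the theorem follows; I expect the write-up to consist mostly of fixing notation for the atom-patterns and then a short ``unwind the definitions'' argument.
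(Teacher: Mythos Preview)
Your proposal is correct and follows essentially the same route as the paper: the paper presents the forward construction (Skolem normal form, encode relations by integers via $Bit_k$, translate the quantifier prefix into $\sum/\prod$, then the outer $\prod$ over relation codes) in the text preceding the theorem and then simply remarks that ``conversely, it is also easy to verify'' that any $\chi$ of this shape is the characteristic function of the spectrum of the corresponding reconstructed sentence. Your write-up is in fact more explicit than the paper's about the invertibility and about why Skolem normal form preserves the spectrum, but the underlying argument is identical.
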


Note that Asser judges his result ``non satisfactory'', in particular because this paraphrastic characterization is of no help in proving that a given set is or not a spectrum, or in providing any concrete spectrum.
However, Asser's characterization is enough to prove Theorem \ref{theorem asser}, that we restate here for sake of self-containment.

\begin{quote}
{\sc Theorem}  \ref{theorem asser}\ \ $\SPEC\subsetneq {\mathcal E}_{\star}^3$
\end{quote}

The inclusion follows from the fact that Theorem \ref{asser1} provides 
elementary characteristic functions for spectra. The properness is obtained by 
diagonalization.

As a conclusion, Asser asks some questions, that have essentially remained open 
up to now. 
First, he asked for a recursive characterization of spectra. He 
notes that there are actually two different problems. 
The first one 
asks for 
a recursively defined class of functions, i.e. 
a class of functions defined via some basis functions 
and closure under some functional operations, such that the unary functions in 
this class are exactly the characteristic functions of spectra. 
Second, he
asks for a recursively defined class of functions, but now such 
that the unary functions in it enumerate exactly the spectra, i.e. a set $S$ is 
a spectrum if and only if $S=f({\mathbb N})$ for some $f$ in the class. Note 
that this is not the most commonly admitted meaning for enumeration, because 
the enumeration functions are usually required to be strictly increasing, which 
is not the case here. 

Next, Asser 
refers to ``work in 
progress'' that proves that a large class of unary functions are 
characteristic functions of spectra, among which the following arithmetically 
defined sets: prime numbers, multiples of a given integer $k$, powers of a 
given $k$, $k$-th powers, composite numbers.a

Finally, the third and most famous open question proposed in this paper 
is usually known as Asser's Problem (Open Question \ref{open-assers-problem}) and 
asks whether spectra are closed under complement.

\subsection{Mostowski's paper}\label{subsection mostowski}

A paper  almost simultaneous with Asser's is  \cite{ar:Mostowski56}, due to A. Mostowski in 
1956. It also adresses recursive characterization of spectra, and 
explicitly uses the name ``spectrum''. It is noticed that ``The results of 
Asser overlap in part with results which I have found in 1953 while attempting 
(unsuccessfully) to solve Scholz's problem (cf. Roczniki Polskiego Towarzystwa 
Matematycznego, series I, vol. 1 (1955), p.427). I shall give here proofs of my 
results which do not overlap with Asser's.''~\footnote{Thanks to J. Tomasik, we have 
seen  a translation of the Polish reference. It is the abstract 
of a seminar given by Andrzej Mostowski on October, 16. 1953. In addition to the 
following material, it is also stated that spectra form a strict subclass of 
primitive recursive sets, a result which indeed overlaps with Asser's.
}. 

Here, A. Mostowski defines a class of functions denoted by $K$ as follows. 

\begin{definition}
\label{classK}
The class $K$ is the least class
\begin{itemize}
\item[--] containing the functions $Z_k,U^i_k,S,C$ respectively defined by:
\begin{itemize}
\item[-] $Z_k(x_1,\ldots,x_k,n)=0$
\item[-]  $U^i_k(x_1,\ldots,x_k,n)=min(x_i,n)$,  for $i=1,\ldots,k$
\item[-] $S(x,n)=min(x+1,n)$
\item[-] $C(x)=n$
\end{itemize}
\item[--] closed under composition: $f(x_1,\ldots, 
x_{j-1},g(y_1,\ldots,y_p,n),x_{j+1},\ldots,x_k,n)$
 
\item[--] closed under recursion: 
$\left\{\begin{array}{l}f(0,\vec{x},n)=g(\vec{x},n)\\ 
f(x+1,\vec{x},n)=min(h(x,f(x,\vec{x},n),\vec{x},n),n)\end{array}\right.$
 
\end{itemize}

\end{definition}

The basis functions $Z_k$, $U^i_k$ and $S$ are intended as the classical zero, projections and successor functions, but the special variable $n$ always bounds their values. The function $C$ is intended as a maximum function. The functional operations composition and recursion are also bounded by $n$.
The main result of Mostowski's paper is the following theorem.

\begin{theorem}
For any unary function $f\in K$, the set $\{n+1\ |\ f(n)=0\}$ is a spectrum.
\end{theorem}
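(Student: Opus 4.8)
The plan is to encode the (truncated) computation of $f$ into finite structures, exploiting the fact that Mostowski's special variable $n$ is exactly ``the size of the universe minus one''. Indeed, in a structure of size $n+1$ equipped with a linear order, the universe is canonically $(\{0,1,\dots,n\},\le)$, so $n$ is the maximum element and $0$ the minimum; and every function in $K$, evaluated with this value of the bounding parameter, takes values in $\{0,\dots,n\}$, so its graph lives inside the structure. The sentence $\phi$ will force auxiliary relations to be precisely the truncated graphs of all functions occurring in a derivation of $f$, and then assert ``$f(n)=0$''.

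First I would fix a derivation of $f\in K$: a finite list $g_1,\dots,g_N=f$ in which each $g_i$ is a basis function ($Z_k$, $U^i_k$, $S$, or $C$) or is obtained from earlier entries by truncated composition or truncated recursion. For each $g_i$ with $a_i$ ordinary arguments I introduce a relation symbol $R_{g_i}$ of arity $a_i+1$, intended to hold the graph of $\bar x\mapsto g_i(\bar x,n)$; since $f$ is unary, $R_f$ is a unary predicate. The vocabulary of $\phi$ consists of a binary symbol $\le$ together with all the $R_{g_i}$, and $\phi$ is the conjunction of: (i) the axioms that $\le$ is a linear order (this makes a size-$m$ model isomorphic to $(\{0,\dots,m-1\},\le,\dots)$, so ``$n$'' is the maximum and ``$0$'' the minimum, both definable, and successor is definable); (ii) for each $i$, the axiom that $R_{g_i}$ is the graph of a total function, together with the first-order transcription of the defining clause of $g_i$ --- for $Z_k$: ``$y$ is the minimum''; for $U^i_k$: ``$y=x_i$'' (legitimate, since every element is $\le$ the maximum, so $\min(x_i,n)=x_i$); for $C$: ``$y$ is the maximum''; for $S$: ``if $x$ is the maximum then $y=x$, else $y$ is the $\le$-successor of $x$'' (this is exactly $y=\min(x+1,n)$); for a composition, an existential quantification over the intermediate value linking the relations of the constituents; for a recursion $f(0,\bar x)=g(\bar x)$, $f(t+1,\bar x)=\min(h(t,f(t,\bar x),\bar x),n)$, the base clause $R_f(\mathbf{0},\bar x,y)\leftrightarrow R_g(\bar x,y)$ and, for $t$ not the maximum, the step clause $R_f(t',\bar x,y)\leftrightarrow\exists z\,(R_f(t,\bar x,z)\wedge R_h(t,z,\bar x,y))$ where $t'$ is the $\le$-successor of $t$ (the outer $\min$ is redundant, the value of $h$ being already bounded by the maximum); and (iii) the single atom $R_f(\mathbf{0})$, expressing $f(n)=0$.

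It then remains to verify $\spec{\phi}=\{n+1\mid f(n)=0\}$. For $\supseteq$: given $n$ with $f(n)=0$, equip $\{0,\dots,n\}$ with its natural order and interpret each $R_{g_i}$ by the genuine graph of $\bar x\mapsto g_i(\bar x,n)$; one checks by induction along the derivation that all clauses of (ii) hold (the truncations matching as noted), and (iii) holds by hypothesis, so this is a model of size $n+1$. For $\subseteq$: if $M\models\phi$ with $|M|=m$, then $M\cong(\{0,\dots,m-1\},\le,\dots)$, and an induction along the derivation --- using the functionality axioms and, for a recursion, unwinding the step clause along the order --- shows that each $R_{g_i}$ is forced to equal the truncated graph of $g_i$ with bounding parameter $m-1$; hence clause (iii) reads $f(m-1)=0$, i.e.\ $m\in\{n+1\mid f(n)=0\}$.

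The only delicate points are the bookkeeping ones: checking that every ordinary argument and every function value genuinely stays inside $\{0,\dots,n\}$, so that the ``$\min(\cdot,n)$'' truncations become vacuous (the sole place $x+1$ can overshoot is in $S$, handled by the maximum-element case), and --- the real content of the $\subseteq$ direction --- checking that the recursion clause, when unwound along the linear order, genuinely pins down $R_f$ uniquely rather than merely constraining it. Everything else is a routine translation of truncated primitive recursion into first-order logic over a linear order.
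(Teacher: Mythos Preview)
Your proposal is correct and follows essentially the same approach as the paper: encode a derivation of $f$ in $K$ by relation symbols for the graphs of all intermediate functions, axiomatise a linear order so that the universe is $\{0,\dots,n\}$ with $n$ the maximum, transcribe each defining clause of the derivation into first-order conditions on these relations, and finally assert that $f$ takes the value $0$. The only cosmetic differences are that the paper keeps the bounding parameter as an explicit (but always $=\max$) argument of each relation and puts $\min$, $\max$, $Succ$ into the vocabulary rather than defining them from $\leq$, and that the paper illustrates the construction on a concrete example rather than writing out the general induction; your version is a faithful systematic rendering of the same idea.
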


Let us give an idea of the proof via an example.
Consider the functions $f$, $g$ and $h$ defined as follows: 

\begin{itemize}
  \item[-] $f(x,n)=1$ if $x=0$ and $f(x,n)=0$ otherwise. 

I.e.
$\left\{\begin{array}{l}f(0,n)=S(Z(C(n),n),n)\\ 
f(x+1,n)=min(Z(f(x,n),n),n)\end{array}\right.$
  \item[-] $g(x,n)=0$ if $x$ is even and $g(x,n)=1$ otherwise. 

I.e.  
$\left\{\begin{array}{l}g(0,n)=Z(C(n),n)
\\ g(x+1,n)=min(f(g(x,n),n),n)\end{array}\right.$ 
\item[-] $h(n)=g(C(n),n)$.

\end{itemize}

\noindent Clearly we have $h\in K$ and $h(n)=0$ if and only if $n$ is even. 
Let us derive from the definition of $h$ a sentence $\psi$ in the vocabulary 
$$\sigma=\{\leq, min,max, Succ^{(2)}, R_f^{(3)}, R_g^{(3)}, R_h^{(2)}\}$$ 
such that $\psi$ has a model with $n+1$ elements if and only if $h(n)=0$ 
(i.e. $\spec{\psi}$ is the set of odd numbers). 
The key point of the construction is that the functions in the class $K$ can be 
interpreted as functions on finite structures 
(eg. from $\{0,\ldots,n\}^k$ to $\{0,\ldots,n\}$) 
without loss of generality, because of the special variable $n$ that bounds all their values.

The sentence $\psi$ first expresses the fact that $\leq$ is a linear ordering, 
$min$ and $max$ are its first and last elements and $Succ$ its successor 
relation. Then, $\psi$ describes the behavior of the predicates $R_f$, $R_g$ 
and $R_h$ corresponding to the graphs of the functions $f$, $g$ and $h$. For 
instance, $R_g$ obeys the conjunction of the following sentences:

\begin{itemize}
  \item[-] $g$ is functional in its first variable:
   
$\forall x,y \left(y=max\longrightarrow\exists !z R_g(x,y,z)\right)$

\item[-] the second 
variable in $g$ is always $n$:

$\forall x,y,z \left(R_g(x,y,z)\longrightarrow y=max\right)$

\item[-] description of the base case of the definition of $g$:

$\forall x,y,z \left[\left(R_g(x,y,z)\wedge x=min\right) \longrightarrow 
z=min\right]$

\item[-] description of the recursive recursive case of the definition of $g$:

$\forall x,y,z [(R_g(x,y,z)\wedge \neg x=min)$ 

$\longrightarrow \exists t,u (Succ(t,x)\wedge R_g(t,y,u)\wedge 
R_f(u,y,z))]$ 
\end{itemize}

\noindent Our goal is then achieved by adding to $\psi$ the following condition: 

$\forall x,y\left(R_h(x,y)\longrightarrow (x=max \wedge y=min)\right)$

\noindent Finally, it is clear that 
$\spec{\psi}$
is the set of odd numbers as required.

Mostowski asks if the converse is true, i.e. 
\begin{openquestion}
\label{mostowski-K}
Is every 
spectrum representable as $\{n+1\ |\ f(n)=0\}$ for some function $f\in K$? 
\end{openquestion}
No answer is known up to now.

As a conclusion, new examples of spectra are presented: the set of integers having the form $n!$ for some $n$, and the set $\{n\ |\ n^2+1  \mbox{ is prime }\}$. 
Also, Mostowski asks whether Fermat's prime numbers, i.e. primes of the form $2^{2^n}+1$, form a spectrum. This question can be understood in two different ways, as noticed by Bennett: which one of the sets $A=\{p\ |\ p\mbox{ is prime and }p=2^{2^n}+1\mbox{ for some integer }n\}$ and $B=\{n\ |\ 2^{2^n}+1\mbox{ is prime }\}$ is intended ? Using rudimentary relations, the set $A$ is easily proved to be a spectrum, whereas it is still not known for the set $B$.
\medskip

Finally, let us remark that it is ordinarily considered that what Mostowski 
proved is that the unary relations in ${\mathcal E}^2_{\star}$ are spectra. This is not exactly the case, 
but the legend is most probably due to the fact that Bennett attributes this result to Mostowski. 
However, Bennett also notes that, even if it is easy to prove that 
$K\subseteq{\mathcal E}^2$, it is not clear that the bounded version of 
any function in ${\mathcal E}^2$ 
(i.e. $f_{b}(x_1,\ldots,x_k,n)=min(f(x_1,\ldots,x_k),n)$) is in $K$. 
Mostowski's construction crucially relies on the fact that the functions in $K$ 
are bounded by their last variable, and does not generalize to functions in 
${\mathcal E}^2$. 
In contrast, it is not difficult to verify that the bounded versions of 
addition and multiplication are in $K$, 
and consequently that the rudimentary relations have their characteristic functions in $K$.
Whatever, it is true that the unary relations in ${\mathcal E}^2_{\star}$ 
are indeed spectra, see Corollary \ref{bennett-corr}.

\subsection{Bennett's thesis}\label{subsection bennett}
This is a huge work titled ``On spectra'' \cite{phd:Bennett62}, but which also deals with a lot of other subjects. Bennett's thesis is unpublished, and only available via library services.
It is one of the remarkable early texts anticipating later developments
in finite model theory, definability theory and complexity theory.
It contains a characterization (and various definitions) of 
rudimentary sets and already relates spectra to space bounded Turing machines, thus catching a glimpse of many of the results
concerning spectra that were formulated and proved 
in more modern language after 1970.

Not only first-order spectra are considered by Bennett, but also spectra of higher order logics, and not only sets, but also many-sorted sets, all in all spectra of the whole theory of types. This full generality makes the notations quite clumsy. The use of many-sorted structures corresponds to relations of arity greater than one, and the use of higher order logics provides more complicated relations.

We shall limit ourselves with the cases of one-sorted (i.e. ordinary) spectra 
of orders one and two.
Note that the first item of Theorem \ref{TH Bennett} is also stated as Theorem \ref{th spec srud} in Subsection \ref{subsection recursion}.
\medskip

\begin{theorem}[Bennett, 1962~\cite{phd:Bennett62}]~\label{TH Bennett}
\ 
\begin{enumerate}
\item~\label{Bennett1} A set $S\subseteq{\mathbb N}$ is a first-order spectrum iff it can be defined by a formula of the form $\exists y\!\leq\! 2^{x^j} R(x,y)$ for some $j\geq 1$, where $R$ is strictly rudimentary.
\item~\label{Bennett2} A set $S\subseteq{\mathbb N}$ is a second-order spectrum iff it can be defined by a formula of the form $\exists y\!\leq\! 2^{x^j} R(x,y)$ for some $j\geq 1$, where $R$ is rudimentary.
\end{enumerate}
\end{theorem}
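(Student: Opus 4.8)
The plan is to prove both directions for part (\ref{Bennett1}) and then observe that part (\ref{Bennett2}) follows by an entirely parallel argument in which "strictly rudimentary" is replaced by "rudimentary" throughout. I would first establish the easy inclusion: \emph{every set defined by $\exists y \leq 2^{x^j} R(x,y)$ with $R \in \SRUD$ is a first-order spectrum.} Here the idea is that a witness $y < 2^{x^j}$ is exactly the bit-code of a $j$-ary relation on an $x$-element universe $\{0,\ldots,x-1\}$; conversely any $j$-ary relation on such a universe is coded by such a $y$. So one builds a first-order sentence $\phi$ over a vocabulary containing a linear order (forced to be the natural order on the universe), its arithmetic (addition/multiplication as ternary relations, which are FO-axiomatizable on an initial segment), and a $j$-ary relation symbol $\bar Y$ playing the role of (the relation coded by) $y$. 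The predicate $R$, being strictly rudimentary, is built from dyadic concatenation by Booleans and subword quantifiers; the crucial point is that subword quantifiers over $y$ translate into first-order quantifiers over the universe elements (positions/subwords are themselves bounded by $x$ in the relevant sense), so $R(x,y)$ becomes a first-order formula interpreting $\bar Y$. Then $\spec{\phi}$ is precisely $\{x : \exists y \leq 2^{x^j} R(x,y)\}$. This direction reuses the encoding machinery already sketched in the proof of the Proposition on sentences with prescribed spectra.

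For the harder inclusion — \emph{every first-order spectrum has the stated form} — I would start from the Jones--Selman characterization $\SPEC = \NE$ (Theorem, Section \ref{crucialcomplexity}), or argue directly. Given $\phi$, a nondeterministic machine accepting $\spec{\phi}$ on unary input $x$ guesses an $x$-element model $\mathcal M$ and checks $\mathcal M \models \phi$ in time polynomial in $x$, i.e. time $x^{O(1)} = 2^{O(\log x)}$ — equivalently $\NE$ in the binary length. Encode the entire accepting computation (the guessed model plus the tableau verifying $\mathcal M \models \phi$) as a single binary string $y$ of length at most $2^{x^j}$ for a suitable $j$. The remaining work is to express "$y$ encodes a correct, accepting computation on input $x$" by a \emph{strictly rudimentary} predicate $R(x,y)$. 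Correctness of the tableau is local: each cell of $y$ is determined by a bounded neighborhood of cells, and consistency between a cell and its successors/predecessors is checkable by looking at short subwords of $y$ whose positions differ by the (fixed, $x$-dependent but polynomially bounded) row length. The point is that all these checks are subword conditions, and the arithmetic of positions needed (row length, etc.) can be folded into the encoding so that no genuine bounded-\emph{numerical} quantifier is required — only subword quantifiers. This is exactly Bennett's technical insight, and it is the delicate step.

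The main obstacle is precisely this last point: showing that the consistency-of-computation predicate lies in $\SRUD$ rather than merely in $\RUD$. Strictly rudimentary relations are extremely weak — even $x \leq y$ and "$x, y$ have the same dyadic length" are \emph{not} in $\SRUD$ (Nepomnjascii) — so one cannot freely compare positions or measure lengths. The resolution is to design the encoding $y$ so that positional information is carried \emph{syntactically} inside the string (e.g. by interleaving position markers, delimiters, or a self-delimiting block structure), so that "the cell at position $p$ and the cell at the corresponding position in the next row are consistent" becomes a statement about matching adjacent blocks of $y$ — a pure subword condition. Once this careful string design is in place, the verification predicate is a Boolean combination of subword-quantified statements about dyadic concatenations, hence strictly rudimentary, and the existential bound $\exists y \leq 2^{x^j}$ absorbs the guess. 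For part (\ref{Bennett2}), the same construction applied to second-order $\phi$ requires coding a computation of an $\NE$-machine that may itself guess second-order objects, which blows the verification up to needing genuine bounded numerical quantifiers — and these are available in $\RUD$ but not $\SRUD$ — giving exactly the rudimentary (rather than strictly rudimentary) $R$ in the statement.
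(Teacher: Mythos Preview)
Your route differs substantially from the paper's. The paper proves part~(\ref{Bennett2}) \emph{first} as the easier case and then refines the argument for part~(\ref{Bennett1}); you have this ordering reversed. More importantly, the paper never passes through Turing machines or Jones--Selman (which postdates Bennett by a decade). For the direction ``spectrum $\Rightarrow$ $\SRUD$ form,'' it takes $\phi$ with predicate symbols $Z_1,\ldots,Z_k$ and packs a \emph{model} --- not a computation tableau --- directly into $y$: each relation $Z_i$ becomes a delimited list $z_i = \star\ast x_1^1\ast\cdots\ast x_a^1\ast\star\cdots$, the domain is encoded as $x_0 = \ast x\ast(x{-}1)\ast\cdots\ast 1\ast$, and $y = \bullet z_1\bullet\cdots\bullet z_k\bullet x_0\bullet$ (the extra symbols are later simulated over $\{1,2\}$). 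First-order quantifiers in $\phi$ become subword quantifiers ranging over the maximal digit-blocks of $x_0$; atoms $Z(u_1,\ldots,u_a)$ become the subword test $\star\ast u_1\ast\cdots\ast u_a\ast\star \harp z$. The delicate step is a strictly rudimentary $Dom(x_0,x)$ asserting that $x_0$ really enumerates $\{1,\ldots,x\}$. For the converse direction, the paper replaces each subword quantifier $\exists z\harp y$ by a pair of $j$-tuples $(\overline{z_1},\overline{z_2})$ marking start and end positions, with $y$ held in two free $j$-ary predicates $Y_1,Y_2$ for its dyadic digits.

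Your detour through $\NE$ could in principle be completed, but it displaces rather than removes the difficulty: verifying an $\NE$ tableau in $\SRUD$ is at least as hard as Bennett's direct argument, and you defer precisely this step to ``Bennett's technical insight'' without supplying it. Your explanation of why part~(\ref{Bennett2}) requires $\RUD$ is also off. The issue is not that a machine guesses larger objects; it is that second-order \emph{quantifiers} in $\phi$ translate into quantifiers over integers up to $2^{x^a}$, which are bounded \emph{numerical} quantifiers (available in $\RUD$) rather than subword quantifiers, whereas first-order quantifiers over the domain can be handled by subword quantifiers over the enumeration string $x_0$ --- that is exactly the gap between $\SRUD$ and $\RUD$ in this theorem.
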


Spectra of higher order are characterized by similar features: spectra of 
order $2n$ correspond to rudimentary relations prefixed by an existential 
quantifier bounded by an iterated exponential $2^{\!^{.^{.^{2^{x^j}}}}}$ with 
height $n$, and spectra of order $2n-1$ correspond to strictly rudimentary 
relations prefixed by an existential quantifier bounded by an iterated 
exponential with height $n$. Spectra of sentences over a 
$d$-sorted universe have the same types of characterizations, using $\exists 
y\leq 2^{\max(x_1,\ldots,x_d)^j}\ R(x_1,\ldots,x_d,y)$. Finally, the spectra of 
the whole type theory are characterized as the elementary relations.

\raus{
In order to present Bennett's results, we have first to define some notions of type theory.

\begin{definition}[type theory]
\ 
\begin{itemize}
\item order The orders are the natural integers.
\item type
\begin{itemize}
\item Every type has an order.
\item The types of order $0$ are the (dyadic notations of the) positive integers.
\item If $s_1,\ldots,s_k$ are types of respective orders $o_1,\ldots,o_k$, then $(s_1,\ldots,s_k)$ is a type of order $1+\max(o_1,\ldots,o_k)$.
\end{itemize}
\item variable
\begin{itemize}
\item Every variable has a type.
\item If $s$ is a type and $i$ is (the dyadic notation for) an integer, then $X_i^s$ is a variable of type $s$.
\end{itemize}
\item formula
\begin{itemize}
\item Every formulas has a positive order.
\item If $s$ is a type of order $0$ and $x,x'$ are variables of type $s$, then $x=x'$ is a formula of order $1$.
\item If $X,x_1,\ldots,x_k$ are variables of respective types $(s_1,\ldots,s_k),s_1,\ldots,s_k$, then $X(x_1,\ldots,x_k)$ is a formula of order $2o-1$, where $o$ is the order of the type $(s_1,\ldots,s_k)$.
\item If $\varphi_1,\varphi_2$ are formulas of respective orders $o_1,o_2$, then $\neg\varphi_1$ is a formul of order $o_1$, $\varphi_1\wedge\varphi_2$ and $\varphi_1\vee\varphi_2$ are formulas of order $\max(o_1,o_2)$.
\item If $\varphi$ is a formula of order $o$ and $x$ is a variable (of type $s$) of order $o'$, than $\forall x\varphi$ and $\exists x\varphi$ are formulas of order $\max(2o',o)$.
\end{itemize}
\item domain: The domains form a finite sequence of arbitrary sets $D_1,\ldots,D_d$, with $d\geq 1$.
\item object
\begin{itemize}
\item Every object has a type.
\item If $1\leq i\leq d$, then the objects of type $i$ are the elements of $D_i$.
\item If $s_1,\ldots,s_k$ are types, then the objects of type $(s_1,\ldots,s_k)$ are the $k$-ary relations defined on objects of types $s_1,\ldots,s_k$.
\end{itemize}
\item assignation of objects to variables: the types must match
\item dimension of a formula: highest number appearing in the type of a variable in the formula
\item spectrum Let $\varphi$ be a formula of dimansion $k$ with free variables $x_1,\ldots,x_j$ of types $s_1,\ldots,s_j$. Then $Sp\varphi$ is the $k$-ary relation over integers defined by $(a_1,\ldots,a_k)\in SP\varphi$ if and oly if there exists domains $D_1,\ldots,D_k$ with respective numbers of elements $a_1,\ldots,a_k$ and objects $b_1,\ldots,b_j$ with types $s_1,\ldots,s_j$ such that $\varphi(b_1,\ldots,b_k)$ is true.
\end{itemize}
\end{definition}

\begin{definition}
Let $n\geq 1$. We denote by ${\mathcal S}^n$ the class of spectra of formulas of order $n$.
\end{definition}

\begin{definition}
Let $n\geq 0$ and $j\geq 1$. We denote by $F^{n,j}(x)$ the function defined as follows: $F^{0,j}(x)=x^j$ and $F^{n+1,j}(x)=2^{F^{n,j}(x)}$.
\end{definition}

\begin{theorem}
\ 
\begin{enumerate}
\item For each $n\geq 1$ and $m\geq 2$, ${\mathcal S}^{2n-1}$ is the set of relations of the form $\exists y\leq F(\max(\overrightarrow{x}))\ R(\overrightarrow{x},y)$, where $R$ is strictly $m$-rudimentary and $F$ is $F^{n,j}$ for some $j\geq 1$.
\item For each $n\geq 1$, ${\mathcal S}^{2n}$ is the class of relations of the form $\exists y\leq F(\max(\overrightarrow{x}))\ R(\overrightarrow{x},y)$, where $R$ is rudimentary and $F$ is $F^{n,j}$ for some $j\geq 1$.
\item For each $n\geq 1$, ${\mathcal S}^n$ is a subset of ${\mathcal S}^{n+1}$ and a proper subclass of ${\mathcal S}^{n+2}$.
\item The class of all spectra is the class of elementary relations.
\end{enumerate}
\end{theorem}

In an attempt to make his characterization of spectra of odd order independant of $m$, just as the 
}

\medskip

Bennett also introduces several other subrudimentary classes,  
respectively called ``strongly'', ``positive'' and ``extended'' 
rudimentary relations, which yield a bunch of slightly different 
characterizations of spectra, which may witness various unsuccessful attempts 
to design a truly satisfactory characterization. 
In this survey, we shall limit ourselves to $\RUD$ and $\SRUD$.

\medskip

Some consequences of the characterization theorem (not all of them are immediate):

\begin{corollary}\label{bennett-corr}
\ 
\begin{enumerate}
\item For each $n\geq 1$, the class of spectra of order $n$ is closed under $\wedge$, $\vee$, bounded quantifications, substitution of rudimentary functions, explicit transformations and finite modifications. 
\item For each $n\geq 1$, the class of spectra of order $2n$ is closed under $\neg$.
\item 
\label{bennett-corr-iii}
The class of first-order spectra contains the rudimentary relations and ${\mathcal E}_{\star}^2$.
\item The class of second-order spectra strictly contains the rudimentary relations.
\item For each $n\geq 1$, spectra of order $n$ form a subset of spectra of order $n+1$ and a strict subset of spectra of order $n+2$.
\end{enumerate}
\end{corollary}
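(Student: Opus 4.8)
My plan is to derive all five clauses from the Bennett normal form: by Theorem~\ref{TH Bennett} and its higher-order extension stated just after it, a set is an order-$n$ spectrum iff it equals $\{x:\exists y\le b_n(x^j)\,R(x,y)\}$ for some $j\geq 1$, where $b_n$ is the iterated-exponential bound of height $\lceil n/2\rceil$ (so $b_1(t)=b_2(t)=2^{t}$, $b_3(t)=b_4(t)=2^{2^{t}}$, and so on), $R\in\SRUD$ when $n$ is odd, and $R\in\RUD$ when $n$ is even. I would combine this with: (a) $\SRUD$ is closed under the Boolean connectives and subword quantification, $\RUD$ additionally under bounded quantification; (b) the bounds $b_n$ have rudimentary graphs, grow faster than every polynomial, but are absorbed by increasing $j$; (c) rudimentary functions have polynomial length-growth; and (d) for the strictness clauses, diagonalization against the normal form (the argument behind Asser's $\SPEC\subsetneq{\mathcal E}^3_{\star}$, Theorem~\ref{theorem asser}) together with the deterministic space hierarchy theorem and the R\"odding--Schwichtenberg inclusion $\DSpace{exp_i(n)}\subseteq\HOSPEC{i+1}$.

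\textbf{Items (i) and (ii).} For closure under $\vee$: from $S_i=\{x:\exists y\le b_n(x^{j_i})R_i(x,y)\}$, take the common bound $b_n(x^{\max(j_1,j_2)})$ and the matrix $R_1\vee R_2$ (enlarging the search bound only adds potential witnesses, which is harmless for a disjunction). For $\wedge$, and also for bounded quantification, the key move is to code a pair — or a bounded-length tuple — of bounded witnesses as a single dyadic word whose coordinates are recoverable by subword quantifiers, so that the combined matrix stays in $\SRUD$; the new witness has length polynomial in that of $x$, hence is bounded by $b_n(x^{j'})$ for a larger $j'$. Substitution of rudimentary functions and explicit transformations use (c) to keep the exponential bound and keep the matrix inside the relevant rudimentary class. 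Finite modification is immediate: $S\setminus F=S\cap(\N^+\setminus F)$ and $S\cup F$, with $F$ finite, and finite and cofinite sets are order-$1$ (hence, by the trivial inclusion in item~(v), order-$n$) spectra. For (ii) the crucial observation is that a bounded \emph{universal} quantifier can be replaced by the bounded \emph{existential} witness furnished by the bound itself:
\[
\N^+\setminus S=\bigl\{\,x\geq 1:\exists z\le b_n(x^j)\,\bigl(z=b_n(x^j)\wedge\forall w\le z\,\neg R(x,w)\bigr)\,\bigr\};
\]
the matrix is rudimentary because $z=b_n(x^j)$ has a rudimentary graph (by repeated bounded-existential composition of the rudimentary graph of exponentiation) and $\RUD$ is closed under bounded universal quantification, so the whole thing is again in the even-order normal form.

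\textbf{Items (iii)--(v).} Item~(iii) is essentially a restatement of facts already at hand: $\RUD\subseteq{\mathcal E}^2_{\star}$ (recorded earlier) together with Mostowski's Theorem~\ref{th:most56}, ${\mathcal E}^2_{\star}\subseteq\SPEC$, give $\RUD\subseteq{\mathcal E}^2_{\star}\subseteq\SPEC$, the $\RUD$ half being Corollary~\ref{cor rud-in-spec}; alternatively one checks directly that a rudimentary $Q$ sits in the form $\exists y\le 2^{x}\,R(x,y)$ with $R\in\SRUD$ via the subword-encoding underlying Theorem~\ref{TH Bennett}(1). For item~(iv), $\RUD\subseteq{\mathcal E}^2_{\star}=\LINSPACE$ by Theorem~\ref{ritchie-kuroda}, the space hierarchy theorem gives $\LINSPACE=\DSpace{n}\subsetneq\DSpace{2^{n}}$, and $\DSpace{2^{n}}=\DSpace{exp_1(n)}\subseteq\HOSPEC{2}$ by R\"odding--Schwichtenberg, so the inclusion of $\RUD$ into second-order spectra is proper. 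For item~(v), order-$n$ spectra sit inside order-$(n+1)$ spectra trivially, since an order-$n$ formula is an order-$(n+1)$ formula; strictness of order $n$ inside order $n+2$ I would obtain by diagonalizing against the normal form of Theorem~\ref{TH Bennett} at the level of the resource bound $b_n$ versus $b_{n+2}$ — exactly Asser's separation $\SPEC\subsetneq{\mathcal E}^3_{\star}$ specialised from ``$n$ unbounded'' to ``$n$ fixed'' — or equivalently from the space/time hierarchy theorems fed through the R\"odding--Schwichtenberg and Bennett inclusions.

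\textbf{Main obstacle.} The real work is in item~(i): to merge several bounded existential quantifiers into one while keeping the matrix \emph{strictly} rudimentary, one must encode a tuple of bounded numbers as a single dyadic word whose coordinates are extractable using only subword quantification — this is the one genuinely delicate point, and precisely the reason Bennett works with the auxiliary ``strongly/positive/extended rudimentary'' variants of $\SRUD$. A secondary technical chore is verifying, for the higher-order cases of (ii) and (v), that the iterated-exponential bounds $b_n$ indeed have rudimentary graphs. Once these encodings are nailed down, the remainder of the corollary is bookkeeping on top of Theorem~\ref{TH Bennett}.
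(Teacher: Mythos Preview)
The paper does not actually prove this corollary: it is introduced only as ``Some consequences of the characterization theorem (not all of them are immediate)'' and then the text moves directly to a proof of Theorem~\ref{TH Bennett} itself. Your plan --- to read every item off the Bennett normal form $\{x:\exists y\le b_n(x^j)\,R(x,y)\}$ --- is therefore exactly the approach the paper intends, and your sketch is sound. In particular, your treatment of~(ii) via the dummy witness $z=b_n(x^j)$ and the rudimentarity of the iterated-exponential graph is correct, your derivations of~(iii) and~(iv) from the surrounding results (Mostowski, Ritchie, R\"odding--Schwichtenberg, space hierarchy) are fine, and you have correctly isolated the one genuinely nontrivial point: for odd orders, merging several exponentially bounded existentials into one while keeping the matrix in $\SRUD$ requires a subword-recoverable pairing of dyadic words, which is precisely why Bennett introduces the auxiliary ``strongly/positive/extended'' variants the paper alludes to. One small clarification worth making explicit in your write-up: ``bounded quantification'' in~(i) refers to the many-sorted setting (spectra as $d$-ary relations, with the bound $\max(x_1,\dots,x_d)^j$), so the closure is over an input coordinate rather than over the exponential witness; your encoding argument handles this, but the phrasing should make clear which variable is being quantified.
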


We propose below a proof of Bennett's theorem.
\medskip

\begin{proof}[Proof of Theorem~\ref{TH Bennett}]
\noindent (\ref{Bennett2}) We first present the second-order case, because it has less technical difficulties.
\medskip

\noindent - First inclusion: 
$\{\spec{\varphi}\ 
|\ \varphi\in \Sol\}\subseteq\{\exists y\leq 2^{x^j}\ R(x,y)\ 
|\ j\geq 1\hbox{ and }R\in \RUD\}$ 
i.e. $\varphi$ has a model with $x$ elements iff $\exists y\leq 2^{x^j}\ R(x,y)$ is true.

W.l.o.g. we may assume that $\varphi$ has no  first-order or second-order 
free variable (just quantify existentially in case there are any). Assume the 
second-order variables appearing in $\varphi$ have arities strictly less than 
$j$. Then we take $y=2^{x^j}$. We encode a second-order variable $Z$ with arity 
$a<j$ by the number $z<2^{x^a}<y$ in the usual way. Hence, every second-order 
quantification $QZ^{(a)}$ in $\varphi$ is translated into the first-order 
bounded quantification $Qz<2^{x^a}<y$. Recall that $Bit(a,b)$ is true iff the bit of rank $b$ of $a$ is $1$. Now, every atomic formula 
$Z(z_1,\ldots,z_a)$ is translated into $Bit(z,z_1+z_2\cdot x+\ldots+z_a\cdot 
x^{a-1})$. 
Every first-order quantification $qz$ in $\varphi$ is translated into the 
bounded quantification $qz<x$. The atomic formulas $z=z'$ in $\varphi$ remain 
unchanged. Let $\varphi'$ denote the obtained formula. Finally, let $R\equiv 
(y=2^{x^j})\wedge \varphi'$.
\medskip

\noindent - Second inclusion: 
$\{\spec{\varphi}\ 
|\ \varphi\in SO\}\supseteq\{\exists y\leq 2^{x^j}\ R(x,y)\ 
|\ j\geq 1\hbox{ and }R\in \RUD\}$ 
i.e. $\exists y\leq 2^{x^j}\ R(x,y)$ is true iff $\varphi$ has a model with $x$ elements.

First, we use three existentially quantified relations, namely $\leq^{(2j)}$ 
which is bound to be a linear ordering over the $j$-tuples of vertices, 
$+^{(3j)}$ which is bound to be the associated addition and $\times^{(3j)}$ 
which is bound to be the associated multiplication. Let us denote by 
$Arithm(\leq,+,\times)$ the first-order sentence expressing this requirement. 
Note that we may now use for free any usual arithmetic predicate on numbers 
bounded by $x^j$ (written in $x$-ary notation, i.e. seen as $j$-tuples of 
integers in $\{0,\ldots,x-1\}$). Next, all variables in $R$, including $x$ and 
$y$, are encoded by $j$-ary second-order variables in $\varphi$ in the usual 
way. For instance if $x=\sum_{l=0}^p2^{i_l}$, we let 
$X=\{(i_0,0,\ldots,0),\ldots,(i_p,0,\ldots,0)\}$.

W.l.o.g. we may assume that all the atomic formulas in $R$ are of type $u\cdot v=w$ 
(concatenation), which we translate into 

\[
\begin{array}{l}
Concat(U,V,W)\equiv\\
\quad \exists \overline{t} V(\overline{t}) 
\wedge\forall\overline{z}(V(\overline{z})\longrightarrow\overline{z}\leq\overline{t}) 
\wedge\forall\overline{z}(U(\overline{z})\longrightarrow\overline{z}\leq(\overline{max}-\overline{t}))\\
\quad\wedge\forall \overline{z}  \left(W(\overline{z})\longleftrightarrow 
\left(\left(\overline{z}\leq\overline{t}\wedge V(\overline{z})\right)\vee 
\left(\overline{z}>\overline{t}\wedge 
U(\overline{z}-\overline{t})\right)\right)\right). 
\end{array}
\]

Note that this sentence 
would be cleaner in dyadic than it is in binary, but the whole encoding would 
also be more complicated because two unary relations are needed to encode an 
integer in dyadic (the set of $1$s and the set of $2$s) because its length is fixed.

In order to translate the bounded quantifications in $R$, we also need the 
following first-order sentence, which expresses the fact that the integers $u$ and $v$ respectively encoded by $U$ and $V$ are such that $u<v$.

\[
\begin{array}{l}
Smaller(U,V)\equiv\\
\exists\overline{z}\left(V(\overline{z})\wedge\neg 
U(\overline{z})\wedge\forall\overline{z'}>\overline{z}\neg 
U(\overline{z'})\right)\\
\vee \exists\overline{z}\left(V(\overline{z})\wedge 
U(\overline{z})\wedge\forall\overline{z'}>\overline{z}\left(\neg 
V(\overline{z'})\wedge\neg U(\overline{z'})\right)\wedge 
\exists\overline{z'}<\overline{z} \left(V(\overline{z'})\wedge\neg 
U(\overline{z'})\right)\right).
\end{array}
\]

Now, let $R'$ be obtained from $R$ by applying the following rules: every bounded first-order quantification $\forall z<z'\ldots$ is translated into the second order quantification $\forall Z^{(j)} Smaller(Z,Z')\longrightarrow \ldots$, and accordingly for $\exists z<z'\ldots$; and every atomic formula $u\cdot v=w$ is translated into $Concat(U,V,W)$. 

It remains to express that $X$ encodes the size $x$ of the domain, which is done 
using the binary notation of $(max,0,\ldots,0)$, which represents the largest 
element of the domain. More precisely, we have $max+1=x$, which translates in 
binary as follows:

\[
\begin{array}{l}
Dom(X)\equiv\\
\forall\overline{z}
(((X(\overline{z})\wedge \forall\overline{z'}<z \neg X(\overline{z'})) \longrightarrow \neg Bit((max,0,\ldots,0),\overline{z}))\\
\wedge ((\exists\overline{z'}<z X(\overline{z'})) \longrightarrow (X(\overline{z}) \longleftrightarrow Bit((max,0,\ldots,0),\overline{z})))\\
\wedge ((\exists\overline{z'}>\overline{z} (X(\overline{z'})\wedge \forall\overline{z''}<\overline{z'} \neg X(\overline{z''}))) \longrightarrow Bit((max,0,\ldots,0),\overline{z})) )\\ 
\end{array}
\]

Finally, $\varphi$ is $\exists 
\leq^{(2j)}\exists+^{(3j)}\exists\times^{(3j)}\exists Y^{(j)}\exists X^{(j)} 
(Arithm(\leq,+,\times)\wedge Dom(X)\wedge R')$.

\medskip

\noindent (\ref{Bennett1}) Next we turn to the first-order case. We consider the proof of the second-order 
case and show how it has to be modified in order to fit to the first-order 
case. Note that the proof is now more tricky, and we use dyadic notation 
because we have to be more precise.
\medskip

\noindent - First inclusion: 
$\{\spec{\varphi}\ |\ \varphi\in FO\}
\subseteq\{\exists y\leq 2^{x^j}\ R(x,y)\ |\ j\geq 1\hbox{ and }R\in \SRUD\}$

The main difference concerning $\varphi$ is that it contains no second-order quantifications. Concerning $R$, we have to deal with two differences: bounded quantifications are now replaced by part-of quantifications ($\forall z_1\harp z_2$ and $\exists z_1\harp z_2$) on the one hand and we have to use concatenation instead of arithmetic on the other hand. 

However, $\varphi$ does contain free second-order variables, say 
$Z_1^{(a_1)},\ldots,Z_k^{(a_k)}$, which we do not encode in the usual way 
because $\SRUD$ does not allow to use arithmetical predicates, hence the $Bit$ 
predicate is not available. Instead, we assume for now that the alphabet is 
$\{1,2,\ast,\star,\bullet\}$ and we first define a provisional predicate $R'(x,y)$. We shall 
explain later how to get rid of the extra symbols $\ast$, $\star$ and $\bullet$ 
to obtain the expected $R(x,y)$.
\medskip

We use the following encoding: if 
$Z=\{(x_1^1,\ldots,x_a^1),\ldots,(x_1^p,\ldots,x_a^p)\}$, with $p\leq x^a$, 
then let $z=\star\ast x_1^1*\ldots*x_a^1\ast\star\ldots\star\ast 
x_1^p*\ldots*x_a^p\ast\star$. Note that we have $|z|\leq x^a\cdot a\cdot 
(|x|+2)$.

Let us define $x_0=\ast x\ast(x-1)\ast\ldots\ast1\ast$, i.e. the dyadic 
representation of $x_0$ is the concatenation of the dyadic representations of 
all integers in $\{1,\ldots,x\}$, separated by $\ast$s. Note that $|x_0|\leq 
x\cdot(|x+2|)<x^2$. Finally, let $y=\bullet z_1\bullet\ldots\bullet z_k\bullet 
x_0\bullet $. Clearly we have $y\leq 2^{x^j}$ for some $j\geq 1$.

Now, $R'(x,y)$ will begin with $\exists z_1\harp  y\ \ldots\ \exists 
z_k\harp  y\ \exists x_0\harp  y\ ((y=\bullet 
z_1\bullet\ldots\bullet z_k\bullet x_0\bullet) \wedge \neg (\bullet 
\harp  z_1)\wedge\ldots\wedge \neg(\bullet \harp  z_k) \wedge 
\neg(\bullet \harp  x_0) \wedge \ldots)$, in order to retrieve the 
significant parts of $y$.

We use $x_0$ to replace every first-order quantification $\forall u \ldots$ 
appearing in $\varphi$ by a part-of quantification $\forall u \harp  
x_0\left(Int(u,x_0)\longrightarrow\ldots\right)$ in $R'$, and similarly for 
$\exists u \ldots$, where $Int(u,x_0)$ means that $u$ is a maximal non-empty 
string of $1$s and $2$s in $x_0$. The most technical part of the proof is to 
write a strictly rudimentary formula $Dom(x_0,x)$ which is true iff $x_0$ has 
the expected form, but for sake of brevity, we do not explicit this formula. 
In particular, note that we now consider the domain as $\{1,\ldots,x\}$ instead of 
$\{0,\ldots,x-1\}$ as we did previously. Finally it is not difficult to write a 
formula $Verif(x_0,z)$ expressing the fact that $z$ has the expected form 
$\star\ast x_1^1*\ldots*x_a^1\ast\star\ldots\star\ast 
x_1^p*\ldots*x_a^p\ast\star$. Namely, take

\[
\begin{array}{l}
Verif(x_0,z)\equiv \exists  u\harp  z\ \\ (\star u\star \harp  
z) \wedge \forall u\harp  z (((\star u\star \harp  z)\wedge \neg 
(\star \harp  u)\wedge u\neq\epsilon)\longrightarrow \exists 
v_1\harp  u\ldots\exists v_a\harp  u\\
(Int(v_1,x_0)\wedge\ldots\wedge Int(v_a,x_0)\wedge u=\ast v_1\ast\ldots\ast 
v_a\ast))\\
\wedge \forall u_1,u_2,\alpha,\beta,\gamma\harp  z(((z=\alpha\star 
u_1\star\beta\star u_2\star\gamma\vee z=\alpha\star u_1\star u_2\star\gamma)\\
\wedge \neg(\star \harp  u_1)\wedge\neg (\star\harp  
u_2))\longrightarrow u_1\neq u_2).
\end{array}
\]

There are two types of atomic formulas in $\varphi$: equalities $z_1=z_2$ and 
atoms $Z(z_1,\ldots,z_a)$. Equalities remain unchanged and $Z(z_1,\ldots,z_a)$ 
is changed into $\star\ast z_1\ast\ldots\ast z_a\ast\star \harp  z$. 
These operations lead to the strictly rudimentary formula $\varphi'$.

Finally, $R'(x,y)$ is $\exists z_1\harp  y \ldots \exists z_k\harp  y \exists x_0\harp  y ((y=\bullet z_1\bullet\ldots\bullet z_k\bullet x_0\bullet) \wedge \neg (\bullet \harp  z_1)\wedge\ldots\wedge \neg(\bullet \harp  z_k) \wedge \neg (\bullet \harp  x_0)
\wedge Dom(x_0,x) \wedge  Verif(x_0,z_1)\wedge\ldots\wedge Verif(x_0,z_k) \wedge \varphi')$.
\medskip

To obtain $R$, it remains to get rid of the alphabet $\{1,2,\ast,\star,\bullet\}$. 
Let $\ast$ be a string of $1$s which is not a subword of $x, x-1,\ldots, 2$ and $1$. 
For instance, $\ast$ could be of length $|x|+1$. Let $\star=2\ast2$ and $\bullet=22\ast22$. 
The final length of $y$ is polynomially longer than it used to be, 
which remains acceptable. 
Finally, take 
$R(x,y)\equiv \exists\ast \harp  y\ 
\exists\star \harp  y\ 
\exists\bullet \harp  y\ 
((\forall u\harp  \ast\ (u=1))\wedge \ast\neq\epsilon\wedge \star=2\ast2\wedge 
\bullet=22\ast22\wedge R')$. 
Note that strictly rudimentary relations do not define predicates referring 
to the length of integers, so that $\ast$ cannot be bound to be some 
specific word like $1^{|x|+1}$.

\medskip 

\noindent - Second inclusion: 
$\{\spec{\varphi}\ |\ \varphi\in FO\}
\supseteq\{\exists y\leq 2^{x^j}\ R(x,y)\ |\ j\geq 1\hbox{ and }R\in \SRUD\}$

The main difference with the second-order case concerning $\varphi$ is that it only contains first-order quantifications. However, we are still free to choose as many free second-order variables as we may need. In particular, we still use usual arithmetic predicates on the ($j$-tuples of) elements of the domain, and the previous first-order sentence $Arithm(\leq,+,\times)$ is still required to hold for this purpose. In addition, we introduce the second-order variables $X_1, X_2$ and $Y_1,Y_2$, both of arity $j$, respectively representing the set of positions where $x$ and $y$ have $1$s and $2$s and no other second-order variables are introduced. Let $Word(X_1,X_2)$ be the sentence expressing the fact that $X_1$ and $X_2$ (and similarly $Y_1,Y_2$) do represent a dyadic word, namely $$\begin{array}{l}Word(X_1,X_2)\equiv\forall\overline{z}\neg(X_1(\overline{z})\wedge X_2(\overline{z}))\\
\wedge\exists\overline{z}\forall\overline{t}((\overline{t}>\overline{z}\longrightarrow(\neg X_1(\overline{t})\wedge\neg X_2(\overline{t})))\wedge(\overline{t}\leq\overline{z}\longrightarrow(X_1(\overline{t})\vee X_2(\overline{t})))).\end{array}$$
Concerning $R$, we may assume w.l.o.g. that it only contains part-of quantifications $qz\harp  x$ and $qz\harp  y$ and no $qz\harp  z'$ for $z'\not\in\{x,y\}$.

The main trick is that a part-of quantification $\exists z\harp  y\ldots$ (for instance) will be replaced by $2j$ first-order quantifications $\exists\overline{z_1}\exists\overline{z_2} \left(\overline{z_1}\leq\overline{z_2}\wedge\ldots\right)$, where $\overline{z_1}$ and $\overline{z_2}$ encode the positions where $z$ begins and ends, as a subword of $y$.

We have to translate the atomic formulas $u\cdot v=w$. W.l.o.g.\ we may rewrite $R$ 
in an equivalent formula by replacing everywhere $u\cdot v=w$ with $(u\cdot 
v= w\wedge u\harp  y\wedge v\harp  y\wedge 
w\harp  y)\vee(u\cdot v= w\wedge u\harp  y\wedge 
v\harp  y\wedge w\harp  x)\vee\ldots\vee(u\cdot v= 
w\wedge u\harp  x\wedge v\harp  x\wedge w\harp  x) 
$. Hence, there are $8$ slightly different cases to be taken care of. We limit 
ourselves with the case $u\cdot v\cdot w\wedge u\harp  y\wedge 
v\harp  y\wedge w\harp  y$. The corresponding formula 
$Concat_{yyy}(\overline{u_1},\overline{u_2},\overline{v_1},\overline{v_2},\overline{w_1},\overline{w_2})$ 
is as follows: 

\[
\begin{array}{l}
\overline{w_2}=\overline{w_1}+\overline{u_2}-\overline{u_1}+\overline{v_2}-\overline{v_1}\\
\wedge \forall\overline{z} 
(\overline{w_1}\leq\overline{z}<\overline{w_1}+\overline{u_2}-\overline{u_1}\longrightarrow\\
((Y_1(\overline{z})\longleftrightarrow Y_1(\overline{z}-\overline{w_1}))\wedge 
(Y_2(\overline{z})\longleftrightarrow Y_2(\overline{z}-\overline{w_1}))))\\
\wedge \forall \overline{z} (\overline{w_1}+\overline{u_2}-\overline{u_1}\leq\overline{z}<\overline{w_2}\longrightarrow\\ 
((Y_1(\overline{z})\longleftrightarrow Y_1(\overline{z}-\overline{w_1}-\overline{u_2}+\overline{u_1}))\wedge
(Y_2(\overline{z})\longleftrightarrow Y_2(\overline{z}-\overline{w_1}-\overline{u_2}+\overline{u_1})))).
\end{array}
\]

Let us denote by $R'$ the obtained sentence.

The last remaining part is to write out a sentence $Dom'(X_1,X_2)$ expressing 
the fact that $X_1,X_2$ encodes (in dyadic) the cardinality of the domain, i.e. 
the successor of the $j$-tuple $(max,0,\ldots,0)$. This is a bit more technical 
than the sentence $Dom(X)$ we used for the binary notation and we do not 
spell it out here. Finally, take $\varphi\equiv Arithm\wedge Word(X_1,X_2)\wedge 
Word(Y_1,Y_2)\wedge Dom(X_1,X_2)\wedge R'$.
\end{proof}

\medskip

\subsubsection*{Connections with complexity classes}

At the beginning of complexity theory, the usual compexity classes 
such as the polynomial hierarchy had not emerged yet. 
So the classes used by Bennett are not standard ones. 
He considers two hierarchies based on space-bounded deterministic 
Turing machines defined in a recursive fashion: 
the base class is of type $FDSpace(f(n))$, 
and the next class has a space bound which is a function in the previous class.

Let us denote by $({\mathcal R}^i)_{i\geq 1}$ the first hierarchy, 
introduced in Ritchie's 1963 paper \cite{ar:Ritchie63}, 
which comes from from his Ph.D. thesis \cite{phd:Ritchie60}. 

\begin{definition}[Ritchie's classes]\ 
\begin{itemize}
\item[-] Let ${\mathcal R}^1$ be the class of functions computable by some (deterministic) Turing machine in space bounded by $b\cdot\max(\overrightarrow{x})$ on input $\overrightarrow{x}$, where $b\geq 1$ is some integer fixed for each machine, i.e. ${\mathcal R}^1=FDSpace({\mathcal O}(2^n))$ in modern notation.
\item[-] For each $i\geq 1$, let us denote by ${\mathcal R}^{i+1}$ the class of functions computable by a Turing machine in space bounded by $B(\overrightarrow{x})$, where $B$ is some function in ${\mathcal R}^i$, fixed for each machine. 
\item[-] For each $i\geq 1$, let us denote by ${\mathcal R}^i_{\star}$ the class of relations whose characteristic functions are in ${\mathcal R}^i$.
\end{itemize}
\end{definition}
It is proved in \cite{ar:Ritchie63} that this hierarchy $({\mathcal R}^i_{\star})_{i\geq 1}$ is strict and that its union corresponds to elementary relations. 
\medskip

Using the same pattern, Bennett introduces a second hierarchy, that we denote by $({\mathcal B}^i)_{i\geq 1}$. 

\begin{definition}[Bennett's classes]\ 
\begin{itemize}
\item[-] Let ${\mathcal B}^1$ be the class of functions computable by some (deterministic) Turing machine in space bounded by $P(\overrightarrow{x})$ on input $\overrightarrow{x}$, where $P$ is some arithmetical polynomial fixed for each machine, i.e. ${\mathcal B}^1=FDSpace((2^{{\mathcal O}(n)}))$ in modern notation.
\item[-] For each $i\geq 1$, let us denote by ${\mathcal B}^{i+1}$ the class of functions computable by a Turing machine in space bounded by $B(\overrightarrow{x})$, where $B$ is some function in ${\mathcal B}^i$, fixed for each machine. 
\item[-]For each $i\geq 1$, let us denote by ${\mathcal B}^i_{\star}$ the class of relations whose characteristic functions are in ${\mathcal B}^i$.
\end{itemize}
\end{definition}

Bennett shows that Ritchie's classes ${\mathcal R}^i_{\star}$ come in between spectra of various orders, but not in a very nice way. In contrast, he proves nice closure properties and an exact intercalation between the classes of spectra of consecutive orders for the classes ${\mathcal B}^i_{\star}$. However, all these classes are too big to be informative concerning relationship between first-order spectra and complexity classes.

In order to state the next theorem, let us denote by ${\mathcal S}^i$ the class of (many-sorted) spectra of formulas of order $i$. For instance $\SPEC$ is the class of unary relations in ${\mathcal S}^1$.

\begin{theorem}\ 
\begin{enumerate}
\item\label{turing-ritchie} ${\mathcal R}^1_{\star}\subseteq {\mathcal S}^3$ and for each $i\geq 2$, ${\mathcal S}^{2i-2}\subseteq {\mathcal R}^i_{\star}\subseteq {\mathcal S}^{2i+1}$. Moreover, for no $i,j\geq 1$ does ${\mathcal R}^i_{\star}={\mathcal S}^j$.
\item\label{turing-bennett} For each $i\geq 1$, ${\mathcal S}^{2i}\subseteq {\mathcal B}^i_{\star}\subseteq {\mathcal S}^{2i+1}$ (equality or strictness is unknown) and ${\mathcal R}^i_{\star}\subsetneq {\mathcal B}^i_{\star}\subsetneq {\mathcal R}^{i+1}_{\star}$. Moreover, ${\mathcal B}^i_{\star}$ is closed with respect to union, intersection, bounded quantifications, substitution of rudimentary functions, explicit transformations and finite modifications.
\end{enumerate}
\end{theorem}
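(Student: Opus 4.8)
The whole statement is best organised around Bennett's arithmetical characterisation of the classes $\mathcal{S}^n$ (Theorem~\ref{TH Bennett} together with its higher‑order extension sketched after it): writing $exp_0(x)=x$ and $exp_{k+1}(x)=2^{exp_k(x)}$, a relation lies in $\mathcal{S}^{2m}$ iff it has the form $\exists y\le exp_m(x^j)\,R(x,y)$ with $R$ rudimentary, and in $\mathcal{S}^{2m-1}$ iff the same holds with $R$ strictly rudimentary. Given this, the theorem falls into two independent parts: \textbf{(A)} translations between these bounded‑existential formulas and deterministic space‑bounded machines, which produce the inclusions $\mathcal{R}^i_\star\subseteq\mathcal{S}^{2i+1}$, $\mathcal{S}^{2i-2}\subseteq\mathcal{R}^i_\star$ (with the degenerate first case $\mathcal{R}^1_\star\subseteq\mathcal{S}^3$), and likewise $\mathcal{S}^{2i}\subseteq\mathcal{B}^i_\star\subseteq\mathcal{S}^{2i+1}$; and \textbf{(B)} purely order‑theoretic consequences of those inclusions combined with the \emph{strictness} of Ritchie's hierarchy $(\mathcal{R}^i_\star)$ and of the height‑$2$ steps $\mathcal{S}^n\subsetneq\mathcal{S}^{n+2}$ of Bennett's order hierarchy, which yield the strict interleaving $\mathcal{R}^i_\star\subsetneq\mathcal{B}^i_\star\subsetneq\mathcal{R}^{i+1}_\star$ and the impossibility of $\mathcal{R}^i_\star=\mathcal{S}^j$. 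Throughout one fixes the number representations so that Ritchie's theorem $\mathcal{E}^2_\star=\LINSPACE$ (Theorem~\ref{ritchie-kuroda}), Wrathall's $\RUD=\LTH$ and Myhill's $\LTH\subseteq\LINSPACE$ apply as stated.

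\textbf{Part (A): the translations.} For the direction \emph{machine $\to$ spectrum}, given a deterministic $M$ running in space $s(x)$ on input $x$ of binary length $n$, code an accepting run as a transcript word $y=\mathit{config}_0\,\mathit{config}_1\cdots$; determinism in space $s(x)$ gives at most $2^{O(s(x))}$ configurations, each of length $O(s(x))$, so $|y|\le 2^{O(s(x))}$, and the predicate ``$y$ codes a valid accepting run of $M$ on $x$'' is \emph{rudimentary}, since locating the boundary between $\mathit{config}_t$ and $\mathit{config}_{t+1}$ only needs addition and multiplication. Taking $s(x)=b\cdot x$ for $\mathcal{R}^1$ gives a bound of the form $exp_1(x^j)$; iterating the definition of $\mathcal{R}^{i+1}$ (space a function of $\mathcal{R}^i$) raises the exponential height by one at each step, producing $\mathcal{R}^i_\star\subseteq\mathcal{S}^{2i+1}$, and the same estimate with polynomial‑in‑value space bounds gives $\mathcal{B}^i_\star\subseteq\mathcal{S}^{2i+1}$. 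For the direction \emph{spectrum $\to$ machine}, given $\exists y\le exp_m(x^j)\,R(x,y)$ with $R$ rudimentary, a deterministic machine cycles through all candidates $y$, whose length is $exp_{m-1}(\mathrm{poly}(x))$, and tests $R(x,y)$; by $\RUD=\LTH\subseteq\LINSPACE$ this test runs in space linear in $|x|+|y|=exp_{m-1}(\mathrm{poly}(x))$, so the whole search fits the space budget of $\mathcal{R}^i$ (resp.\ $\mathcal{B}^i$) for the appropriate $i$, yielding $\mathcal{S}^{2i-2}\subseteq\mathcal{R}^i_\star$ and $\mathcal{S}^{2i}\subseteq\mathcal{B}^i_\star$. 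The loss of one exponential height between the two directions is precisely why the sandwich $\mathcal{S}^{2i-2}\subseteq\mathcal{R}^i_\star\subseteq\mathcal{S}^{2i+1}$ is not claimed tight. The closure of $\mathcal{B}^i_\star$ under union, intersection, bounded quantification, substitution of rudimentary functions, explicit transformations and finite modifications (and, for free, complement) follows from $\mathcal{B}^i$ being a deterministic space class: $\cap,\cup,\neg$ cost nothing, a bounded quantifier is an extra search inside the space bound, and rudimentary functions are cheap to evaluate.

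\textbf{Part (B): the lattice consequences.} From the inclusions one reads off the chain $\mathcal{S}^{2i-2}\subseteq\mathcal{B}^{i-1}_\star\subsetneq\mathcal{R}^i_\star\subsetneq\mathcal{B}^i_\star\subseteq\mathcal{S}^{2i+1}$, where the two strict steps come from the deterministic space hierarchy theorem — $\mathcal{R}^i$ (space linear in the value) and $\mathcal{B}^i$ (space polynomial in the value) are genuinely different resource bounds, and $\mathcal{R}^{i+1}$'s space bounds, being functions of $\mathcal{R}^i$, can be exponential in the value. In particular $\mathcal{S}^{2i-2}\subsetneq\mathcal{R}^i_\star\subsetneq\mathcal{S}^{2i+1}$; so a hypothetical $\mathcal{R}^i_\star=\mathcal{S}^j$, using the monotonicity $\mathcal{S}^a\subseteq\mathcal{S}^b$ for $a\le b$, forces $j\in\{2i-1,2i\}$, and each of these two cases is then excluded by playing the closure of $\mathcal{R}^i_\star$ under complement (it is a deterministic space class) against the closure of even‑order spectra under complement (Corollary~\ref{bennett-corr}), the position of $\mathcal{B}^i_\star$ strictly between $\mathcal{R}^i_\star$ and $\mathcal{S}^{2i+1}$, and the collapse behaviour of higher‑order spectra.

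\textbf{Main obstacle.} Two points carry the real weight. The first is the bookkeeping in (A): one must track the exponential towers and the polynomial slacks carefully enough to land in exactly the levels claimed — this is where Bennett's many‑sorted / higher‑order apparatus does genuine work and where the non‑tightness of the sandwich originates — and, for the odd levels $\mathcal{S}^{2m-1}$, one must verify that the transcript‑validity predicate can be made \emph{strictly} rudimentary, which is delicate precisely because $\SRUD$ cannot even express $x\le y$, so all configuration alignment has to be performed by subword manipulations exactly as in the proof of Theorem~\ref{TH Bennett}. The second is the ``for no $i,j$ does $\mathcal{R}^i_\star=\mathcal{S}^j$'' clause: the case $\mathcal{R}^i_\star=\mathcal{S}^{2i-1}$ is the genuinely tricky one, since ruling it out cannot go through a naive complement argument (for $i=1$ that would resolve Asser's problem), and instead relies on combining the strict interleaving of part (ii) with a collapse property of the higher‑order spectrum hierarchy that is more cooperative than the stubbornly open first‑order case.
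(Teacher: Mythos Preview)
The paper does not actually prove this theorem: it records only that Bennett's proof of item~(\ref{turing-ritchie}) ``is based on recursive characterizations of the classes $\mathcal{R}^i_\star$'', and that item~(\ref{turing-bennett}) is ``stated without proof'' already in Bennett's thesis. So there is little to compare against; your transcript-encoding approach for the inclusions in Part~(A) is a reasonable reconstruction, though it is framed in Turing-machine language rather than the purely recursion-theoretic idiom the paper attributes to Bennett.

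There is, however, a genuine gap in your Part~(B), specifically in the argument that $\mathcal{R}^i_\star\neq\mathcal{S}^j$ for all $i,j$. Your proposed mechanism --- ``playing the closure of $\mathcal{R}^i_\star$ under complement against the closure of even-order spectra under complement'' --- cannot work: \emph{both} $\mathcal{R}^i_\star$ (a deterministic space class) and $\mathcal{S}^{2i}$ (by Corollary~\ref{bennett-corr}) are closed under complement, so complement-closure distinguishes nothing here. You acknowledge that the odd case $j=2i-1$ is ``genuinely tricky'' and then invoke an unspecified ``collapse property'', which is not an argument.

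The missing idea is a different closure property. Each $\mathcal{S}^j$ is closed under substitution of polynomials in the input (this is part of Corollary~\ref{bennett-corr}(i): substitution of rudimentary functions), whereas $\mathcal{R}^i_\star$ is \emph{not}: Ritchie's base class uses space $b\cdot x$ linear in the \emph{value} of the input, so replacing $x$ by $x^2$ escapes the bound, and this defect propagates up the hierarchy. This is exactly why Bennett introduced the classes $\mathcal{B}^i$ with \emph{polynomial}-in-value base bounds in the first place --- they repair precisely this closure failure. Once you have that $\mathcal{S}^j$ is closed under $x\mapsto x^c$ but $\mathcal{R}^i_\star$ is not, the inequality $\mathcal{R}^i_\star\neq\mathcal{S}^j$ follows for every $j$, with no case split on parity and no appeal to complement.
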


The proof of item (\ref{turing-ritchie}) is based on recursive characterizations of the classes ${\mathcal R}^i_{\star}$, whereas item (\ref{turing-bennett}) is stated without proof.

\subsection{Mo's paper}~\label{se:Mo}

There is a late paper on the recursive aspect of spectra, namely \cite{ar:Mo91}, due to the Chinese logician Mo Shaokui in 1991, only available in Chinese (see the author's English abstract in Mathematics Abstracts of Zentralblatt \cite{misc:Mo-Zbl}).
With the help of Zhu Ping \cite{misc:zhuping}, we have been able to state Mo's result, and we propose a proof  sketch.
\raus{
\begin{quote}
There are two Scholz problems. First, to find a necessary and
sufficient condition for a set of natural numbers to be a finite spectrum
(i.e. the set of satisfiable numbers of a formula in first-order logic);
second, whether the complement of a finite spectrum is necessarily also a
finite spectrum. In this paper it is shown that a set M of natural numbers
is a finite spectrum if and only if $x\in M$ is an ${\mathcal E}\sp 2$ (or
${\mathcal E}\sp 1$ or ${\mathcal E}\sp 0)$ predicate (in the Grzegorczyk hierarchy)
prefixed by existential quantifiers of the form $\exists f<v\uparrow
(v\uparrow h)$, where $\uparrow$ denotes the power function, $h$ is some
concrete natural number, and the bound variables $f$ should be the first
argument of the function $dig\sb v$, where $dig\sb v(x,y)$ denotes the $y$-th
digit (in the $v$-scale) of the natural number $x$. Therefore, the first Scholz
problem is solved. It is also shown that if all the functions in ${\mathcal E}\sp 0$ can be enumerated by a function in ${\mathcal E}\sp 2$, then the
complement of a certain finite spectrum cannot be any finite spectrum.
Hence, under such a condition, the answer to the second Scholz problem is
negative.
\end{quote}
}

\begin{definition}
Let $x,x_1,x_2,\ldots$ and $y,y_1,y_2,\ldots$ be two disjoint sets of variables.
Let $\textsc{Mo}$ be the smallest class of predicates over integers 
containing the relations $x_1+x_2=x_3$, $x_1\times x_2=x_3$ 
(both for variables of type $x$ only) and $Bit(y,x)$ 
(where the first variable is of type $y$, the second of type $x$) 
and closed under Boolean operations and (polynomially) 
bounded quantifications for variables of type $x$ only. 
\end{definition}

Note that a predicate in $\textsc{Mo}$ has two types of variables, 
which do not play similar roles, and that $\textsc{Mo}$ extends 
the rudimentary relations by the use of $Bit(y,x)$ atoms, 
which are not definable because $y$ variables are not allowed 
in the atomic formulas for addition and multiplication. 

\begin{theorem}
$\{\spec{\varphi}\ |\ \varphi\in FO\}=\{\exists y_1\leq 2^{x^{j_1}}\ldots\exists y_k\leq 2^{x^{j_k}}\\ 
R(x,y_1,\ldots,y_k)\ |\ k, j_1,\ldots, j_k\geq 1\hbox{ and }R\in \textsc{Mo}\}$
\end{theorem}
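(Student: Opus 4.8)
The theorem asserts that $\SPEC$ coincides with the class of sets definable by a block of existential quantifiers, each bounded by an iterated exponential $2^{x^{j_i}}$, in front of a $\textsc{Mo}$-predicate. The natural strategy is to observe that this is, up to bookkeeping, a mild reformulation of Bennett's characterization (Theorem~\ref{th spec srud}/Theorem~\ref{TH Bennett}(\ref{Bennett1})), and to push the argument through directly rather than quoting Bennett verbatim — since the base class here ($\textsc{Mo}$, built from $+$, $\times$, and $Bit$ with bounded quantification over the ``small'' variables only) is intermediate between $\SRUD$ and $\RUD$, and the existential prefix has $k$ blocks instead of one.

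\textbf{Step 1: $\SPEC \subseteq \{\exists \vec{y}\ldots R\}$.} Given $\varphi \in \FO$ over a relational vocabulary, I would reuse exactly the encoding already displayed in the sketch of the undecidability proposition (Section~\ref{prescribed spectra}): a relation $R$ on $\{0,\ldots,n-1\}$ of arity $a$ is coded by a number $y < 2^{n^a}$, and $R(z_1,\ldots,z_a)$ becomes $Bit(y, z_1 + z_2 n + \cdots + z_a n^{a-1})$. Each relation symbol $R_i$ of arity $a_i$ contributes one existentially quantified variable $y_i$ bounded by $2^{x^{a_i}}$. First-order quantifiers of $\varphi$ become bounded quantifiers $\exists z < x$, $\forall z < x$ over $x$-type variables; atomic formulas $R_i(\vec{z})$ become $Bit$ atoms; equalities stay as they are; and arithmetic $+,\times$ on the domain elements (needed to express $z_1 + z_2 n + \cdots$) is available since those are $\textsc{Mo}$-atoms for $x$-type variables. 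The crucial point is that $n = x$ is the size of the model, so $n^{a_i} \leq x^{a_i}$ and all the bounds $2^{x^{a_i}}$ are of the required form. This gives $\spec{\varphi}$ as $\exists y_1 \leq 2^{x^{j_1}} \cdots \exists y_k \leq 2^{x^{j_k}}\, R(x,\vec{y})$ with $R \in \textsc{Mo}$.

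\textbf{Step 2: $\{\exists \vec{y}\ldots R\} \subseteq \SPEC$.} This is the harder direction and the main obstacle. Starting from $\exists y_1 \leq 2^{x^{j_1}} \cdots \exists y_k \leq 2^{x^{j_k}}\, R(x,\vec{y})$, I want a first-order sentence $\psi$ whose models of size $x$ exist iff the formula holds. Take a vocabulary with a built-in linear order of type $x^J$ (for $J = \max j_i$) — i.e.\ work with $J$-tuples of domain elements, the associated addition and multiplication being fixed by a first-order sentence $Arithm$ — and represent each $y_i < 2^{x^{j_i}}$ by a unary (monadic) second-order-style object, which here means: one extra $J$-ary predicate $Y_i$ on the domain marking the positions of the $1$-bits of $y_i$. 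Since we are building a first-order sentence over an enriched vocabulary, these predicates are simply part of the signature; their intended meaning (``$Y_i$ encodes some integer $< 2^{x^{j_i}}$'') is pinned down by a first-order axiom. Then every $Bit(y_i, t)$ atom of $R$ translates to $Y_i(\text{the } J\text{-tuple coding } t)$, the bounded $x$-quantifiers of $R$ translate to ordinary first-order quantifiers over the domain, and $Arithm$ provides $+,\times$. The one technical subtlety — exactly as in Bennett's proof — is writing a first-order sentence $Dom$ forcing one of the predicates to encode the domain size $x$ in binary, so that the value ``$x$'' appearing as the free variable of $R$ is correctly realized; this is routine once the order and arithmetic are available. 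The resulting $\psi$ has a model of size $x$ iff $\exists \vec{y}\, R(x,\vec{y})$ holds, so the set is in $\SPEC$.

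\textbf{Main obstacle and remarks.} The delicate part is entirely in Step~2: verifying that the iterated-exponential bounds $2^{x^{j_i}}$ are exactly what a single relational model of size $x$ can ``store'' (a $j_i$-ary predicate has $x^{j_i}$ potential tuples, hence $2^{x^{j_i}}$ subsets), so the correspondence is tight and no padding of the domain is needed — in contrast to the transfer theorems elsewhere in the survey. One should also check that allowing a prefix of $k$ blocks rather than one costs nothing: $k$ is fixed with the formula, so $k$ extra predicates suffice, and conversely several relation symbols in $\varphi$ naturally produce several $y$-blocks. Finally, the statement's parenthetical freedom to replace $\textsc{Mo}$'s $\mathcal{E}^0$/$\mathcal{E}^1$/$\mathcal{E}^2$ backbone is harmless, since $+,\times,Bit$ with bounded quantification already lie at the bottom and closure under the operations used in the translations stays within the class; this mirrors the fact, noted after Theorem~\ref{th:most56}, that $\RUD \subseteq \SPEC$ and that equality of these low levels is open.
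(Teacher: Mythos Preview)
Your proposal is correct and follows essentially the same route as the paper's proof, which the paper itself describes as ``a slightly modified version of the proof of Bennett's theorem for second-order spectra.'' Both directions match: encode each relation symbol by a number $y_i<2^{x^{a_i}}$ via the $Bit$ predicate (Step~1), and conversely realize each $y_i$ as a predicate of the appropriate arity over a domain equipped with $\leq,+,\times$ on tuples (Step~2).

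One small point where the paper is cleaner than your sketch: in Step~2 you invoke a $Dom$ predicate (borrowed from Bennett's first-order/$\SRUD$ proof) to pin down the value $x$. Here this is unnecessary. Since the $x$-type variables of a $\textsc{Mo}$-predicate are polynomially bounded, the paper represents them directly as tuples of domain elements in $x$-ary notation; in particular $x$ itself is simply the tuple $(0,1,0,\ldots,0)$, and the atoms $x_1+x_2=x_3$, $x_1\cdot x_2=x_3$ translate to first-order formulas $Add_j$, $Mult_j$ on tuples. Likewise, a bounded quantifier $\exists z<x^i$ becomes a block of $i$ first-order quantifiers (one per coordinate), not a single one --- your phrase ``ordinary first-order quantifiers over the domain'' is right in spirit but should be read as tuples. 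With these adjustments your argument coincides with the paper's.
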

\begin{proof}
It is a slightly modified version of the proof of Bennett's theorem for second-order spectra. 
\medskip

- First inclusion: 
$\varphi$ has a model with $x$ elements iff 
$\exists y_1\leq 2^{x^{j_1}}\ldots\exists y_k\leq 2^{x^{j_k}}\ R(x,y_1,\ldots,y_k)$ is true.

We encode a predicate symbol $Y$ with arity $j$ by the number $y<2^{x^j}$ in the usual way. Hence, every atomic formula $Y(x_1,\ldots,x_j)$ is translated into $\exists x'<x^j (Bit(y,x')\wedge x'=x_1+x_2\cdot x+\ldots+x_j\cdot x^{j-1})$. Every first-order quantification $qx_i$ in $\varphi$ is translated into the bounded quantification $qx_i<x$. The atomic formulas $x_1=x_2$ in $\varphi$ remain unchanged. Let $R$ denote the obtained formula with free variables $x,y_1,\ldots,y_k$.
\medskip

- Second inclusion: $\exists y_1\leq 2^{x^{j_1}}\ldots\exists y_k\leq 2^{x^{j_k}}
R(x,y_1,\ldots,y_k)$ is true iff $\varphi$ has a model with $x$ elements.

First, we use three predicate symbols, namely $\leq^{(2)}$ which is bound to be a linear ordering on the vertices, $+^{(3)}$ which is bound to be the associated addition and $\times^{(3)}$ which is bound to be the associated multiplication. Let us denote by $Arithm_1(\leq,+,\times)$ the first-order sentence expressing this requirement. Note that we may now use for free any usual arithmetic predicate on numbers bounded by $x$. 

Next, every free variable of type $y$ in $R$ and bounded by $2^{x^j}$ is translated into a predicate symbol $Y$ of arity $j$.

W.l.o.g., we may assume that all the bounded quantifications in $R$ are of type $qx'<x^i$ for some $i\geq 1$. The bounded quantification $qx'<x^i$ in $R$ is simply translated into $qx'_1\ldots qx'_i$ and $x'$ is represented by the $i$-tuple $(x'_1,\ldots,x'_i)$.

There are three types of atomic formulas in $R$. Let us first consider formulas $x_1+x_2=x_3$ and $x_1\times x_2=x_3$. Assume we have $x_1<x^{i_1}\leq x^j, x_2<x^{i_2}\leq x^j,x_3<x^{i_3}\leq x^j$, with $j=\max(i_1,i_2,i_3)$. The variables $x_1,x_2,x_3$ correspond to the tuples $(x_1^1,\ldots,x_1^{j}),(x_2^1,\ldots,x_2^{j}),(x_3^1,\ldots,x_3^{j})$ (padding with as many $0$s as necessary). This includes the case $x<x^2$ so that $x$ corresponds to $(0,1,0,\ldots,0)$. Then $x_1+x_2=x_3$ is changed into  $Add_j(x_1^1,\ldots,x_1^{j},x_2^1,\ldots,x_2^{j},x_3^1,\ldots,x_3^{j})$ and $x_1\times x_2=x_3$ is changed into $Mult_j(x_1^1,\ldots,x_1^{j},x_2^1,\ldots,x_2^{j},x_3^1,\ldots,x_3^{j})$, where the formulas $Add_j$ and $Mult_j$ express addition and multiplication on $j$-tuples in $x$-ary notation. The case of atomic formulas $Bit(y,x')$ is dealt with similarly. Assume we have $y<2^{x^j}$, and $x'<x^i$ then there are three possibilities. If $i< j$, then $Bit(y,x')$ is changed into $Y(x'_1,\ldots,x'_i,0,\ldots,0)$. If $i=j$, then $Bit(y,x')$ is changed into $Y(x'_1,\ldots,x'_j)$. If $i>j$, then $Bit(y,x')$ is changed into $Y(x'_1,\ldots,x'_j)\wedge x_{j+1}=0\wedge\ldots\wedge x_i=0$. Similarly, $Bit(y,x)$ (which may also occur in $R$ because $x$ is a free variable of type $x$) translates into $Y(0,1,0,\ldots,0)$ if $Y$ has arity $2$ at least and into $0\neq 0$ (false) if $Y$ is unary. Let us denote by $R'$ the first-order sentence thus obtained.

Finally, $\varphi$ is $Arithm_1(\leq,+,\times)\wedge R'$.
\end{proof}

Note that, in order to uniformize the proofs with that of Bennett's theorem and help comparison, we have slightly modified the original statement in two points. First, Mo uses functional vocabularies, which yields bounds of type $x^{x^j}$ for $y$ type variables and the use of atoms $Digit_x(y,x')=x''$ (meaning ``the digit of rank $x'$ of $y$ in $x$-ary notation is $x''$'') instead of $Bit(y,x)$. Second, the relation $R$ is originally described using Grzegorczyk's classes ${\mathcal E}^0_{\star}$, ${\mathcal E}^1_{\star}$ or ${\mathcal E}^2_{\star}$ instead of $\RUD$.

Finally, concerning Asser's problem (so-called second Scholz problem here), the author's abstract \cite{misc:Mo-Zbl} asserts that: 
\begin{quote}
It is also shown that if all the functions in ${\mathcal E}\sp 0$ can be enumerated by a function in ${\mathcal E}\sp 2$, then the
complement of a certain finite spectrum cannot be any finite spectrum.
Hence, under such a condition, the answer to the second Scholz problem is
negative.
\end{quote}
Hence, the conditional negative solution proposed here seems to be linked to some separation of ${\mathcal E}\sp 0$ and ${\mathcal E}\sp 2$ via diagonalization, which seems unlikely (the classical proof of separation of ${\mathcal E}\sp i$ and ${\mathcal E}^{i+1}$ uses the bound on the growth of the functions in ${\mathcal E}\sp i$). 
\raus{
Note that it is not clear whether ${\mathcal E}^i$ or ${\mathcal E}^i_{\star}$ are intended, because both expressions ``${\mathcal E}^i$ predicate'' and ``${\mathcal E}^i$ function'' are used. A separation of ${\mathcal E}\sp 0_{\star}$ and ${\mathcal E}\sp 2_{\star}$ via diagonalization is even more unlikely.
}

\section{A compendium of questions and conjectures}
\ \\

In this appendix we list, for convenience,
all the Open Questions (OQ)and stated in our survey.

\ 

\subsubsection*{From Section \ref{se:intro}}
\ \\

\begin{description}
\item[OQ \ref{scholz-problem}]
(Scholz)
Characterize the sets of natural numbers
that are first order spectra.

\item[OQ \ref{open-assers-problem}]
(Asser)
Is the complement of a first order spectrum
a first order spectrum?

\item[OQ \ref{asser-msol}]
Is the complement of a spectrum of an $\Msol$-sentence
again
a spectrum of an $\Msol$-sentence?

\item[OQ \ref{Fagin-binary}]
(Fagin)
Is every first order
spectrum the spectrum of a first order sentence of
one binary relation symbol?

\item[OQ \ref{Fagin-simple}]
(Fagin)
Is every first order
spectrum the spectrum of a first order sentence
over simple graphs?

\item[OQ \ref{Fagin-planar}]
(Fagin)
Is every first order
spectrum the spectrum of a first order sentence
over planar graphs?
\end{description}

\ 

\subsubsection*{From Section \ref{se:counting}}
\ \\

We recall a few definitions:
Let $M \subseteq \N^+$, and let
$m_1, m_2, \ldots  $ an enumeration of $M$ ordered
by the size of its elements.
$\chi_M(n)$ is the characteristic function of $M$.
$\eta_M(n)$ is the enumeration function of $M$, i.e.,
$ \eta_M(n) = m_n$ if it exists, and
$\eta_M(n)= 0$ otherwise.
Finally, $\delta_M(n)= \eta_M(n+1)-\eta_M(n)$.

\begin{description}
\item[OQ \ref{counting}]
Which strictly increasing sequences of positive integers,
are enumerating functions of spectra?
For instance, how fast can they grow?

\item[OQ \ref{gaps}]
If $M$ is a spectrum how can $\delta_M(n)$ behave?

\item[OQ \ref{littlewood}]
Let 
$\pi(n)$ be the
counting function of the primes, and let
$li(n)$ be its
approximation by the integral logarithm. 
Define
\\
$
\pi^+ =\{ n : \pi(n) - li(n) > 0 \}
$
and
$
\pi^- =\{ n : \pi(n) - li(n) \geq 0 \}
$.
\\
Are the sets
$\pi^+$ and $\pi^-$ spectra?

\item [OQ \ref{lcounting}]
Let $\phi$ a first order sentence, and
$f_{\phi}$
be the associated
labeled counting function that is monotonically increasing.
Is there a first order sentence $\psi$ such that
for all $n$ we have
$ f_{\phi}(n) = \eta_{\psi}(n) $?

\item[OQ \ref{algebraic}]
Are there any irrational algebraic reals which are spectral?
\item[OQ \ref{automatic}]
Is every automatic real a spectral real?
\end{description}
The binary string complexity of a real in binary presentation
is the function $p_r(m)$ which counts, for each $m$
the number of distinct binary words $w$ of length $m$ occurring in $r$.
\begin{description}
\item[OQ \ref{spectral-lim}]
Does the binary string complexity $p_r(m)$ of
a spectral real $r$ satisfy
$$
\liminf_{m \rightarrow \infty} \frac{p_r(m)}{2^m} < 1
$$
or even
$$
\liminf_{m \rightarrow \infty} \frac{p_r(m)}{2^m} = 1?
$$
\item[OQ \ref{e2-reals}]
Are the $b$-adic $\mathcal{E}^2$-computable reals
$\mathcal{E}^2$-Cauchy computable?
\item[OQ \ref{low}]
Is the inclusion $\mathcal{F}_{low} \subseteq \mathcal{E}^2$ proper?
\end{description}

\ 

\subsubsection*{From Section \ref{recursion}}
\ \\

\begin{description}
\item[OQ \ref{e0e1e2}]
Are the inclusions in
$
{\mathcal E}_{\star}^0 \subseteq {\mathcal E}_{\star}^1 \subseteq {\mathcal E}_{\star}^2
$
proper?

\item[OQ \ref{rud=e0}]
Additionally,
is the inclusion 
$\RUD \subseteq
{\mathcal E}^0_{\star} 
$ proper?

\item[OQ \ref{e2=spec}]
Is the inclusion in
${\mathcal E}_{\star}^2\subseteq\SPEC$
proper?

\item[OQ \ref{rud=spec}]
Is the inclusion $\RUD \subseteq \SPEC$ proper?
\end{description}
\ 

\subsubsection*{From Section \ref{complexity}}
\ \\

\begin{description}
\item[OQ \ref{p=np}]
\ 
\begin{renumerate}
\item
Are any of the inclusions
\\
$\L \subseteq \NL$, $\LINSPACE \subseteq \NLINSPACE$,
$\classP \subseteq \NP$ and $\E \subseteq \NE$ proper?
\item
Do any of the equalities  $\NP = \coNP$ and $\NE = \coNE$ hold?
\end{renumerate}

\item[OQ \ref{lth}]
Are the inclusions
$\L \subseteq \RUD = \LTH  \subseteq \LINSPACE$ proper?

\item[OQ \ref{asser-cat}]
Is every
spectrum the spectrum of a categorical sentence ?

\item[OQ \ref{compl-spec}]
Is there a universal (complete) spectrum $S_0$ and a suitable notion of
reduction such that every spectrum $S$ is reducible to $S_0$?

\item[OQ \ref{lynch}]
Is the inclusion
$d\geq 1$, $\NTime{2^{d\cdot n}}\subseteq \RSpectra{d}{}(+)$ proper?

\item[OQ \ref{fspec}]
Is there a characterization as a complexity class of the classes
$\FSpectra{d}{}$ for all $d\geq 1$ ?

\end{description}
\ 

\subsubsection*{From Section \ref{vocabularies}}
\ \\

\begin{description}
\item[OQ \ref{fagin-one-binary}]
Is every first order spectrum in $\RSpectra{2}{1}$?

\item[OQ \ref{QUE unary hierarchy}]
Is the following hierarchy proper:
\[
\FSpectra{1}{1}\subset
\FSpectra{1}{2} \subseteq
\FSpectra{1}{3} \subseteq  \ldots \subseteq
\FSpectra{1}{k} \subseteq  \ldots?
\]

\item[OQ \ref{rud=fspectra}]
Is $\RUD = \FSpectra{1}{2}$?

\item[OQ \ref{rspectra=fspectra}]
Is $\RSpectra{2}{1} = \FSpectra{1}{}$ or even $\RSpectra{2}{1} = \FSpectra{1}{2}$?

\item[OQ \ref{arity}]
Is the following hierarchy proper:
\[
\RSpectra{1}{}\subseteq
\RSpectra{2}{} \subseteq
\RSpectra{3}{} \subseteq  \ldots \subseteq
\RSpectra{k}{} \subseteq  \ldots?
\]
The same question may be asked for spectra over $i$-ary functions.
\item[OQ \ref{oq:fivar}]
Does the hierarchy $Sp\Fol^k$ collapse at level $3$?
\end{description}
\ 

\subsubsection*{From Section \ref{Ash}}
\ \\

\begin{description}
\item[OQ \ref{ash-1}]
(Ash's constant conjecture)\\
Is it true that
for any finite relational
vocabulary $\sigma$ and any positive integer $k$, the Ash function
$N_{\sigma,k}$ is eventually constant?

\item[OQ \ref{ash-2}]
(Ash's periodic conjecture)\\
Is it true that
for any finite relational
vocabulary $\sigma$ and any positive integer $k$, the Ash function
$N_{\sigma,k}$ is eventually periodic?

\item[OQ \ref{ash-3}]
(Ultra-weak Ash conjecture)\\
Is it true that
for any finite relational vocabulary $\sigma$, for any positive integer
$k$ and for all $i\in\NN^+$, the set $N_{\sigma, k}^{-1}(i)$ is a
spectrum?

\item[OQ \ref{ash-4}]
(Ultra-weak Ash conjecture for classes of structures)\\
Is it true that
for any finite relational vocabulary $\sigma$, for any first-order
$\sigma$-sentence $\Psi$, for any positive integer $k$ and for all
$i\in\NN$, the set $N_{Mod_f(\Psi), k}^{-1}(i)$ is a spectrum?

\end{description}
\ 

\subsubsection*{From Section \ref{semantic}}
\ \\

\begin{description}
\item[OQ \ref{patch-complexity}]
What is the complexity of checking whether a $\tau$-structure
$\mathfrak{A}$ has patch-width
at most $k$,
for a fixed $k$?

\end{description}

\subsubsection*{From Section \ref{semantic}}
\ \\

\begin{description}
\item[OQ\ref{mostowski-K}]
Is every
spectrum representable as $\{n+1\ |\ f(n)=0\}$ for some function $f\in K$?
\end{description}
Recall that $K$ is the class of functions defined in Section \ref{section technical}
Definition \ref{classK}.

\def\cprime{$'$}

\end{document}